\numberwithin{equation}{section}
\theoremstyle{plain}
\newtheorem{theorem}{Theorem}[section]
\newtheorem{definition}{Definition}[section]
\newtheorem{lemma}[theorem]{Lemma}
\newtheorem{assumption}[theorem]{Assumption}
\renewcommand{\vec}[1]{\boldsymbol{#1}}
\newcommand{\paren}[1]{\left(#1\right)}
\newcommand{\brac}[1]{\left[#1\right]}
\newcommand{\E}{\mathbb{E}}
\newcommand{\avg}[1]{\E\brac{#1}}
\newcommand{\lap}[1]{\Delta#1}
\newcommand{\abs}[1]{\left|#1\right|}
\newcommand{\norm}[1]{\Vert#1\Vert}
\newcommand{\DA}{D^{\textrm{A}}}
\newcommand{\DB}{D^{\textrm{B}}}
\newcommand{\DC}{D^{\textrm{C}}}
\newcommand{\bi}{\vec{i}}
\newcommand{\bj}{\vec{j}}
\newcommand{\vn}{\vec{n}}
\newcommand{\vN}{\vec{N}}
\newcommand{\vP}{\vec{P}}
\newcommand{\Pn}{P^{\vec{n}}}
\newcommand{\vG}{\vec{G}}
\newcommand{\vx}{\vec{x}}
\newcommand{\vy}{\vec{y}}
\newcommand{\vq}{\vec{q}}
\newcommand{\vQ}{\vec{Q}}
\newcommand{\vqn}{\vec{q}^{\vec{n}}}
\newcommand{\vqnj}{\vec{q}^{n_j}}
\newcommand{\ind}{\mathbbm{1}}
\def\N{\mathbb{N}}
\def\R{\mathbb{R}}
\renewcommand{\epsilon}{\varepsilon}
\newcommand{\Def}{\overset{\text{def}}{=}}
\newcommand{\BP}{\mathbb{P}}
\newcommand{\filt}{\mathscr{F}}
\newcommand{\la}{\left \langle}
\newcommand{\ra}{\right\rangle}
\newcommand{\RN}[1]{
  \textup{\uppercase\expandafter{\romannumeral#1}}%
}
\DeclarePairedDelimiter\ceil{\lceil}{\rceil}
\newcommand{\cC}{\mathcal{C}}
\newcommand{\cD}{\mathcal{D}}
\newcommand{\Ol}{\Omega^{(\ell)}}
\newcommand{\Otl}{\tilde{\Omega}^{(\ell)}}
\newcommand{\given}{\,\middle|\,}
\newcommand{\vupl}{\vec{\upsilon}^{(\ell)}}
\newcommand{\Klg}{K_{\ell}^{\gamma}}
\DeclareMathOperator{\Bernoulli}{Bernoulli}
\DeclareMathOperator{\Cov}{Cov}
\def\mathcolor#1#{\@mathcolor{#1}}
\def\@mathcolor#1#2#3{%
  \protect\leavevmode
  \begingroup
    \color#1{#2}#3%
  \endgroup
}
\title{Fluctuation analysis for particle-based stochastic reaction-diffusion models}
\author{M. Heldman, S. Isaacson, J. Ma and K. Spiliopoulos}
\thanks{Department of Mathematics and Statistics, Boston University, 111 Cummington Mall, Boston, MA 02215. Emails: heldmanm@bu.edu, isaacson@math.bu.edu, majw@bu.edu, kspiliop@math.bu.edu}
\thanks{All authors were supported by ARO W911NF-20-1-0244. MH was also supported by NSF-DMS 1902854. KS was also supported by NSD-DMS 2107856. We would like to thank the two anonymous reviewers of this article for their critical and essential review that greatly improved this work. }
\date{\today}
\begin{document}

\begin{abstract}
  Recent works have derived and proven the large-population mean-field limit for
  several classes of particle-based stochastic reaction-diffusion (PBSRD)
  models. These limits correspond to systems of partial integral-differential
  equations (PIDEs) that generalize standard mass-action reaction-diffusion PDE
  models. In this work we derive and prove the next order fluctuation
  corrections to such limits, which we show satisfy systems of stochastic PIDEs
  with Gaussian noise. Numerical examples are presented to illustrate how
  including the fluctuation corrections can enable the accurate estimation of
  higher order statistics of the underlying PBSRD model.
\end{abstract}

\maketitle

\section{Introduction}
In this work we consider the large-population limit of particle-based stochastic reaction-diffusion (PBSRD) models. Such models are used to study the interplay between spatial transport via diffusion and chemical reactions for systems of interacting particles/molecules, and have been used in a variety of biological and population process studies. Several recent works~\cite{Nolen2019,IMS:2022,PopovicVeber22} have investigated the large-population (thermodynamic) limit of such models, first studied in~\cite{Oelschlager89a}, where the number of particles and a system size parameter (typically domain volume and/or Avogadro's number) are allowed to become infinite such that the initial concentration of different chemical species (i.e. different types of particles) is held constant. In such limits one obtains systems of partial-integral differential equations (PIDEs) for the associated macroscopic concentration field of each species.

Here we build on our previous studies in~\cite{IMS:2022} and~\cite{IMS:2021}, in which we investigated the large-population mean-field limit for the volume reactivity (VR) PBSRD model, which is a generalization of the classical Doi model~\cite{TeramotoDoiModel1967,DoiSecondQuantA,DoiSecondQuantB}. In the VR model, molecules are approximated as point particles that move by Brownian Motion, unimolecular reactions such as $A \to B$ occur as an internal Poisson process with a fixed probability per time for each substrate molecule (i.e. each $A$ molecule), and bimolecular reactions between two individual reactant particles, like $A + B \to \dots$, occur with a probability per time based on the positions of the two substrate molecules. Products of a reaction are placed randomly based on specified particle placement densities. In~\cite{IMS:2022} we derived a set of reaction-diffusion PIDEs that we then proved corresponded to the rigorous large-population limit of the VR PBSRD model, with our analysis being valid for general reaction networks involving both first and second order reactions in which particle numbers can change. Our method of proof was based on formulating a weak representation for the dynamics of measure-valued stochastic processes (MVSPs), which correspond to concentration fields of different chemical species in the VR PBSRD model. We then adapted the classical Stroock-Varadhan Martingale approach to calculating mean-field limits~\cite{MR88a:60130,StroockVaradhan} to prove the large-population limit for the MVSPs. A similar approach was used in~\cite{Oelschlager89a} to derive the analogous mean-field PIDEs for systems that involved only linear zero and first-order reactions (and therefore no interactions between particles). In contrast, a different approach was considered in~\cite{Nolen2019}, which derived the analogous mean-field limit using relative entropy methods, but for systems involving only reactions that conserve particle numbers. Using entropy methods allowed for more quantitative estimates, but as pointed out by the authors, appears restricted to systems in which particle numbers are unchanged by each reaction. Finally, we note that~\cite{PopovicVeber22} considers a novel underlying PBSRD model that is distinct from the VR model. This enables the consideration of higher than second order reactions, and is used in studying a large-population limit in which some species remain discrete with a stochastic evolution while others converge to continuous concentration fields.

We next investigated in~\cite{IMS:2021} how the derived mean-field PIDEs relate to commonly used, but phenomenological, mass action reaction-diffusion PDE models. We demonstrated that the latter could be interpreted as an approximation to the rigorous mean-field PIDEs in the limit that bimolecular interactions become short-ranged when bimolecular reaction kernels have an averaging form. Collectively, the works~\cite{Oelschlager89a,Nolen2019,IMS:2022,IMS:2021} establish the rigorous large population limit for VR PBSRD models, and then rigorously demonstrate how they relate to commonly used macroscopic reaction-diffusion PDE models.

The derived mean-field model has several clear limitations. In particular, as the resulting PIDEs are deterministic it is unable to give any approximation to higher order statistics of the underlying VR PBSRD models. To address this issue, in this work we derive the next order fluctuation corrections to the mean-field PIDEs we derived in~\cite{IMS:2022}, giving a proof that the scaled difference between the PBSRD MVSPs and the mean-field PIDE solutions, see (\ref{Eq:FluctProcess}), converges to the derived fluctuation corrections in the large-population limit.

Our analysis allows us to obtain an approximation to the distribution of the empirical measure of the system to leading order in the system size parameter. Denote by $\mu^{\gamma,j}_{t}$ the marginal distribution (molar concentration) of species $j=1\cdots, J$, where $\gamma$ represents the system size parameter (e.g. Avogadro's number in $\R^d$). Letting $\bar{\mu}^{j}_{t}$ be the limit of $\mu^{\gamma,j}_{t}$ as $\gamma\rightarrow\infty$, we study the behavior of the fluctuation corrections
\begin{align}
\Xi^{\gamma,j}_{t}=\sqrt{\gamma}\left(\mu^{\gamma,j}_{t}-\bar{\mu}^{j}_{t}\right),\quad j=1,\cdots J,\label{Eq:FluctProcess}
\end{align}
as $\gamma\rightarrow\infty$. We prove that, in the appropriate sense, $\Xi^{\gamma,j}_{t}\rightarrow \bar{\Xi}^{j}_{t}$ as $\gamma\rightarrow\infty$ with $\bar{\Xi}^{j}_{\cdot}$ being Gaussian. We therefore obtain the approximation
\[
\mu^{\gamma,j}_{t}\approx \bar{\mu}^{j}_{t}+\frac{1}{\sqrt{\gamma}}\bar{\Xi}^{j}_{t},
\]
which expands on earlier mean-field limit studies by allowing the approximation of the distribution of $\mu^{\gamma,j}_{t}$, as well as associated statistics.

The proof of convergence is based on weak convergence analysis of interacting particle systems. One of the main challenges is that the fluctuations of the empirical distribution correspond to a signed measure-valued process. As discussed in \cite{BDG:2007,FluctuationSpiliopoulosSirignanoGiesecke}, the space of signed measures endowed with the weak topology is in general not metrizable. For this reason, we study convergence of the distribution-valued fluctuation process in the dual space of an appropriately weighted Sobolev space (weights are introduced to control the behavior at infinity).  We use the weights introduced in \cite{Meleard} in studying the McKean-Vlasov equation, and show fluctuations arising from our VR PBSRD model are elements of these spaces, see Section~\ref{S:SobolevSpace} for a short exposition. The limit, $\bar{\Xi}^j_t$, is then obtained as a distribution-valued process in an appropriate weighted negative Sobolev space. In the process of studying the limit of $\Xi^{\gamma,j}$ as $\gamma\rightarrow\infty$, we need to linearize the dynamics of $\mu^{\gamma,j}$ around its limiting behavior $\bar{\mu}^{j}$. Doing so produces a number of remainder terms that need to be appropriately controlled; details of these bounds are in Section~\ref{S:ProofMainTheorem}.


We note that our driving goal in this work is to rigorously derive the main correction terms to the mean-field large-population limit of PBSRD models.  The derived fluctuations limit identifies and enables the approximation of the resulting error terms for the mean-field limit. Our results can be used to both assess errors in the mean-field approximation, but also, as our numerical studies of Section~\ref{S:numerics} demonstrate, enable the approximation of a broader range of statistics from the underlying PBSRD model, reducing the need for expensive Monte-Carlo simulations of the underlying PBSRD model. A number open questions around the large population limit and properties of the PBSRD model still remain. For example, in our work we assume the well-posedness of the process associated with the PBSRD model, and finiteness of certain of its spatial moments. It is of great mathematical and practical interest to identify general reaction networks and placement mechanisms for which one can rigorously prove the validity of these assumption. Initial steps towards this direction have been recently taken in  \cite{Nolen2019,C:2023,IMS:2021}. In addition, it would also be of great interest to apply the theoretical results of this paper to study more complex reaction networks as arise in many biological applications. Our numerical results in Section~\ref{S:numerics} demonstrate the theory, but at the same time they also demonstrate that the fluctuation limit is computationally tractable. It can thus be used to aid computational modeling studies, for example being used in uncertainty quantification and to enable more efficient approximation of higher order statistics than direct simulation of the underlying particle model. We plan to explore these directions in future work.

As stating our main result, Theorem~\ref{T:FluctuationsThm}, requires significant notation and a number of assumptions, we begin in Section~\ref{S:summaryexample} by summarizing our results for the special case of the reversible $A + B \leftrightarrows C$ reaction. In Section~\ref{S:GeneralSetup} we then summarize the main notation we will use in describing the weak MVSP representation for the VR PBSRD model. Section~\ref{S:PreliminaryResults} summarizes our earlier work on the large-population mean-field limit from~\cite{IMS:2022}, along with recalling the main assumptions under which we proved the large-population mean-field limit. In Section~\ref{S:mainresult} we present the main result of this work, Theorem~\ref{T:FluctuationsThm}, along with stating the additional assumptions we introduce to prove this theorem. We summarize in Section~\ref{S:SobolevSpace} the weighted Sobolev spaces used to control behavior at infinity, and in which we prove convergence. In Section~\ref{S:numerics} we illustrate how including fluctuation corrections can enable the accurate approximation of higher-order (i.e. non-mean) statistics of an underlying VR PBSRD model. This is demonstrated via comparison of numerical solutions to the SPIDEs of the fluctuation process model and a jump process approximation to the VR PBSRD model~\cite{I:2013,IZ:2018}. Finally, in Section~\ref{S:ProofMainTheorem} we provide the proof of Theorem~\ref{T:FluctuationsThm} via proving uniform spatial moment bounds of the PBSRD model in Section~\ref{S:MomentBounds}, proving tightness in Section~\ref{S:Tightness}, identifying the limiting SPIDEs satisfied by the fluctuation corrections in Section~\ref{S:Identification}, and proving uniqueness of solutions to the limiting equations in Section~\ref{S:Uniqueness}. An appendix proves several results that are used in Section~\ref{S:ProofMainTheorem}.

\section{Summary of fluctuation corrections for the $A + B \leftrightarrows C$ reaction} \label{S:summaryexample}
In order to introduce the main result of this paper, let us first present the result in the special case of the reversible $A + B \leftrightarrows C$ reaction. We first summarize the large population limit derived in~\cite{IMS:2022}, and then we show the fluctuation corrections we obtain in this work.

Let $\gamma$ denote a system size parameter (i.e. Avogadro's number, or in bounded domains the product of Avogadro's number and the domain volume). We assume all molecules move by Brownian Motion in $\R^d$, with species-dependent diffusivities, $\DA$, $\DB$ and $\DC$ respectively. Let $K_1^{\gamma}(x,y) = K_{1}(x,y)/\gamma$ denote the probability per time an individual \textrm{A} molecule at $x$ and \textrm{B} molecule at $y$ can react, with $m_{1}(z|x,y)$ giving the probability density that when the \textrm{A} and \textrm{B} molecules react they produce a \textrm{C} molecule at $z$. We define $K_2^{\gamma}(z) = K_2(z)$ and $m_2(x,y|z)$ similarly for the reverse reaction. Finally, denote by $A(t)$ the stochastic process for the number of species \textrm{A} molecules at time $t$, and label the position of $i$th molecule of species \textrm{A} at time $t$ by the stochastic process $\vQ_i^{A(t)}(t) \subset \R^d$. The generalized stochastic process
\begin{equation*}
A^{\gamma}(x,t) = \frac{1}{\gamma} \sum_{i=1}^{A(t)} \delta\paren{x - \vQ_i^{A(t)}(t)}
\end{equation*}
corresponds to the molar concentration of species \textrm{A} at $x$ at time $t$. We can similarly define $B^{\gamma}(x,t)$ and $C^{\gamma}(x,t)$. In~\cite{IMS:2022} we derived the large population (thermodynamic) limit where $\gamma \to \infty$ and $A^{\gamma}(x,0)$ converges to a well defined limiting molar concentration field (with similar limits for the molar concentrations of species \textrm{B} and \textrm{C}). We proved, in a weak sense, that as $\gamma \to \infty$,
\begin{align*}
 \paren{A^{\gamma}(x,t),B^{\gamma}(x,t),C^{\gamma}(x,t)} \to \paren{A(x,t), B(x,t), C(x,t)},
\end{align*}
where
\begin{align*}
  \partial_t A(x,t) &= \DA \lap A(x,t) - \int_{\R^d} K_1(x,y) A(x,t) B(y,t) \, dy
  + \int_{\R^{2d}} K_2(z) m_2(x,y|z) C(z,t) \,dy \, dz.\\
  \partial_t B(x,t) &= \DB \lap B(x,t) - \int_{\R^d} K_1(y,x) A(y,t) B(x,t) \, dy
  + \int_{\R^{2d}} K_2(z) m_2(y,x|z) C(z,t) \,dy \, dz.\\
  \partial_t C(x,t) &= \DC \lap C(x,t) - K_2(x) C(x,t) + \int_{\R^{2d}} K_1(y,z) m_1(x|y,z) A(y,t) B(z,t) \, dy \, dz.
\end{align*}
Here $(A(x,t),B(x,t),C(x,t))$ corresponds to the deterministic mean-field large-population limit.

In this work we derive the next order correction to these equations, obtaining an approximation as $\gamma \to \infty$ of
\begin{equation*}
A^{\gamma}(x,t) \sim A(x,t) + \frac{1}{\sqrt{\gamma}} \bar{A}(x,t), \quad \gamma \to \infty,
\end{equation*}
with similar equations for $B(x,t)$ and $C(x,t)$. We prove that in an appropriate (weak) sense,
\begin{equation*}
\lim_{\gamma \to \infty} \sqrt{\gamma} \paren{A^{\gamma}(x,t) - A(x,t)} = \bar{A}(x,t).
\end{equation*}
Here the fluctuation correction processes, $(\bar{A}(x,t),\bar{B}(x,t),\bar{C}(x,t))$, satisfy the stochastic PIDEs
\begin{equation*}
  \begin{aligned}
    \partial_t \bar{A}(x,t) &\begin{multlined}[t]
      =\DA \Delta_x \bar{A} + \xi^A_t
    - \int_{\R^d} K_1(x,y) \brac{A(x,t) \bar{B}(y,t) + \bar{A}(x,t) B(y,t)} dy \\
    + \int_{\R^{2d}} K_2(z) m_2(x,y|z) \bar{C}(z,t) \, dy \, dz,
    \end{multlined}\\
    \partial_t \bar{B}(x,t) &\begin{multlined}[t]
      =\DB \Delta_x \bar{B} + \xi^B_t
    - \int_{\R^d} K_1(y,x) \brac{A(y,t) \bar{B}(x,t) + \bar{A}(y,t) B(x,t)} \, dy \\
    + \int_{\R^{2d}} K_2(z) m_2(y,x|z) \bar{C}(z,t) \, dy \, dz,
    \end{multlined}\\
    \partial_t \bar{C}(x,t) &\begin{multlined}[t]
      =\DB \Delta_x \bar{C} + \xi^C_t - K_2(x) \bar{C}(x,t) \\
     + \int_{\R^{2d}} K_1(y,z) m_1(x|y,z) \brac{A(y,t) \bar{B}(z,t) + \bar{A}(y,t)B(z,t)} \, dy \, dz,
    \end{multlined}
  \end{aligned}
\end{equation*}
where the noise processes, $(\xi^A_t,\xi^B_t,\xi^C_t)$, are distribution-valued mean-zero Gaussian processes, specified precisely in Theorem \ref{T:FluctuationsThm}. For test functions $f$ and $g$ they have the covariance structure
\begin{align*}
  \Cov\brac{\la \xi^A_t, f\ra, \la \xi^A_s, g\ra} &\begin{multlined}[t]
    = 2 \DA \int_0^{s \wedge t} \paren{\nabla_x f(x) \cdot \nabla_x g(x)} A(x,s') \, dx \,ds'\\
    \quad + \int_0^{t \wedge s} \int_{\R^d} K_2(z) \paren{\int_{\R^{2d}} f(x) g(x) m_{2}(x,y|z) \, dx \, dy} C(z,s') \, dz \, ds' \\
    + \int_0^{t \wedge s} \int_{\R^{2d}} K_1(x,y) f(x) g(x) A(x,s') B(y,s') \, dx \, dy \, ds',
  \end{multlined}\\
  \Cov\brac{\la \xi^A_t, f\ra, \la \xi^B_s, g\ra} &\begin{multlined}[t]
    = \int_0^{t \wedge s} \int_{\R^d} K_2(z) \paren{\int_{\R^{2d}} f(x) g(y) m_{2}(x,y|z) \, dx \, dy} C(z,s') \, dz \, ds'\\
    +\int_0^{t \wedge s} \int_{\R^{2d}} K_1(x,y) f(x) g(y) A(x,s') B(y,s') \, dx \, dy \, ds',
  \end{multlined}\\
  \Cov\brac{\la \xi^C_t, f\ra, \la \xi^C_s, g\ra} &\begin{multlined}[t]
    = 2 \DC \int_0^{s \wedge t} \paren{\nabla_z f(z) \cdot \nabla_z g(z)} C(z,s') \, dz \,ds'\\
    \quad + \int_0^{t \wedge s} \int_{\R^{2d}} K_1(x,y) \paren{\int_{\R^{d}} f(z) g(z) m_{1}(z |x,y) \, dz} A(x,s') B(y,s') \, dx \, dy \, ds'\\
    + \int_0^{t \wedge s} \int_{\R^{d}} K_2(z) f(z) g(z) C(z,s')  \, dz \, ds',
  \end{multlined}\\
  \Cov\brac{\la \xi^A_t, f\ra, \la \xi^C_s, g\ra} &\begin{multlined}[t]
    = -\int_0^{t \wedge s} \int_{\R^d} K_2(z) \paren{\int_{\R^{2d}} f(x) g(z) m_{2}(x,y|z) \, dx \, dy} C(z,s') \, dz \, ds'\\
    -\int_0^{t \wedge s} \int_{\R^{2d}} K_1(x,y) \paren{\int_{\R^d} f(x) g(z) m_{1}(z|x,y)\, dz} A(x,s') B(y,s') \, dx \, dy \, ds',
  \end{multlined}
\end{align*}
with $\Cov\brac{\la \xi^B_t, f\ra, \la \xi^B_s, g\ra}$ and $\Cov\brac{\la \xi^B_t, f\ra, \la \xi^C_s, g\ra}$ defined analogously. In Section~\ref{S:numerics}, statistics of the PBSRD solution, $(A^\gamma,B^\gamma,C^\gamma)$, the mean-field model solution, $(A,B,C)$, and the fluctuation process approximation, $(A,B,C) + (\bar{A},\bar{B},\bar{C}) /\sqrt{\gamma}$, are compared numerically for specific choices of the reaction kernels, $K_1$ and $K_2$, and the placement kernels, $m_1$ and $m_2$.


\section{Notation and preliminary definitions}\label{S:GeneralSetup}
Our notation is similar to what we previously used in~\cite{IMS:2022}, however, for completeness it is fully described in this section.

We consider a collection of particles with $J$ possible different types. Note, in the following we will interchangeably use particle or molecule, and type or species. Let $\mathscr{S} = \{ S_{1},\cdots, S_{J}\}$ denote the set of different possible particle types, with $p_i \in \mathscr{S} $ the value of the type of the $i$-th particle. In the remainder, we also assume an underlying probability triple, $(\Omega,\filt,\BP)$, on which all random variables are defined.

Molecules are assumed to diffuse freely in $\R^d$, and undergo at most $L$ possible different types of reactions, denoted as $\mathscr{R}_1, \cdots , \mathscr{R}_L$. We describe the
$\mathscr{R}_\ell$th reaction,  $\ell \in \{1,\dots,L\}$, by
\begin{equation*}
\sum_{j = 1}^{J} \alpha_{\ell j}S_j \rightarrow \sum_{j = 1}^{J} \beta_{\ell j}S_j,
\end{equation*}
where we assume the stoichiometric coefficients $\{\alpha_{\ell j}\}_{j=1}^{J}$ and $\{\beta_{\ell j}\}_{j=1}^J$ are non-negative integers. Let $\vec{\alpha}^{(\ell)} = (\alpha_{\ell 1}, \alpha_{\ell 2}, \cdots, \alpha_{\ell J})$ and $\vec{\beta}^{(\ell)} = (\beta_{\ell 1}, \beta_{\ell 2}, \cdots, \beta_{\ell J})$ be multi-index vectors collecting the coefficients of the $\ell$th reaction. We denote the reactant and product orders of the reaction by $|\vec{\alpha}^{(\ell)}|\doteq\sum_{i = 1}^{J} \alpha_{\ell i} \leq 2$ and $|\vec{\beta}^{(\ell)}|\doteq\sum_{j = 1}^{J} \beta_{\ell j}\leq 2$, assuming that at most two reactants and two products participate in any reaction. We therefore implicitly assume all reactions are at most second order. For subsequent notational purposes, we order the reactions such that the first $\tilde{L}$ reactions correspond to those that have no products, i.e. annihilation reactions of the form
\begin{equation*}
\sum_{j = 1}^{J} \alpha_{\ell j}S_j \rightarrow \emptyset,
\end{equation*}
for $\ell \in \{1,\dots,\tilde{L}\}$. We assume the remaining $L - \tilde{L}$ reactions have one or more product particles.

Let $D^i$ label the diffusion coefficient for the $i$th molecule, taking values in $\{D_1, \dots, D_J\}$, where $D_j$ is the diffusion coefficient for species $S_j$, $ j = 1,\cdots, J$. We denote by $Q^{i}_{t}\in \R^d$ the position of the $i$th molecule, $i \in \N_+$, at time $t$. A particle's state can be represented as a vector in $\hat{P}= \mathbb{R}^{d}\times \mathscr{S}$, the combined space encoding particle position and type. This state vector is subsequently denoted by $\hat Q^i_t \Def (Q^{i}_{t},p_i)$.

We now formulate our representation for the (number) concentration, equivalently number density, fields of each species. Let $E$ be a complete metric space and $M(E)$ the collection of measures on $E$. Let $\mathcal{M}(E)$ be the subset of $M(E)$ consisting of all finite, non-negative point measures
\begin{align*}
\mathcal{M}(E)&=\left\{\sum_{i=1}^{N}\delta_{Q^{i}}, N\geq 1, Q^{1},\cdots, Q^{N}\in E\right\}.
\end{align*}

For  $f: E\mapsto \mathbb{R}$ and $\mu\in M(E)$, define
\begin{equation*}
\la f,\mu\ra_{E} = \int_{x \in E}f(x)\mu(d x).
\end{equation*}
We will frequently have $E=\mathbb{R}^{d}$. In this case we omit the subscript $E$ and simply write $\la f,\mu\ra$. For each $t\ge 0$, we define the concentration of particles in the system at time $t$ by the distribution
\begin{equation} \label{eq:densitymeasdef}
\nu_t = \sum_{i=1}^{N(t)}\delta_{\hat Q^i_t} = \sum_{i=1}^{N(t)}\delta_{Q^i_t} \delta_{p_i},
 \end{equation}
where, borrowing notation from \cite{BM:2015}, $N(t) = \la 1, \nu_t\ra_{\hat{P}}$ represents the stochastic process for the total number of particles at time $t$. To investigate the behavior of different types of particles, we denote the marginal distribution on the $j$th type, i.e. the concentration field for species $j$, by
\begin{equation*}
\nu^{j}_{t}(\cdot)= \nu^{}_{t}(\cdot\times \{S_j\}) \in M(\R^{d}),
\end{equation*}
a distribution on $\R^d$. $N_j(t) = \la 1, \nu^j_t\ra$ will label the total number of particles of type $S_j$ at time $t$. Note that in the remainder, in any rigorous calculation $\nu_{t}$ and $\nu_{t}^{j}$ will be measures and treated as such. However, we will abuse notation and also refer to them as concentration fields, i.e. number densities. Strictly speaking, the latter should refer to the densities associated with such measures, but we ignore this distinction in subsequent discussions. For $\nu$ any fixed particle distribution of the form~\eqref{eq:densitymeasdef}, we will also use an alternative representation in terms of the marginal distributions $\nu^j\in \mathcal{M}(\R^d)$ for particles of type $j$,
\begin{equation} \label{eq:densitymeasdefmargrep}
\nu = \sum_{j = 1}^{J} \nu^j\delta_{S_j}  \in \mathcal{M}(\hat{P}).
\end{equation}

In addition to having notations for representing particle concentration fields, we will also often make use of state vectors for the positions of particles of a given type.
  Following the notation established in \cite{BM:2015} (see Section 6.3 therein), we let $\mathbb{N}^{*}=\mathbb{N}\setminus\{0\}$ and let $H=(H^{1},\cdots, H^{k},\cdots ):\mathcal{M}\mapsto \left(\mathbb{R}^{d}\right)^{\mathbb{N}^{*}}$ be defined by
\begin{align}
H\left(\sum_{i=1}^{N}\delta_{Q^{i}}\right)=\left(Q^{\sigma(1)},\cdots, Q^{\sigma(N)}, 0,0,\cdots\right)
\end{align}
where $\sigma$ is a permutation such that $Q^{\sigma(1)}\preceq\cdots\preceq Q^{\sigma(N)}$, arising from an (assumed) fixed underlying ordering on $\mathbb{R}^{d}$. As commented in \cite{BM:2015}, this function $H$ allows us to address a notational issue. In particular, choosing a particle of a certain type uniformly among all particles in $\nu\in\mathcal{M}$ amounts to basically choosing uniformly in the set $\{1,\cdots,\la 1,\nu \ra\}$ and then choosing the individual particle from the arbitrary fixed ordering.

In our specific case, define the particle index maps $\{\sigma_j(k)\}_{k=1}^{N_j(t)}$, which encode a fixed ordering for particles of species $j$, $Q_t^{\sigma_j(1)}\preceq\cdots\preceq Q_t^{\sigma_j(N_j(t))}$, arising from an (assumed) fixed underlying ordering on $\mathbb{R}^{d}$, and we shall consider
\begin{align}
H(\nu^{j}_t)=\left(Q^{\sigma_j(1)}_{t},\cdots, Q^{\sigma_j(N_j(t))}_{t}, 0,0,\cdots\right)
\end{align} 
the position state vector for type $j$ particles, using the same ordering on $\mathbb{R}^{d}$, with $H^i(\nu^j_t)\in \R^d$ labeling the $i$th entry in $H(\nu^j_t)$. Note, as particles of the same type are assumed indistinguishable, there is no ambiguity in the value of $H(\nu_{t}^j)$ in the case that two particles of type $j$ have the same position.

With the preceding definitions, we finally introduce a system of notation to encode reactant and particle positions and configurations that are needed to later specify reaction processes.

\begin{definition}\label{def:reacIndSpace}
  In describing the dynamics of $\nu_t$, we will sample vectors containing the indices of the specific reactant particles participating in a single $\ell$-type reaction from the reactant index space
\begin{equation*}
  \mathbb{I}^{(\ell)} = \left(\N\setminus\{0\}\right)^{|\vec{\alpha}^{(\ell)}|}.
\end{equation*}
For the allowable reactions considered in this work we label the elements of $\mathbb{I}^{(\ell)}$ in a manner that shows which species they belong to:
\begin{enumerate}
  \item For $\mathscr{R}_{\ell}$ of the form $\varnothing \to \cdots$
  \begin{equation*}
    \mathbb{I}^{(\ell)} = \varnothing.
  \end{equation*}
  \item For $\mathscr{R}_{\ell}$ of the form $S_j \to \cdots$
  \begin{equation*}
    \mathbb{I}^{(\ell)} = \{i_1^{(j)} \in \N\setminus\{0\}\}
  \end{equation*}
  \item For $\mathscr{R}_{\ell}$ of the form $S_j + S_k \to \cdots$ with $j < k$
  \begin{equation*}
    \mathbb{I}^{(\ell)} = \{(i_1^{(j)},i_1^{(k)}) \in (\N\setminus\{0\})^2 \}
  \end{equation*}
  \item For $\mathscr{R}_{\ell}$ of the form $2 S_j \to \cdots$
  \begin{equation*}
    \mathbb{I}^{(\ell)} = \{(i_1^{(j)},i_2^{(j)}) \in (\N\setminus\{0\})^2  \}.
  \end{equation*}
\end{enumerate}
\end{definition}
A generic element $\bi \in \mathbb{I}^{(\ell)}$ can then be written as $\bi = (i_1^{(1)},\cdots,i_{\alpha_{\ell 1}}^{(1)},\cdots,i_1^{(J)},\cdots,i_{\alpha_{\ell J}}^{(J)})$.

\begin{definition}\label{def:prodIndSpace}
We define the product index space $\mathbb{J}^{(\ell)}$ analogously to $\mathbb{I}^{(\ell)}$, with $\bj \in \mathbb{J}^{(\ell)}$ given by $\bj = (j_1^{(1)},\cdots,j_{\beta_{\ell 1}}^{(1)},\cdots,j_1^{(J)},\cdots,j_{\beta{\ell J}}^{(J)})$.
\end{definition}

\begin{definition}\label{def:reacPosSpace}
We define the reactant particle position space analogously to $\mathbb{I}^{(\ell)}$,
\begin{equation*}
\mathbb{X}^{(\ell)} = \left(\R^d\right)^{|\vec{\alpha}^{(\ell)}|},
\end{equation*}
with an element $\vx \in \mathbb{X}^{(\ell)}$ written as $\vec{x} = (x^{(1)}_1, \cdots, x^{(1)}_{\alpha_{\ell 1}}, \cdots, x^{(J)}_1, \cdots, x^{(J)}_{\alpha_{\ell J}})$.
For $\vec{x}\in \mathbb{X}^{(\ell)}$ a sampled reactant position configuration for one individual $\mathscr{R}_{\ell}$ reaction, $x_r^{(j)}$ then labels the sampled position for the $r$th reactant particle of species $j$ involved in the reaction. Let $d\vec{x} = \left( \bigwedge_{j = 1}^J (\bigwedge_{r = 1}^{ \alpha_{\ell j}} d x_r^{(j)}) \right) $ be the corresponding volume form on $\mathbb{X}^{(\ell)}$, which also naturally defines an associated Lebesgue measure.
\end{definition}

\begin{definition}\label{def:prodPosSpace}
For reaction $\mathscr{R}_{\ell}$ with $\tilde{L} + 1\leq \ell\leq L$, i.e. having at least one product particle, define the product position space analogously to $\mathbb{X}^{(\ell)}$,
\begin{equation*}
\mathbb{Y}^{(\ell)} = \left(\R^d\right)^{|\vec{\beta}^{(\ell)}|},
\end{equation*}
with an element $\vy \in \mathbb{Y}^{(\ell)}$ written as $\vy = (y^{(1)}_1, \cdots, y^{(1)}_{\beta_{\ell 1}}, \cdots, y^{(J)}_1, \cdots, y^{(J)}_{\beta_{\ell J}})$. Let $d\vec{y} = \left( \bigwedge_{j = 1}^J (\bigwedge_{r = 1}^{ \beta_{\ell j}} d y_r^{(j)}) \right) $ be the corresponding volume form on $\mathbb{Y}^{(\ell)}$, which also naturally defines an associated Lebesgue measure.
\end{definition}

\begin{definition}\label{def:projMapping}
Consider a fixed reaction $\mathscr{R}_{\ell}$, with $\vec{i}\in \mathbb{I}^{(\ell)}$ and $\nu$ corresponding to a fixed particle distribution given by~\eqref{eq:densitymeasdef} with representation~\eqref{eq:densitymeasdefmargrep}. We define the $\ell$th projection mapping $\mathcal{P}^{(\ell)} :   \mathcal{M}(\hat{P})\times  \mathbb{I}^{(\ell)} \rightarrow  \mathbb{X}^{(\ell)}$ as
\begin{equation*}
\mathcal{P}^{(\ell)}(\nu,  \vec{i}) = (H^{i^{(1)}_1}(\nu^1), \cdots, H^{i^{(1)}_{\alpha_{\ell 1}}}(\nu^1) , \cdots , H^{i^{(J)}_1}(\nu^{J}), \cdots, H^{i^{(J)}_{\alpha_{\ell J}}}(\nu^{J})).
\end{equation*}
When reactants with indices $\vec{i}$ in particle distribution $\nu$ are chosen to undergo a reaction of type $\mathscr{\ell}$, $\mathcal{P}^{(\ell)}(\nu,\vec{i})$ then gives the vector of the corresponding reactant particles' positions. For simplicity of notation, in the remainder we will sometimes evaluate $\mathcal{P}^{(\ell)}$ with inconsistent particle distributions and index vectors. In all of these cases the inconsistency will occur in terms that are zero, and hence not matter in any practical way.
\end{definition}

\begin{definition}\label{def:effSamplingSpace}
Consider a fixed reaction $\mathscr{R}_{\ell}$, with $\nu$ a fixed particle distribution given by~\eqref{eq:densitymeasdef} with representation~\eqref{eq:densitymeasdefmargrep}. Using the notation of Def.~\ref{def:reacIndSpace}, we define the allowable reactant index sampling space $\Omega^{(\ell)}(\nu)\subset \mathbb{I}^{(\ell)}$ as
\begin{equation*}
\Omega^{(\ell)}(\nu) = \begin{cases}
\varnothing, & \abs{\vec{\alpha}^{(\ell)}}=0,\\
\{ \vec{i} = i_{1}^{(j)} \in \mathbb{I}^{(\ell)} \, | \, i_1^{(j)} \leq \la 1, \nu^j \ra \}, & \abs{\vec{\alpha}^{(\ell)}}=\alpha_{\ell j} = 1,\\
\{ \vec{i} = (i_{1}^{(j)},i_{2}^{(j)}) \in \mathbb{I}^{(\ell)} \, | \,   i_1^{(j)} < i_{2}^{(j)} \leq \la 1, \nu^j \ra \}, & \abs{\vec{\alpha}^{(\ell)}}=\alpha_{\ell j} = 2,\\
\{ \vec{i} = (i_{1}^{(j)},i_{1}^{(k)})\in \mathbb{I}^{(\ell)} \, | \, i_1^{(j)} \leq \la 1, \nu^j \ra, i_1^{(k)} \leq \la 1, \nu^k \ra \}, & \abs{\vec{\alpha}^{(\ell)}}=2, \quad \alpha_{\ell j} = \alpha_{\ell k} = 1, \quad j < k.
\end{cases}
\end{equation*}
Note that in the calculations that follow $\Omega^{(\ell)}(\nu)$ will change over time due to the fact that $\nu=\nu_{t}$ changes over time, but this will not be explicitly denoted for notational convenience.
\end{definition}

\begin{definition}\label{def:lambda}
Consider a fixed reaction $\mathscr{R}_{\ell}$, with $\nu$ any element of $M(\hat{P})$ having the representation~\eqref{eq:densitymeasdefmargrep}. We define the $\ell$th reactant measure mapping $\lambda^{(\ell)} [\, \cdot \,] : M(\hat{P}) \to M( \mathbb{X}^{(\ell)})$ evaluated at $\vx \in \mathbb{X}^{(\ell)}$ via $\lambda^{(\ell)}[\nu](d\vx) = \otimes_{j = 1}^J(\otimes_{ r = 1}^{\alpha_{\ell j }} \nu^j(dx_r^{(j)})) $.
\end{definition}

\begin{definition}\label{def:offDiagSpace}
For reaction $\mathscr{R}_{\ell}$, define a subspace $\tilde{\mathbb{X}}^{(\ell)}\subset\mathbb{X}^{(\ell)}$ by removing all particle reactant position vectors in $\mathbb{X}^{(\ell)}$ for which two particles of the same species have the same position. That is
\begin{equation*}
\tilde{\mathbb{X}}^{(\ell)} = \mathbb{X}^{(\ell)}\setminus\{ \vec{x} \in\mathbb{X}^{(\ell)} \, | \,  x_r^{(j)}  =  x_{k}^{(j)} \text{ for some } 1\leq j\leq J, 1\leq k \neq r \leq \alpha_{\ell j} \}.
\end{equation*}
\end{definition}


\section{Previous Results and Assumptions for the Mean Field Limit}\label{S:PreliminaryResults}
In this section we review our previous results on the mean field large-population limit from~\cite{IMS:2022} that are relevant to the current work.

Let us consider the time evolution of the processes $\nu^{\vec{{\vec{\zeta}}},j}_t(x) = \sum_{i=1}^{N^{\vec{\zeta}}(t)}\delta_{Q^i_t}(x) 1_{p_i=S_j}$ which give the spatial distribution of particles of type $j$ (i.e. number density or concentration). Here $N^{\vec{\zeta}}(t) = \sum_{j=1}^J N^{\vec{\zeta},j}(t)$ denotes the total number of particles in the system at time $t$, with $N^{\vec{\zeta},j}(t) = \la 1,\nu^{{\vec{\zeta}},j}_{t}\ra$ the number of particles of type $j$ at time $t$. ${\vec{\zeta}}= (\frac{1}{\gamma}, \eta)\in(0,1)^{2}$ is a two-vector consisting of a scaling parameter, $\gamma$, and a displacement range parameter, $\eta$ (which will be explained in Section~\ref{S:PreviousAssumptions}).

In the large population limit we consider $\gamma$ plays the role of a system size, and is considered to be large (e.g. Avogadro's number, or in bounded domains the product of Avogadro's number and the domain volume)~\cite{DK:2015}. On the other hand, $\eta$ is a regularizing parameter allowing us to be able to consider and rigorously handle delta-function placement measures for reaction products (a common choice in many PBSRD simulation methods). We will further clarify these parameters later on,  focusing on the (large-population) limit that $\gamma\to \infty$ and $\eta\to 0$ jointly, denoted as $\vec{\zeta}\to 0$.

\subsection{Generator and process level description}
To formulate the process-level model, it is necessary to specify more concretely the reaction process between individual particles. For reaction $\mathscr{R}_{\ell}$, denote by $K_\ell^{\gamma}(\vec{x})$ the rate (i.e. probability per time) that reactant particles with positions $\vec{x}\in \mathbb{X}^{(\ell)}$ react. As described in the next section, we assume this rate function has a specific scaling dependence on $\gamma$. Let $m^\eta_\ell( \vec{y} \, | \,  \vec{x})$ be the placement measure when the reactants at positions $\vec{x}\in \mathbb{X}^{(\ell)}$ react and generate products at positions $\vec{y}\in \mathbb{Y}^{(\ell)}$. We assume this placement measure depends on the displacement range parameter $\eta$.

To describe a reaction $\mathscr{R}_\ell$ with no products,  i.e. $1\leq \ell \leq \tilde{L}$,  we associate with it a Poisson point measure $dN_\ell(s, \vec{i}, \theta)$  on $\R_+\times \mathbb{I}^{(\ell)} \times\R_+$. Here $\vec{i}\in \mathbb{I}^{(\ell)}$ gives the sampled reactant configuration, with $i^{(j)}_r$ labeling the $r$th sampled index of species $j$. The corresponding intensity measure of $dN_{\ell}$ is given by $d\bar{N}_\ell(s, \vec{i}, \theta)= ds \, \left( \bigwedge_{ j = 1}^J \left( \bigwedge_{r = 1}^{\alpha_{\ell j}}\left(\sum_{k\geq 0} \delta_k( i_{r}^{(j)})\right) \right)\right)\, d\theta$. Analogously, for each reaction $\mathscr{R}_\ell$ with products,  i.e. $\tilde{L} + 1 \leq \ell \leq L$,  we associate with it a Poisson point measure $dN_\ell(s, \vec{i}, \vec{y}, \theta_1, \theta_2)$ on $\R_+\times \mathbb{I}^{(\ell)} \times \mathbb{Y}^{(\ell)} \times\R_+\times\R_+$. Here $\vec{i}\in \mathbb{I}^{(\ell)}$ gives the sampled reactant configuration, with $i^{(j)}_r$ labeling the $r$th sampled index of species $j$. $\vec{y}\in \mathbb{Y}^{(\ell)}$ gives the sampled product configuration, with $y_r^{(j)}$ labeling the sampled position for the $r$th newly created particle of species $j$. The corresponding intensity measure is given by $d\bar{N}_\ell(s, \vec{i}, \vec{y}, \theta_1, \theta_2)= ds \, \left( \bigwedge_{ j = 1}^J \left( \bigwedge_{r = 1}^{\alpha_{\ell j}}\left(\sum_{k\geq 0} \delta_k( i_{r}^{(j)})\right) \right)\right)\,d\vec{y}\, d\theta_1 \, d\theta_2$.

The existence of the Poisson point measure follows as the intensity measure is $\sigma$-finite (see Chapter I - Theorem 8.1 in \cite{NW:2014} or Corollary 9.7 in \cite{K:2001}). Let $d\tilde{N}_{\ell}(s, \vec{i}, \vec{y}, \theta_1, \theta_2) = dN_{\ell}(s, \vec{i}, \vec{y}, \theta_1, \theta_2) - d\bar{N}_{\ell}(s, \vec{i}, \vec{y}, \theta_1, \theta_2)$ be the compensated Poisson measure, for $\tilde{L}+1 \leq \ell \leq L$. For any measurable set $A\in  \mathbb{I}^{(\ell)} \times \mathbb{Y}^{(\ell)} \times\R_+\times\R_+$  such that $\bar{N}_{\ell}(\cdot,A)<\infty$, which is true if for example A is bounded, $N_\ell(\, \cdot \, , A)$ is a Poisson process and $\tilde{N}_\ell(\, \cdot \,, A)$ is a martingale (see Proposition 9.18 in \cite{K:2001}). Similarly, we can define $d\tilde{N}_{\ell}(s, \vec{i}, \theta) = dN_{\ell}(s, \vec{i}, \theta) - d\bar{N}_{\ell}(s, \vec{i}, \theta)$, for $1\leq \ell \leq \tilde{L}$. In this case, given any measurable set $A\in  \mathbb{I}^{(\ell)} \times\R_+$ such that $\bar{N}_{\ell}(\cdot,A) < \infty$, we then have that $N_\ell(\, \cdot \, , A)$ is a Poisson process and $\tilde{N}_\ell(\, \cdot \,, A)$ is a martingale.

With the preceding definitions, we can formulate a weak representation for the dynamics of $\nu_t^{\vec{\zeta}}$ and the concentration (i.e. number density) fields for each species, $\nu_t^{{\zeta},j}$, see~\cite{IMS:2022} for the resulting equations.  Note, here by weak representation we mean that the time evolution of $\mu^{{\vec{\zeta}}}_{t}$ is given in terms of pairings with appropriate test functions, see (\ref{Eq:EM_j_formula}). In this work we only require a weak representation for the time evolution of the scaled empirical measures, i.e. the molar concentration field for species $j$, $\mu^{{\vec{\zeta}}, j}_{t}=\frac{1}{\gamma}\nu^{{\vec{\zeta}}, j}_{t}$ with $j =1, \cdots, J$. Note, with this definition $\mu^{{\vec{\zeta}}}_{t}=\frac{1}{\gamma}\nu^{{\vec{\zeta}}}_{t}= \sum_{j = 1}^J\mu^{{\vec{\zeta}}, j}_{t}\delta_{S_j} $.

Let us denote $\{W^{n, j}_t\}_{n\in \N_+}$ as a countable collection of standard independent Brownian motions in $\R^d$ for species $j$, $j = 1, 2, \cdots, J$.
We can write the marginal distribution (molar concentration) of species $j$ as
\begin{equation*}
\mu_t^{{\vec{\zeta}}, j}(dx) = \frac{1}{{\gamma}}\sum_{i = 1}^{{\gamma}\la 1, \mu^{{\vec{\zeta}}, j}_t\ra} \delta_{H^i({\gamma}\mu_t^{{\vec{\zeta}}, j})}(dx), \quad j \in \{1,\dots,J\},
\end{equation*}
which, for $f\in C^{2}_{b}(\R^{d})$, satisfies the coupled system
\begin{equation}\label{Eq:EM_j_formula}
\begin{aligned}
\la f,\mu^{{\vec{\zeta}}, j}_{t}\ra
&
=\la f,\mu^{{\vec{\zeta}}, j}_{0}\ra + \frac{1}{{\gamma}}\sum_{i\geq 1}\int_{0}^{t}1_{\{i\leq \gamma \la 1, \mu_{s-}^{{\vec{\zeta}}, j}\ra\}}\sqrt{2D_{j}}\frac{\partial f}{\partial Q}(H^i(\gamma\mu_{s-}^{{\vec{\zeta}}, j}))dW^{i, j}_{s}
+\frac{1}{\gamma}\int_{0}^{t}\sum_{i=1}^{\gamma\la 1, \mu_{s-}^{{\vec{\zeta}}, j}\ra} D_{j}\frac{\partial^{2} f}{\partial Q^{2}}(H^i(\gamma\mu_{s-}^{{\vec{\zeta}}, j}))ds\\
&
\quad-\frac{1}{\gamma}\sum_{\ell = 1}^{\tilde{L}} \int_{0}^{t}\int_{\mathbb{I}^{(\ell)}} \int_{\mathbb{R}_{+}}\la f, \sum_{r = 1}^{\alpha_{\ell j}} \delta_{H^{i_r^{(j)}}(\gamma\mu^{{\vec{\zeta}},j}_{s-})}  \ra 1_{\{\vec{i} \in \Omega^{(\ell)}(\gamma\mu^{\vec{\zeta}}_{s-})\}} 1_{ \{ \theta \leq K_\ell^{\gamma}\left(\mathcal{P}^{(\ell)}(\gamma\mu_{s-}^{\vec{\zeta}}, \vec{i}) \right) \}}  dN_{\ell}(s,\vec{i}, \theta)\\
&
\quad+\frac{1}{\gamma}\sum_{\ell = \tilde{L}+1}^L \int_{0}^{t}\int_{\mathbb{I}^{(\ell)}} \int_{\mathbb{Y}^{(\ell)}}\int_{\mathbb{R}_{+}^2}\la f, \sum_{r = 1}^{\beta_{\ell j}} \delta_{y_r^{(j)}} - \sum_{r = 1}^{\alpha_{\ell j}} \delta_{H^{i_r^{(j)}}(\gamma\mu^{{\vec{\zeta}},j}_{s-})} \ra
1_{\{\vec{i} \in \Omega^{(\ell)}(\gamma\mu^{\vec{\zeta}}_{s-})\}}  \\
&
\qquad\qquad\qquad\qquad\times 1_{ \{ \theta_1 \leq K_\ell^{\gamma}\left(\mathcal{P}^{(\ell)}(\gamma\mu_{s-}^{\vec{\zeta}}, \vec{i}) \right) \}}  1_{ \{ \theta_2 \leq  m^\eta_\ell\left(\vec{y} \,  | \, \mathcal{P}^{(\ell)}(\gamma\mu_{s-}^{\vec{\zeta}}, \vec{i}) \right) \}}  dN_{\ell}(s,\vec{i}, \vec{y},\theta_1, \theta_2),
\end{aligned}
\end{equation}
for $j \in \{1,\dots,J\}$.

In this formulation, one important fact is that for fixed ${\vec{\zeta}}$, $\gamma\la 1,\mu^{{\vec{\zeta}}, j}_{s-}\ra$ is assumed finite, see Assumption~\ref{Assume:kernalBdd} in the next section, which provides exchangeability of the sum and Lebesgue integral. It also implies that the stochastic integrals with respect to Brownian motions in~\eqref{Eq:EM_j_formula} are martingales (for a fixed ${\vec{\zeta}}$).

Relation (\ref{Eq:EM_j_formula}) captures the dynamics of our particle system. We refer the interested reader to Remark 5.3 of \cite{IMS:2022} for a discussion on well posedness of (\ref{Eq:EM_j_formula}) and note that for the purposes of this paper, we assume that we work with reaction networks for which well posedness of (\ref{Eq:EM_j_formula}) holds. We also refer the interested reader to the very recent article \cite{C:2023}, where well-posedness of systems related to (\ref{Eq:EM_j_formula}) is proven.


\subsection{Assumptions for the Mean Field Limit}\label{S:PreviousAssumptions}
In this section we summarize the assumptions we previously used in proving the mean field large population limit~\cite{IMS:2022}. We will assume they hold in the remainder in studying the fluctuation corrections.

\begin{assumption}\label{Assume:kernalBdd0}
  We assume that for all $1\leq \ell \leq L$, the reaction rate kernel $K_\ell(\vec{x})$ is uniformly bounded for all $\vec{x}\in \mathbb{X}^{(\ell)}$. We denote generic constants that depend on this bound by $C(K)$.
  \end{assumption}

  \begin{assumption}\label{Assume:measureP}
  We assume that for any $\eta \geq 0$, $\tilde{L} + 1\leq \ell \leq L$, $\vec{y}\in \mathbb{Y}^{(\ell)}$ and $\vec{x}\in \mathbb{X}^{(\ell)}$,  the placement density $m^\eta_{\ell}(\vec{y} \, | \, \vec{x})$ is uniformly bounded in $\vec{x}$ and $\vec{y}$, and a  probability density in $\vec{y}$, i.e. $\int_{\mathbb{Y}^{(\ell)}} m^\eta_{\ell}(\vec{y} \, | \, \vec{x})\, d\vec{y}= 1$.
  \end{assumption}

  We want to allow for placement densities involving delta-functions. To do so in a mathematically rigorous way we introduced the displacement (i.e. smoothing) parameter $\eta$, through which we can define a corresponding mollifier in a standard way, as given by Definition~\eqref{def:molifier} below. This is needed for~\eqref{Eq:EM_j_formula} to be well-defined, since expressions like $\{\theta_2\leq m^{\eta}_\ell\left(\vec{y} \,  | \, \mathcal{P}^{(\ell)}(\nu_{s-}^{\vec{\zeta}}, \vec{i}) \right)\}$ are nonsensical when $\eta=0$ and the placement density is a Dirac delta function.
  \begin{definition}\label{def:molifier}
  For $x\in\R^d$, let $G(x)$ denote a standard positive mollifier and $G_\eta(x) = \eta^{-d}G(x/\eta)$. That is, $G(x)$ is a smooth function on $\R^d$ satisfying the following four requirements
  \begin{enumerate}
  \item $G(x)\geq 0$,
  \item $G(x)$ is compactly supported in $B(0, 1)$, the unit ball in $\R^d$,
  \item $\int_{\R^d} G(x)\, dx = 1$,
  \item $\lim_{\eta\to 0} G_\eta(x) = \lim_{\eta\to 0} \eta^{-d}G(x/\eta) = \delta_0(x) $, where $\delta_0(x)$ is the Dirac delta function and the limit is taken in the space of Schwartz distributions.
  \end{enumerate}
  \end{definition}

  The allowable forms of the placement density for each possible reaction are given by Assumptions \ref{Assume:measureOne2One}-\ref{Assume:measureOne2Two}:
  \begin{assumption}\label{Assume:measureOne2One}
  If $\mathscr{R}_{\ell}$ is a first order reaction of the form $S_i \rightarrow S_j$, we assume that the placement density  $m^\eta_{\ell}(y\,|\, x)$ takes the mollified form of
  $$m^\eta_{\ell}(y\, |\, x) =G_\eta(y-x).$$
  \end{assumption}
  Note that its distributional limit as $\eta \to 0$ is given by
  $$m_{\ell}(y\,|\, x) = \delta_x(y).$$
  This describes that the newly created $S_j$ particle is placed  at the position of the reactant $S_i$ particle.

  \begin{assumption}\label{Assume:measureTwo2One}
  If $\mathscr{R}_{\ell}$ is a second order reaction of the form $S_i + S_k \rightarrow S_j$, we assume that the binding placement density  $m_{\ell}(z\, |\, x,y)$ takes the mollified form of
  $$m^\eta_{\ell}(z\,|\,x,y) =\sum_{i=1}^{I}p_i \times G_\eta\left(z-(\alpha_i x +(1-\alpha_i)y)\right).$$
  \end{assumption}
  Note that its distributional limit as $\eta \to 0$ is given by
   $$m_{\ell}(z\,|\,x,y) = \sum_{i=1}^{I}p_i \times \delta\left(z-(\alpha_i x +(1-\alpha_i)y)\right),$$
    where $I$ is a fixed finite integer and $\sum_i p_i = 1$. This describes that the creation of particle $S_j$ is always on the segment connecting the reactant $S_i$ and reactant $S_k$ particles, but allows some random choice of position. A special case would be $I = 2$, $p_i = \tfrac{1}{2}$, $\alpha_1 = 0$ and $\alpha_2 = 1$, which corresponds to placing the particle randomly at the position of one of the two reactants. One common choice is taking $I = 1$, $p_1 = 1$ and choosing $\alpha_1$ to be the diffusion weighted center of mass~\cite{IZ:2018}.

  \begin{assumption}\label{Assume:measureTwo2Two}
  If $\mathscr{R}_{\ell}$ is a second order reaction of the form $S_i + S_k \rightarrow S_j + S_r$, we assume that the placement density  $m_\ell(z, w \, | \, x, y)$ takes the mollified form of
  $$m^\eta_\ell(z, w \, | \, x, y) = p\times G_\eta\left(x-z\right)\otimes G_\eta\left(y-w\right)   + (1-p)\times G_\eta\left(x-w\right)\otimes G_\eta\left(y-z\right).$$
  \end{assumption}
  Note that its distributional limit as $\eta \to 0$ is given by
  $$m_\ell(z, w \, | \, x, y) = p\times\delta_{(x, y)}\left((z, w)\right)  + (1-p)\times\delta_{(x, y)}\left((w, z)\right).$$
  This describes that newly created product $S_j$ and $S_r$ particles are always at the positions of the reactant $S_i$ and $S_k$ particles. $p$ is typically either $0$ or $1$, depending on the underlying physics of the reaction.

  \begin{assumption}\label{Assume:measureOne2Two}
  If $\mathscr{R}_{\ell}$ is a first order reaction of the form $S_i \rightarrow S_j + S_k$, we assume the unbinding displacement density is in the mollified form of $$m^\eta_{\ell}(x,y\,|\,z) = \rho(|x-y|) \sum_{i=1}^{I}p_i\times G_\eta\left(z-(\alpha_i x +(1-\alpha_i)y)\right),$$
       with $\sum_i p_i = 1$. Here we assume the relative separation of the product $S_j$ and $S_k$ particles, $\abs{x -y}$, is sampled from the probability density $\rho(|x-y|)$. Their (weighted) center of mass is sampled from the density encoded by the sum of $\delta$ functions. Such forms are common for detailed balance preserving reversible bimolecular reactions~\cite{IZ:2018}.
  \end{assumption}
   Note that the distributional limit of $m^\eta_{\ell}(x,y\,|\,z)$ as $\eta \to 0$ is given by
   $$m_{\ell}(x,y\,|\,z) = \rho(|x-y|) \sum_{i=1}^{I}p_i\times\delta\left(z-(\alpha_i x +(1-\alpha_i)y)\right).$$

  We further assume some regularity of the separation placement density, $\rho(r)$, introduced in Assumption~\ref{Assume:measureOne2Two}:
  \begin{assumption}\label{Assume:measureMrho}
  For Assumption~\ref{Assume:measureP} to be true, we'll need that the probability density $\rho$ is normalized, i.e.
  $$\int_{\R^d} \rho(|w|)\, dw = 1.$$
  In the remainder we abuse notation, and write  $\rho\in L^{1}(\mathbb{R}^{d})$ to mean $\rho(\abs{\cdot}) \in L^1(\R^d)$.
  \end{assumption}
  Though $m_{\ell}^{\eta}(x,y\,|\,z)$ is not a direct mollification of $m_{{\ell}}(x,y\,|\,z)$, with this assumption it is still properly normalized with respect to $x$ and $y$ as
   \begin{align*}
   \int_{\R^{2d}} m^\eta_{\ell}(x,y\,|\,z) \, dx \, dy = \sum_{i=1}^{I} p_{i} \int_{\R^{2d}} \rho(\abs{w}) \delta(z - y - \alpha_{i} w) \, dw \, dy = 1.
   \end{align*}

  Finally, to study the large-population limit of the population density measures, we must specify how the reaction kernels depend on the scaling parameter (i.e. system size parameter) $\gamma$. Motivated by the classical spatially homogeneous reaction network large-population limit~\cite{DK:2015}, we choose
  \begin{assumption}\label{Assume:rescaling}
  The reaction kernel is assumed to have the explicit $\gamma$ dependence that
  \begin{equation*}
  K_\ell^{\gamma}(\vec{x}) = \gamma^{1 - |\vec{\alpha}^{(\ell)}|}K_\ell(\vec{x})
  \end{equation*}
  for any $\vec{x}\in \mathbb{X}^{(\ell)}$, $1\leq \ell \leq L$.
  \end{assumption}
  Recall that $|\vec{\alpha}^{(\ell)}|$ represents the number of reactant particles needed for the $\ell$-th reaction.
  As we assume $\abs{\vec{\alpha}}^{\ell} \leq 2$, we obtain three scalings for the three allowable reaction orders:
  \begin{itemize}
  \item $|\vec{\alpha}^{(\ell)}| = 0$ corresponds to a pure birth reaction. By Assumption \ref{Assume:rescaling}, the scaling is $\gamma$; i.e. a larger system size implies more births. In a well-mixed model this would imply that as  $\gamma$ and the initial number of molecules are increased, we maintain a fixed rate \emph{with units of molar concentration per time} for the birth reaction to occur.
  \item $|\vec{\alpha}^{(\ell)}| = 1$ corresponds to a unimolecular reaction. By Assumption \ref{Assume:rescaling}, there's no rescaling as it's linear. We assume the rates of first order reactions are internal processes to particles, and as such independent of the system size.
  \item $|\vec{\alpha}^{(\ell)}| = 2$ corresponds to a bimolecular reaction. By Assumption \ref{Assume:rescaling}, the scaling of reaction kernel is $\gamma^{-1}$. As the system size increases it is harder for two \emph{individual} reactant particles to encounter each other and react.
  \end{itemize}
  See~\cite{IMS:2022} for a more detailed discussion on how such scalings could arise.

  Finally, we made two assumptions on the molar concentration fields, $\mu^j_t$, in~\cite{IMS:2022},
  requiring boundedness of the amount of particles in the system and convergence of the initial conditions in the large-population limit.
  \begin{assumption}\label{Assume:kernalBdd}  for all $t<\infty$
    We assume that the total (molar) population concentration satisfies $\sum_{j = 1}^J \la 1, \mu_t^{{\vec{\zeta}}, j} \ra \leq C_{\circ} < \infty$ for all $t<\infty$, i.e. are uniformly bounded in time by some deterministic constant  $C_{\circ}$. In the remainder we abuse notation and also denote generic constants that depend on this bound by $C_{\circ}$.
  \end{assumption}
  \begin{assumption}\label{Assume:initial}
    We assume that the initial distribution $\mu_0^{{\vec{\zeta}}, j}\to \xi_0^j$ weakly as $\vec{\zeta}\to 0$, where $\xi_0^j$ is a compactly supported measure with finite mass, for all $1\leq j\leq J$.
  \end{assumption}

\subsection{Mean Field Limit}
In \cite{IMS:2022}, we proved the following mean field limit result.  Let $M_F(\mathbb{R}^d)$ be the space of finite measures endowed with the weak topology and $\mathbb{D}_{M_F(\mathbb{R}^d)}[0, T]$ be the space of cadlag paths with values in $M_F(\mathbb{R}^d)$ endowed with Skorokhod topology.

\begin{theorem}{(Mean field large-population limit)} \label{thm:convergence}
Given Assumptions~\ref{Assume:kernalBdd0}-\ref{Assume:initial}, the sequence of measure-valued processes $\{(\mu^{{\vec{\zeta}}, 1}_t,\cdots,\mu^{{\vec{\zeta}}, J}_t)\}_{t\in [0, T]}\in \mathbb{D}_{\otimes_{j=1}^{J}M_F(\mathbb{R}^d)}([0, T])$ is relatively compact in $\mathbb{D}_{\otimes_{j=1}^{J}M_F(\mathbb{R}^d)}([0, T])$ for each $j= 1, 2,\cdots, J$. It converges in distribution to $\{(\bar{\mu}^1_t,\cdots,\bar{\mu}^{J}_{t})\}_{t\in [0, T]} \in C_{\otimes_{j=1}^{J} M_F(\mathbb{R}^d)}([0, T])$ as $\vec{\zeta}\to 0$, respectively being with $j=1,\cdots, J$ the unique solution to the system
\begin{align}
\la f,\bar{\mu}^j_{t}\ra
&
=\la f,\bar{\mu}^{ j}_{0}\ra + \int_{0}^{t} \la (\mathcal{L}_j f)(x), \bar{\mu}_{s}^{ j}(dx) \ra ds\nonumber\\
&
\quad-\sum_{\ell = 1}^{\tilde{L}} \int_{0}^{t} \int_{\tilde{\mathbb{X}}^{(\ell)}}     \frac{1}{\vec{\alpha}^{(\ell)}!}   K_\ell\left(\vec{x}\right)  \left(  \sum_{r = 1}^{\alpha_{\ell j}} f(x_r^{(j)}) \right)\,\lambda^{(\ell)}[\bar{\mu}_{s}](d\vec{x}) \, ds\nonumber\\
&
\quad+\sum_{\ell = \tilde{L} + 1}^L \int_{0}^{t} \int_{\tilde{\mathbb{X}}^{(\ell)}}     \frac{1}{\vec{\alpha}^{(\ell)}!}   K_\ell\left(\vec{x}\right) \left(\int_{\mathbb{Y}^{(\ell)}}    \left( \sum_{r = 1}^{\beta_{\ell j}} f(y_r^{(j)}) \right) m_\ell\left(\vec{y} \, |\, \vec{x} \right)\, d \vec{y} - \sum_{r = 1}^{\alpha_{\ell j}} f(x_r^{(j)}) \right)\,\lambda^{(\ell)}[\bar{\mu}_{s}](d\vec{x}) \, ds.\label{Eq:Limit_EM_formula2}
\end{align}
for each $f\in C^{2}_{b}(\R^{d})$.  Here $\mathcal{L}_j := D_j \Delta_x$ denotes the diffusion operator associated with a particle of type $j$.
\end{theorem}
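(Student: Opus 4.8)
The plan is to run the classical Stroock--Varadhan martingale programme on the weak representation~\eqref{Eq:EM_j_formula}, in four stages: (i) uniform a priori bounds yielding compact containment; (ii) tightness of $\{(\mu^{\vec{\zeta},1}_t,\dots,\mu^{\vec{\zeta},J}_t)\}$ in $\mathbb{D}_{\otimes_j M_F(\R^d)}([0,T])$; (iii) identification of every subsequential limit as a solution of~\eqref{Eq:Limit_EM_formula2}; (iv) uniqueness for~\eqref{Eq:Limit_EM_formula2}. Relative compactness together with a unique limit point then upgrades subsequential convergence to convergence in distribution of the whole sequence. One works species by species but must keep the full coupled system in view, since the bimolecular terms mix the different $\mu^{\vec{\zeta},j}$.

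\textbf{A priori bounds and tightness.} By Assumption~\ref{Assume:kernalBdd} the total mass $\sum_j\la 1,\mu^{\vec{\zeta},j}_t\ra$ is bounded by $C(\mu)$ uniformly in $t$ and $\vec{\zeta}$. For compactness in the weak topology of $M_F(\R^d)$ one also needs uniform control of mass near infinity, which I would obtain by testing~\eqref{Eq:EM_j_formula} against a suitable family of weight functions $\phi$ that grow to infinity but are integrable against the placement densities of Assumptions~\ref{Assume:measureOne2One}--\ref{Assume:measureMrho}: the diffusion generator then contributes an $O(1)$ term, the reaction terms merely redistribute finite mass locally (to within an $O(\eta)$ neighbourhood of, or on segments between, the reactant positions, or at separations drawn from $\rho$), and with Assumptions~\ref{Assume:kernalBdd0} and~\ref{Assume:kernalBdd} a Gronwall estimate gives $\sup_{\vec{\zeta}}\sup_{t\le T}\E\la\phi,\mu^{\vec{\zeta},j}_t\ra<\infty$, hence (with Assumption~\ref{Assume:initial}) compact containment. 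For tightness I would invoke a standard criterion for measure-valued processes, reducing the problem to tightness in $\mathbb{D}_\R([0,T])$ of $\la f,\mu^{\vec{\zeta},j}_t\ra$ for $f$ in a countable dense family in $C^2_c(\R^d)$. For such $f$ the Aldous--Rebolledo criterion applies: the finite-variation part of~\eqref{Eq:EM_j_formula} has increments controlled uniformly by $C(K)$, $C(\mu)$ and $\|f\|_{C^2}$, while the martingale part --- the Brownian stochastic integral together with the compensated Poisson integrals --- has predictable quadratic variation of order $1/\gamma$ (each term carries a $1/\gamma^2$ prefactor against at most $\gamma C(\mu)$ summands), hence vanishes as $\vec{\zeta}\to0$.

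\textbf{Identification of the limit.} Along a subsequence with $(\mu^{\vec{\zeta},1},\dots,\mu^{\vec{\zeta},J})\Rightarrow(\bar{\mu}^1,\dots,\bar{\mu}^J)$, the limit has continuous paths since the jumps are $O(1/\gamma)$. I would then pass to the limit in each term of~\eqref{Eq:EM_j_formula} tested against $f\in C^2_c$: the martingale terms drop out by the quadratic variation bound above; the diffusion drift converges by continuity of $x\mapsto D_j\Delta f(x)$ and compact containment. For the reaction terms I would first replace each Poisson integral by its compensator plus a compensated martingale (which again vanishes), leaving the intensity integrals; the scaling Assumption~\ref{Assume:rescaling} makes the $\gamma^{-1}$ prefactor times $K_\ell^\gamma=\gamma^{1-|\vec{\alpha}^{(\ell)}|}K_\ell$ combine with the sum over index tuples in $\Omega^{(\ell)}$ to reproduce exactly an iterated integral of $K_\ell(\vec{x})(\cdots)$ against $\lambda^{(\ell)}[\mu^{\vec{\zeta}}_{s-}]$ (Definition~\ref{def:lambda}), and boundedness and continuity of $K_\ell$ and $f$ then let the weak limit pass inside, producing $\lambda^{(\ell)}[\bar{\mu}_s]$. \textbf{The main obstacle} lies in two features of the bimolecular terms. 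First, for $2S_j$-type reactions the admissible index set enforces $i_1<i_2$, so one must separately show the ``diagonal'' self-interaction correction is $O(1/\gamma)$ and then match the limiting integral to the off-diagonal domain $\tilde{\mathbb{X}}^{(\ell)}$ of Definition~\ref{def:offDiagSpace}, which uses that the limiting marginals $\bar{\mu}^j_s$ carry no atoms for a.e.\ $s$ --- a consequence of the smoothing of the heat semigroup since the reactions only redistribute finite mass. Second, the product-placement terms must be carried through the joint limit $\eta\to0$, using that $m^\eta_\ell\to m_\ell$ in the sense of Assumptions~\ref{Assume:measureOne2One}--\ref{Assume:measureOne2Two}, i.e.\ via the convolution identities $\int f(y)\,m^\eta_\ell(\vec{y}\mid\vec{x})\,d\vec{y}\to\int f(y)\,m_\ell(\vec{y}\mid\vec{x})\,d\vec{y}$ for bounded continuous $f$, with enough uniformity in $\vec{x}$ to combine with $\gamma\to\infty$. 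Collecting the surviving terms shows $\bar{\mu}$ satisfies~\eqref{Eq:Limit_EM_formula2}.

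\textbf{Uniqueness and conclusion.} Finally I would show~\eqref{Eq:Limit_EM_formula2} has at most one solution in $C_{\otimes_j M_F(\R^d)}([0,T])$ by rewriting it in mild form through the heat semigroups $e^{s\mathcal{L}_j}$ (contractions on $M(\R^d)$ in total variation), subtracting two solutions $\bar{\mu},\bar{\nu}$ with common initial data, and estimating $\sum_j\sup_{r\le t}\|\bar{\mu}^j_r-\bar{\nu}^j_r\|_{\mathrm{TV}}$: although the reaction functionals are quadratic in the measures, on the ball of total mass $\le C(\mu)$ they are Lipschitz with constant controlled by $C(K)$ (writing, e.g., $\bar{\mu}^i\otimes\bar{\mu}^k-\bar{\nu}^i\otimes\bar{\nu}^k=(\bar{\mu}^i-\bar{\nu}^i)\otimes\bar{\mu}^k+\bar{\nu}^i\otimes(\bar{\mu}^k-\bar{\nu}^k)$), so Gronwall forces $\bar{\mu}=\bar{\nu}$. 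Together with the relative compactness from stage (ii) and the continuity of limiting paths noted above, this gives convergence in distribution of the full sequence to the unique solution of~\eqref{Eq:Limit_EM_formula2} in $C_{\otimes_j M_F(\R^d)}([0,T])$.
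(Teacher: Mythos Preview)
The paper does not prove this theorem here: Theorem~\ref{thm:convergence} is stated in Section~\ref{S:PreliminaryResults} as a summary of prior work, with the proof given in the authors' earlier paper~\cite{IMS:2022}. The introduction explicitly describes that proof as ``adapt[ing] the classical Stroock--Varadhan Martingale approach to calculating mean-field limits,'' which is exactly the four-stage programme (compact containment and tightness via Aldous--Rebolledo, vanishing of the $O(1/\gamma)$ martingale part, passage to the limit in the compensated reaction terms with the $\eta\to 0$ mollifier limit handled separately, and uniqueness via a Gronwall argument on the mild formulation) that you outline. Your identification of the two delicate points --- the diagonal correction for $2S_j$ reactions and the uniformity needed to interchange the $\eta\to 0$ and $\gamma\to\infty$ limits in the placement densities --- matches the issues flagged in~\cite{IMS:2022} (cf.\ their Lemmas~B.1--B.2, which are invoked in the present paper in the proofs of Lemmas~\ref{L:MomentBound} and~\ref{lem:DiffPlaceMeas}). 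So your proposal is on target and essentially reconstructs the approach of the cited proof; there is nothing further to compare against in this paper itself.
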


 Note that since the limit (\ref{Eq:Limit_EM_formula2}) is deterministic, we have actually established convergence in probability since weak convergence to constants implies convergence in probability.

 In \cite{IMS:2021} assumptions on the structure of the chemical-reaction network are posed under which $\bar{\mu}^{j}$ has a density that is well-defined either locally or globally in time. For example, global well-posedness can be shown at least in the cases of $A+B\rightleftarrows C+D$ and $A+B\rightleftarrows C$ (the latter under specific choices for the placement measures), see \cite{IMS:2021}. In general though, only local in time well-posedness can be claimed, see \cite{IMS:2021}. 
\section{Main result: fluctuation theorem} \label{S:mainresult}
In this work, we aim to prove a central limit theorem result for the  particle-based stochastic reaction diffusion (PBSRD) model. We define the fluctuation process by
\begin{equation*}
\Xi^{\vec{\zeta}, j}_t = \sqrt{\gamma} (\mu^{{\vec{\zeta}}, j}_t - \bar{\mu}^j_t), \quad j =1, \cdots, J.
\end{equation*}
In Theorem \ref{T:FluctuationsThm} we show that the signed measure-valued process $\left\{\paren{\Xi^{\vec{\zeta}, 1}_t, \cdots,  \Xi^{\vec{\zeta}, J}_t}, t\in[0,T]\right\}_{\vec{\zeta}\in (0,1)^{2}}$ converges in law to a limit point $\{\paren{\bar{\Xi}^1_t, \cdots, \bar{\Xi}^J_t}, t\in[0, T]\}$ as $\vec{\zeta} \to 0$ in the appropriate space.

As is commonly found for problems involving fluctuation analysis of interacting particle systems, weighted Sobolev spaces are needed to control behavior of the fluctuation process as $\vx \to \infty$. Such spaces have been previously used to study central limit theorems of mean field systems in a variety of studies, include~\cite{Meleard}, \cite{KurtzXiong} and~\cite{FluctuationSpiliopoulosSirignanoGiesecke}. The weighted spaces introduced in~\cite{Meleard}, denoted by $W^{\Gamma,a}_{0}=W^{\Gamma,a}_{0}(\mathbb{R}^{d})$, are what is needed for our analysis. We subsequently denote by $W^{-\Gamma,a}=W^{-\Gamma,a}(\mathbb{R}^{d})$ the dual space to $W^{\Gamma,a}_{0}$. In Section~\ref{S:SobolevSpace} we review the precise definitions and key properties of such spaces.

A number of constants will appear in the sequel. For bookkeeping purposes and for easier reference to the reader we have gathered them and their relations in Assumption \ref{A:SobolevSpaceParameters} below.
\begin{assumption} \label{A:SobolevSpaceParameters}
  In the remainder we will make use of the non-negative integer parameters $\{D,\Gamma,\Gamma_1,a,b\}$ in specifying weighted Sobolev spaces. These parameters are chosen to satisfy the following constraints:
  \begin{enumerate}
    \item $D = 1 + \ceil*{d/2}$
    \item $\Gamma \geq 2D + 2$
    \item $\Gamma_1 = \Gamma - 1 \geq 2D + 1$
    \item $a \geq D$
    \item $b > \max\left\{a+\frac{d}{2}, 2D\right\}$
  \end{enumerate}
\end{assumption}

In order to prove the fluctuations theorem, we make a few additional assumptions:
\begin{assumption}\label{A:AssumptionParameterRelativeRates}
We assume the large population limit $\gamma\to \infty$ is taken such that $\sqrt{\gamma}\eta\to 0$. As such $\eta \to 0$ simultaneously and we write $\vec{\zeta} \to 0$ for the dual limit.
\end{assumption}

\begin{assumption}\label{A:AssumptionRho}
For the density $\rho$, in addition to Assumption~\ref{Assume:measureMrho} we assume $\int_{\mathbb{R}^{d}}|w|^{8D}\rho(w)dw<\infty$.
\end{assumption}

\begin{assumption}\label{A:AssumptionK}
For $\Gamma=2D+2$, we assume that $K_{\ell}\in C^{\Gamma}(\mathbb{X}^{(\ell)})$, that $\max_{j\leq\Gamma}\|\partial_{x}^{(j)}K_{\ell}\|_{\infty}<\infty$, and that $K_{\ell}(\vx)$ is symmetric with respect to permutations in the ordering of components of $\vx$ that correspond to the same species (e.g. to interchanging $x_k^{(j)}$ with $x_{k'}^{(j)}$).
\end{assumption}

Note that this assumption means our results do not rigorously encompass the commonly used Doi bimolecular reaction model for an $\textrm{A} + \textrm{B} \to \textrm{C}$ reaction, in which an \textrm{A} at $x$ and \textrm{B} at $y$ would react with probability per time $\lambda$ when separated by $\varepsilon$ or less, i.e. $K(x,y) = \lambda \ind_{\brac{0,\varepsilon}}(\abs{x-y})$. Similar to how we have introduced $\eta$ as a regularization of the $\delta$-functions that arise in the particle placement densities and then incorporated the limit $\eta \to 0$ in our analysis, we could have introduced a second regularization parameter into the Doi rate function $K(x,y)$. For simplicity of exposition we have chosen to ignore this additional complication, but note that our results would hold for any mollification of such a Doi kernel. We expect these smoothness requirements are an artifact of our method of proof, and not intrinsic to the existence of a limiting fluctuation process correction.

\begin{assumption}\label{A:AssumptionInitialCondition}
Let $\Gamma=2D+2$ and $a=D$. We assume that $\sup_{\vec{\zeta}\in(0,1)^{2}}\sum_{j=1}^{J}\E\|\Xi^{\vec{\zeta}, j}_0\|_{-\Gamma,a} <C$ for some $C<\infty$. In addition, we assume that $\sum_{j=1}^{J}\mathbb{E}\la |\cdot|^{8D},\mu^{\vec{\zeta},j}_{t}\ra<\infty$ for $t \in [0,T]$, and that this moment is well-defined via the density associated with the corresponding forward equation (i.e. the Fock-space representation for the forward equation, see Appendix~\ref{S:FockSpaceExpect}).
\end{assumption}
 Note that Assumption \ref{A:AssumptionInitialCondition} does not assume uniform boundedness of $\sum_{j=1}^{J}\mathbb{E}\la |\cdot|^{8D},\mu^{\vec{\zeta},j}_{t}\ra<\infty$ with respect to $\vec{\zeta}\in(0,1)^{2}$. It only assumes that for each $\vec{\zeta}\in(0,1)^{2}$, the spatial moment is finite. As a matter of fact we shall prove in Section \ref{S:MomentBounds} that these moments (assuming that they are well defined per Assumption \ref{A:AssumptionInitialCondition}) are indeed uniformly bounded in $\vec{\zeta}\in(0,1)^{2}$.

The assumed existence and finiteness of the spatial moment, $\sum_{j=1}^{J}\mathbb{E}\la |\cdot|^{8D},\mu^{\vec{\zeta},j}_{t}\ra$, is motivated by physical considerations. Particles move by diffusion, and we have assumed the underlying reaction network is one such that the number of particles stays uniformly bounded for all times. Moreover, the most commonly used reaction kernel $K(x,y)$, the Doi kernel, has compact support, which would then lead to $\rho(x)$ also having compact support for models that are consistent with detailed balance of reversible reactions~\cite{ZI:2022}. Combined with the standard delta-function based placement models described in the previous section, reactions should not induce particles to jump more than a bounded distance with each occurrence. As the number of reaction occurrences for typical reaction systems studied in practice should be bounded over any finite interval, we would not anticipate that the reaction components of the model could lead to unbounded spatial moments. That said, it is an open problem to rigorously establish the existence of spatial moments, which to our knowledge has not been investigated for the class of PBSRD models considered in this work. (In actuality, there is limited work establishing the boundedness of simpler population number moments such as $\E \la 1, \mu_{t}^{\vec{\zeta},j} \ra^{p}$, see for example \cite[Chapter 7]{BM:2015} and  the very recent work~\cite{C:2023} for some results in particular reaction systems.)

For reasons that will become clearer in Subsection \ref{SS:PreliminaryCalculations} we have the following definition.
\begin{definition}\label{def:Delta}
For any $\ell$-th reaction, particle distribution $\nu = \sum_{j = 1}^{J} \nu^j\delta_{S_j} \ \in M(\hat{P})$ and particle fluctuation distribution $\Xi = \sum_{j = 1}^{J} \Xi^j\delta_{S_j} \ \in M(\hat{P})$ , define the $\ell$-th reactant mapping  $\Delta^{(\ell)} [\, \cdot \,,  \cdot \,] : M(\hat{P})\times M(\hat{P}) \to M( \mathbb{X}^{(\ell)})$ via
\begin{equation}\label{eq:reactantmappingdef}
  \Delta^{(\ell)} [\nu,\, \Xi] =
  \begin{cases}
    \Xi^k(x)       & \quad \text{if the $\ell$-th reaction is of the form } S_k \to \cdots\\
    \Xi^k(x)\nu^r(y) + \nu^k(x) \Xi^r(y)  & \quad \text{if the $\ell$-th reaction is of the form } S_k + S_r \to \cdots
  \end{cases}
\end{equation}
\end{definition}

\begin{theorem}\label{T:FluctuationsThm}
Assume $T < \infty$. Given Assumptions~\ref{Assume:kernalBdd0}-\ref{Assume:initial} as well as Assumptions~~\ref{A:SobolevSpaceParameters}-\ref{A:AssumptionInitialCondition}, the sequence $\left\{\paren{\Xi^{\vec{\zeta}, 1}_t, \cdots,  \Xi^{\vec{\zeta}, J}_t}, t\in[0,T]\right\}_{\vec{\zeta}\in (0,1)^{2}}$ is relatively compact in $D_{(W^{-\Gamma,a}(\R^d))^{\otimes J}}([0, T])$. For any subsequence of this sequence, there is a further subsubsequence that converges in law to the distribution-valued stochastic process $\{\paren{\bar{\Xi}^1_t, \cdots, \bar{\Xi}^J_t}, t\in[0, T]\}$ as $\vec{\zeta} \to 0$  satisfying in $W^{-(2+\Gamma),a}$ the evolution equation
\begin{align}\label{eq:fluctlimitweakform}
\la f,\bar{\Xi}^j_t\ra&
=\la f,\bar{\Xi}^j_0\ra + \int_{0}^{t} \la (\mathcal{L}_jf)(x) , \bar{\Xi}^j_s (dx)\ra ds  + \bar{M}^{j}_t(f) \notag \\
&-\sum_{\ell = 1}^{\tilde{L}} \int_{0}^{t} \int_{\tilde{\mathbb{X}}^{(\ell)}}     \tfrac{1}{\vec{\alpha}^{(\ell)}!}   K_\ell\left(\vec{x}\right)  \left(  \sum_{r = 1}^{\alpha_{\ell j}} f(x_r^{(j)}) \right)\, \Delta^{(\ell)} [\bar{\mu}_s, \bar{\Xi}_s](d\vec{x})  \, ds\\
&+ \sum_{\ell = \tilde{L} + 1}^L \int_{0}^{t} \int_{\tilde{\mathbb{X}}^{(\ell)}}     \tfrac{1}{\vec{\alpha}^{(\ell)}!}   K_\ell\left(\vec{x}\right) \left(\int_{\mathbb{Y}^{(\ell)}}    \left( \sum_{r = 1}^{\beta_{\ell j}} f(y_r^{(j)}) \right) m_\ell\left(\vec{y} \, |\, \vec{x} \right)\, d \vec{y} - \sum_{r = 1}^{\alpha_{\ell j}} f(x_r^{(j)}) \right)\Delta^{(\ell)}[\bar{\mu}_s, \bar{\Xi}_s](d\vec{x}) \, ds \notag
\end{align}
for $j = 1, \cdots, J$ and any $f\in W^{2+\Gamma,a}_{0}(\mathbb{R}^{d})$. Here $\paren{\bar{M}^1_t, \cdots, \bar{M}^J_t}$ is a distribution-valued mean-zero Gaussian martingale with marginal variance
covariance structure for $0\leq s,t\leq T$ and $f,g\in W^{2+\Gamma,a}_{0}(\mathbb{R}^{d})$
\begin{equation}\label{eq:fluctnoiselimitweakform}
\begin{aligned}
\Cov[\bar{M}^j_t(f), \bar{M}^k_s(g)] &=  \sum_{\ell = 1}^{\tilde{L}} \int_{0}^{s\wedge t} \int_{\tilde{\mathbb{X}}^{(\ell)}}     \frac{1}{\vec{\alpha}^{(\ell)}!}   K_\ell\left(\vec{x}\right)  \left(  \sum_{r = 1}^{\alpha_{\ell j}} f(x_r^{(j)}) \right) \left(  \sum_{r = 1}^{\alpha_{\ell k}} g(x_r^{(k)}) \right)\,\lambda^{(\ell)}[\bar{\mu}_{s'}](d\vec{x}) \, ds'\\
&\quad+\sum_{\ell = \tilde{L} + 1}^L \int_{0}^{s\wedge t} \int_{\tilde{\mathbb{X}}^{(\ell)}}     \frac{1}{\vec{\alpha}^{(\ell)}!}   K_\ell\left(\vec{x}\right) \left(\int_{\mathbb{Y}^{(\ell)}}    \left( \sum_{r = 1}^{\beta_{\ell j}} f(y_r^{(j)}) - \sum_{r = 1}^{\alpha_{\ell j}} f(x_r^{(j)}) \right)\right.\\
&\qquad \left. \times \left( \sum_{r = 1}^{\beta_{\ell k}} g(y_r^{(k)}) - \sum_{r = 1}^{\alpha_{\ell k}} g(x_r^{(k)}) \right) m_\ell\left(\vec{y} \, |\, \vec{x} \right)\, d \vec{y} \right)\,\lambda^{(\ell)}[\bar{\mu}_{s'}](d\vec{x}) \, ds'\\
&+\int_{0}^{s\wedge t} \la 2D_{j} \frac{ \partial f}{\partial Q}(x)\frac{ \partial g}{\partial Q}(x), \bar{\mu}_{s'}^j(dx)\ra 1_{\{k=j\}}\,ds'.
\end{aligned}
\end{equation}

Finally, the limiting stochastic evolution equation has a unique solution in $\paren{W^{-(2+\Gamma),a}}^{\otimes J}$ for $t \in \brac{0,T}$, and thus the limit accumulation point $\{\paren{\bar{\Xi}^1_t, \cdots, \bar{\Xi}^J_t}, t\in[0, T]\}$ is unique.
\end{theorem}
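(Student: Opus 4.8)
\emph{Proof proposal.} The plan is to follow the classical four-step program for fluctuation limits of interacting particle systems — (i) an exact semimartingale representation for the fluctuation process, (ii) uniform bounds and tightness, (iii) identification of the limit via a martingale central limit theorem, and (iv) uniqueness of the limiting equation — all carried out in the scale of the weighted Sobolev spaces $W^{\Gamma,a}_{0}$ and their duals, which is forced on us because the signed-measure-valued process $\Xi^{\vec{\zeta},j}$ lives in a space that is not metrizable under the weak topology.

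First I would derive the semimartingale decomposition. Starting from the weak representation \eqref{Eq:EM_j_formula} for $\mu^{\vec{\zeta},j}_{t}$ and the limiting identity \eqref{Eq:Limit_EM_formula2} for $\bar{\mu}^{j}_{t}$, I subtract the two and multiply by $\sqrt{\gamma}$ to obtain, for $f\in W^{2+\Gamma,a}_{0}$, an expression for $\la f,\Xi^{\vec{\zeta},j}_{t}\ra$. The diffusion terms contribute $\int_{0}^{t}\la (\mathcal{L}_{j}f)(x),\Xi^{\vec{\zeta},j}_{s}(dx)\ra\,ds$ plus the scaled Brownian stochastic integral, which is a martingale. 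For the reaction terms, after compensating the Poisson integrals the drift is $\sqrt{\gamma}$ times the difference of the mean-field reaction functionals evaluated at $\mu^{\vec{\zeta}}_{s}$ and $\bar{\mu}_{s}$; I then Taylor-expand the bilinear (bimolecular) functionals about $\bar{\mu}_{s}$, so that the linear-in-$\Xi^{\vec{\zeta}}$ term reproduces exactly the operator appearing in \eqref{eq:fluctlimitweakform} with $\Delta^{(\ell)}[\bar{\mu}_{s},\Xi^{\vec{\zeta}}_{s}]$, while the genuinely quadratic term carries a prefactor $1/\sqrt{\gamma}$ and is of the form $\tfrac{1}{\sqrt{\gamma}}\la \text{(kernel)},\Xi^{\vec{\zeta},k}_{s}\otimes\Xi^{\vec{\zeta},r}_{s}\ra$. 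There remain error terms from (a) the restriction to $\tilde{\mathbb{X}}^{(\ell)}$ and to the allowable index space versus the full lattice sum, (b) the mollification $m^{\eta}_{\ell}$ versus $m_{\ell}$, which is $O(\eta)$ per unit mass and enters with a factor $\sqrt{\gamma}$, hence is controlled under Assumption~\ref{A:AssumptionParameterRelativeRates}, and (c) the $O(1/\gamma)$ discreteness corrections. The martingale part $M^{\vec{\zeta},j}_{t}(f)$ is the sum of the scaled Brownian and compensated-Poisson integrals; I would compute its predictable quadratic variation explicitly and check that, upon replacing $\mu^{\vec{\zeta}}_{s'}$ by $\bar{\mu}_{s'}$ (legitimate by the mean-field limit Theorem~\ref{thm:convergence} together with continuity of the relevant functionals), it converges to the right-hand side of \eqref{eq:fluctnoiselimitweakform}.

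Second, for tightness, I would establish uniform-in-$\vec{\zeta}$ bounds on $\E\sup_{t\le T}\|\Xi^{\vec{\zeta},j}_{t}\|^{2}_{-\Gamma,a}$ and on the spatial moments $\E\la|\cdot|^{p},\mu^{\vec{\zeta},j}_{t}\ra$ for the exponents $p\le 8D$ needed above, using Assumption~\ref{A:AssumptionInitialCondition}, Assumption~\ref{A:AssumptionRho}, Gronwall's inequality, and the embedding/continuity estimates for $W^{\Gamma,a}_{0}$ (in particular that point evaluations $f\mapsto f(x)$ and the linear functionals appearing in the drift and the quadratic variation are bounded on $W^{\Gamma,a}_{0}$, which is where the parameter constraints of Assumption~\ref{A:SobolevSpaceParameters} and the smoothness Assumption~\ref{A:AssumptionK} are used). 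These give the compact-containment condition; an Aldous–Rebolledo/Kurtz criterion applied to the drift and the quadratic variation then yields relative compactness in $D_{(W^{-\Gamma,a})^{\otimes J}}([0,T])$, and the uniform Sobolev bound plus continuity of the limiting drift and bracket forces every limit point to have continuous paths.

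Third, for identification I pass to the limit along a convergent subsubsequence: the error terms (a)–(c) and the quadratic remainder vanish in probability by the uniform bounds and $\sqrt{\gamma}\eta\to 0$, the linear drift terms converge by continuity of the relevant functionals of $(\bar{\mu},\Xi^{\vec{\zeta}})$, and the martingale central limit theorem applies — the Lindeberg/jump-size condition holds since each reaction alters $\mu^{\vec{\zeta}}$ by $O(1/\gamma)$ in the relevant norm and we rescale by $\sqrt{\gamma}$ — so $M^{\vec{\zeta},\cdot}$ converges to a continuous Gaussian martingale with the deterministic bracket \eqref{eq:fluctnoiselimitweakform}; hence every limit point solves \eqref{eq:fluctlimitweakform}–\eqref{eq:fluctnoiselimitweakform} in $W^{-(2+\Gamma),a}$. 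For uniqueness, the limiting equation is \emph{linear} in $\bar{\Xi}$ with coefficients depending only on the already-determined deterministic $\bar{\mu}$ and driven by a fixed Gaussian martingale, so subtracting two solutions, testing against the adjoint semigroup of $\mathcal{L}_{j}$ (or directly via an energy estimate for $\|\cdot\|^{2}_{-(2+\Gamma),a}$) and Gronwall yields $\bar{\Xi}^{1}\equiv\bar{\Xi}^{2}$; combined with tightness, this upgrades subsubsequential convergence to convergence in law of the full sequence. The step I expect to be the main obstacle is the identification of the quadratic linearization remainder: showing that the genuinely bilinear-in-$\Xi^{\vec{\zeta}}$ reaction term, paired against the two-variable smoothed kernels $K_{\ell}$ and the placement densities, vanishes in the dual norm uniformly in $\vec{\zeta}$ requires quantitative multilinear estimates on products of signed measures tested against $W^{\Gamma,a}_{0}$-functions, and it is precisely the bookkeeping needed to close these bounds (together with the mollification error of order $\sqrt{\gamma}\eta$) that dictates the stringent smoothness and high-moment hypotheses and the precise Sobolev-parameter constraints.
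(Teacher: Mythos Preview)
Your four-step program matches the paper's proof structure exactly: semimartingale decomposition (Section~\ref{SS:PreliminaryCalculations}), moment bounds and tightness via Aldous' criterion in the weighted dual spaces (Sections~\ref{S:MomentBounds}--\ref{S:Tightness}), identification of the limit through a martingale CLT (Section~\ref{S:Identification}), and uniqueness via a Gronwall energy estimate in $W^{-(2+\Gamma),a}$ (Section~\ref{S:Uniqueness}).

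There is one genuine tactical difference worth flagging. For the bimolecular drift you propose to Taylor-expand $\sqrt{\gamma}\bigl(\mu^{\vec{\zeta},k}\mu^{\vec{\zeta},r}-\bar{\mu}^{k}\bar{\mu}^{r}\bigr)$ symmetrically about $\bar{\mu}$, giving $\Xi^{\vec{\zeta},k}\bar{\mu}^{r}+\bar{\mu}^{k}\Xi^{\vec{\zeta},r}+\tfrac{1}{\sqrt{\gamma}}\Xi^{\vec{\zeta},k}\Xi^{\vec{\zeta},r}$, and then kill the bilinear-in-$\Xi$ remainder. The paper instead uses the \emph{asymmetric exact} identity $\Xi^{\vec{\zeta},k}\mu^{\vec{\zeta},r}+\bar{\mu}^{k}\Xi^{\vec{\zeta},r}$, which is jointly linear in $(\mu^{\vec{\zeta}},\Xi^{\vec{\zeta}})$ with no remainder at all, and passes to the limit by joint tightness of $(\Xi^{\vec{\zeta}},\mu^{\vec{\zeta}},M^{\vec{\zeta}})$ together with Kurtz--Protter~\cite{KP:1996}. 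This sidesteps precisely the multilinear estimate you identify as ``the main obstacle'': no product of two signed measures ever needs to be controlled. Your route is also correct (once the uniform bound $\sup_{\vec{\zeta}}\E\sup_{t}\|\Xi^{\vec{\zeta},j}_{t}\|^{2}_{-\Gamma,a}<\infty$ is in hand, the $\tfrac{1}{\sqrt{\gamma}}$-remainder vanishes easily), but the paper's decomposition is cleaner and explains why the bookkeeping you anticipate never actually materialises. The real technical weight in the paper lies earlier, in the uniform Sobolev bound itself: it is obtained by applying It\^o's formula to $\la f_{p},\Xi^{\vec{\zeta},j}\ra^{2}$, summing over a complete orthonormal system $\{f_{p}\}$ of $W^{\Gamma_{1},a}_{0}$, and using Hilbert--Schmidt embeddings $W^{\Gamma_{1},a}_{0}\hookrightarrow W^{1+D,b}_{0}$ to close the estimate---a device you allude to but do not make explicit.
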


Here by uniqueness for (\ref{eq:fluctlimitweakform}) we mean that for a given $(\bar{M}^{1}_t(f),\cdots, \bar{M}^{J}_t(f))$ there is a unique $\{(\Xi^{1}_{\cdot},\cdots, \Xi^{J}_{\cdot})\}$ in $\paren{W^{-(2+\Gamma),a}(\R^d)}^{\otimes J}$ (see Theorem \ref{T:Uniqueness} for the precise statement.) We also note that equations (\ref{eq:fluctlimitweakform})-(\ref{eq:fluctnoiselimitweakform})  characterize the limit of $\left\{\paren{\Xi^{\vec{\zeta}, 1}_t, \cdots,  \Xi^{\vec{\zeta}, J}_t}, t\in[0,T]\right\}_{\vec{\zeta}\in (0,1)^{2}}$ in a weak sense, i.e., in terms of appropriate test functions. Due to the presence of the reaction terms, the general existence of a strong formulation is an open problem. However, as we demonstrate in Subsection~\ref{SS:FluctuationsProcessModel_Discretization}, one can at least identify formal stronger representations for specific reaction systems.

\section{On the appropriate Sobolev space}\label{S:SobolevSpace}
We study convergence in Sobolev spaces, see for example \cite{Adams} for a general exposition. The weighted Sobolev spaces introduced in~\cite{Meleard} are designed to control growth as $x \to \infty$, and are just what is needed for the problem at hand. In this section, we recall their definitions and main properties.

For any integers $\Gamma,a\in\mathbb{N}$, consider the space of real valued functions $f$ with partial derivatives up to order $\Gamma$ which satisfy
\begin{eqnarray}
\norm{f}_{\Gamma,a} = \bigg{(} \sum_{|k| \leq \Gamma} \int_{\R^d}  \frac{\big{|}  D^k f(x) \big{|}^2}{1+|x|^{2a}} dx\bigg{)}^{1/2} < \infty.\notag
\end{eqnarray}
Define  the  space $W_0^{\Gamma,a}$ as the closure of functions of class $C_0^{\infty}$ in the norm defined above, where $C_0^{\infty}$ is the space of all functions in $C^{\infty}$ with compact support.  $W_0^{\Gamma,a}$ is a Hilbert space (see Theorem 3.5 and Remark 3.33 in \cite{Adams} and also \cite{Meleard} for the weighted version) and has the inner product
\begin{eqnarray}
\la f, g \ra_{\Gamma,a} =  \sum_{|k| \leq \Gamma} \int_{\R^d} \frac{D^k f(x) D^k g(x)}{1+|x|^{2a}} dx.\notag
\end{eqnarray}
When $\Gamma,a=0$, we write $\la f, g \ra_{0}=\la f, g \ra$. $W^{-\Gamma,a}$ denotes the dual space of $W_0^{\Gamma,a}$ that is equipped with the norm
\begin{eqnarray}
\norm{f}_{-\Gamma,a} = \sup_{\{g \in W_0^{\Gamma,a}\, |\, g \neq 0\}} \frac{ \big{|} \la f, g \ra \big{|} }{ \norm{g}_{\Gamma,a}}.\notag
\end{eqnarray}

Let $C^{\Gamma,a}$ denote the space of continuous functions $f$ that have continuous partial derivatives up to order $\Gamma$ such that
\begin{equation*}
  \lim_{|x|\rightarrow\infty}\frac{|D^{j}f(x)|}{1+|x|^a}=0, \text{ for all }j\leq \Gamma.
\end{equation*}
The norm of this space is
\begin{equation*}
\|f\|_{C^{\Gamma,a}}=\sum_{j\leq \Gamma}\sup_{x\in \R^d}\frac{|D^{j}f(x)|}{1+|x|^a}.
\end{equation*}
We refer the interested to \cite{Meleard} for details on this class of weighted spaces.

A few properties that we will use in this paper include the embeddings that
\begin{equation*}
W^{m+j,a}_{0}\hookrightarrow C^{j,a}, \quad m>d/2, j\geq 0, a\geq 0.
\end{equation*}
and
\begin{equation*}
W^{m+j,a}_{0}\hookrightarrow W^{j,a+b}_{0}, \quad m>d/2, j\geq 0, a\geq 0, b>d/2.
\end{equation*}
A key property we will subsequently make use of is that the latter is Hilbert-Schmidt, and implies that the embedding
\begin{equation*}
W^{-j,a+b}\hookrightarrow W^{-m-j,a}, \quad m>d/2, j\geq 0, a\geq 0, b>d/2
\end{equation*}
is also Hilbert-Schmidt.

The need to introduce weights becomes apparent when we derive the necessary a-priori bounds for $\{\Xi^{\zeta, j}_{t}\}$ that then lead to the tightness claims in Section \ref{S:Tightness}. A representative calculation where the need for weights is clear is the bound in (\ref{Eq:MapingPropertyD}). As illustrated there, by providing control over the $\abs{x} \to \infty$ behavior of integrands, the weights allow us to leverage uniform moment and kernel bounds when estimating integrals over $\R^{d}$.


\section{Numerical results} \label{S:numerics}
In this section, we numerically compare the mean field with fluctuation corrections SPIDE system to the underlying PBSRD model for a 1D ($d=1$) model of the three species
\begin{equation*}
  A + B \underset{\mu}{\stackrel{\lambda K(x,y)}{\rightleftharpoons}} C
\end{equation*}
reversible reaction. The model we consider corresponds to that presented in Section~\ref{S:summaryexample}, with the specific choice of reaction kernels and placement densities summarized in Section~\ref{S:numerics_example_def}.

We start by describing the model problem and discretization schemes for the SPIDE and PBSRD models in detail before giving numerical results. Note that our notation in this section differs slightly from the more abstract notation used previously; for example, we parametrize the particle types by uppercase letters $A,B,C$ instead of natural numbers $S_1,S_2,S_3$.

We demonstrate that for the total molar mass (i.e., the integral or $L^1$-norm of the molar concentration field) of the type $C$ particles, the fluctuation process gives an increasingly accurately approximation of the PBSRD variance as $\gamma$ increases. For $\gamma=8000$, the largest value of $\gamma$ that we consider, the variances for the two models agree to statistical error. Since the concentration field is a Gaussian process, the distribution of the molar mass is Gaussian at each time $t$. We observe empirically that the PBSRD molar mass is also approximately Gaussian for large $\gamma$, and so, given the close agreement of the means and variances, we also see close agreement between the entire statistical distribution for the molar mass for large enough $\gamma$.

Throughout the following section, we use the terms PBSRD and particle model interchangeably. We refer to the jump process discretization of the particle model that we actually simulate as the CRDME (see discretization section). Finally, we subsequently refer to the solution of the combined mean field with fluctuation correction SPIDE model as the fluctuation process.

\subsection{Description of model problem} \label{S:numerics_example_def}
We restrict the reaction system to the periodic domain $\Omega = (0,2\pi)$. The function $\lambda K(x,y)$ denotes the forward reaction kernel, i.e., the probability per unit time that a forward reaction $A+B \to C$ occurs given an $A$ at $x$ and a $B$ at $y,$ where $K(x,y)$ is a normalized function giving the spatial distribution and $\lambda > 0$ is the total reaction rate. Here we use the Gaussian
\begin{equation*}
K(x,y) = \frac{1}{Z}\frac{e^{-\frac{|x-y|^2}{2\varepsilon^2}}}{\sqrt{2\pi\varepsilon^2}},
\end{equation*}
with $\varepsilon$ a parameter determining the kernel width, $|x-y|$ the periodic distance between $x,y\in\Omega$, i.e.
\begin{equation*}
  |x-y| = \min\{|x-y|,2\pi-|x-y|\},
\end{equation*}
and $Z$ a normalization constant,
\begin{equation*}
Z = \frac{1}{\sqrt{2\pi\varepsilon^2}} \int_{0}^{2 \pi}  e^{-\frac{|x-y|^2}{2\varepsilon^2}} dx.
\end{equation*}
For the placement density $m(z | x,y)$, the probability of placing a $C$ at $z$ given a reaction between an $A$ at $x$ and $B$ at $y$, we use the combination of $\delta$-functions \begin{equation}\label{eq:deltafunctionplacement}m(z|x,y) = \frac{1}{2}\delta(x - z) + \frac{1}{2}\delta(y-z),\end{equation} meaning that in the event of an $A+B\to C$ reaction the product $C$ is placed at the location of the $A$ or the $B$ each with probability $\frac{1}{2}$. Note, in the simulations that follow we do not regularize this density, so that $\eta = 0$ in the notation of previous sections. This is common when simulating particle models, where such a regularization is not needed (it's use in the MVSP formulation is solely to ensure that representation of the model is well-defined as described in Section~\ref{S:PreliminaryResults}).

Reversibility (equivalently detailed balance) implies that the reverse reaction in which a $C$ at $z$ unbinds into an $A$ at $x$ and a $B$ at $y$ occurs with rate $\mu K(x,y)m(z|x,y)$, where $\mu > 0$ is a constant~\cite{ZI:2022}. Integrating over $x$ and $y$, we find that the probability per time of an unbinding reaction for a $C$ at $z$ is given by
\begin{equation*}
\mu \int_{\Omega^2} K(x,y) m(z|x,y)dxdy = \frac{\mu}{2}\int_\Omega K(z,y) dy + \frac{\mu}{2}\int_\Omega K(x,z) dz = \mu.
\end{equation*}
Similarly, we can derive that given an unbinding reaction of a $C$ at $z$, either the $A$ or $B$ is placed at $z$ with probability $\frac{1}{2}$ and the other molecule is placed with the Gaussian distribution $K(x,z)$ (respectively $K(z,y)$) centered at $z$.

In the results that will follow, we have used the parameters $D_A = 1$, $D_B=.5$, and $D_C=.1$. We also set $\lambda = 1$, $\mu = .05$, and $\varepsilon=2^{-7}$. We will also need an initial condition, which we set to be proportional to \begin{align*}
    A(x,0) &= e^{-10(x-1)^2},\\
    B(x,0) &= e^{-10(x-2)^2},\\
    C(x,0) &= 0.
\end{align*}
We use this initial condition for the PIDEs satisfied by the mean-field concentration fields, and to generate initial conditions for the particle model (see the next section). The initial conditions for the SPIDEs that correspond to the fluctuation corrections, a scaled difference between the particle and mean-field initial conditions, were therefore zero.

\subsection{Discretization of particle model.}
We numerically study the particle model using the convergent reaction-diffusion master equation (CRDME), a convergent spatial discretization of the forward Kolmogorov equation associated with the PBSRD model~\cite{I:2013,IZ:2018}. The CRDME corresponds to the forward equation for a system of continuous-time jump processes on a mesh, and we therefore simulate the particle system via simulations of these jump processes using optimized versions of the stochastic simulation algorithm (SSA), also known as Gillespie's method or Kinetic Monte Carlo~\cite{G:1976}. The PBSRD particles' Brownian motions are then approximated by continuous-time random walks on a grid, and their reactive interactions by jump processes that depend on the relative positions of reactants on the mesh. As discussed in~\cite{I:2013,IZ:2018}, statistics obtained from simulations of the CRDME should then converge to those of the underlying PBSRD model as the mesh spacing is taken to zero. For these simulations we use a uniform mesh with nodes $\{x_i\}_{i=1}^N \subseteq \Omega$, where $x_i = (i-1)h$, $i=1,...,N$, and $h = \frac{2\pi}{N}$. We denote the compartments, or voxels, that particles hop between by $V_i, i=1,...,N$, given by $V_i = (x_i-\frac{h}{2},x_i+\frac{h}{2})$.

As we require integer numbers of particles for each simulation, we converted from the molar concentration field at $x_i$ to the number of particles, $a_i$, in voxel $V_i$ as
  \begin{algorithmic}[1]
    \State{$m := \gamma h A(x_i,0)$, $\bar{m} := \lfloor m \rfloor$}
    \State{$r \sim \Bernoulli(m - \bar{m})$.}
    \State{$a_i = \bar{m} + r$.}
\end{algorithmic}
This choice ensured the average number of particles of species $A$ in the $i$th voxel is $\gamma h A(x_i,0)$, consistent with the mean-field model's initial condition. The mean number of particles of species $A$ in our initial condition is therefore $\gamma \sum_{i=1}^N h A(x_i,0)$, which is proportional (but not equal) to $\gamma$ for fixed $h$. The initial number of particles of species $B$ and $C$ were chosen similarly.

The diffusion rate for each species $s\in \{A,B,C\}$ from one voxel $V_i$ to either of its neighbors is given by $\frac{D_s}{h^2}$, where $h$ is the distance between the centers of neighboring voxels. Pair reaction rates between a molecule of type $A$ in voxel $V_i$ and a molecule of type $B$ in voxel $V_j$ are computed as $\lambda K_\gamma(x_i,y_j) = \frac{K(x_i,y_j)}{\gamma}$. The resulting $C$ is placed in voxel $V_i$ or $V_j$ with probability $\frac{1}{2}$ each. When a $C$ at location $V_k$ unbinds, an $A$ or $B$ is placed in $V_k$ with probability $\frac{1}{2}$. The location of the other product particle is determined by the (unnormalized) discrete distributions $\{K(z_k,y_j)\}_{j=1}^N  = \{K(x_i,z_k)\}_{i=1}^N$ over the voxel sites. The total rate of unbinding in the CRDME model then is given by the discrete marginal integral of the reaction kernel,
\begin{equation*}
\mu\sum_{i=1}^N hK(x_i,z_k),
\end{equation*}
which for our chosen parameters is essentially $\mu$ (i.e. to numerical precision). Specifying the rates with the midpoint rule quadrature scheme as we have just described preserves detailed balance and also $O(h^2)$ convergence for statistics as compared to the version of the CRDME presented in~\cite{IZ:2018}.  We note that the product-particle placement mechanisms in our discrete model directly simulate the unmollified $\delta$-function placement mechanism \eqref{eq:deltafunctionplacement} from the continuum model.

The CRDME is simulated as a continuous-time jump process using a variant of the direct Gillespie stochastic simulation algorithm (SSA) in which we view the simulation mesh as the top level (also referred to as the fine grid) of a nested hierarchy of uniform meshes $$\Omega_I \supseteq \Omega_{I-1} \supseteq ... \supseteq \Omega_1,$$ so that $|\Omega_i| = 2^i$. We keep track of the total reaction rate for each subvolume on each mesh throughout the simulation, which allows us to identify the next fine grid subvolume where a reaction occurs in logarithmic time using a binary search. When a reaction or a diffusion event occurs, we update the total reaction rate for each subvolume on each mesh that contains a fine grid subvolume involved in the reaction. The complexity for these updates is also logarithmic, although the total cost of rate updates after a $B$ molecule reacts or diffuses will be $O(N\log N)$ (with a small constant) due to the cost of updating the reaction rate for all $A$ in the range of the reaction kernel.

In our CRDME simulations, we chose $I = 10$ so that the simulations used $N = 2^{10}$ voxels. This choice was informed by looking at successive differences of statistics for simulations at different mesh resolutions to check for convergence, and also by the requirement for the mesh spacing $h = \frac{2\pi}{N}$ to be below the kernel spacing $\varepsilon$ in order to achieve asymptotic $O(h^2)$ convergence rates.  In these informal experiments, we found the required value of $h$ to be independent of the system size $\gamma$.

\subsection{Fluctuation Process Model and Discretization.}\label{SS:FluctuationsProcessModel_Discretization}\smallskip
The mean field model for the reversible reaction is given by the system of PIDEs
\begin{align*}
    \frac{\partial A}{\partial t}(x,t) &= D_A\Delta A(x,t) - \lambda (K*B)(x,t)A(x,t) + \frac{\mu}{2} [(K*C)(x,t) +  C(x,t)],\\
    \frac{\partial B}{\partial t}(y,t) &= D_B\Delta B(y,t) - \lambda (K*A)(y,t)B(y,t) + \frac{\mu}{2}[(K*C)(y,t) + C(y,t)],\\
    \frac{\partial C}{\partial t}(z,t) &= D_C\Delta C(z,t) - \mu C(z,t) + \frac{\lambda}{2}[(K*B)(z,t)A(z,t) + (K*A)(z,t)B(z,t)],
\end{align*}
where $A$, $B$, and $C$ are the mean field molar concentrations for the corresponding particle types, and we define the integral operator $(K*f)(x,t)$ for a function $f(x,t)$ by
\begin{equation*}
  (K * f)(x,t) = \int_{\Omega} K(x,y) f(y,t) \, dy.
\end{equation*}
This model has been previously explored and compared to CRDME simulations of the PBSRD model in~\cite{IMS:2021}.

We denote by $\Bar{A}$, $\Bar{B}$, and $\Bar{C}$ the fluctuation corrections that correspond to the limiting process in Theorem~\ref{T:FluctuationsThm}. By augmenting the mean field solution with the fluctuation corrections, we obtain the fluctuation processes
\begin{align*}
  A^\gamma(x,t) &:= A(x,t) + \frac{1}{\sqrt{\gamma}}\bar{A}(x,t), \\
  B^\gamma(y,t) &:= B(y,t) + \frac{1}{\sqrt{\gamma}}\bar{B}(y,t), \\
  C^\gamma(z,t) &:= C(z,t) + \frac{1}{\sqrt{\gamma}}\bar{C}(z,t),
\end{align*}
which we expect to provide an improved approximation of the particle model for finite $\gamma$ compared to the mean field model. We note that the fluctuation processes are Gaussian processes.

From the rigorous weak MVSP equations~\eqref{eq:fluctlimitweakform} and~\eqref{eq:fluctnoiselimitweakform} we obtain by integration by parts the following (formal) strong form representation the fluctuation corrections should satisfy
\begin{align*}
    \frac{\partial \bar{A}}{\partial t}(x,t) &= D_A\Delta \bar{A}(x,t) - \lambda [(K*B)(x,t)\bar{A}(x,t) + (K*\bar{B})(x,t)A(x,t)]  + \frac{\mu}{2}\brac{(K*\bar{C})(x,t) + \bar{C}(x,t)} + \xi^A_t,\\
    \frac{\partial \bar{B}}{\partial t}(y,t) &= D_B\Delta\bar{B}(y,t) - \lambda[(K*A)(y,t)\bar{B}(y,t) + (K*\bar{A})(x,t)B(x,t)  + \frac{\mu}{2}\brac{(K*\bar{C})(x,t) + \bar{C}(x,t)} + \xi^B_t,\\
    \frac{\partial \bar{C}}{\partial t}(z,t) &= D_C\Delta \bar{C}(z,t) - \mu \bar C(z,t) + \frac{\lambda}{2} \big[(K*B)(z,t)\bar{A}(z,t) + (K*A)(z,t)\bar{B}(z,t) + (K*\bar{A})(z,t)B(z,t) \\
    &\quad + (K*\bar{B})(z,t)A(z,t)\big] + \xi^C_t.
\end{align*}
Here the noise terms, $\xi_t^A,\xi^B_t,\xi^C_t$, are mean zero Gaussian processes with covariance structure
\begin{equation*}
\begin{aligned}
\Cov[\la f, \xi^A_t\ra\la g, \xi^A_t\ra] &= \int_0^t 2D_A\la \frac{\partial{f}}{\partial Q} A(s), \frac{\partial{g}}{\partial Q} \ra + \lambda  \la A(s)f,(K*B)(
s)g \ra + \frac{\mu}{2} [\la f(C * K)(s), g\ra + \la C(s)f, g\ra ]ds\\
\Cov[\la f, \xi^C_t\ra\la g, \xi^C_t\ra] &= \int_0^t 2D_C\la \frac{\partial{f}}{\partial Q}  C(s),   \frac{\partial{g}}{\partial Q}\ra + \mu \la fC(s), g\ra +\frac{\lambda}{2} [\la f A(s),g(K*B)(s)\ra  + \la f B(s),g(K*A)(s) \ra ds\\
\Cov[\la f, \xi^A_t\ra\la g, \xi^B_t\ra] &= \int_0^t \lambda\la (K*fA)(s),gB(s)\ra + \frac{\mu}{2} [\la (K * fC)(s), g\ra + \la (K * gC)(s), f\ra]ds\\
\Cov[\la f, \xi^A_t\ra\la g, \xi^C_t\ra] &= -\int_0^t \Big( \tfrac{\lambda}{2}[\la fA(s), g(K*B)(s) \ra + \la fA, (K * gB)(s) \ra] + \tfrac{\mu}{2}[\la K*f, C(s)g \ra + \la fC(s), g \ra] \Big) ds,
\end{aligned}
\end{equation*}
with the remaining covariances 
following by symmetry.  In deriving those equations we assume densities associated with the fluctuation measures exist. Note that in the preceding formula we have suppressed writing the spatial dependence of the mean field solutions and test functions, and we have denoted by $\la . , . \ra$ the $L^2$ inner-product on $\Omega$.

To compute the SPIDE solution first requires the mean field solution. We use Fourier spectral discretizations for both problems. For the mean field solver, we use a total of $2^9$ basis functions $\{e^{inx}\}_{n=0}^{2^9-1}$ to represent the concentration fields for each of the three species. We use fast Fourier transforms provided by \texttt{Scipy} to convert between Fourier representations and values at collocation points and compute all integrals using the midpoint rule on intervals centered at collocation points.

Solving for the fluctuation process is more computationally expensive because of the covariance matrix computation and factorization, and the large number of trials required to resolve statistics to sufficient accuracy for convergence testing. We therefore used 61 basis functions $\{\sin(nx),\cos(nx),1\}_{n=1}^{30}$ for approximating the fluctuation corrections. Quadratures were computed at the original $2^{9}$ collocation points for integrals involving the mean field solution using the midpoint rule.

For the time discretization we used a one step IMEX Euler method for both the mean field and fluctuation processes, treating the reaction and noise terms explicitly and the diffusion terms implicitly. In both solvers we used a time step of .001.

\subsection{Numerical results}
In this section we present numerical results for the reversible $A+B\leftrightarrows C$ reaction using the CRDME approximation to the particle model, and Fourier spectral approximation to the fluctuation process SPIDEs, as previously described.  We  examine several statistics that are not available from the (deterministic) mean-field approximation, to illustrate the additional information gained by having the fluctuation corrections. To generate the empirical variances and distributions, we used 280,000 simulations for the CRDME and 1,600,000 simulations for the fluctuation process for each individual set of parameter values.

We first compare the variance of the ($\sqrt{\gamma}$-scaled) molar mass within the interval for the fluctuation process at various times $t$. Denote by $V_{SPIDE}(t)$ the variance of the total (scaled) molar mass of $C$ molecules from the fluctuation process, i.e.
\begin{equation*}
  V_{SPIDE}(t) = \gamma \, \avg{\paren{\int_{\Omega} \paren{C^{\gamma}(x,t) - C(x,t)} \, dx}^2} = \avg{\paren{\int_{\Omega} \bar{C}(x,t) \, dx}^2}.
\end{equation*}
We denote by $V_{CRDME}^\gamma(t)$ the raw (unscaled) variance of the CRDME molar mass for a given $\gamma$ at time $t$, i.e.
\begin{equation*}
  V_{CRDME}^{\gamma}(t) = \avg{\paren{\int_{\Omega} \paren{C^{\gamma}_{CRDME}(x,t) - \avg{C^{\gamma}_{CRDME}(x,t)}} \, dx}^2},
\end{equation*}
where $C^{\gamma}_{CRDME}(x,t)$ is the molar concentration field process in the CRDME. Then $\gamma V_{CRDME}^\gamma(t)$ denotes the rescaled variance of the CRDME molar mass, which is directly comparable to $V_{SPIDE}(t)$. That is, we expect $\gamma V_{CRDME}^\gamma(t) \to V_{SPIDE}(t)$ as $\gamma \to \infty$. Directly comparing the unscaled variances would be more difficult as we expect the unscaled CRDME variance $V_{CRDME}^\gamma(t) \to 0$ and unscaled SPIDE variance $V_{SPIDE}/\gamma \to 0$ as $\gamma \to 0$.


\begin{figure}[tb]
  \centering
  \includegraphics[width=.45\textwidth]{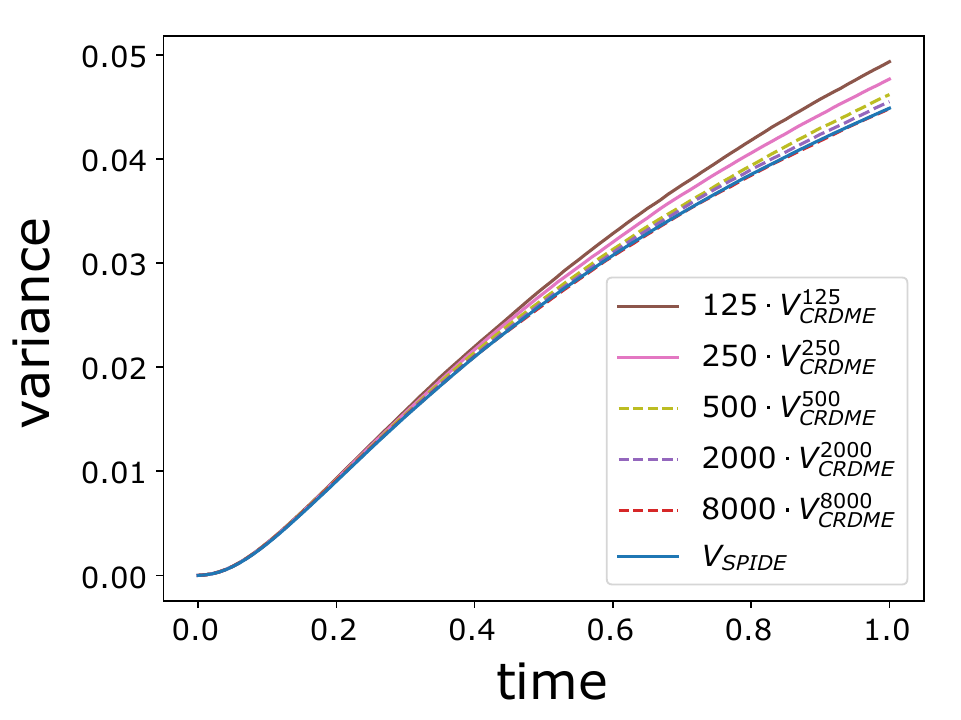}
  \includegraphics[width=.45\textwidth]{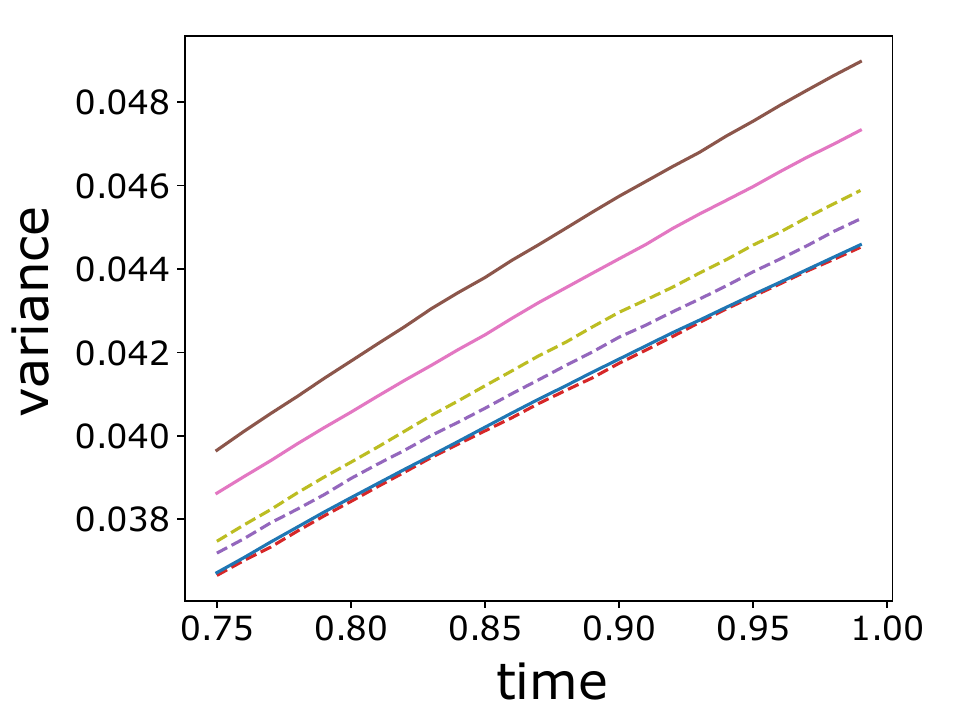}\\
  \includegraphics[width=.45\textwidth]{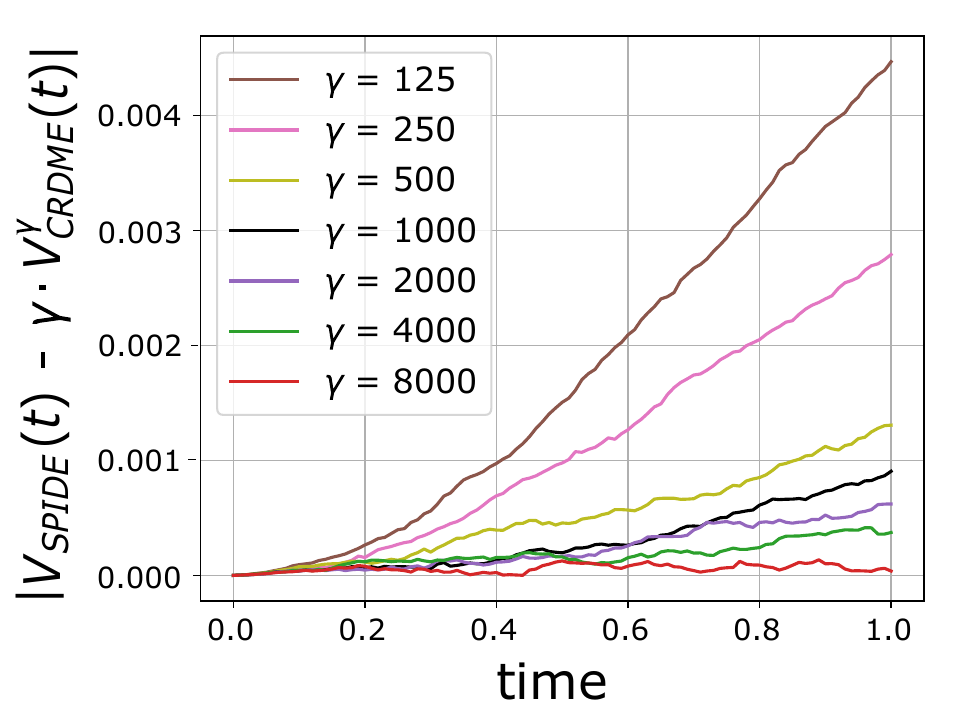}
  \includegraphics[width=.45\textwidth]{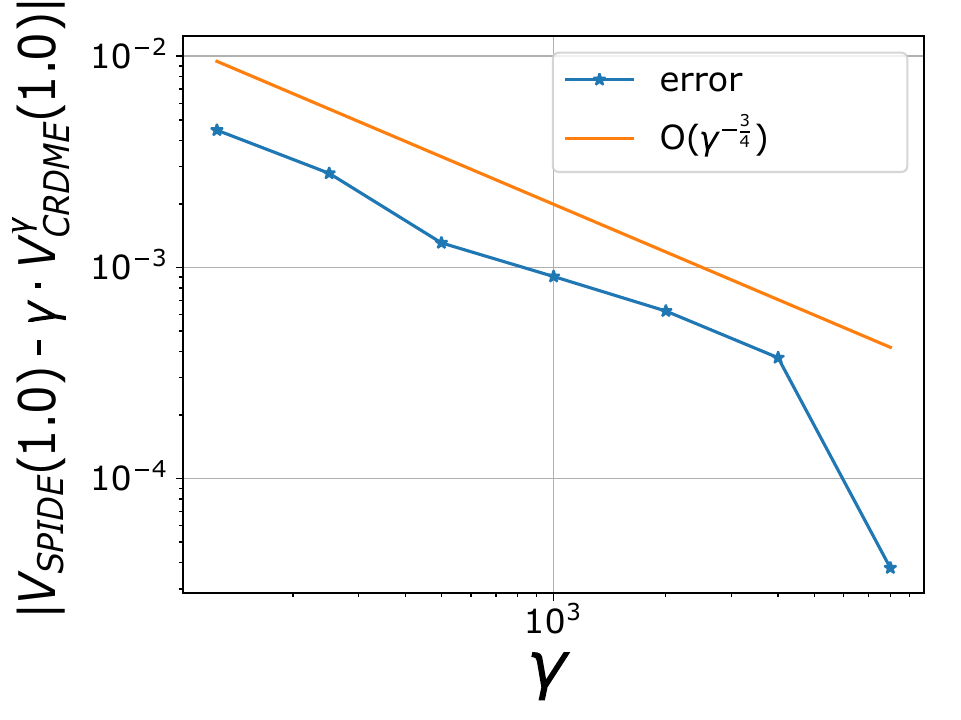}
  \caption{Scaled variance for fluctuation process, $V_{SPIDE}(t)$, compared to rescaled CRDME variances $\gamma V^{\gamma}_{CRDME}(t)$ for $t \in \brac{0,1}$ (top left) and $t \in \brac{.75,1}$ (top right, zoomed-in plot of the last quarter of the left panel). The bottom row shows the difference between these two statistics vs. time (bottom left) and
  at $t=1$ vs. $\gamma$ (bottom right). The latter illustrates how $\gamma V^{\gamma}_{CRDME}(t) \to V_{SPIDE}(t)$ as $\gamma \to \infty$.}
  \label{fig:variances}
\end{figure}
The top panels in Figure \ref{fig:variances} show the rescaled CRDME variances over the whole time interval $[0,1]$ (top left) and over the shorter time interval $[.75,1]$ (top right), alongside the (scaled) SPIDE variance $V_{SPIDE}$. The top right plot is a zoomed-in version of the top left plot, to show more clearly the increasing agreement between the variances computed of the particle model with the variances computed by the SPDE as $\gamma$ increases. We see that by $\gamma=8000$, the variances agree within sampling error. The bottom panels show the difference $|V_{SPIDE}(t) - \gamma V_{CRDME}^\gamma(t)|$ over time (left) and at $t=1$ (right). We see from the left bottom panel that over the entire interval $[0,1]$ the difference between the variances decreases as $\gamma$ increases. In the right bottom panel we observe a noisy but roughly consistent rate of convergence as we increase $\gamma$. The orange line represents an $O(\gamma^{-\frac{3}{4}})$ convergence rate, which is similar to the best fit rate of convergence for the first five values of $\gamma$ of $\gamma^{-.71}$.

\begin{figure}[thb]
  \centering
  \includegraphics[width=1\textwidth]{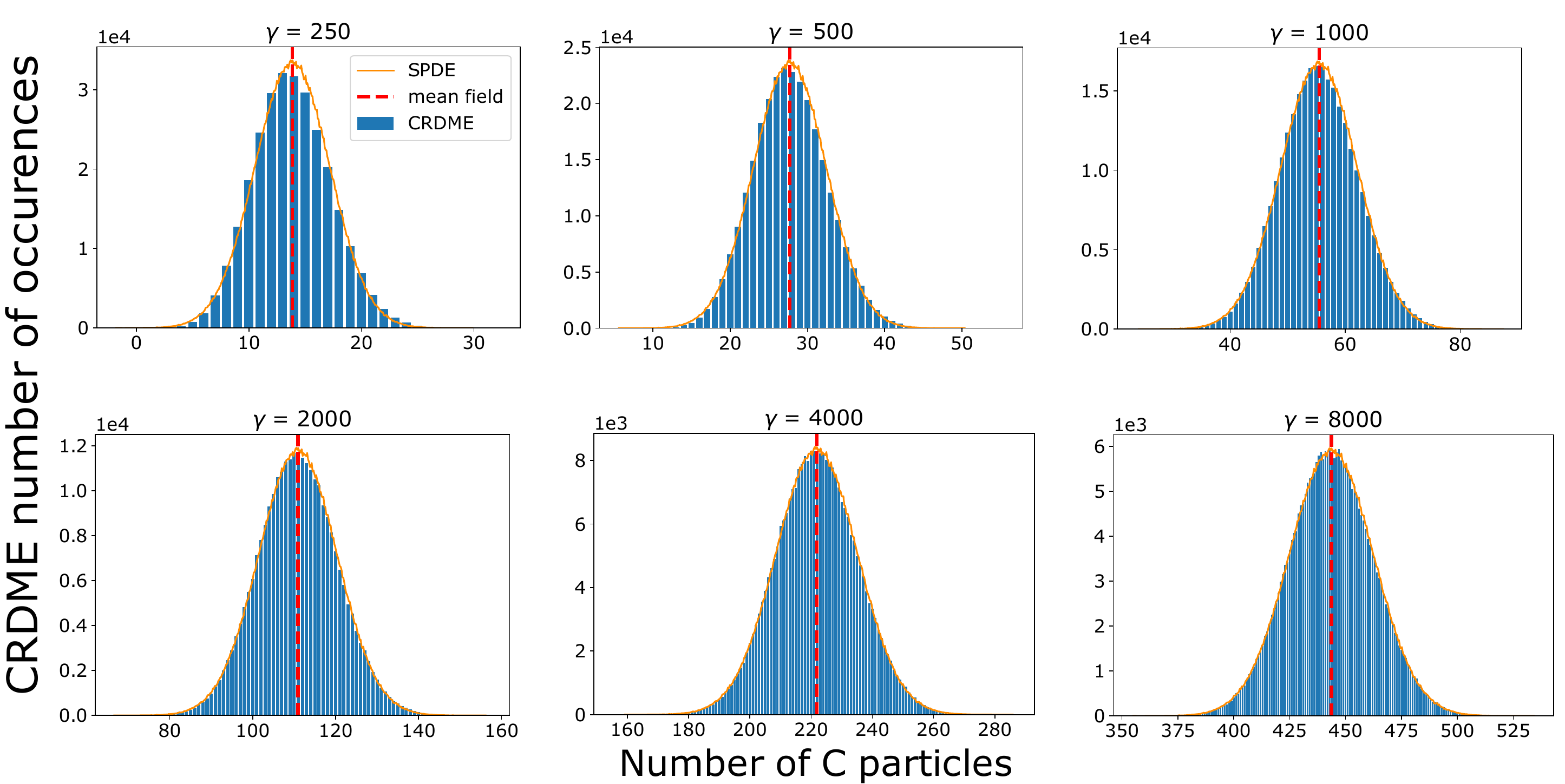}
\caption{CRDME bar plot of particle counts and fluctuation process particle count density for $\gamma=250,500,1000,2000,4000,8000$ at $t=1$ (plotted left to right top to bottom). The vertical dashed line shows the average number of $C$ particles predicted by the mean-field limit (i.e. the $\gamma \to \infty$ limit). The raw fluctuation process and mean-field data have been multiplied by $\gamma$ to convert their original units of molar mass to particle counts. Each fluctuation process curve has also been normalized to have the same integral as the corresponding CRDME box plot (the number of CRDME trials multiplied by $\frac{1}{\gamma}$).}
  \label{fig:histogram}
\end{figure}
As a final demonstration of the accuracy of the SPIDE solution, in Figure \ref{fig:histogram} we show the entire distribution of the CRDME $C$ particle counts, i.e., the distribution of
\begin{equation*}
 \gamma  \int_{\Omega}  C^{\gamma}_{CRDME}(x,t) \, dx
\end{equation*}
compared to the distribution of the $\gamma$-scaled fluctuation process molar mass, i.e., the distribution of
\begin{equation*}
  \gamma  \int_{\Omega}  C^{\gamma}(x,t) \, dx.
\end{equation*}
In the figure we have converted the raw molar mass data to have units of particle counts by multiplying by $\gamma$. We generated the SPIDE curve by partitioning the data produced by our simulations into equally sized bins using the \texttt{hist} function from the Python visualization library \texttt{matplotlib}, pairing each bin center with the corresponding bin count to create a series of equispaced data points, and interpolating linearly between consecutive data points. We then normalized the SPIDE curve to have an integral of (number of CRDME trials) / $\gamma$. We see that, as expected, the SPIDE particle counts have an approximately Gaussian distribution. For large $\gamma$, the SPIDE and CRDME distributions have very little visible difference.

In our tests, the SPIDE solver is about 100 times faster than the CRDME simulation for large enough $\gamma$; it took approximately 21 hours on 28 threads for 1,600,000 SPIDE simulations, or 588 compute hours, for an approximate time of 1.3s/simulation. On the other hand, the $\gamma=8000$ CRDME simulations took around 132s/trial. Hence, the SPIDE provides an intermediate fidelity model, more accurate and more computationally expensive than the mean field model, but less accurate and less computationally expensive than the PBSRD model. As the figures show, the fluctuation process can very accurately capture the overall distribution for statistics of interest at a reduced computational cost when $\gamma$ is sufficiently large.


\section{Proof of the fluctuations result, Theorem \ref{T:FluctuationsThm}}\label{S:ProofMainTheorem}
In this section we prove Theorem~\ref{T:FluctuationsThm}. We begin in the next section by formulating an evolution equation for the fluctuation process, $\Xi^{{\vec{\zeta}}, j}_{t}$. In Section~\ref{S:MomentBounds} we prove a lemma giving moment bounds for the pre-limit measures, $\mu_t^{\vec{\zeta},j}$, and the post-limit mean-field measures, $\bar{\mu}_t^{\vec{\zeta},j}$. We then prove tightness for the fluctuation process in Section~\ref{S:Tightness}, identify the limiting equation it satisfies in Section~\ref{S:Identification}, and finally prove uniqueness of the solution to this equation in Section~\ref{S:Uniqueness}.

\subsection{Preliminary calculations for the prelimit fluctuation process}\label{SS:PreliminaryCalculations}

From \cref{Eq:EM_j_formula,Eq:Limit_EM_formula2}, we directly obtain
\begin{align}
\la f,\Xi^{{\vec{\zeta}}, j}_{t}\ra
&
=\la f,\Xi^{{\vec{\zeta}}, j}_{0}\ra + \int_{0}^{t} \la (\mathcal{L}_jf)(x) , \Xi^{\vec{\zeta}, j}_{s-} (dx)\ra ds + \frac{1}{{\sqrt{\gamma}}}\sum_{i\geq 1}\int_{0}^{t}1_{\{i\leq \gamma \la 1, \mu_{s-}^{{\vec{\zeta}}, j}\ra\}}\sqrt{2D_{j}}\frac{\partial f}{\partial Q}(H^i(\gamma\mu_{s-}^{{\vec{\zeta}}, j}))dW^{i, j}_{s}\nonumber\\
&
\quad-\frac{1}{\gamma} \sqrt{\gamma}\sum_{\ell = 1}^{\tilde{L}} \int_{0}^{t}\int_{\mathbb{I}^{(\ell)}} \int_{\mathbb{R}_{+}}\la f,\sum_{r = 1}^{\alpha_{\ell j}} \delta_{H^{i_r^{(j)}}(\gamma\mu^{{\vec{\zeta}},j}_{s-})}  \ra  1_{\{\vec{i} \in \Omega^{(\ell)}(\gamma\mu^{\vec{\zeta}}_{s-})\}} 1_{ \{ \theta \leq K_\ell^{\gamma}\left(\mathcal{P}^{(\ell)}(\gamma\mu_{s-}^{\vec{\zeta}}, \vec{i}) \right) \}}  d\tilde{N}_{\ell}(s,\vec{i}, \theta)\nonumber\\
&
\quad+\sqrt{\gamma} \frac{1}{\gamma} \sum_{\ell = 1 + \tilde{L}}^L \int_{0}^{t}\int_{\mathbb{I}^{(\ell)}} \int_{\mathbb{Y}^{(\ell)}}\int_{\mathbb{R}_{+}^2} \la f, \sum_{r = 1}^{\beta_{\ell j}} \delta_{y_r^{(j)}} -\sum_{r = 1}^{\alpha_{\ell j}} \delta_{H^{i_r^{(j)}}(\gamma\mu^{{\vec{\zeta}},j}_{s-})} \ra 1_{\{\vec{i} \in \Omega^{(\ell)}(\gamma\mu^{\vec{\zeta}}_{s-})\}}   \nonumber\\
&
\qquad\qquad\qquad\qquad
\times 1_{ \{ \theta_1 \leq K_\ell^{\gamma}\left(\mathcal{P}^{(\ell)}(\gamma\mu_{s-}^{\vec{\zeta}}, \vec{i}) \right) \}}  \times 1_{ \{ \theta_2 \leq  m^\eta_\ell\left(\vec{y} \,  | \, \mathcal{P}^{(\ell)}(\gamma\mu_{s-}^{\vec{\zeta}}, \vec{i}) \right) \}}  d\tilde{N}_{\ell}(s,\vec{i}, \vec{y},\theta_1, \theta_2),\nonumber\\
&
\quad-\sum_{\ell = 1}^{\tilde{L}} \int_{0}^{t} \int_{\tilde{\mathbb{X}}^{(\ell)}}     \frac{1}{\vec{\alpha}^{(\ell)}!}   K_\ell\left(\vec{x}\right)  \left(  \sum_{r = 1}^{\alpha_{\ell j}} f(x_r^{(j)}) \right)\, \sqrt{\gamma}\paren{ \lambda^{(\ell)}[\mu_{s}^{\vec{\zeta}}](d\vec{x})- \lambda^{(\ell)}[\bar{\mu}_{s}](d\vec{x}) } \, ds\nonumber\\
&
\quad+\sqrt{\gamma} \sum_{\ell = \tilde{L} + 1}^L \int_{0}^{t} \int_{\tilde{\mathbb{X}}^{(\ell)}}     \frac{1}{\vec{\alpha}^{(\ell)}!}   K_\ell\left(\vec{x}\right) \left(\int_{\mathbb{Y}^{(\ell)}}    \left( \sum_{r = 1}^{\beta_{\ell j}} f(y_r^{(j)}) \right) m^\eta_\ell\left(\vec{y} \, |\, \vec{x} \right)\, d \vec{y} - \sum_{r = 1}^{\alpha_{\ell j}} f(x_r^{(j)}) \right)\,\lambda^{(\ell)}[\mu_{s}^{\vec{\zeta}}](d\vec{x})\, ds \nonumber\\
&
\quad-\sqrt{\gamma} \sum_{\ell = \tilde{L} + 1}^L \int_{0}^{t} \int_{\tilde{\mathbb{X}}^{(\ell)}}     \frac{1}{\vec{\alpha}^{(\ell)}!}   K_\ell\left(\vec{x}\right) \left(\int_{\mathbb{Y}^{(\ell)}}    \left( \sum_{r = 1}^{\beta_{\ell j}} f(y_r^{(j)}) \right) m_\ell\left(\vec{y} \, |\, \vec{x} \right)\, d \vec{y} - \sum_{r = 1}^{\alpha_{\ell j}} f(x_r^{(j)}) \right)\,\lambda^{(\ell)}[\bar{\mu}_{s}](d\vec{x})\, ds \nonumber\\
&
=\la f,\Xi^{{\vec{\zeta}}, j}_{0}\ra + \int_{0}^{t} \la (\mathcal{L}_jf)(x) , \Xi^{\vec{\zeta}, j}_{s-} (dx)\ra ds  + M^{\vec{\zeta}, j}_t(f) \nonumber\\
&
\quad-\sum_{\ell = 1}^{\tilde{L}} \int_{0}^{t} \int_{\tilde{\mathbb{X}}^{(\ell)}}     \frac{1}{\vec{\alpha}^{(\ell)}!}   K_\ell\left(\vec{x}\right)  \left(  \sum_{r = 1}^{\alpha_{\ell j}} f(x_r^{(j)}) \right)\, \sqrt{\gamma}\paren{ \lambda^{(\ell)}[\mu_{s}^{\vec{\zeta}}](d\vec{x})- \lambda^{(\ell)}[\bar{\mu}_{s}](d\vec{x}) } \, ds\nonumber\\
&
\quad+ \sum_{\ell = \tilde{L} + 1}^L \int_{0}^{t} \int_{\tilde{\mathbb{X}}^{(\ell)}}     \frac{1}{\vec{\alpha}^{(\ell)}!}   K_\ell\left(\vec{x}\right) \left(\int_{\mathbb{Y}^{(\ell)}}    \left( \sum_{r = 1}^{\beta_{\ell j}} f(y_r^{(j)}) \right) m_\ell\left(\vec{y} \, |\, \vec{x} \right)\, d \vec{y} - \sum_{r = 1}^{\alpha_{\ell j}} f(x_r^{(j)}) \right)\,\nonumber\\
&
\qquad\times \sqrt{\gamma} \paren{\lambda^{(\ell)}[\mu_{s}^{\vec{\zeta}}](d\vec{x})-\lambda^{(\ell)}[\bar{\mu}_{s}](d\vec{x})}\, ds \nonumber\\
&
\quad+ \sum_{\ell = \tilde{L} + 1}^L \int_{0}^{t} \int_{\tilde{\mathbb{X}}^{(\ell)}}     \frac{1}{\vec{\alpha}^{(\ell)}!}   K_\ell\left(\vec{x}\right) \left(\int_{\mathbb{Y}^{(\ell)}}    \left( \sum_{r = 1}^{\beta_{\ell j}} f(y_r^{(j)}) \right) \sqrt{\gamma}\paren{m^\eta_\ell\left(\vec{y} \, |\, \vec{x} \right)-m_\ell\left(\vec{y} \, |\, \vec{x} \right)}\, d \vec{y}  \right)\, \lambda^{(\ell)}[\mu_{s}^{\vec{\zeta}}](d\vec{x})\, ds. \nonumber\\
\label{eq:fluctuationProcess}
\end{align}
Note that in the last equality of \cref{eq:fluctuationProcess}, we are using the notation
\begin{equation}\label{eq:MartingaleDef}
M^{\vec{\zeta}, j}_t(f) =  \cC^{\vec{\zeta}, j}_t(f)  + \cD^{\vec{\zeta}, j}_t(f) ,
\end{equation}
as the martingale part with quadratic variation
\begin{equation}
\la M^{\vec{\zeta}, j}\ra_t(f) =  \la \cC^{\vec{\zeta}\gamma, j}\ra_t(f)  + \la \cD^{\vec{\zeta}, j}\ra_t(f),
\end{equation}
where
\begin{equation}\label{eq:contMartingaleDef}
\cC^{\vec{\zeta}, j}_t(f) = \frac{1}{{\sqrt{\gamma}}}\sum_{i\geq 1}\int_{0}^{t}1_{\{i\leq \gamma \la 1, \mu_{s-}^{{\vec{\zeta}}, j}\ra\}}\sqrt{2D_{j}}\frac{\partial f}{\partial Q}(H^i(\gamma\mu_{s-}^{{\vec{\zeta}}, j}))dW^{i, j}_{s},
\end{equation}
denotes the continuous part, i.e. Brownian motion, of the martingale and
\begin{equation}\label{eq:disMartingaleDef}
\begin{aligned}
\cD^{\vec{\zeta}, j}_t(f) &=-\frac{1}{\sqrt{\gamma}}\sum_{\ell = 1}^{\tilde{L}} \int_{0}^{t}\int_{\mathbb{I}^{(\ell)}} \int_{\mathbb{R}_{+}} \la f, \sum_{r = 1}^{\alpha_{\ell j}} \delta_{H^{i_r^{(j)}}(\gamma\mu^{{\vec{\zeta}},j}_{s-})}  \ra
 1_{\{\vec{i} \in \Omega^{(\ell)}(\gamma\mu^{\vec{\zeta}}_{s-})\}}   1_{ \{ \theta \leq K_\ell^{\gamma}\left(\mathcal{P}^{(\ell)}(\gamma\mu_{s-}^{\vec{\zeta}}, \vec{i}) \right) \}}  d\tilde{N}_{\ell}(s,\vec{i}, \theta)\\
&\quad+\frac{1}{\sqrt{\gamma}}\sum_{\ell = \tilde{L}+1}^L \int_{0}^{t}\int_{\mathbb{I}^{(\ell)}} \int_{\mathbb{Y}^{(\ell)}}\int_{\mathbb{R}_{+}^2}\la f, \sum_{r = 1}^{\beta_{\ell j}} \delta_{y_r^{(j)}}  -\sum_{r = 1}^{\alpha_{\ell j}} \delta_{H^{i_r^{(j)}}(\gamma\mu^{{\vec{\zeta}},j}_{s-})}\ra
1_{\{\vec{i} \in \Omega^{(\ell)}(\gamma\mu^{\vec{\zeta}}_{s-})\}}   \\
&\qquad\qquad\qquad\qquad\qquad
\times 1_{ \{ \theta_1 \leq K_\ell^{\gamma}\left(\mathcal{P}^{(\ell)}(\gamma\mu_{s-}^{\vec{\zeta}}, \vec{i}) \right) \}}  1_{ \{ \theta_2 \leq  m^\eta_\ell\left(\vec{y} \,  | \, \mathcal{P}^{(\ell)}(\gamma\mu_{s-}^{\vec{\zeta}}, \vec{i}) \right) \}}  d\tilde{N}_{\ell}(s,\vec{i}, \vec{y},\theta_1, \theta_2),
\end{aligned}
\end{equation}
denotes the discrete part, i.e. Poisson process, of the martingale.

The quadratic variation of $\cC^{\vec{\zeta}, j}_t(f)$ is
\begin{equation}\label{eq:contMartingaleQV}
\la \cC^{\vec{\zeta}, j}\ra_t(f) = \int_{0}^{t} \la 2D_{j} \paren{\frac{ \partial f}{\partial Q}(x)}^2, \mu_{s-}^{{\vec{\zeta}}, j}(dx)\ra\,ds,
\end{equation}
and the quadratic variation of $\cD^{\vec{\zeta}, j}_t(f)$ is
\begin{align}\label{eq:disMartingaleQV}
\la \cD^{\vec{\zeta}, j}\ra_t(f) &=\sum_{\ell = 1}^{\tilde{L}} \int_{0}^{t} \int_{\tilde{\mathbb{X}}^{(\ell)}}     \frac{1}{\vec{\alpha}^{(\ell)}!}   K_\ell\left(\vec{x}\right)  \left(  \sum_{r = 1}^{\alpha_{\ell j}} f(x_r^{(j)}) \right)^2\,\lambda^{(\ell)}[\mu_{s-}^{{\vec{\zeta}}}](d\vec{x}) \, ds\nonumber\\
&
\quad+\sum_{\ell = \tilde{L} + 1}^L \int_{0}^{t} \int_{\tilde{\mathbb{X}}^{(\ell)}}     \frac{1}{\vec{\alpha}^{(\ell)}!}   K_\ell\left(\vec{x}\right) \left(\int_{\mathbb{Y}^{(\ell)}}    \left( \sum_{r = 1}^{\beta_{\ell j}} f(y_r^{(j)}) - \sum_{r = 1}^{\alpha_{\ell j}} f(x_r^{(j)}) \right)^2 m^\eta_\ell\left(\vec{y} \, |\, \vec{x} \right)\, d \vec{y} \right)\,\lambda^{(\ell)}[\mu_{s-}^{{\vec{\zeta}}}](d\vec{x}) \, ds.
\end{align}


Notice that in \cref{eq:fluctuationProcess}, $\{ \Xi^{{\vec{\zeta}}, j}_{t}\}_{j = 1}^J$ evolves through $\sqrt{\gamma}\paren{\lambda^{(\ell)}[\mu_{s}^{\vec{\zeta}}](\vx)-\lambda^{(\ell)}[\bar{\mu}_{s}](\vx)}$. In particular, we have the following cases. If the $\ell$-th reaction is a first order reaction, $A_k \to \cdots$ ,
$$\sqrt{\gamma}\paren{\lambda^{(\ell)}[\mu_{s}^{\vec{\zeta}}](x)-\lambda^{(\ell)}[\bar{\mu}_{s}](x)} =  \Xi^{{\vec{\zeta}},k}_{s}.$$
If the $\ell$-th reaction is a second order reaction, $A_k + A_r \to \cdots$ ,
$$\sqrt{\gamma}\paren{\lambda^{(\ell)}[\mu_{s}^{\vec{\zeta}}](x,y)-\lambda^{(\ell)}[\bar{\mu}_{s}](x,y)}
=  \sqrt{\gamma} \paren{\mu_{s}^{\vec{\zeta}, k}(x) \mu_{s}^{\vec{\zeta}, r}(y) - \bar{\mu}^k_{s}(x)\bar{\mu}^r_{s}(y)}
= \Xi^{{\vec{\zeta}},k}_{s}(x)\mu_{s}^{\vec{\zeta}, r}(y) +  \bar{\mu}^k_{s}(x)\Xi^{{\vec{\zeta}},r}_{s}(y).$$
When $\vec{\zeta}\to 0$, we have that $\mu_{s}^{\vec{\zeta}, j}\to \bar{\mu}^j_{s}$ for all $j = 1, \cdots, J$ in probability. These considerations led to the notation used in Definition~\ref{def:Delta}.

For the remainder of the proof of Theorem~\ref{T:FluctuationsThm}, and without loss of generality, we assume that $\tilde{L} = 0$. The case when $\tilde{L} > 0$ follows by similar arguments.

\subsection{Preliminary moment bounds}\label{S:MomentBounds}
The purpose of this section is to obtain moment bounds on $\mu_{t}^{{\vec{\zeta}}, j}$ that are uniform with respect to $\vec{\zeta}$, and on the limit measure $\bar{\mu}_{t}^{j}$, for all $t \in [0,T]$ for any $T<\infty$.

We are interested in obtaining a uniform bound with respect to $\vec{\zeta}\in(0,1)^{2}$ of the quantity
\[
\theta^{\vec{\zeta},j,4D}_{t}=\mathbb{E}\sup_{s\in[0,t]}\la |\cdot|^{4D},\mu^{\vec{\zeta},j}_{s}\ra.
\]
For this purpose, we have the following lemma.

\begin{lemma}\label{L:MomentBound}
Let us assume $\rho\in L^{1}(\mathbb{R}^{d})$ and that Assumptions \ref{Assume:kernalBdd}, \ref{A:AssumptionRho}, \ref{A:AssumptionK}, \ref{A:AssumptionInitialCondition} hold.
Then, we have that there exists a finite constant $C(C_{\circ},K,\rho,T,D)<\infty$ that depends on the upper bounds of the quantities that appear in those assumptions, such that the following moment bound holds
\begin{align*}
\sum_{j=1}^{J} \theta^{\vec{\zeta},j,4D}_{T}&=\sum_{j=1}^{J}\mathbb{E}\sup_{t\in[0,T]}\la |\cdot|^{4D},\mu^{\vec{\zeta},j}_{t}\ra \leq C(C_{\circ},K,\rho,T,D)<\infty,
\end{align*}
uniformly with respect to $\vec{\zeta}$. In addition, we also have that
\begin{align*}
\sup_{t\in[0,T]}\sum_{j=1}^{J}\la |\cdot|^{8D},\bar{\mu}^{j}_{t}\ra \leq C(C_{\circ},K,\rho,T,D)<\infty.
\end{align*}
\end{lemma}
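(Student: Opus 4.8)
The plan is to apply It\^o's formula for jump-diffusions to the test function $|\cdot|^{4D}$ paired against $\mu^{\vec{\zeta},j}_t$, using the weak evolution equation \eqref{Eq:EM_j_formula}, and then to close a Gronwall-type estimate for $\theta^{\vec{\zeta},j,4D}_t$. Since the monomial $|\cdot|^{4D}$ is unbounded, I would first work with the truncated test functions $f_R(x) = |x|^{4D}\chi(x/R)$ for a smooth cutoff $\chi$, derive the estimate uniformly in $R$, and pass $R\to\infty$ at the end via monotone convergence (this is where Assumption~\ref{A:AssumptionInitialCondition}, which controls $\E\la|\cdot|^{8D},\mu^{\vec{\zeta},j}_0\ra$, supplies the finite starting point — note the $8D$ there is what lets us eventually also handle the limiting $\bar\mu$ bound at order $8D$). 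For each fixed $R$ the three groups of terms in \eqref{Eq:EM_j_formula} are handled as follows. The diffusion term contributes $\int_0^t\la D_j \Delta f_R, \mu^{\vec{\zeta},j}_{s-}\ra\,ds$; since $|\Delta(|x|^{4D})| \lesssim |x|^{4D-2} \lesssim 1 + |x|^{4D}$, this is bounded by $C\int_0^t(1 + \la|\cdot|^{4D},\mu^{\vec{\zeta},j}_{s}\ra)\,ds$ up to the total-mass bound $C(\mu)$ from Assumption~\ref{Assume:kernalBdd}. The martingale part (Brownian stochastic integral) is mean-zero; after taking $\sup_{s\le t}$ and expectation I would control it with the Burkholder--Davis--Gundy inequality, its quadratic variation $\frac1\gamma\int_0^t\la 2D_j|\partial_Q f_R|^2,\mu^{\vec{\zeta},j}_{s-}\ra\,ds$ being dominated by $\frac{C}{\gamma}\int_0^t(1+\la|\cdot|^{8D-2},\mu^{\vec{\zeta},j}_s\ra)\,ds$ — here one either uses the order-$8D$ initial bound propagated forward, or, more simply, interpolates $|x|^{8D-2}\le 1 + |x|^{4D}\cdot(\text{bounded})$... actually the cleaner route is to run the whole argument simultaneously at orders $4D$ and $8D$, or to note that the martingale term after BDG and Young's inequality can be absorbed into $\tfrac12\E\sup_{s\le t}\la|\cdot|^{4D},\mu^{\vec{\zeta},j}_s\ra$ plus a Gronwall-integrable remainder.

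The reaction terms require the most care. For a bimolecular reaction $\mathscr{R}_\ell$ of type $S_k + S_r \to \cdots$ with products placed according to $m^\eta_\ell$, the contribution is
\begin{align*}
\sum_\ell \int_0^t\int_{\tilde{\mathbb{X}}^{(\ell)}} \tfrac{1}{\vec\alpha^{(\ell)}!} K_\ell(\vx)\Big(\int_{\mathbb{Y}^{(\ell)}}\big(\textstyle\sum_s f_R(y_s^{(j)}) - \sum_s f_R(x_s^{(j)})\big) m^\eta_\ell(\vy|\vx)\,d\vy\Big)\lambda^{(\ell)}[\mu^{\vec{\zeta}}_{s-}](d\vx)\,ds
\end{align*}
(plus the martingale counterpart, again handled by BDG). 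The key structural point, exactly as exploited in the mean-field proof of \cite{IMS:2022}, is that the placement measures of Assumptions~\ref{Assume:measureOne2One}--\ref{Assume:measureOne2Two} place products either at a reactant position, at a convex combination of reactant positions, or — in the unbinding case — at a reactant position plus a separation drawn from $\rho$. In every case $|y_s^{(j)}|^{4D} \le C\big(1 + |x_1^{(\cdot)}|^{4D} + |x_2^{(\cdot)}|^{4D} + |w|^{4D}\big)$ where $|w|$ is the $\rho$-distributed separation, so after integrating against $m^\eta_\ell$ and using Assumption~\ref{A:AssumptionRho} ($\int |w|^{8D}\rho(w)\,dw < \infty$, hence certainly the $4D$ moment is finite) together with the boundedness of $K_\ell$ (Assumption~\ref{Assume:kernalBdd0}/\ref{A:AssumptionK}), the whole reaction contribution is bounded by $C\int_0^t\big(1 + \sum_{j'}\la|\cdot|^{4D},\mu^{\vec{\zeta},j'}_s\ra\big)\,ds$, using the uniform mass bound $C(\mu)$ to control the extra factor of $\mu^{\vec{\zeta}}$-mass coming from $\lambda^{(\ell)}$ being a product of two marginals. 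Crucially the $\gamma$-scaling works out: $K_\ell^\gamma = \gamma^{1-|\vec\alpha|}K_\ell$ combines with the $\tfrac1\gamma$ in front of the reaction sum in \eqref{Eq:EM_j_formula} and the $\gamma$-fold multiplicity of the sum over reactant indices to yield an $O(1)$, $\gamma$-independent bound — this is precisely why the estimate is uniform in $\vec{\zeta}$.

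Summing over $j$, taking $\sup_{s\le t}$ and $\E$, absorbing the martingale pieces, and invoking Gronwall's inequality then yields $\sum_j\theta^{\vec{\zeta},j,4D}_T \le C(\mu,K,\rho,T,D)$ uniformly in $\vec\zeta$, after letting $R\to\infty$. For the second assertion, the bound on $\sup_{t\le T}\sum_j\la|\cdot|^{8D},\bar\mu^j_t\ra$, I would repeat the identical computation with $|\cdot|^{8D}$ in place of $|\cdot|^{4D}$ directly on the limiting mean-field equation \eqref{Eq:Limit_EM_formula2} — there is no martingale term, so the argument is in fact simpler and only needs the order-$8D$ moment hypotheses in Assumptions~\ref{A:AssumptionRho} and~\ref{A:AssumptionInitialCondition}; alternatively one obtains it by lower-semicontinuity from the prelimit bound at order $8D$ together with weak convergence $\mu^{\vec{\zeta},j}\to\bar\mu^j$ from Theorem~\ref{thm:convergence}, using Fatou. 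I expect the main technical obstacle to be bookkeeping the placement-measure terms in the unbinding reaction $S_i \to S_j + S_k$ — ensuring the separation variable's moment (controlled by Assumption~\ref{A:AssumptionRho}) is correctly disentangled from the center-of-mass variable so that the recursive Gronwall structure $\la|\cdot|^{p},\mu_s\ra$ on the right-hand side is genuinely closed, rather than picking up a higher power — but the structural bound $|y|^p \lesssim 1 + |x_1|^p + |x_2|^p + |w|^p$ for the convex-combination-plus-separation placement is exactly what prevents any loss, and this mirrors the mechanism already used in \cite{IMS:2022}.
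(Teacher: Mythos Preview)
Your approach is essentially the paper's, but the paper organizes the argument as two clearly separated steps, and one of your proposed shortcuts does not close.

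\textbf{The two-step structure.} The paper first bounds the \emph{expectation-only} quantity $h^{\vec{\zeta},j,8D}_t := \E\la|\cdot|^{8D},\mu^{\vec{\zeta},j}_t\ra$ (no supremum inside), for which the martingale term simply vanishes in expectation and a direct Gronwall closes. Only then does it attack $\theta^{\vec{\zeta},j,4D}_T = \E\sup_{t\le T}\la|\cdot|^{4D},\mu^{\vec{\zeta},j}_t\ra$: after BDG/Doob, the Poisson-martingale quadratic variation contains the square $\big(\sum f(y_r^{(j)})-\sum f(x_r^{(j)})\big)^2 \sim |x|^{8D}$, and it is the already-established bound on $\sup_t h^{\vec{\zeta},j,8D}_t$ that controls this. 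Your phrase ``the order-$8D$ initial bound propagated forward'' is exactly Step~1, so you have the right idea; but your alternative (``after BDG and Young's inequality can be absorbed into $\tfrac12\E\sup_{s\le t}\la|\cdot|^{4D},\mu_s\ra$'') does not work, because $\la|x|^{8D},\mu_s\ra$ cannot be dominated by a product of $\la|x|^{4D},\mu_s\ra$-type quantities --- there is no interpolation that closes the loop at order $4D$ alone. You must go through the $8D$ bound first.

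\textbf{Truncation vs.\ Fock space.} For Step~1 the paper does \emph{not} use truncated test functions in \eqref{Eq:EM_j_formula}; instead it derives a separate mean-evolution identity \eqref{Eq:Expectation_Mean_Formula} from the Fock-space forward equation (Appendix~A), which is valid for the unbounded $f(x)=|x|^{8D}$ and contains no martingale term by construction. Your truncation route $f_R(x)=|x|^{p}\chi(x/R)$ is a legitimate alternative and arguably more elementary: with $f_R$ bounded, the stochastic integrals in \eqref{Eq:EM_j_formula} are genuine martingales, their expectation vanishes, Gronwall gives a bound uniform in $R$, and monotone convergence finishes. The paper's Fock-space detour buys a clean justification that the martingale property survives for unbounded $f$, at the cost of the extra appendix.

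Your treatment of the reaction and placement terms (the $|y|^p\lesssim 1+|x_1|^p+|x_2|^p+|w|^p$ mechanism, with Assumption~\ref{A:AssumptionRho} controlling the $\rho$-moment) matches the paper's case analysis exactly, as does your handling of the $\bar\mu$ bound directly on \eqref{Eq:Limit_EM_formula2}.
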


\begin{proof}[Proof of Lemma \ref{L:MomentBound}]
We will only prove the statement for the moments associated with the prelimit measure $\mu^{\vec{\zeta},j}_{\cdot}$. The statement for the moments of the limit measure $\bar{\mu}^{j}_{\cdot}$ follows by a similar argument.

Without loss of generality we assume $\tilde{L} = 0$. We start by defining
\[
h^{\vec{\zeta},j,8D}_{t}=\mathbb{E}\la |\cdot|^{8D},\mu^{\vec{\zeta},j}_{t}\ra.
\]
Note that by Assumption \ref{A:AssumptionInitialCondition} we have that $h^{\vec{\zeta},j,8D}_{0}<\infty$. We will first prove that $\sup_{t\in[0,T]}\sum_{j=1}^{J} h^{\vec{\zeta},j,8D}_{t}<C$ for some finite constant $C<\infty$ that is uniform with respect to $\zeta\in(0,1)^{2}$. Note that in Assumption \ref{A:AssumptionInitialCondition} we merely assume that such spatial moments are finite (not necessarily uniformly bounded in $\zeta\in(0,1)^{2}$). Now, we shall prove they in fact are uniformly bounded in $\zeta\in(0,1)^{2}$.

Beginning with the equivalent Fock space forward equation representation of~\eqref{Eq:EM_j_formula}, i.e. the forward Kolmogorov equation for the stochastic processes for the number of particles of each type and their positions, we show in Appendix~\ref{S:FockSpaceExpect} that for $f(x) = \abs{x}^{8D}$ we have that $\mathbb{E}\la f,\mu^{\vec{\zeta},j}_{t}\ra$ satisfies the equation
\begin{multline} \label{Eq:Expectation_Mean_Formula}
\mathbb{E}\la f,\mu^{\vec{\zeta},j}_{t}\ra
=\mathbb{E}\la f,\mu^{\vec{\zeta},j}_{0}\ra + \int_{0}^{t} \mathbb{E}\la (\mathcal{L}_j f)(x), \mu^{\vec{\zeta},j}_{s}(dx) \ra ds\\
+\sum_{\ell =  1}^L \mathbb{E}\int_{0}^{t} \int_{\tilde{\mathbb{X}}^{(\ell)}}     \frac{1}{\vec{\alpha}^{(\ell)}!}   K_\ell\left(\vec{x}\right) \left(\int_{\mathbb{Y}^{(\ell)}}    \left( \sum_{r = 1}^{\beta_{\ell j}} f(y_r^{(j)}) \right) m^{\eta}_\ell\left(\vec{y} \, |\, \vec{x} \right)\, d \vec{y} - \sum_{r = 1}^{\alpha_{\ell j}} f(x_r^{(j)}) \right)\,\lambda^{(\ell)}[\mu^{\vec{\zeta}}_{s}](d\vec{x}) \, ds.
\end{multline}
Note that implicit in deriving this equation is the assumed existence of the spatial moment via Assumption~\ref{A:AssumptionInitialCondition}.

Now we derive an a-priori bound that is uniform with respect to $\zeta\in(0,1)^{2}$ and $t\in[0,T]$. We use (\ref{Eq:Expectation_Mean_Formula}) with  $f(x)=|x|^{8D}$ and we note that by Assumption \ref{Assume:kernalBdd} we have that $\la 1, \mu_t^{{\vec{\zeta}}, j} \ra \leq C_{\circ}$ and by Assumption \ref{A:AssumptionInitialCondition} that $h^{\vec{\zeta},j,8D}_{0}\leq C<\infty$. We then obtain
\begin{align}
h^{\vec{\zeta},j,8D}_{t}
&
=h^{\vec{\zeta},j,8D}_{0} + 8D(8D+d-2)D_{j}\int_{0}^{t} h^{\vec{\zeta},j,8D-2}_{s} ds\notag\\
&
\quad+\sum_{\ell =  1}^L \mathbb{E}\int_{0}^{t} \int_{\tilde{\mathbb{X}}^{(\ell)}}     \frac{1}{\vec{\alpha}^{(\ell)}!}   K_\ell\left(\vec{x}\right) \left(\int_{\mathbb{Y}^{(\ell)}}    \left( \sum_{r = 1}^{\beta_{\ell j}} |y_r^{(j)}|^{8D} \right) m_\ell\left(\vec{y} \, |\, \vec{x} \right)\, d \vec{y} - \sum_{r = 1}^{\alpha_{\ell j}} |x_r^{(j)}|^{8D} \right)\,\lambda^{(\ell)}[\mu^{\vec{\zeta}}_{s}](d\vec{x}) \, ds \notag\\
&\quad+\sum_{\ell =  1}^L \mathbb{E}\int_{0}^{t} \int_{\tilde{\mathbb{X}}^{(\ell)}} \frac{1}{\vec{\alpha}^{(\ell)}!}   K_\ell\left(\vec{x}\right) \left(\int_{\mathbb{Y}^{(\ell)}} \left( \sum_{r = 1}^{\beta_{\ell j}} |y_r^{(j)}|^{8D} \right) \left(m^{\eta}_\ell\left(\vec{y} \, |\, \vec{x} \right)-m_\ell\left(\vec{y} \, |\, \vec{x} \right)\right)\, d \vec{y} \right)\,\lambda^{(\ell)}[\mu^{\vec{\zeta}}_{s}](d\vec{x}) \, ds.\notag \\
&\leq h^{\vec{\zeta},j,8D}_{0} + 8D(8D+d-2)D_{j}\int_{0}^{t} h^{\vec{\zeta},j,8D-2}_{s} ds \label{Eq:Expectation_Mean_Formula2}\\
&\quad+\sum_{\ell =  1}^L \mathbb{E}\int_{0}^{t} \int_{\tilde{\mathbb{X}}^{(\ell)}} \frac{1}{\vec{\alpha}^{(\ell)}!} K_\ell\left(\vec{x}\right) \left(\int_{\mathbb{Y}^{(\ell)}}    \left( \sum_{r = 1}^{\beta_{\ell j}} |y_r^{(j)}|^{8D} \right) m_\ell\left(\vec{y} \, |\, \vec{x} \right)\, d \vec{y} \right)\,\lambda^{(\ell)}[\mu^{\vec{\zeta}}_{s}](d\vec{x}) \, ds \notag \\
&\quad+\sum_{\ell =  1}^L \mathbb{E}\int_{0}^{t} \int_{\tilde{\mathbb{X}}^{(\ell)}}     \frac{1}{\vec{\alpha}^{(\ell)}!}   K_\ell\left(\vec{x}\right) \left(\int_{\mathbb{Y}^{(\ell)}}    \left( \sum_{r = 1}^{\beta_{\ell j}} |y_r^{(j)}|^{8D} \right) \left(m^{\eta}_\ell\left(\vec{y} \, |\, \vec{x} \right)-m_\ell\left(\vec{y} \, |\, \vec{x} \right)\right)\, d \vec{y} \right)\,\lambda^{(\ell)}[\mu^{\vec{\zeta}}_{s}](d\vec{x}) \, ds.\notag
\end{align}

We first estimate
\begin{equation*}
  I := \E \int_{\tilde{\mathbb{X}}^{(\ell)}}     \frac{1}{\vec{\alpha}^{(\ell)}!}   K_\ell\left(\vec{x}\right) \left(\int_{\mathbb{Y}^{(\ell)}}    \left( \sum_{r = 1}^{\beta_{\ell j}} |y_r^{(j)}|^{8D} \right) m_\ell\left(\vec{y} \, |\, \vec{x} \right)\, d \vec{y}  \right) \lambda^{(\ell)}[\mu_{s}^{\vec{\zeta}}](d\vec{x})
\end{equation*}
by analyzing all possible cases.
\begin{enumerate}
\item If the $\ell$-th reaction is of the form $S_i \to S_j$, so that $\alpha_{\ell,i} = \beta_{\ell,j} = 1$, then
\begin{equation*}
 I = \E\int_{\R^d}  K_\ell\left(x\right) |x|^{8D} \mu^{\vec{\zeta},i}_{s} (dx) \leq C(K) h^{\vec{\zeta},i,8D}_{s}.
\end{equation*}

\item If the $\ell$-th reaction is of the form $S_i \to S_j + S_n$,
\begin{align*}
I &=
\begin{cases}
\E\int_{\R^d} K_\ell(x) \left(\int_{\R^{2d}}  |y|^{8D}  m_\ell\left(y, z \, |\, x \right)\,  dy \, dz \right)  \mu_{s}^{\vec{\zeta},i} (dx), & (\beta_{\ell j} = 1)\\
\E\int_{\R^d} K_\ell\left(x\right) \left(\int_{\R^{2d}}    \left( |y|^{8D} + |z|^{8D}\right) m_\ell\left(y, z \, |\, x \right)\,  dy\, dz   \right)  \mu_{s}^{\vec{\zeta},i} (dx), &(\beta_{\ell j} = 2)
\end{cases}\\
&\leq \E\int_{\R^d} K_\ell(x) \left(\int_{\R^{2d}} \left( |y|^{8D} + |z|^{8D}\right) \rho(y-z) \sum_{i = 1}^{I} p_i \delta\paren{x - \paren{\alpha_i y + (1-\alpha_i) z}} \, dy\, dz  \right)  \mu_{s}^{\vec{\zeta},i} (dx)\\
&= \sum_{i = 1}^{I} p_i  \E\int_{\R^d}  K_\ell(x) \left(\int_{\R^{2d}}  \paren{|w+z|^{8D} + |z|^{8D}}  \rho(w) \delta\paren{x - \paren{\alpha_i w + z}} \, dz\, dw  \right)  \mu_{s}^{\vec{\zeta},i} (dx) \\
&= \sum_{i = 1}^{I} p_i \E \int_{\R^d} K_\ell(x) \left(\int_{\R^{d}} \paren{|x + \paren{1-\alpha_i}w|^{8D} + \abs{x - \alpha_i w}^{8D}}  \rho(w) \, dw  \right)  \mu_{s}^{\vec{\zeta},i} (dx) \\
&\leq C \sum_{i = 1}^{I} p_i \E \int_{\R^d} K_\ell(x) \left(\int_{\R^{d}} \paren{|x|^{8D} + \abs{w}^{8D}}  \rho(w) \, dw  \right)  \mu_{s}^{\vec{\zeta},i} (dx) \\
&\leq C \|K_\ell\|_{\infty}\sum_{i = 1}^{I} p_i \E \left[\la |x|^{8D}, \mu_{s}^{\vec{\zeta},i}(dx)\ra+ \int_{\R^{d}} \abs{w}^{8D} \rho(w) \, dw \la 1, \mu_{s}^{\vec{\zeta},i}\ra\right]
 \\
&\leq C(C_{\circ},K)(1+h^{\vec{\zeta},i,8D}_{s}).
\end{align*}
Here in obtaining the second to last inequality we made use of Assumption~\ref{A:AssumptionRho}.
\item If the $\ell$-th reaction is of the form $S_i + S_n \to S_j$,
\begin{align*}
  I &= \E\int_{\R^{2d}} K_\ell\left(x, y\right) \left(\int_{\R^d}    |z|^{8D} m_\ell\left(z |\, x, y\right)\, d z  \right)   \mu_{s}^{\vec{\zeta}, n}(dx)\mu_{s}^{\vec{\zeta}, i}(dy) \\
  &= \sum_{i = 1}^{I} p_i \E\int_{\R^{2d}}     K_\ell\left(x, y\right)   |\alpha_i x + (1-\alpha_i) y|^{8D}     \mu_{s}^{\vec{\zeta}, n}(dx)\mu_{s}^{\vec{\zeta}, i}(dy) \\
  & \leq C(C_{\circ},K)\sum_{j=1}^{J}h^{\vec{\zeta},j,8D}_{s}.
\end{align*}
\item If the $\ell$-th reaction is of the form $S_i + S_n \to S_j + S_r$,
\begin{align*}
I &= \begin{cases}
  \frac{1}{2} \E\int_{\R^{2d}} K_\ell\left(x,y\right) \left(\int_{\R^{2d}} |z|^{8D} \, m_\ell\left(z, w \, |\,x, y \right)\, d z\, dw  \right) \mu_{s}^{\vec{\zeta}, n}(dx)\mu_{s}^{\vec{\zeta}, i}(dy), & (\beta_{\ell j} = 1)\\
  \frac{1}{2} \E\int_{\R^{2d}} K_\ell\left(x,y\right) \left(\int_{\R^{2d}}    \left( |z|^{8D} + |w|^{8D} \right) m_\ell\left(z, w \, |\,x, y \right)\, d z\, dw  \right) \mu_{s}^{\vec{\zeta}, n}(dx)\mu_{s}^{\vec{\zeta}, i}(dy), & (\beta_{\ell j} = 2)
\end{cases} \\
&\leq \tfrac{1}{2}\E\int_{\R^{2d}} K_\ell(x, y)    \left( |x|^{8D} + |y|^{8D} \right)  \mu_{s}^{\vec{\zeta}, n}(dx)\mu_{s}^{\vec{\zeta}, i}(dy) \\
&\leq  C(C_{\circ},K)\sum_{j=1}^{J}h^{\vec{\zeta},j,8D}_{s}.
\end{align*}
\end{enumerate}
Thus, we obtain that
\begin{align}\label{eq:DiffMeasPart2}
  I \leq C(C_{\circ},K) \paren{1 +\sum_{j=1}^{J}h^{\vec{\zeta},j,8D}_{s}}.
\end{align}

It remains to bound the second term in (\ref{Eq:Expectation_Mean_Formula2}),
\begin{equation*}
  \mathbb{E}\int_{0}^{t} \int_{\tilde{\mathbb{X}}^{(\ell)}}     \frac{1}{\vec{\alpha}^{(\ell)}!}   K_\ell\left(\vec{x}\right) \left(\int_{\mathbb{Y}^{(\ell)}}    \left( \sum_{r = 1}^{\beta_{\ell j}} |y_r^{(j)}|^{8D} \right) \left(m^{\eta}_\ell\left(\vec{y} \, |\, \vec{x} \right)-m_\ell\left(\vec{y} \, |\, \vec{x} \right)\right)\, d \vec{y} \right)\,\lambda^{(\ell)}[\mu^{\vec{\zeta}}_{s}](d\vec{x}) \, ds.
\end{equation*}
The analysis of the innermost integral is done as in the proof of Lemma B.2 of \cite{IMS:2022} using the decomposition in terms of the different types of reactions (similar to the approach above). The role of the test function $f(y)$ in that lemma is played here by  $f(y)= \sum_{r = 1}^{\beta_{\ell j}}|y_r^{(j)}|^{8D}$. The primary change in the estimates is that we can not use the uniform $C_{b}^{1}(\R^{d})$ norm when bounding $\abs{f(y) - f(x)}$ over balls about $x$, since the moments are unbounded over free-space. We instead use $x$-dependent estimates over the balls combined with Assumption~\ref{A:AssumptionRho} to  obtain bounds of the form
\begin{equation*}
\abs{\int_{\mathbb{Y}^{(\ell)}}   |y_r^{(j)}|^{8D}  \left(m^{\eta}_\ell\left(\vec{y} \, |\, \vec{x} \right)-m_\ell\left(\vec{y} \, |\, \vec{x} \right)\right)\, d \vec{y}}  \leq C(D) \eta \paren{1 + \abs{x_{r}^{(j)}}^{8D-1}}.
\end{equation*}
Using such modifications, and the assumption that $\sum_{t\in[0,T]}\sum_{j=1}^{J}\la 1, \mu^{\vec{\zeta},j}_{t} \ra$ is bounded, one gets that there exists a constant $C=C(C_{\circ},K,D)$ which  depends on $\sum_{t\in[0,T]}\sum_{j=1}^{J}\la 1, \mu^{\vec{\zeta},j}_{t} \ra$ and on the upper uniform bound for the reaction kernel $K_\ell\left(\vec{x}\right)$ such that
\begin{align*}
&\mathbb{E}\int_{0}^{t} \int_{\tilde{\mathbb{X}}^{(\ell)}}     \frac{1}{\vec{\alpha}^{(\ell)}!}   K_\ell\left(\vec{x}\right) \left(\int_{\mathbb{Y}^{(\ell)}}    \left( \sum_{r = 1}^{\beta_{\ell j}} |y_r^{(j)}|^{8D} \right) \left(m^{\eta}_\ell\left(\vec{y} \, |\, \vec{x} \right)-m_\ell\left(\vec{y} \, |\, \vec{x} \right)\right)\, d \vec{y} \right)\,\lambda^{(\ell)}[\mu^{\vec{\zeta}}_{s}](d\vec{x}) \, ds \\
&\qquad \leq \eta C (C_{\circ},K,D)\int_{0}^{t} \paren{1 + \sum_{j=1}^{J} h^{\vec{\zeta},j,8D-1}_{s}} ds.
\end{align*}

Combining now the last bound with~\eqref{eq:DiffMeasPart2} implies the following bound for~\eqref{Eq:Expectation_Mean_Formula2}
\begin{align*}
h^{\vec{\zeta},j,8D}_{s} &\leq h^{\vec{\zeta},j,8D}_{0}+ 8D(8D+d-2)D_{j}\int_{0}^{t} h^{\vec{\zeta},j,8D-2}_{s} ds+ C(C_{\circ},K,D)\left[t + \sum_{j=1}^{J} \int_{0}^{t} \paren{h^{\vec{\zeta},j,8D}_{s} + \eta h^{\vec{\zeta},j,8D-1}_{s}} ds\right] \\
&\leq C(C_{\circ},K,D) \paren{h^{\vec{\zeta},j,8D}_{0} + t + \sum_{j=1}^J \int_{0}^{t} h^{\vec{\zeta},j,8D}_{s} ds}
\end{align*}
by bounding the lower moments by $C(1 + h^{\vec{\zeta},j,8D}_{s})$.
Summing over all $j\in\{1,\cdots, J\}$ and using Grownwall's inequality, we find that there exists some constant $C(C_{\circ},K,\rho,T,D)<\infty$ that depends on $\sup_{t\in[0,T]}\sum_{j=1}^{J}\la 1, \mu^{\vec{\zeta},j}_{t}\ra$, on the upper uniform bound for the reaction kernel $K_\ell\left(\vec{x}\right)$, on $\int_{\mathbb{R}^{d}}|w|^{8D}\rho(w)dw$, on $\norm{\rho}_{L^{1}(\mathbb{R}^{d})}$, and on $T<\infty$, such that
\begin{align}
\sup_{t\in[0,T]}\sum_{j=1}^{J} h^{\vec{\zeta},j,8D}_{t}\leq C(C_{\circ},K,\rho,T,D)<\infty.\label{Eq:MomentBoundExpOutside}
\end{align}

So, indeed starting with equation (\ref{Eq:Expectation_Mean_Formula2}) and recalling the uniform bound at time $t=0$ by Assumption \ref{A:AssumptionInitialCondition}, we obtain in (\ref{Eq:MomentBoundExpOutside}) the a-priori uniform bound in $\zeta\in(0,1)^{2}$ and $t\in[0,T]$ of $h^{\vec{\zeta},j,8D}_{t}$.

We now prove the uniform bound for $\theta^{\vec{\zeta},j,4D}_{t}$. Recall that we can assume, without loss of generality, that $\tilde{L}=0$. Due to the preceding bound, we can now write (\ref{Eq:EM_j_formula}) for $f(x)=|x|^{4D}$
\begin{equation}  \label{eq:MuProcess}
\begin{aligned}
\la f,\mu^{{\vec{\zeta}}, j}_{t}\ra&
=\la f,\mu^{{\vec{\zeta}}, j}_{0}\ra + \int_{0}^{t} \la (\mathcal{L}_jf)(x) , \mu^{\vec{\zeta}, j}_{s-} (dx)\ra ds  + \frac{1}{\sqrt{\gamma}}M^{\vec{\zeta}, j}_t(f) \\
&\phantom{=}+ \sum_{\ell = 1}^L \int_{0}^{t} \int_{\tilde{\mathbb{X}}^{(\ell)}}     \frac{1}{\vec{\alpha}^{(\ell)}!}   K_\ell\left(\vec{x}\right) \left(\int_{\mathbb{Y}^{(\ell)}}  \left( \sum_{r = 1}^{\beta_{\ell j}} f(y_r^{(j)}) \right) \paren{m^\eta_\ell\left(\vec{y} \, |\, \vec{x} \right)-m_\ell\left(\vec{y} \, |\, \vec{x} \right)}\, d \vec{y}  \right)\, \lambda^{(\ell)}[\mu_{s}^{\vec{\zeta}}](d\vec{x})\, ds,
\end{aligned}
\end{equation}
where $M^{\vec{\zeta}, j}_t(f)= \cC^{\vec{\zeta}, j}_t(f)  + \cD^{\vec{\zeta},
j}_t(f)$ is the martingale given by (\ref{eq:MartingaleDef}). By the
Burkoholder-Davis-Gundy inequality with $f(x)=|x|^{4D}$, we have that there
exists a finite constant $C$ that depends on $D_{j}$ and $D$, such that
\begin{align}
\E\sup_{t\in[0,T]}|\cC^{\vec{\zeta}, j}_t(|\cdot|^{4D})|^{2}
&\leq C \int_{0}^{T}  h_{s-}^{{\vec{\zeta}}, j,8D-2} ds.
\end{align}

Similarly, using Doob's maximal inequality and calculations for the different reaction possibilities as before, we obtain that there exists a constant $C<\infty$ that may depend on $\sum_{t\in[0,T]}\sum_{j=1}^{J}\la1, \mu^{\vec{\zeta},j}_{t}\ra$, on the upper uniform bound for the reaction kernel $K_\ell\left(\vec{x}\right)$ , on $\int_{\mathbb{R}^{d}}|w|^{8D}\rho(w)dw$, on $\norm{\rho}_{L^{1}(\mathbb{R}^{d})}$, and on $T<\infty$ such that
\begin{align*}
&\E\sup_{t\in[0,T]}|\cD^{\vec{\zeta}, j}_t(|\cdot|^{4D})|^{2} \\
&\leq C \E\sum_{\ell = 1}^L \int_{0}^{T} \int_{\tilde{\mathbb{X}}^{(\ell)}}     \frac{1}{\vec{\alpha}^{(\ell)}!}   K_\ell\left(\vec{x}\right) \left(\int_{\mathbb{Y}^{(\ell)}}    \left( \sum_{r = 1}^{\beta_{\ell j}} |y_r^{(j)}|^{4D} - \sum_{r = 1}^{\alpha_{\ell j}} |x_r^{(j)}|^{4D} \right)^2 m^\eta_\ell\left(\vec{y} \, |\, \vec{x} \right)\, d \vec{y} \right)\,\lambda^{(\ell)}[\mu_{s-}^{{\vec{\zeta}}}](d\vec{x}) \, ds.\\
&\leq C \sum_{j=1}^{J}\int_{0}^{T}  h_{s}^{{\vec{\zeta}}, j,8D} ds.
\end{align*}
Using then the bound (\ref{Eq:MomentBoundExpOutside}), we obtain that
\begin{align}
\E\sup_{t\in[0,T]}|M^{\vec{\zeta}, j}_t(|\cdot|^{4D})|^{2}&\leq C,\label{Eq:MartBound}
\end{align}
for some constant $C<\infty$ that may depend on $\sum_{t\in[0,T]}\sum_{j=1}^{J}\la 1, \mu^{\vec{\zeta},j}_{t}\ra$, on the upper uniform bound for the reaction kernel $K_\ell\left(\vec{x}\right)$ , on $\int_{\mathbb{R}^{d}}|w|^{8D}\rho(w)dw$, on $\norm{\rho}_{L^{1}(\mathbb{R}^{d})}$, and on $T<\infty$, but not on $\vec{\zeta}$.

Taking now supremum over $t\in[0,T]$ and then the expectation operator in (\ref{eq:MuProcess}), using (\ref{Eq:MartBound}) for the martingale terms and (\ref{Eq:MomentBoundExpOutside}) to bound the supremum of the Riemann integral terms, we obtain the desired bound
\begin{align*}
\sum_{j=1}^{J} \theta^{\vec{\zeta},j,4D}_{T}&=\sum_{j=1}^{J}\mathbb{E}\sup_{t\in[0,T]}\la |\cdot|^{4D},\mu^{\vec{\zeta},j}_{t}\ra \leq C(C_{\circ},K,\rho,T,D)<\infty,
\end{align*}
concluding the proof of the lemma.
\end{proof}

\subsection{Tightness} \label{S:Tightness}
In this section, we aim to rigorously show the relative compactness of the fluctuation process $\left\{\paren{\Xi^{\vec{\zeta}, 1}_t, \cdots, \Xi^{\vec{\zeta}, J}_t},t\in[0, T]\right\}_{\vec{\zeta}\in(0,1)^{2}}$ and  of the martingale vector $\left\{\paren{M^{\vec{\zeta},1}_t, \cdots, M^{\vec{\zeta}, J}_t},t\in[0, T]\right\}_{\vec{\zeta}\in(0,1)^{2}}$ in $D_{(W^{-\Gamma,a}(\R^d))^{\otimes J}}([0, T])$.

Let $\kappa>0$, $q\geq 0$ and define the stopping time
\begin{align}
\theta^{q}_{\vec{\zeta},\kappa}&=\inf\left\{t\geq 0: \sum_{j=1}^{J}\la |\cdot|^{q},\mu^{\vec{\zeta},j}_{t}\ra\geq \kappa\right\}.
\end{align}
The estimate in Lemma \ref{L:MomentBound} implies that for $T<\infty$ and $q=4D\geq 0$, we have that
\begin{align}
\lim_{\kappa\rightarrow\infty}\sup_{\vec{\zeta}\in(0,1)^{2}}\mathbb{P}(\theta^{q}_{\vec{\zeta},\kappa}\leq T)=0.
\end{align}
Thus, for an appropriately chosen $q\geq 0$, it is enough to prove tightness for $\left\{\paren{\Xi^{\vec{\zeta}, 1}_{t\wedge\theta^{q}_{\vec{\zeta},\kappa}}, \cdots, \Xi^{\vec{\zeta}, J}_{t\wedge\theta^{q}_{\vec{\zeta},\kappa}}}\right\}_{t\in[0, T]} $ and  of the martingales $\left\{\paren{M^{\vec{\zeta},1}_{t\wedge\theta^{q}_{\vec{\zeta},\kappa}}, \cdots, M^{\vec{\zeta}, J}_{t\wedge\theta^{q}_{\vec{\zeta},\kappa}}}\right\}_{t\in[0, T]} $ in $D_{(W^{-\Gamma,a}(\R^d))^{\otimes J}}([0, T])$.


\subsubsection{Uniform Bound on the Fluctuation Process $\left\{\paren{\Xi^{\vec{\zeta}, 1}_{t\wedge\theta^{2b}_{\vec{\zeta},\kappa}}, \cdots,  \Xi^{\vec{\zeta}, J}_{t\wedge\theta^{2b}_{\vec{\zeta},\kappa}}}\right\}_{t\in[0, T]}$}

Before we prove our main Lemma \ref{lem:MartingaleNegBdd}, let us first present the technical \cref{lem:contMartingale,lem:poissonBd1,lem:MartingalePoisson,lem:densityIneq}.
\begin{lemma}\label{lem:contMartingale}
Let $\{ f_p \}_{p\geq 1}$ be a complete orthonormal system in $W_0^{\Gamma_{1},a}(\R^d)$ of class $C^\infty_0(\R^d)$. For a fixed $\gamma$, and assuming from Assumption~\ref{A:SobolevSpaceParameters} that $b>a+d/2$, $\Gamma_1\geq 2D+1$, and $a\geq D$, we have that
\begin{equation*}
  \sum_{p\geq 1}\int_0^{t\wedge\theta^{2b}_{\vec{\zeta},\kappa}}  \la f_p, \Xi^{{\vec{\zeta}}, j}_{s-}\ra d\cC^{\vec{\zeta}, j}_s (f_p)
\end{equation*}
is a square integrable martingale with the expected value of its quadratic variation bounded by
\begin{equation*}
  C(D_j,\kappa) \, \mathbb{E}\int_0^{t\wedge\theta^{2b}_{\vec{\zeta},\kappa}}   || \Xi^{{\vec{\zeta}}, j}_{s-}||_{-\Gamma_{1},a}^2  \, ds
\end{equation*}
for a finite constant $C<\infty$.
\end{lemma}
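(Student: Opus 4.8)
The plan is to realize the claimed object as an $L^2$-limit of the partial sums
\[
S^P_t := \sum_{p=1}^P \int_0^{t\wedge\theta^{2b}_{\vec{\zeta},\kappa}} \la f_p,\Xi^{\vec{\zeta}, j}_{s-}\ra\, d\cC^{\vec{\zeta}, j}_s(f_p),
\]
and then read off the quadratic variation of the limit. First I would check each summand is well posed: $s\mapsto \la f_p,\Xi^{\vec{\zeta}, j}_{s-}\ra$ is adapted and left-continuous, hence predictable, and for fixed $\gamma$ the measure $\mu^{\vec{\zeta}, j}_s$ is a finite sum of Dirac masses whose $a$-moments are controlled on $[0,\theta^{2b}_{\vec{\zeta},\kappa}]$ by the definition of the stopping time and Assumption~\ref{Assume:kernalBdd}; combined with $\bar\mu^j_s$ having finite $2a$-moments (Lemma~\ref{L:MomentBound}) and the embedding $W_0^{\Gamma_1,a}\hookrightarrow C^{0,a}$ (valid since $\Gamma_1>d/2$), this yields $\|\Xi^{\vec{\zeta}, j}_{s-}\|_{-\Gamma_1,a}\le \sqrt{\gamma}\,C(\mu,\kappa)$ for $s\le \theta^{2b}_{\vec{\zeta},\kappa}$. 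Since $\cC^{\vec{\zeta}, j}(f_p)$ is a continuous square-integrable martingale with $d\la \cC^{\vec{\zeta}, j}(f_p)\ra_s = 2D_j\la (\partial_Q f_p)^2,\mu^{\vec{\zeta}, j}_{s-}\ra\,ds$, see~\eqref{eq:contMartingaleQV}, each summand, and hence each $S^P$, is a square-integrable martingale.

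Second, I would compute the mutual brackets. Because all the $\cC^{\vec{\zeta}, j}(f_p)$ are driven by the same Brownian family $\{W^{i, j}\}$, one obtains $d\la \cC^{\vec{\zeta}, j}(f_p),\cC^{\vec{\zeta}, j}(f_q)\ra_s = 2D_j\la \partial_Q f_p\,\partial_Q f_q,\mu^{\vec{\zeta}, j}_{s-}\ra\,ds$, whence
\[
\la S^P\ra_t = \int_0^{t\wedge\theta^{2b}_{\vec{\zeta},\kappa}} 2D_j\,\la (\partial_Q \tilde\Xi^P_{s-})^2,\ \mu^{\vec{\zeta}, j}_{s-}\ra\, ds,
\qquad \tilde\Xi^P_{s-} := \sum_{p=1}^P \la f_p,\Xi^{\vec{\zeta}, j}_{s-}\ra\, f_p .
\]
Let $\tilde\Xi_{s-}\in W_0^{\Gamma_1,a}$ be the Riesz representative of $\Xi^{\vec{\zeta}, j}_{s-}\in W^{-\Gamma_1,a}$, so that $\tilde\Xi^P_{s-}\to \tilde\Xi_{s-}$ in $W_0^{\Gamma_1,a}$ with $\|\tilde\Xi_{s-}\|_{\Gamma_1,a}=\|\Xi^{\vec{\zeta}, j}_{s-}\|_{-\Gamma_1,a}$. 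The constraint $\Gamma_1\ge 2D+1$ (with $D=1+\ceil*{d/2}>d/2$) gives $W_0^{\Gamma_1,a}\hookrightarrow C^{1,a}$, so $g\mapsto \partial_Q g$ is bounded from $W_0^{\Gamma_1,a}$ into $C^{0,a}$; in particular $|\partial_Q g(x)|\le C(1+|x|^a)\|g\|_{\Gamma_1,a}$ and $\partial_Q \tilde\Xi^P_{s-}\to \partial_Q\tilde\Xi_{s-}$ in $C^{0,a}$. Using $(1+|x|^a)^2\le 2(1+|x|^{2a})$ and $\la 1+|x|^{2a},\mu^{\vec{\zeta}, j}_{s-}\ra\le C(\mu,\kappa)$ on $\{s\le \theta^{2b}_{\vec{\zeta},\kappa}\}$ (valid since $2a\le 2b$, as $b>a+\tfrac d2$), I obtain the remainder bound
\[
\mathbb{E}\,|\la S^{P'}\ra_t - \la S^P\ra_t| \le C(D_j,\mu,\kappa)\,\mathbb{E}\!\int_0^{t\wedge\theta^{2b}_{\vec{\zeta},\kappa}} \sum_{p=P+1}^{P'}\la f_p,\Xi^{\vec{\zeta}, j}_{s-}\ra^2\, ds \longrightarrow 0
\]
as $P,P'\to\infty$, by dominated convergence: the integrand is $\le \|\Xi^{\vec{\zeta}, j}_{s-}\|_{-\Gamma_1,a}^2\le \gamma\,C(\mu,\kappa)^2$ pointwise in $s$, while $\sum_{p\ge1}\la f_p,\Xi^{\vec{\zeta}, j}_{s-}\ra^2=\|\Xi^{\vec{\zeta}, j}_{s-}\|_{-\Gamma_1,a}^2<\infty$ so its tails vanish.

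Consequently $\{S^P\}_{P\ge1}$ is Cauchy in the Hilbert space of $L^2$-martingales on $[0,T]$ with norm $X\mapsto (\mathbb{E}\la X\ra_T)^{1/2}$, hence converges to a square-integrable martingale $S$, which is exactly $\sum_{p\ge1}\int_0^{\cdot\wedge\theta^{2b}_{\vec{\zeta},\kappa}}\la f_p,\Xi^{\vec{\zeta}, j}_{s-}\ra\,d\cC^{\vec{\zeta}, j}_s(f_p)$, with $\la S\ra_t = \int_0^{t\wedge\theta^{2b}_{\vec{\zeta},\kappa}} 2D_j\la (\partial_Q\tilde\Xi_{s-})^2,\mu^{\vec{\zeta}, j}_{s-}\ra\,ds$. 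Bounding $(\partial_Q\tilde\Xi_{s-}(x))^2\le C(1+|x|^{2a})\|\Xi^{\vec{\zeta}, j}_{s-}\|_{-\Gamma_1,a}^2$ and using $\la 1+|x|^{2a},\mu^{\vec{\zeta}, j}_{s-}\ra\le C(\mu,\kappa)$ on the stopped interval yields
\[
\mathbb{E}\,\la S\ra_t \le C(D_j,\kappa)\,\mathbb{E}\!\int_0^{t\wedge\theta^{2b}_{\vec{\zeta},\kappa}} \|\Xi^{\vec{\zeta}, j}_{s-}\|_{-\Gamma_1,a}^2\, ds ,
\]
which is the assertion. The main obstacle is the second step: proving the $L^2$-martingale convergence of the partial sums and identifying the limiting bracket by replacing $\sum_p\la f_p,\Xi_{s-}\ra\partial_Q f_p$ with $\partial_Q$ of the Riesz representative. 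This is where the parameter constraints are genuinely used — $\Gamma_1\ge 2D+1$ makes $\partial_Q$ a bounded operator into the weighted sup-norm space $C^{0,a}$, and $b>a+\tfrac d2$ ensures the stopping time $\theta^{2b}$ controls the moment $\la |x|^{2a},\mu^{\vec{\zeta}, j}_{s-}\ra$ needed when pairing $(\partial_Q\tilde\Xi_{s-})^2$ against $\mu^{\vec{\zeta}, j}_{s-}$; the hypothesis $a\ge D$ is needed by later lemmas and is harmless here.
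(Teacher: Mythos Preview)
Your argument is correct and in fact more careful than the paper's. The paper's proof bounds
\[
\mathbb{E}\sum_{p\geq 1}\int_0^{t\wedge\theta^{2b}_{\vec{\zeta},\kappa}}  \la f_p, \Xi^{{\vec{\zeta}}, j}_{s-}\ra^2\, d\la\cC^{\vec{\zeta}, j}\ra_s (f_p),
\]
which is the sum of the individual brackets $\sum_p \la M_p\ra$ rather than the bracket of the sum $\la \sum_p M_p\ra$; it then controls this by the crude estimate $\la f_p,\Xi\ra^2\le \|\Xi\|_{-\Gamma_1,a}^2$ together with the Hilbert--Schmidt embedding $W_0^{\Gamma_1,a}\hookrightarrow W_0^{1+D,b}$, which makes $\sum_p\|f_p\|_{1+D,b}^2<\infty$. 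You instead compute the \emph{true} quadratic variation of the partial sums (cross terms included) as $\int 2D_j\la (\partial_Q\tilde\Xi^P_{s-})^2,\mu_{s-}\ra\,ds$, pass to the limit via the Riesz representative, and bound $(\partial_Q\tilde\Xi_{s-})^2$ using only the continuous embedding $W_0^{\Gamma_1,a}\hookrightarrow C^{1,a}$ together with the $2a$-moment of $\mu_{s-}$ (dominated by the $2b$-moment controlled by the stopping time). Your route avoids the Hilbert--Schmidt machinery entirely for this lemma, whereas the paper's approach recycles the same device it needs elsewhere (e.g.\ for the Poisson martingale terms). One small slip: to prove $\{S^P\}$ is Cauchy in the $L^2$-martingale norm you need $\mathbb{E}\la S^{P'}-S^P\ra_t\to 0$, not $\mathbb{E}|\la S^{P'}\ra_t-\la S^P\ra_t|\to 0$; your bound $C\,\mathbb{E}\int\sum_{p=P+1}^{P'}\la f_p,\Xi_{s-}\ra^2\,ds$ is exactly the right estimate for the former quantity, so the argument stands once you replace the displayed object by $\mathbb{E}\la S^{P'}-S^P\ra_t$.
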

\begin{proof}
For a fixed $\gamma$,
\begin{align}\label{eq:fixGammaBd}
 ||\Xi^{{\vec{\zeta}}, j}_{s}||_{-\Gamma_{1},a} &= \sup_{||f||_{\Gamma_{1},a} = 1}\la f, \Xi^{{\vec{\zeta}}, j}_{s}\ra \nonumber\\
 & = \sqrt{\gamma}\sup_{||f||_{\Gamma_{1},a} = 1}\la f, \mu^{{\vec{\zeta}}, j}_{s} - \bar{\mu}^j_{s}\ra\nonumber\\
 & \leq C \sqrt{\gamma}\sup_{||f||_{\Gamma_{1},a} = 1} ||f||_{C^{0,a}} \leq C \sqrt{\gamma}  < \infty,
\end{align}
where we used the moment bound from Lemma \ref{L:MomentBound} and Sobolev embedding since $\Gamma_{1}>d/2$. For a fixed $\gamma$, $b>a+d/2$, by \cref{eq:contMartingaleQV}, we have that $\sum_{p\geq 1}\int_0^{t\wedge\theta^{2b}_{\vec{\zeta},\kappa}}  \la f_p, \Xi^{{\vec{\zeta}}, j}_{s-}\ra d\cC^{\vec{\zeta}, j}_s (f_p)$ is a square integrable martingale with quadratic variation having expected value
\begin{align*}
\mathbb{E}\sum_{p\geq 1}\int_0^{t\wedge\theta^{2b}_{\vec{\zeta},\kappa}}  \la f_p, \Xi^{{\vec{\zeta}}, j}_{s-}\ra^2 d\la\cC^{\vec{\zeta}, j}\ra_s (f_p)
&= 2D_{j}\mathbb{E}\sum_{p\geq 1}\int_0^{t\wedge\theta^{2b}_{\vec{\zeta},\kappa}}   \la f_p, \Xi^{{\vec{\zeta}}, j}_{s-}\ra^2 \la \paren{\frac{ \partial f_p}{\partial Q}(x)}^2, \mu_{s-}^{{\vec{\zeta}}, j}(dx)\ra \, ds \\
&\leq 2D_{j} \paren{\sum_{p\geq 1}||f_p||^{2}_{C^{1,b}}}\mathbb{E}\int_0^{t\wedge\theta^{2b}_{\vec{\zeta},\kappa}}   \left(|| \Xi^{{\vec{\zeta}}, j}_{s-}||_{-\Gamma_{1},a}^2\right) \kappa  \, ds \\
&\leq C(D_j,\kappa) \paren{\sum_{p\geq 1}||f_p||^{2}_{1+D,b}}\mathbb{E}\int_0^{t\wedge\theta^{2b}_{\vec{\zeta},\kappa}}   \left(|| \Xi^{{\vec{\zeta}}, j}_{s-}||_{-\Gamma_{1},a}^2 \right) \, ds  < \infty.
\end{align*}
The derivation of the last line used the Sobolev embedding theorem, and that since $\Gamma_{1}-(1+D)>d/2$ and $b-a>d/2$ the embedding $W^{\Gamma_{1},a}_{0}\hookrightarrow W^{1+D,b}_{0}$ is of Hilbert-Schmidt type, so $\sum_{p\geq 1}||f_p||^{2}_{1+D,b}<\infty$. 
This concludes the proof of the lemma.
\end{proof}

We now develop several useful bounds involving the function
\begin{multline}\label{eq:jump}
g^{\ell,j, f, \mu^{\vec{\zeta}}}(s,\vec{i}, \vec{y},\theta_1, \theta_2)=\frac{1}{\sqrt{\gamma}}\la f, \sum_{r = 1}^{\beta_{\ell j}} \delta_{y_r^{(j)}}  -\sum_{r = 1}^{\alpha_{\ell j}} \delta_{H^{i_r^{(j)}}(\gamma\mu^{{\vec{\zeta}},j}_{s-})} \ra \\
\times 1_{\{\vec{i} \in \Omega^{(\ell)}(\gamma\mu^{\vec{\zeta}}_{s-})\}}  1_{ \{ \theta_1 \leq K_\ell^{\gamma}\left(\mathcal{P}^{(\ell)}(\gamma\mu_{s-}^{\vec{\zeta}}, \vec{i}) \right) \}}  1_{ \{ \theta_2 \leq  m^\eta_\ell\left(\vec{y} \,  | \, \mathcal{P}^{(\ell)}(\gamma\mu_{s-}^{\vec{\zeta}}, \vec{i}) \right) \}},
\end{multline}
representing the "jump" at time $s$. Notice that it is uniformly bounded and of order $\mathcal{O}(\frac{1}{\sqrt{\gamma}})$ for $f \in C_0^{\infty}(\R^d)$.

\begin{lemma}\label{lem:poissonBd1}
For any $f_p \in W_0^{D,b}(\R^d)\cap C^\infty_{0}(\R^d)$, there is a finite constant $C$ such that
\begin{align}
&\mathbb{E}\sup_{t\in [0, T]}\sum_{\ell = 1}^L \int_{0}^{t\wedge\theta^{2b}_{\vec{\zeta},\kappa}}\int_{\mathbb{I}^{(\ell)}} \int_{\mathbb{Y}^{(\ell)}}\int_{\mathbb{R}_{+}^2}\paren{ \paren{\la f_p,\Xi^{{\vec{\zeta}}, j}_{s-}\ra + g^{\ell, j, f_p, \mu^{\vec{\zeta}}}(s,\vec{i}, \vec{y},\theta_1, \theta_2) }^2 - \la f_p,\Xi^{{\vec{\zeta}}, j}_{s-}\ra^2 \right. \nonumber\\
&
\qquad\qquad\left.  - 2g^{\ell, j, f_p, \mu^{\vec{\zeta}}}(s,\vec{i}, \vec{y},\theta_1, \theta_2) \la f_p,\Xi^{{\vec{\zeta}}, j}_{s-}\ra} \,  d\bar{N}_{\ell}(s,\vec{i}, \vec{y},\theta_1, \theta_2)  \nonumber\\
&\leq     TC  ||f_p||_{D,b}^2\nonumber
\end{align}
is uniformly bounded in $\vec{\zeta}\in(0,1)^{2}$. The constant $C$ depends on $L,C(K),22),\kappa$ and the upper bound from the moment bound of Lemma \ref{L:MomentBound}.
\end{lemma}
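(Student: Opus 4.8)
The plan begins with the observation that the integrand telescopes. Writing $a=\la f_p,\Xi^{\vec{\zeta},j}_{s-}\ra$ and $g=g^{\ell,j,f_p,\mu^{\vec{\zeta}}}(s,\vec{i},\vec{y},\theta_1,\theta_2)$, one has $(a+g)^2-a^2-2ag=g^2\ge 0$, so the quantity to be bounded is
\begin{equation*}
\mathbb{E}\sup_{t\in[0,T]}\sum_{\ell=1}^{L}\int_{0}^{t\wedge\theta^{2b}_{\vec{\zeta},\kappa}}\int_{\mathbb{I}^{(\ell)}}\int_{\mathbb{Y}^{(\ell)}}\int_{\mathbb{R}_{+}^{2}}\paren{g^{\ell,j,f_p,\mu^{\vec{\zeta}}}(s,\vec{i},\vec{y},\theta_1,\theta_2)}^{2}\,d\bar{N}_{\ell}(s,\vec{i},\vec{y},\theta_1,\theta_2).
\end{equation*}
Since this integrand is nonnegative and the domain of integration is increasing in $t$, the supremum is attained at $t=T$ and can be dropped.

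Next I would compute the intensity integral explicitly. Because $d\bar{N}_{\ell}$ factors as $ds$ times counting measure on the indices times $d\vec{y}\,d\theta_1\,d\theta_2$, while $(g^{\ell,j,f_p,\mu^{\vec{\zeta}}})^{2}$ carries the prefactor $1/\gamma$ and the (squared) indicators, integrating out $\theta_1,\theta_2$ produces the factor $K_\ell^{\gamma}(\mathcal{P}^{(\ell)}(\gamma\mu^{\vec{\zeta}}_{s-},\vec{i}))\,m^{\eta}_\ell(\vec{y}\,|\,\mathcal{P}^{(\ell)}(\gamma\mu^{\vec{\zeta}}_{s-},\vec{i}))$. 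Using the $\gamma$-scaling $K_\ell^{\gamma}=\gamma^{1-|\vec{\alpha}^{(\ell)}|}K_\ell$ from Assumption~\ref{Assume:rescaling}, together with the fact that the sum over $\vec{i}\in\Omega^{(\ell)}(\gamma\mu^{\vec{\zeta}}_{s-})$ of a symmetric function of the reactant positions equals $\vec{\alpha}^{(\ell)}!\,\gamma^{|\vec{\alpha}^{(\ell)}|}$ times its integral against $\lambda^{(\ell)}[\mu^{\vec{\zeta}}_{s-}]$ restricted to the off-diagonal set $\tilde{\mathbb{X}}^{(\ell)}$ (the diagonal correction being relevant only for $2S_i$ reactions), every power of $\gamma$ cancels and the expression reduces exactly to
\begin{equation*}
\mathbb{E}\sum_{\ell=1}^{L}\int_{0}^{T\wedge\theta^{2b}_{\vec{\zeta},\kappa}}\int_{\tilde{\mathbb{X}}^{(\ell)}}\tfrac{1}{\vec{\alpha}^{(\ell)}!}K_\ell(\vec{x})\paren{\int_{\mathbb{Y}^{(\ell)}}\paren{\textstyle\sum_{r=1}^{\beta_{\ell j}}f_p(y_r^{(j)})-\sum_{r=1}^{\alpha_{\ell j}}f_p(x_r^{(j)})}^{2}m^{\eta}_\ell(\vec{y}\,|\,\vec{x})\,d\vec{y}}\lambda^{(\ell)}[\mu^{\vec{\zeta}}_{s-}](d\vec{x})\,ds,
\end{equation*}
which is just $\mathbb{E}\la\cD^{\vec{\zeta},j}\ra_{T\wedge\theta^{2b}_{\vec{\zeta},\kappa}}(f_p)$ from~\eqref{eq:disMartingaleQV}; the content of the lemma is to bound this uniformly in $\vec{\zeta}$.

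To estimate it, I would bound $\paren{\sum_r f_p(y_r^{(j)})-\sum_r f_p(x_r^{(j)})}^{2}\le C\paren{\sum_r |f_p(y_r^{(j)})|^{2}+\sum_r|f_p(x_r^{(j)})|^{2}}$ (at most four terms, since reactions are at most second order), then use the embedding $W_0^{D,b}\hookrightarrow C^{0,b}$ — valid because $D=1+\ceil*{d/2}>d/2$ — to get $|f_p(z)|^{2}\le C\norm{f_p}_{D,b}^{2}(1+|z|^{2b})$. The terms in $|f_p(x_r^{(j)})|^{2}$ are then controlled using that $K_\ell$ is bounded (Assumption~\ref{Assume:kernalBdd0}), that the non-weighted factors of $\lambda^{(\ell)}[\mu^{\vec{\zeta}}_{s-}]$ have total mass $\le C(\mu)$ (Assumption~\ref{Assume:kernalBdd}), and that $\sum_j\la|\cdot|^{2b},\mu^{\vec{\zeta},j}_{s-}\ra\le\kappa$ for $s\le\theta^{2b}_{\vec{\zeta},\kappa}$ for the weighted factor. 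For the terms in $|f_p(y_r^{(j)})|^{2}$ I would integrate $(1+|y_r^{(j)}|^{2b})$ against each allowable form of $m^{\eta}_\ell$ in Assumptions~\ref{Assume:measureOne2One}--\ref{Assume:measureOne2Two}: since $G$ is supported in $B(0,1)$ and $\eta<1$, convolution with $G_\eta$ only shifts the weight by a bounded amount, while the relative-separation density contributes $\int_{\mathbb{R}^{d}}|w|^{2b}\rho(w)\,dw<\infty$ by Assumption~\ref{A:AssumptionRho}, so this integral is again $\le C(1+|\vec{x}|^{2b})$ and is handled as before. Summing over the finitely many reactions and integrating over $s\in[0,T]$ gives the claimed bound $TC\norm{f_p}_{D,b}^{2}$, with $C$ depending on $L$, $C(K)$, $C(\mu)$, $\kappa$ and the moment constant of Lemma~\ref{L:MomentBound}.

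The main obstacle is the bookkeeping in the second step: matching, reaction type by reaction type (first order $S_i\to\cdots$, $S_i+S_k\to\cdots$ with $i\neq k$, and $2S_i\to\cdots$), the number of summands in $\Omega^{(\ell)}(\gamma\mu^{\vec{\zeta}}_{s-})$ against the powers of $\gamma$ in $K_\ell^{\gamma}$ so that everything collapses to an $O(1)$ expression through $\lambda^{(\ell)}[\mu^{\vec{\zeta}}_{s-}]$ on $\tilde{\mathbb{X}}^{(\ell)}$, including the diagonal correction for same-species reactants — together with the uniform-in-$\eta$ control of the $\vec{y}$-integral against each form of $m^{\eta}_\ell$. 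Both are routine given the assumptions but must be carried out with care.
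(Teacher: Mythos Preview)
Your proposal is correct and follows essentially the same approach as the paper: collapse the integrand to $(g^{\ell,j,f_p,\mu^{\vec{\zeta}}})^2$, integrate out $\theta_1,\theta_2$ against the intensity measure and use the $\gamma$-scaling of $K_\ell^\gamma$ to rewrite the sum over $\Omega^{(\ell)}$ as the integral of $K_\ell$ against $\lambda^{(\ell)}[\mu^{\vec{\zeta}}_{s-}]$ on $\tilde{\mathbb{X}}^{(\ell)}$, then bound via the Sobolev embedding $W_0^{D,b}\hookrightarrow C^{0,b}$ and the moment control available under the stopping time. The paper's proof is more compressed, simply citing Sobolev embedding, Assumption~\ref{Assume:rescaling}, and the moment bounds without spelling out the reaction-by-reaction bookkeeping or the uniform-in-$\eta$ estimate on the $\vec{y}$-integral that you outline, but the argument is the same.
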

\begin{proof}
We write
\begin{equation}\label{eq:poissonBdDerive}
\begin{aligned}
&\mathbb{E}\sup_{t\in [0, T]}\sum_{\ell = 1}^L \int_{0}^{t\wedge\theta^{2b}_{\vec{\zeta},\kappa}}\int_{\mathbb{I}^{(\ell)}} \int_{\mathbb{Y}^{(\ell)}}\int_{\mathbb{R}_{+}^2}\paren{ \paren{\la f_p,\Xi^{{\vec{\zeta}}, j}_{s-}\ra + g^{\ell, j, f_p, \mu^{\vec{\zeta}}}(s,\vec{i}, \vec{y},\theta_1, \theta_2) }^2 - \la f_p,\Xi^{{\vec{\zeta}}, j}_{s-}\ra^2 \right. \\
&\qquad\qquad\qquad\qquad\qquad\qquad\qquad\qquad\qquad\qquad\left.  - 2g^{\ell, j, f_p, \mu^{\vec{\zeta}}}(s,\vec{i}, \vec{y},\theta_1, \theta_2) \la f_p,\Xi^{{\vec{\zeta}}, j}_{s-}\ra} \,  d\bar{N}_{\ell}(s,\vec{i}, \vec{y},\theta_1, \theta_2)  \\
&
=\mathbb{E} \sup_{t\in [0, T]}\sum_{\ell = 1}^L \int_{0}^{t\wedge\theta^{2b}_{\vec{\zeta},\kappa}}\int_{\mathbb{I}^{(\ell)}} \int_{\mathbb{Y}^{(\ell)}}\int_{\mathbb{R}_{+}^2}\paren{ g^{\ell, j, f_p, \mu^{\vec{\zeta}}}(s,\vec{i}, \vec{y},\theta_1, \theta_2) }^2   \,  d\bar{N}_{\ell}(s,\vec{i}, \vec{y},\theta_1, \theta_2) \\
&
=\gamma^{-1} \mathbb{E} \sup_{t\in [0, T]} \sum_{\ell = 1}^L \int_{0}^{t\wedge\theta^{2b}_{\vec{\zeta},\kappa}}\int_{\mathbb{I}^{(\ell)}} \int_{\mathbb{Y}^{(\ell)}}\int_{\mathbb{R}_{+}^2} \left(\sum_{r = 1}^{\beta_{\ell j}}f_p(y_r^{(j)}) - \sum_{r = 1}^{\alpha_{\ell j}} f_p (H^{i_r^{(j)}(\gamma\mu^{{\vec{\zeta}},j}_{s-})}) \right)^2\\
&\qquad\qquad\qquad\qquad\qquad
\times 1_{\{\vec{i} \in \Omega^{(\ell)}(\gamma\mu^{\vec{\zeta}}_{s-})\}} 1_{ \{ \theta_1 \leq K_\ell^{\gamma}\left(\mathcal{P}^{(\ell)}(\gamma\mu_{s-}^{\vec{\zeta}}, \vec{i}) \right) \}}  1_{ \{ \theta_2 \leq  m^\eta_\ell\left(\vec{y} \,  | \, \mathcal{P}^{(\ell)}(\gamma\mu_{s-}^{\vec{\zeta}}, \vec{i}) \right) \}}
  \,  d\bar{N}_{\ell}(s,\vec{i}, \vec{y},\theta_1, \theta_2) \\
&=\mathbb{E}\sup_{t\in [0, T]}\sum_{\ell = 1}^L \int_{0}^{t\wedge\theta^{2b}_{\vec{\zeta},\kappa}}\int_{\tilde{\mathbb{X}}^{(\ell)}}\frac{1}{\vec{\alpha}^{(\ell)}!}\paren{\int_{\mathbb{Y}^{(\ell)}} \left(\sum_{r = 1}^{\beta_{\ell j}}f_p(y_r^{(j)}) - \sum_{r = 1}^{\alpha_{\ell j}} f_p ( x_r^{(j)})  \right)^2  m^\eta_\ell(\vy \,  | \, \vx)  \, \, d\vy}  K_\ell(\vx) \lambda^{(\ell)}[\mu_{s}^{\vec{\zeta}}](d\vec{x})\,  ds \\
& < C T ||f_p||_{C^{0,b}}^2  \\
 &< C T ||f_p||_{D,b}^2  < \infty
\end{aligned}
\end{equation}
which follows using the Sobolev embedding theorem as $D>d/2$, using Assumption~\ref{Assume:rescaling} on the $\gamma$-scaling of $K_\ell^{\gamma}(\vx)$, and using the uniform bound on the moments of $\mu_{s}^{\vec{\zeta}}$ from Lemma~\ref{L:MomentBound}.

Note, in the preceding estimate the final bound has no $\gamma$ dependence.
\end{proof}

\begin{lemma}\label{lem:MartingalePoisson}
For a fixed $\gamma$, and any $f_p \in W_0^{D,b}(\R^d)\cap C^\infty_{0}(\R^d)$,
\begin{align*}
&\sum_{\ell = 1}^L \int_{0}^{t\wedge\theta^{2b}_{\vec{\zeta},\kappa}}\int_{\mathbb{I}^{(\ell)}} \int_{\mathbb{Y}^{(\ell)}}\int_{\mathbb{R}_{+}^2} \paren{ \paren{\la f_p,\Xi^{{\vec{\zeta}}, j}_{s-}\ra +  g^{\ell, j, f_p, \mu^{\vec{\zeta}}}(s,\vec{i}, \vec{y},\theta_1, \theta_2) }^2 - \la f_p,\Xi^{{\vec{\zeta}}, j}_{s-}\ra^2} \,d\tilde{N}_{\ell}(s,\vec{i}, \vec{y},\theta_1, \theta_2)
\end{align*}
is a square integrable martingale with quadratic variation bounded by
$$\frac{1}{\gamma}C t ||f_p||_{D,b}^4 + C  ||f_p||_{D,b}^2 \int_0^{t\wedge\theta^{2b}_{\vec{\zeta},\kappa}} \paren{ \la f_p,\Xi^{{\vec{\zeta}}, j}_{s-}\ra}^2\, ds.$$
The constant $C$ depends on $L, C(K), C_{\circ}, \kappa$, and the upper bound from the moment bound of Lemma~\ref{L:MomentBound}.
\end{lemma}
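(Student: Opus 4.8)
The plan is to recognize the displayed process as the stochastic integral of a predictable integrand against the compensated Poisson measures $d\tilde{N}_{\ell}$, and then to invoke the standard $L^{2}$-theory of such integrals together with the pathwise estimates already contained in the proof of Lemma~\ref{lem:poissonBd1}.

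First I would expand the integrand. Writing $a_{s}:=\la f_{p},\Xi^{\vec{\zeta},j}_{s-}\ra$ and $g:=g^{\ell,j,f_{p},\mu^{\vec{\zeta}}}(s,\vec{i},\vec{y},\theta_{1},\theta_{2})$, we have $(a_{s}+g)^{2}-a_{s}^{2}=2a_{s}g+g^{2}$, which is a predictable integrand since it depends on time only through the left-continuous processes $\Xi^{\vec{\zeta},j}_{s-}$ and $\mu^{\vec{\zeta}}_{s-}$ and measurably on $(\vec{i},\vec{y},\theta_{1},\theta_{2})$. For fixed $\gamma$ the particle number is finite, so $\Omega^{(\ell)}(\gamma\mu^{\vec{\zeta}}_{s-})$ is finite, $K^{\gamma}_{\ell}$ is bounded, and $m^{\eta}_{\ell}(\cdot\,|\,\vec{x})$ is a probability density; together with $|a_{s}|\le C\sqrt{\gamma}\|f_{p}\|_{\Gamma_{1},a}$ from~\eqref{eq:fixGammaBd} and $|g|\le 2\|f_{p}\|_{\infty}/\sqrt{\gamma}$ this makes the integrand bounded and $\mathbb{E}\int_{0}^{t\wedge\theta^{2b}_{\vec{\zeta},\kappa}}\!\int(2a_{s}g+g^{2})^{2}\,d\bar{N}_{\ell}<\infty$. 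Standard results on integration against compensated Poisson random measures (e.g.\ \cite[Ch.~I, Thm.~8.1]{NW:2014}, \cite[Prop.~9.18]{K:2001}) then yield that the process is a square-integrable martingale, and --- using that the $N_{\ell}$ for distinct $\ell$ are independent, so the associated compensated martingales are orthogonal --- that its predictable quadratic variation equals $\sum_{\ell=1}^{L}\int_{0}^{t\wedge\theta^{2b}_{\vec{\zeta},\kappa}}\!\int(2a_{s}g+g^{2})^{2}\,d\bar{N}_{\ell}$.

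To bound this, I would split via $(2a_{s}g+g^{2})^{2}\le 8a_{s}^{2}g^{2}+2g^{4}$. For the first term, $a_{s}$ factors out of the inner integral, and performing the $\theta_{1},\theta_{2}$ integrations and re-expressing the sum over $\vec{i}\in\Omega^{(\ell)}$ via $\lambda^{(\ell)}[\mu^{\vec{\zeta}}_{s}]$ exactly as in the chain of equalities in~\eqref{eq:poissonBdDerive} turns $\int\!\int\!\int g^{2}\,d\bar{N}_{\ell}$ (at fixed $s$) into $\int_{\tilde{\mathbb{X}}^{(\ell)}}\tfrac{1}{\vec{\alpha}^{(\ell)}!}\big(\int_{\mathbb{Y}^{(\ell)}}(\sum_{r}f_{p}(y^{(j)}_{r})-\sum_{r}f_{p}(x^{(j)}_{r}))^{2}m^{\eta}_{\ell}\,d\vec{y}\big)K_{\ell}(\vec{x})\,\lambda^{(\ell)}[\mu^{\vec{\zeta}}_{s}](d\vec{x})$, which is bounded pathwise by $C\|f_{p}\|_{\infty}^{2}\le C\|f_{p}\|_{D,b}^{2}$ using Assumption~\ref{Assume:kernalBdd}, the bounded reaction orders, $\int m^{\eta}_{\ell}\,d\vec{y}=1$, and $W^{D,b}_{0}\hookrightarrow C^{0,b}$ (as $D>d/2$); this contributes $C\|f_{p}\|_{D,b}^{2}\int_{0}^{t\wedge\theta^{2b}_{\vec{\zeta},\kappa}}a_{s}^{2}\,ds$. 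For the second term, from $g^{2}\le (C\|f_{p}\|_{\infty}^{2}/\gamma)\mathbf{1}$ (with $\mathbf{1}$ the product of the indicators in the definition of $g$) we get $g^{4}\le (C\|f_{p}\|_{\infty}^{2}/\gamma)g^{2}$, hence $\int_{0}^{t\wedge\theta^{2b}_{\vec{\zeta},\kappa}}\!\int g^{4}\,d\bar{N}_{\ell}\le \tfrac{C\|f_{p}\|_{\infty}^{2}}{\gamma}\int_{0}^{t\wedge\theta^{2b}_{\vec{\zeta},\kappa}}\!\int g^{2}\,d\bar{N}_{\ell}\le \tfrac{C}{\gamma}t\|f_{p}\|_{D,b}^{4}$, the last step being the pathwise bound $\int_{0}^{t\wedge\theta^{2b}_{\vec{\zeta},\kappa}}\!\int g^{2}\,d\bar{N}_{\ell}\le Ct\|f_{p}\|_{D,b}^{2}$ already established inside the proof of Lemma~\ref{lem:poissonBd1}. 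Summing the finitely many $\ell$ absorbs $L$ into $C$ and gives the asserted bound.

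The one point requiring care --- bookkeeping rather than a real obstacle --- is tracking the powers of $\gamma$: the $\gamma^{-1}$ from $g$, the $\gamma^{|\vec{\alpha}^{(\ell)}|}$ produced when the sum over $\vec{i}\in\Omega^{(\ell)}$ is rewritten via $\lambda^{(\ell)}[\mu^{\vec{\zeta}}_{s}]$, and the $\gamma^{\,1-|\vec{\alpha}^{(\ell)}|}$ scaling of $K^{\gamma}_{\ell}$ from Assumption~\ref{Assume:rescaling} must combine so that the $a_{s}^{2}g^{2}$ contribution is $\gamma$-independent while the $g^{4}$ contribution retains exactly one surplus factor of $\gamma^{-1}$; this is why the first term of the bound carries $\gamma^{-1}$ and the second does not. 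Beyond this, everything reduces to Lemmas~\ref{L:MomentBound} and~\ref{lem:poissonBd1}, so no new estimates are needed.
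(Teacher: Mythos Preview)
Your overall approach coincides with the paper's: recognize the process as a compensated-Poisson integral, write its predictable quadratic variation as $\sum_\ell\int\big((a_s+g)^2-a_s^2\big)^2\,d\bar N_\ell$, expand $(a_s+g)^2-a_s^2=g^2+2a_sg$, and split via $(2a_sg+g^2)^2\le 8a_s^2g^2+2g^4$. The $a_s^2g^2$ contribution is handled exactly as you describe, by factoring $a_s^2$ out and rerunning~\eqref{eq:poissonBdDerive}.

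There is, however, a gap in your treatment of the $g^4$ term. From $g^4\le(C\|f_p\|_\infty^2/\gamma)\,g^2$ together with $\int g^2\,d\bar N_\ell\le Ct\|f_p\|_{D,b}^2$ you conclude $\int g^4\,d\bar N_\ell\le(C/\gamma)\,t\|f_p\|_{D,b}^4$, which silently uses $\|f_p\|_\infty\le C\|f_p\|_{D,b}$. That inequality is \emph{false} in the weighted setting: the embedding $W^{D,b}_0\hookrightarrow C^{0,b}$ controls only the weighted sup norm $\sup_x|f_p(x)|/(1+|x|^b)$, not $\|f_p\|_\infty$, and a bump of fixed shape translated to large $|x|$ makes $\|f_p\|_\infty/\|f_p\|_{D,b}$ arbitrarily large. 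Since the lemma requires $C$ independent of $f_p$, you cannot absorb $\|f_p\|_\infty$ into the constant. The paper avoids this by not factoring: it reruns the chain of equalities in~\eqref{eq:poissonBdDerive} directly with $g^4$ in place of $g^2$, keeping the weighted norm $\|f_p\|_{C^{0,b}}$ throughout; the surplus $\gamma^{-1}$ you correctly identify in your last paragraph then emerges from the $\gamma^{-2}$ in $g^4$ against the usual $\gamma$ from Assumption~\ref{Assume:rescaling} and the $\vec i$-sum. Replace your factoring shortcut with that direct computation and the argument goes through.
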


\begin{proof}
As shown in \cref{eq:fixGammaBd}, for a fixed $\gamma$,  $||\Xi^{\vec{\zeta}, 1}_{s}||_{-\Gamma_{1},a} < \infty$, so that the quadratic variation
\begin{align}\label{eq:QVpoissonBd}
&
\sum_{\ell = 1}^L \int_{0}^{t\wedge\theta^{2b}_{\vec{\zeta},\kappa}}\int_{\mathbb{I}^{(\ell)}} \int_{\mathbb{Y}^{(\ell)}}\int_{\mathbb{R}_{+}^2} \paren{ \paren{\la f_p,\Xi^{{\vec{\zeta}}, j}_{s-}\ra +  g^{\ell, j, f_p, \mu^{\vec{\zeta}}}(s,\vec{i}, \vec{y},\theta_1, \theta_2) }^2 - \la f_p,\Xi^{{\vec{\zeta}}, j}_{s-}\ra^2}^2 \,d\bar{N}_{\ell}(s,\vec{i}, \vec{y},\theta_1, \theta_2)    \nonumber\\
&
 =\sum_{\ell = 1}^L \int_{0}^{t\wedge\theta^{2b}_{\vec{\zeta},\kappa}}\int_{\mathbb{I}^{(\ell)}} \int_{\mathbb{Y}^{(\ell)}}\int_{\mathbb{R}_{+}^2} \paren{ \paren{ g^{\ell, j, f_p, \mu^{\vec{\zeta}}}(s,\vec{i}, \vec{y},\theta_1, \theta_2) }^2 + 2 \la f_p,\Xi^{{\vec{\zeta}}, j}_{s-}\ra g^{\ell, j, f_p, \mu^{\vec{\zeta}}}(s,\vec{i}, \vec{y},\theta_1, \theta_2)}^2 \,d\bar{N}_{\ell}(s,\vec{i}, \vec{y},\theta_1, \theta_2)  \nonumber\\
&
\leq 2\sum_{\ell = 1}^L \int_{0}^{t\wedge\theta^{2b}_{\vec{\zeta},\kappa}}\int_{\mathbb{I}^{(\ell)}} \int_{\mathbb{Y}^{(\ell)}}\int_{\mathbb{R}_{+}^2}  \paren{g^{\ell, j, f_p, \mu^{\vec{\zeta}}}(s,\vec{i}, \vec{y},\theta_1, \theta_2)}^4 \,  d\bar{N}_{\ell}(s,\vec{i}, \vec{y},\theta_1, \theta_2) \nonumber\\
&
\qquad + 8\sum_{\ell = 1}^L \int_{0}^{t\wedge\theta^{2b}_{\vec{\zeta},\kappa}}\int_{\mathbb{I}^{(\ell)}} \int_{\mathbb{Y}^{(\ell)}}\int_{\mathbb{R}_{+}^2} \paren{g^{\ell, j, f_p, \mu^{\vec{\zeta}}}(s,\vec{i}, \vec{y},\theta_1, \theta_2)  \la f_p,\Xi^{{\vec{\zeta}}, j}_{s-}\ra}^2 \,  d\bar{N}_{\ell}(s,\vec{i}, \vec{y},\theta_1, \theta_2)  \nonumber\\
& \text{(Similarly as \cref{eq:poissonBdDerive}, we can get)} \nonumber\\
&
\leq \frac{2}{\gamma}C t ||f_p||_{D,b}^4 + C ||f_p||_{D,b}^2 \int_0^{t\wedge\theta^{2b}_{\vec{\zeta},\kappa}} \paren{ \la f_p,\Xi^{{\vec{\zeta}}, j}_{s-}\ra}^2\, ds < \infty.
\end{align}
\end{proof}

\begin{lemma}\label{lem:densityIneq}
For $\Xi \in W^{-\Gamma_{1},a}$ and $a\geq 1$, recalling that $\mathcal{L}_j = D_j \Delta_x$, we have that
\begin{equation*}
  \la \Xi,  \mathcal{L}_j^* \Xi\ra_{-\Gamma_{1},a} \leq C||\Xi||^2_{-\Gamma_{1},a}.
\end{equation*}
\end{lemma}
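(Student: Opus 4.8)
The plan is to reduce the bound to a weighted integration‑by‑parts estimate for the Laplacian. Since $W_0^{\Gamma_1,a}$ is a Hilbert space, every $\Xi\in W^{-\Gamma_1,a}$ has a Riesz representative $\psi=\psi_\Xi\in W_0^{\Gamma_1,a}$ with $\la f,\psi\ra_{\Gamma_1,a}=\la\Xi,f\ra$ for all $f$ and $\norm{\psi}_{\Gamma_1,a}=\norm{\Xi}_{-\Gamma_1,a}$. Fix a complete orthonormal system $\{f_p\}_{p\ge1}$ of $W_0^{\Gamma_1,a}$ consisting of $C_0^\infty(\R^d)$ functions, as in Lemma~\ref{lem:contMartingale}, and write $\xi_p=\la\Xi,f_p\ra$, so that $\norm{\Xi}_{-\Gamma_1,a}^2=\sum_p\xi_p^2$ and $\psi=\sum_p\xi_p f_p$. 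Since $\mathcal{L}_j f_p=D_j\Delta f_p\in C_0^\infty\subset W_0^{\Gamma_1,a}$, the pairing $\la\Xi,\mathcal{L}_j f_p\ra$ is well defined and
\[
\la\Xi,\mathcal{L}_j^*\Xi\ra_{-\Gamma_1,a}=\sum_{p}\xi_p\,\la\Xi,\mathcal{L}_j f_p\ra=D_j\sum_{p,q}\xi_p\xi_q\,\la\Delta f_p,f_q\ra_{\Gamma_1,a}.
\]
Working with the truncations $\psi_N=\sum_{p\le N}\xi_p f_p$ (which satisfy $\norm{\psi_N}_{\Gamma_1,a}\le\norm{\psi}_{\Gamma_1,a}=\norm{\Xi}_{-\Gamma_1,a}$) this reduces the claim to a uniform‑in‑$N$ bound on $D_j\la\Delta\psi_N,\psi_N\ra_{\Gamma_1,a}$, which we then pass to the limit.

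The core computation is elementary. For $\phi\in C_0^\infty(\R^d)$, writing $w(x)=(1+\abs{x}^{2a})^{-1}$ and using $\Delta g\cdot g=\tfrac12\Delta(g^2)-\abs{\nabla g}^2$ together with two integrations by parts (no boundary terms by compact support), one gets for every multi‑index $k$
\[
\int_{\R^d} w\,\Delta(D^k\phi)\,D^k\phi\,dx=\tfrac12\int_{\R^d}(D^k\phi)^2\,\Delta w\,dx-\int_{\R^d} w\,\abs{\nabla D^k\phi}^2\,dx .
\]
Summing over $\abs{k}\le\Gamma_1$ with $\phi=\psi_N$, the last term is nonpositive and is simply discarded. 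For the first term the only place $a\ge1$ is used is the fact that then $\abs{\Delta w(x)}\le C\,w(x)$ (more generally $\abs{\partial^\beta w}\le C_\beta(1+\abs{x})^{-\abs{\beta}}w$), a direct computation; in our setting $a$ is moreover an integer, so $\abs{x}^{2a}$ is a polynomial and $w$ is smooth. Hence
\[
\tfrac12\sum_{\abs{k}\le\Gamma_1}\Big|\int (D^k\psi_N)^2\,\Delta w\Big|\le\tfrac{C}{2}\sum_{\abs{k}\le\Gamma_1}\int w\,(D^k\psi_N)^2=\tfrac{C}{2}\norm{\psi_N}_{\Gamma_1,a}^2\le\tfrac{C}{2}\norm{\Xi}_{-\Gamma_1,a}^2 ,
\]
so $D_j\la\Delta\psi_N,\psi_N\ra_{\Gamma_1,a}\le\tfrac{CD_j}{2}\norm{\Xi}_{-\Gamma_1,a}^2$ uniformly in $N$, and letting $N\to\infty$ gives $\la\Xi,\mathcal{L}_j^*\Xi\ra_{-\Gamma_1,a}\le C(D_j,a,d,\Gamma_1)\norm{\Xi}_{-\Gamma_1,a}^2$, which is the assertion.

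I expect the only real subtlety — more bookkeeping than a genuine obstacle — to be the loss of one derivative in the first integration by parts: the term $\abs{\nabla D^k\psi_N}^2$ involves derivatives of order $\abs{k}+1$, which are not controlled by $\norm{\psi_N}_{\Gamma_1,a}$, and whose series over $p$ need not converge when $\Xi$ is only in $W^{-\Gamma_1,a}$. This is exactly why the computation is organized around the identity $\Delta g\cdot g=\tfrac12\Delta(g^2)-\abs{\nabla g}^2$: the troublesome high‑order term appears with a favorable (nonpositive) sign and is never estimated, while the second integration by parts brings all derivatives that must be bounded back down to order $\le\Gamma_1$, paid for only by derivatives of the smooth, rapidly decaying weight $w$. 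Carrying the argument through the truncations $\psi_N$ and passing to the limit, rather than working with $\psi$ directly, keeps every manipulation rigorous without demanding extra regularity of $\Xi$; the final constant is independent of $\Xi$.
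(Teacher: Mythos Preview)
Your proof is correct and follows essentially the same route as the paper: reduce via the Riesz map to bounding $\la\mathcal{L}_j\psi,\psi\ra_{\Gamma_1,a}$, then use the weighted integration-by-parts identity (which the paper packages as Lemma~\ref{L:AuxiliaryBound1}) together with $|\Delta w|\le Cw$ for $a\ge1$ to drop the nonpositive gradient term and control the remainder by $\norm{\psi}_{\Gamma_1,a}^2$. The only cosmetic difference is the handling of regularity: the paper restricts to the dense set $S=\{\Xi:\,F(\Xi)\in W_0^{\Gamma_1+2,a}\}$ and then invokes density, whereas you truncate in an orthonormal basis and pass to the limit; both devices serve the same purpose of keeping the higher-order term $|\nabla D^k\psi|^2$ well defined during the computation while never requiring a bound on it.
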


\begin{proof}
By the Riesz Representation Theorem for Hilbert spaces, there exists a unique $\Psi \in W_0^{\Gamma_{1},a}$ such that
$$\la f, \Xi \ra = \la f, \Psi\ra_{\Gamma_{1},a}, \text{for all } f\in W_0^{\Gamma_{1},a}.$$
Let us denote $F(\Xi) = \Psi$, and note that $S := \{\Xi \in W^{-\Gamma_{1},a}: F(\Xi) \in W^{\Gamma_{1}+2,a}_0 \}$ is dense in $W^{-\Gamma_{1},a}$. We will first focus on such $\Xi \in S$. Then $ \mathcal{L}_j F(\Xi) \in W_0^{\Gamma_{1},a}$ and
$$\la \Xi,  \mathcal{L}_j^* \Xi\ra_{-\Gamma_{1},a} =  \la F(\Xi),  \mathcal{L}_j^* \Xi\ra = \la \mathcal{L}_j F(\Xi),   \Xi\ra = \la \mathcal{L}_j F(\Xi), F(\Xi)\ra_{\Gamma_{1},a}.$$
Then, by Lemma \ref{L:AuxiliaryBound1} we obtain that for some constant $C<\infty$
$$\la \Xi,  \mathcal{L}_j^* \Xi\ra_{-\Gamma_{1},a}  =  \la \mathcal{L}_j F(\Xi), F(\Xi)\ra_{\Gamma_{1},a}\leq C||F(\Xi)||^2_{\Gamma_{1},a} = C||\Xi||^2_{-\Gamma_{1},a}$$
By a density argument, the above result holds for any $\Xi \in W^{-\Gamma_{1},a}$.
\end{proof}

\begin{lemma}\label{lem:DiffMeas}
Let $\{ f_p \}_{p\geq 1}$ be a complete orthonormal system in $W_0^{\Gamma_{1},a}(\R^d)$ of class $C^\infty_0(\R^d)$. Under Assumptions~\ref{A:SobolevSpaceParameters} and~\ref{A:AssumptionRho} we have that there exists a finite constant $C<\infty$ such that
\begin{align}\label{eq:DiffMeas}
 &\E\sup_{t\in [0, T]}\sum_{p\geq 1} \sum_{\ell =  1}^L \int_{0}^{t\wedge\theta^{2a}_{\vec{\zeta},\kappa}}\la f_p,\Xi^{{\vec{\zeta}}, j}_{s}\ra \int_{\tilde{\mathbb{X}}^{(\ell)}}     \frac{1}{\vec{\alpha}^{(\ell)}!}   K_\ell\left(\vec{x}\right) \left(\int_{\mathbb{Y}^{(\ell)}}    \left( \sum_{r = 1}^{\beta_{\ell j}} f_p(y_r^{(j)}) \right) m_\ell\left(\vec{y} \, |\, \vec{x} \right)\, d \vec{y} - \sum_{r = 1}^{\alpha_{\ell j}} f_p(x_r^{(j)}) \right)\,\nonumber\\
&
\qquad\times \sqrt{\gamma} \paren{\lambda^{(\ell)}[\mu_{s}^{\vec{\zeta}}](d\vec{x})-\lambda^{(\ell)}[\bar{\mu}_{s}](d\vec{x})}\, ds \nonumber\\
 &
 \leq C \E \int_{0}^{T\wedge\theta^{2a}_{\vec{\zeta},\kappa}}
 \sum_{j=1}^{J}||\Xi_{s}^{{\vec{\zeta}},j}||^{2}_{-\Gamma_{1},a}\, ds
\end{align}
\end{lemma}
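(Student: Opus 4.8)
The plan is to reduce the left-hand side of~\eqref{eq:DiffMeas}, via the case analysis of Definition~\ref{def:Delta} and the identities recorded just before it, to a sum of bilinear pairings of the form $\sum_{p\geq 1}\langle f_p,\Xi^{\vec\zeta,j}_s\rangle\,\langle \mathcal{T}^{\ell}_s f_p,\Xi^{\vec\zeta,k}_s\rangle$, where $\mathcal{T}^\ell_s$ is a linear operator built from the reaction kernel $K_\ell$, the placement density $m_\ell$, and (in the second order case) one contraction against either $\mu^{\vec\zeta,r}_s$ or $\bar\mu^r_s$. Concretely, if $\mathscr{R}_\ell$ is first order of type $S_k\to\cdots$ then $\sqrt\gamma(\lambda^{(\ell)}[\mu^{\vec\zeta}_s]-\lambda^{(\ell)}[\bar\mu_s]) = \Xi^{\vec\zeta,k}_s$ and $(\mathcal{T}^\ell_s f)(x) = \tfrac1{\vec\alpha^{(\ell)}!}K_\ell(x)\bigl(\int_{\mathbb{Y}^{(\ell)}}(\sum_r f(y_r^{(j)}))m_\ell(\vec y\,|\,x)\,d\vec y - \sum_r f(x_r^{(j)})\bigr)$; if $\mathscr{R}_\ell$ is second order of type $S_k+S_r\to\cdots$ then the identity $\sqrt\gamma(\lambda^{(\ell)}[\mu^{\vec\zeta}_s]-\lambda^{(\ell)}[\bar\mu_s])(x,y) = \Xi^{\vec\zeta,k}_s(x)\mu^{\vec\zeta,r}_s(y)+\bar\mu^k_s(x)\Xi^{\vec\zeta,r}_s(y)$ produces two such pairings, one with the $y$-variable integrated against $\mu^{\vec\zeta,r}_s$ and the other with the $x$-variable integrated against $\bar\mu^k_s$; the permutation symmetry of $K_\ell$ in Assumption~\ref{A:AssumptionK} handles the $2S_j$ and $S_j+S_k$ sub-cases uniformly.

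The core estimate I would then establish is that each $\mathcal{T}^\ell_s$, defined on $C_0^\infty$ and extended by continuity, is a bounded operator $W_0^{\Gamma_1,a}\to W_0^{\Gamma_1,a}$ with operator norm controlled, uniformly in $s\leq\theta^{2a}_{\vec\zeta,\kappa}$ and in $\vec\zeta$, by a constant $C=C(K,\rho,\kappa,T,D)$. For the first order operators this uses only Assumption~\ref{A:AssumptionK} (so $K_\ell\in C^\Gamma$ with $\Gamma\geq\Gamma_1$ and bounded derivatives) and, for unbinding placement densities, Assumption~\ref{A:AssumptionRho} to control $\int|w|^{2a}\rho(|w|)\,dw<\infty$. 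For the second order operators, one first applies Jensen's inequality to move the weighted $L^2$ norm inside the contracting measure, using the total-mass bound of Assumption~\ref{Assume:kernalBdd}, and then controls the resulting weighted norms after a change of variables $u=\alpha_i x+(1-\alpha_i)y$; the decisive analytic point is the elementary weight inequality $\tfrac1{1+|x|^{2a}}\leq \tfrac{C(1+|y|^{2a})}{1+|u|^{2a}}$, which produces a factor $\langle 1+|\cdot|^{2a},\mu^{\vec\zeta,r}_s\rangle$ (resp.\ $\langle 1+|\cdot|^{2a},\bar\mu^k_s\rangle$); the former is $\leq C(\mu)+\kappa$ on $\{s<\theta^{2a}_{\vec\zeta,\kappa}\}$, and the latter is bounded by Lemma~\ref{L:MomentBound}. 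The degenerate case $\alpha_i=0$, where the change of variables fails, is handled separately using $a>d/2$ (which holds since $a\geq D$) together with the Sobolev embedding $W_0^{\Gamma_1,a}\hookrightarrow C^{0,a}$; the parameter bookkeeping of Assumption~\ref{A:SobolevSpaceParameters} is precisely what makes all these embeddings and moment conditions compatible.

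Given boundedness of $\mathcal{T}^\ell_s$, I would finish by duality. Writing $b_p=\langle \mathcal{T}^\ell_s f_p,\Xi^{\vec\zeta,k}_s\rangle=\langle f_p,(\mathcal{T}^\ell_s)^*\Xi^{\vec\zeta,k}_s\rangle$ and using that $\{f_p\}$ is a complete orthonormal system of $W_0^{\Gamma_1,a}$, Parseval gives $\sum_p b_p^2=\|(\mathcal{T}^\ell_s)^*\Xi^{\vec\zeta,k}_s\|_{-\Gamma_1,a}^2\leq C\|\Xi^{\vec\zeta,k}_s\|_{-\Gamma_1,a}^2$, while $\sum_p\langle f_p,\Xi^{\vec\zeta,j}_s\rangle^2=\|\Xi^{\vec\zeta,j}_s\|_{-\Gamma_1,a}^2$, which is finite for fixed $\gamma$ by~\eqref{eq:fixGammaBd}. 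Cauchy--Schwarz in $p$ then yields the pointwise-in-$(s,\omega)$ bound $\bigl|\sum_\ell\sum_p\langle f_p,\Xi^{\vec\zeta,j}_s\rangle\langle \mathcal{T}^\ell_s f_p,\Xi^{\vec\zeta,k}_s\rangle\bigr|\leq C\sum_{j'=1}^J\|\Xi^{\vec\zeta,j'}_s\|_{-\Gamma_1,a}^2$, with the absolute convergence of the $p$-sum also justifying the interchange of $\sum_p$ with the time integral and the expectation; taking $\sup_t\int_0^{t\wedge\theta^{2a}_{\vec\zeta,\kappa}}(\cdot)\,ds\leq\int_0^{T\wedge\theta^{2a}_{\vec\zeta,\kappa}}|\cdot|\,ds$ and then $\mathbb{E}$ gives~\eqref{eq:DiffMeas}.

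I expect the main obstacle to be the second step --- proving the uniform operator bound for the $\mathcal{T}^\ell_s$ on the weighted Sobolev space, and in particular correctly tracking how the weight $(1+|x|^{2a})^{-1}$ transforms under the placement-density substitutions and verifying that the resulting moment requirements on $\rho$ and on $\mu^{\vec\zeta,r}_s,\bar\mu^k_s$ are exactly those supplied by Assumptions~\ref{Assume:kernalBdd} and~\ref{A:AssumptionRho}, Lemma~\ref{L:MomentBound}, and the stopping time $\theta^{2a}_{\vec\zeta,\kappa}$. The reduction in the first step and the duality wrap-up in the last step are then routine.
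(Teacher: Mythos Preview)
Your proposal is correct and follows essentially the same approach as the paper: reduce via the case analysis of Definition~\ref{def:Delta} to pairings of the form $\langle \mathcal{D}f_p,\Xi\rangle$, establish uniform $W_0^{\Gamma_1,a}\to W_0^{\Gamma_1,a}$ boundedness of the resulting operators using Assumptions~\ref{A:AssumptionK}, \ref{A:AssumptionRho} and the moment control from the stopping time and Lemma~\ref{L:MomentBound}, and conclude by duality. The only cosmetic difference is the final wrap-up --- the paper passes via Parseval to $\langle \Xi^j,\mathcal{D}^*\Xi^i\rangle_{-\Gamma_1,a}$ and bounds it using Riesz representation plus Young's inequality, whereas you use Parseval plus Cauchy--Schwarz in $p$; both yield the same estimate.
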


\begin{proof}

We begin by considering the term
\begin{equation*}
  I = \int_{\tilde{\mathbb{X}}^{(\ell)}}     \frac{1}{\vec{\alpha}^{(\ell)}!}   K_\ell\left(\vec{x}\right) \left(\sum_{r = 1}^{\alpha_{\ell j}} f_p(x_r^{(j)}) \right) \sqrt{\gamma} \paren{\lambda^{(\ell)}[\mu_{s}^{\vec{\zeta}}](d\vec{x})-\lambda^{(\ell)}[\bar{\mu}_{s}](d\vec{x})}.
\end{equation*}
\begin{enumerate}
\item If the $\ell$-th reaction is a unimolecular reaction $S_j \to \cdots$,
\begin{align*}
 I &=
 \int_{\R^d} K_\ell\left(x\right) f_p(x) \sqrt{\gamma} \paren{\mu_{s}^{\vec{\zeta},j}(dx)-\bar{\mu}_{s}^j(dx)}\,
= \int_{\R^d} K_\ell(x) f_p(x) \, \Xi^{{\vec{\zeta}},j}_{s}(dx) = \la K_\ell f_{p}, \Xi^{{\vec{\zeta}},j}_{s}\ra.
\end{align*}
\item If the $\ell$-th reaction is a bimolecular reaction $2 S_j \to \cdots$,
\begin{align*}
 I &= \tfrac{1}{2} \int_{\R^{2d}}  K_\ell\left(x,y\right) \left(f_p(x) + f_p(y)\right) \paren{\Xi^{{\vec{\zeta}},j}_{s}(dx)\mu_{s}^{\vec{\zeta},j}(dy) +  \bar{\mu}^j_{s}(dx)\Xi^{{\vec{\zeta}},j}_{s}(dy) }\, \\
&=\tfrac{1}{2}\left[\la \la K_\ell, \mu^{\vec{\zeta},j}_{s}\ra f_p, \Xi^{{\vec{\zeta}},j}_{s}\ra +
\la \la K_\ell f_{p}, \bar{\mu}^{j}_{s}\ra, \Xi^{\vec{\zeta},j}_{s}\ra+
\la \la K_\ell f_{p}, \mu^{\vec{\zeta},j}_{s}\ra, \Xi^{\vec{\zeta},j}_{s}\ra+\la \la K_\ell, \bar{\mu}^{j}_{s}\ra f_p, \Xi^{{\vec{\zeta}},j}_{s}\ra\right]
\end{align*}
\item If the $\ell$-th reaction is a bimolecular reaction $S_i + S_j \to \cdots$, with $i \neq j$
\begin{align*}
 I &= \int_{\R^{2d}} K_\ell\left(x,y\right) f_p(y) \paren{\Xi^{{\vec{\zeta}},i}_{s}(dx)\mu_{s}^{\vec{\zeta}, j}(dy) +  \bar{\mu}^i_{s}(dx)\Xi^{{\vec{\zeta}},j}_{s}(dy) }
=\left[\la \la K_\ell f_{p}, \mu^{\vec{\zeta},j}_{s}\ra, \Xi^{\vec{\zeta},i}_{s}\ra+\la \la K_\ell, \bar{\mu}^{i}_{s}\ra f_p, \Xi^{{\vec{\zeta}},j}_{s}\ra\right]
\end{align*}
\end{enumerate}

Next, we simplify
\begin{equation*}
  II = \int_{\tilde{\mathbb{X}}^{(\ell)}}     \frac{1}{\vec{\alpha}^{(\ell)}!}   K_\ell\left(\vec{x}\right) \left(\int_{\mathbb{Y}^{(\ell)}}    \left( \sum_{r = 1}^{\beta_{\ell j}} f_p(y_r^{(j)}) \right) m_\ell\left(\vec{y} \, |\, \vec{x} \right)\, d \vec{y}  \right) \sqrt{\gamma} \paren{\lambda^{(\ell)}[\mu_{s}^{\vec{\zeta}}](d\vec{x})-\lambda^{(\ell)}[\bar{\mu}_{s}](d\vec{x})}
\end{equation*}
by expanding out all the allowable reaction types:
\begin{enumerate}
\item If the $\ell$-th reaction is of the form $S_i \to S_j$,
\begin{align*}
  II &= \int_{\tilde{\mathbb{X}}^{(\ell)}}  K_\ell\left(x\right) \left(\int_{\R^d}  f_p(y)  m_\ell\left(y \, |\, x \right)\, dy \right) \sqrt{\gamma} \paren{\mu_{s}^{\vec{\zeta},i}(dx)-\bar{\mu}_{s}^i(dx)}\\
  &= \int_{\R^d} K_\ell\left(x\right)  f_p(x)  \Xi^{{\vec{\zeta}},i}_{s} (dx)
  = \la K_\ell f_{p}, \Xi^{{\vec{\zeta}},i}_{s}\ra.
\end{align*}
\item If the $\ell$-th reaction is of the form $S_i \to 2 S_j$, we set the operator
\begin{equation*}
  \mathcal{D}_{1}^{(1-\alpha_i)} f_{p}(x)=K_\ell\left(x\right) \left(\int_{\R^{d}}    f_p\paren{x + \paren{1-\alpha_i}w}  \rho(w) \, dw  \right)
\end{equation*}
and we have
\begin{align*}
 II &= \int_{\R^d} K_\ell\left(x\right) \left(\int_{\R^{2d}}  \left( f_p(y) + f_p(z)\right) m_\ell\left(y, z \, |\, x \right)\,  dy\, dz   \right)  \Xi^{{\vec{\zeta}},i}_{s} (dx)\\
 &= \int_{\R^d} K_\ell\left(x\right) \left(\int_{\R^{2d}} \left( f_p(y) + f_p(z)\right) \rho(y-z) \sum_{k = 1}^{I} p_k \delta\paren{x - \paren{\alpha_k y + (1-\alpha_k) z}} \, dy\, dz  \right)  \Xi^{{\vec{\zeta}},i}_{s} (dx)\\
 &= \sum_{k = 1}^{I} p_k  \int_{\R^d} K_\ell(x) \left(\int_{\R^{2d}} \paren{f_p(w+z) + f(z)} \rho(w) \delta\paren{x - \paren{\alpha_k w + z}} \, dz\, dw  \right)  \Xi^{{\vec{\zeta}},i}_{s} (dx)\\
 &= \sum_{k = 1}^{I} p_k  \int_{\R^d}  K_\ell(x) \left(\int_{\R^{d}} \paren{f_p\paren{x + \paren{1-\alpha_k}w} + f(x - \alpha_k w)} \rho(w) \, dw  \right)  \Xi^{{\vec{\zeta}},i}_{s} (dx)\\
 &= \sum_{k = 1}^{I} p_k \brac{ \la \mathcal{D}_{1}^{(1-\alpha_k)}f_{p},\Xi^{{\vec{\zeta}},i}_{s}\ra + \la \mathcal{D}_{1}^{(-\alpha_k)}f_{p},\Xi^{{\vec{\zeta}},i}_{s}\ra}.
\end{align*}
\item If the $\ell$-th reaction is of the form $S_i \to S_j + S_k$ for $j \neq n$, mimicking the previous case we find
\begin{align*}
  II &= \int_{\R^d} K_\ell\left(x\right) \left(\int_{\R^{2d}} f_p(y) m_\ell\left(y, z \, |\, x \right)\,  dy\, dz   \right)  \Xi^{{\vec{\zeta}},i}_{s} (dx)\\
  &= \sum_{n = 1}^{I} p_n \la \mathcal{D}_{1}^{(1-\alpha_n)}f_{p},\Xi^{{\vec{\zeta}},i}_{s}\ra.
 \end{align*}
\item If the $\ell$-th reaction is of the form $S_i + S_k \to S_j$,
\begin{align*}
 II &= \tfrac{1}{\vec{\alpha}^{(\ell)}!} \int_{\R^{2d}} K_\ell\left(x, y\right) \left(\int_{\R^d}    f_p(z) m_\ell\left(z |\, x, y\right)\, d z  \right)   \paren{\Xi^{{\vec{\zeta}},k}_{s}(dx)\mu_{s}^{\vec{\zeta}, i}(dy) +  \bar{\mu}^k_{s}(dx)\Xi^{{\vec{\zeta}},i}_{s}(dy) }\\
&= \tfrac{1}{\vec{\alpha}^{(\ell)}!}  \sum_{n = 1}^{I} p_n \int_{\R^{2d}} K_\ell\left(x, y\right)   f_p\paren{\alpha_n x + (1-\alpha_n) y} \paren{\Xi^{{\vec{\zeta}},k}_{s}(dx)\mu_{s}^{\vec{\zeta}, i}(dy) +  \bar{\mu}^k_{s}(dx)\Xi^{{\vec{\zeta}},i}_{s}(dy) }\\
&= \tfrac{1}{\vec{\alpha}^{(\ell)}!}  \sum_{n = 1}^{I} p_n \Big[\la \la K_\ell(x, y)   f_p\paren{\alpha_n x + (1-\alpha_n) y} \mu^{\vec{\zeta},i}_{s}(dy)\ra, \Xi^{{\vec{\zeta}},k}_{s}(dx)\ra \\
&\qquad\qquad\qquad\qquad\qquad\qquad +\la \la K_\ell(x, y)   f_p\paren{\alpha_n x + (1-\alpha_n) y} \bar{\mu}^{k}_{s}(dx)\ra, \Xi^{{\vec{\zeta}},i}_{s}(dy)\ra \Big].
\end{align*}
\item If the $\ell$-th reaction is of the form $S_i + S_k \to 2S_j$,
\begin{align*}
 II&= \tfrac{1}{\vec{\alpha}^{(\ell)}!}\int_{\R^{2d}} K_\ell(x,y) \left(\int_{\R^{2d}}    \left( f_p(z) + f_p(w) \right) m_\ell\left(z, w \, |\,x, y \right)\, d z\, dw  \right) \paren{\Xi^{{\vec{\zeta}},k}_{s}(dx)\mu_{s}^{\vec{\zeta}, i}(dy) +  \bar{\mu}^k_{s}(dx)\Xi^{{\vec{\zeta}},i}_{s}(dy) }\\
 &=  \tfrac{1}{\vec{\alpha}^{(\ell)}!} \int_{\R^{2d}} K_\ell(x, y)    \left( f_p(x) + f_p(y) \right)  \paren{\Xi^{{\vec{\zeta}},k}_{s}(dx)\mu_{s}^{\vec{\zeta}, i}(dy) +  \bar{\mu}^k_{s}(dx)\Xi^{{\vec{\zeta}},i}_{s}(dy)}\\
 &= \tfrac{1}{\vec{\alpha}^{(\ell)}!}\left[\la \la K_\ell, \mu^{\vec{\zeta},i}_{s}\ra f_p, \Xi^{{\vec{\zeta}},k}_{s}\ra +
\la \la K_\ell f_{p}, \bar{\mu}^{k}_{s}\ra, \Xi^{\vec{\zeta},i}_{s}\ra+
\la \la K_\ell f_{p}, \mu^{\vec{\zeta},i}_{s}\ra, \Xi^{\vec{\zeta},k}_{s}\ra+\la \la K_\ell, \bar{\mu}^{k}_{s}\ra f_p, \Xi^{{\vec{\zeta}},i}_{s}\ra\right]
\end{align*}
\item If the $\ell$-th reaction is of the form $S_i + S_k \to S_j + S_r$ for $r \neq j$, mimicking the previous case we find
\begin{align*}
 II&= \tfrac{1}{\vec{\alpha}^{(\ell)}!}\int_{\R^{2d}} K_\ell(x,y) \left(\int_{\R^{2d}} f_p(z) m_\ell\left(z, w \, |\,x, y \right)\, d z\, dw  \right) \paren{\Xi^{{\vec{\zeta}},k}_{s}(dx)\mu_{s}^{\vec{\zeta}, i}(dy) +  \bar{\mu}^k_{s}(dx)\Xi^{{\vec{\zeta}},i}_{s}(dy) }\\
 &= \tfrac{1}{\vec{\alpha}^{(\ell)}!}\paren{\la \la K_\ell, \mu^{\vec{\zeta},i}_{s}\ra f_p, \Xi^{{\vec{\zeta}},k}_{s}\ra +
\la \la K_\ell f_{p}, \bar{\mu}^{k}_{s}\ra, \Xi^{\vec{\zeta},i}_{s}\ra}.
\end{align*}
\end{enumerate}

In each case studied above, the upper bound is of the form $\la \mathcal{D} f_{p}, \Xi_{s}^{\zeta}\ra$ where the operator $\mathcal{D}$ is either a local operator as $\mathcal{D}f_{p}(x)=K_\ell(x) f_{p}(x)$, $\mathcal{D}f_{p}(x,s)=\la K_\ell(x), \mu^{\vec{\zeta},i}_{s}\ra f_p(x)$ or a non-local operator of the form $\mathcal{D}f_{p}(x)=\mathcal{D}_{1}^{(1-\alpha_n)}f_{p}(x)$ (as defined above) or of the form
$\mathcal{D}f_{p}(x,s)=\la K_\ell f_{p}, \mu^{\vec{\zeta},i}_{s}\ra$. Due to Assumption \ref{A:AssumptionK} and for the operator $\mathcal{D}=\mathcal{D}_{1}^{(1-\alpha_n)}$ Assumption \ref{A:AssumptionRho} as well, in all of the cases, the operators $\mathcal{D}$ are linear operators from
$W_0^{\Gamma_{1},a}$ into $W_0^{\Gamma_{1},a}$.

Examining the left hand side of (\ref{eq:DiffMeas}), for such operators $\mathcal{D}$ we see that in all of these cases estimates of the following form hold:
\begin{equation}
  \begin{aligned}
    \int_{0}^{t\wedge\theta^{2a}_{\vec{\zeta},\kappa}}\sum_{p\geq 1}\la f_p,\Xi^{{\vec{\zeta}}, j}_{s}\ra\la \mathcal{D} f_p,\Xi^{{\vec{\zeta}}, i}_{s}\ra ds
    &= \int_{0}^{t\wedge\theta^{2a}_{\vec{\zeta},\kappa}}\la \Xi^{{\vec{\zeta}}, j}_{s}, \mathcal{D}^{*}\Xi^{{\vec{\zeta}}, i}_{s}\ra_{-\Gamma_{1},a} ds\\
    &\leq C \int_{0}^{t\wedge\theta^{2a}_{\vec{\zeta},\kappa}}\left[\|\Xi^{{\vec{\zeta}}, j}_{s}\|^{2}_{-\Gamma_{1},a}+\|\Xi^{{\vec{\zeta}}, i}_{s}\|^{2}_{-\Gamma_{1},a}\right]ds.
    \end{aligned}
\end{equation}

Let us prove this for one possible form of the operators $\mathcal{D}$, say for $\mathcal{D}f_{p}(x,s;\zeta,i)=\la K_\ell(x,y) f_{p}(y), \mu^{\vec{\zeta},i}_{s}(dy)\ra$. The rest of the cases can being treated similarly\footnote{Even though we do not show this here, showing the desired bound for the choice $\mathcal{D}f_{p}(x)=\mathcal{D}_{(1-\alpha_n)}^{i}f_{p}(x)$ leads to the requirements $\|\rho\|_{L^{1}}<\infty$  and $\int_{\mathbb{R}^{d}}|w|^{2a}\rho(w)dw<\infty$.}. Let us first show that $\mathcal{D}:W_0^{\Gamma_{1},a}\mapsto W_0^{\Gamma_{1},a}$ and for $f_{p}\in W_0^{\Gamma_{1},a}$ the bound $\|\mathcal{D}f_{p}\|^{2}_{\Gamma_{1},a}\leq C \|f_{p}\|^{2}_{\Gamma_{1},a}$ holds, for some unimportant constant $C<\infty$. Indeed, the following computation holds
\begin{align}
\|\mathcal{D}f_{p}\|^{2}_{\Gamma_{1},a}&=\|\la K_\ell(x,\cdot) f_{p}, \mu^{\vec{\zeta},i}_{s}\ra\|^{2}_{\Gamma_{1},a}\nonumber\\
&=\sum_{k\leq\Gamma_{1}}\int_{\mathbb{R}^{d}}\frac{1}{1+|x|^{2a}}\left(\la K_\ell (x,\cdot)f_{p}, \mu^{\vec{\zeta},i}_{s}\ra^{(k)}\right)^{2}dx\nonumber\\
&=\sum_{k\leq\Gamma_{1}}\int_{\mathbb{R}^{d}}\frac{1}{1+|x|^{2a}}\left(\la K_\ell^{(k)} (x,\cdot)f_{p}, \mu^{\vec{\zeta},i}_{s}\ra\right)^{2}dx\nonumber\\
&\leq C \sum_{k\leq\Gamma_{1}}\|\partial^{(k)}_{x}K_{\ell}\|^{2}_{\infty}\int_{\mathbb{R}^{d}}\frac{1}{1+|x|^{2a}}\la (f_{p})^{2}, \mu^{\vec{\zeta},i}_{s}\ra dx\nonumber\\
&\leq C \sum_{k\leq\Gamma_{1}}\|\partial^{(k)}_{x}K_{\ell}\|^{2}_{\infty}
\paren{\sup_{x\in\mathbb{R}^{d}}\frac{(f_{p}(x))^{2}}{1+|x|^{2a}}}\la 1+|x|^{2a}, \mu^{\vec{\zeta},i}_{s}\ra\int_{\mathbb{R}^{d}}\frac{1}{1+|x|^{2a}} dx\nonumber\\
&\leq C \left[\sum_{k\leq\Gamma_{1}}\|\partial^{(k)}_{x}K_{\ell}\|^{2}_{\infty}
\la 1+|x|^{2a}, \mu^{\vec{\zeta},i}_{s}\ra\int_{\mathbb{R}^{d}}\frac{1}{1+|x|^{2a}} dx\right]\|f_{p}\|^{2}_{\Gamma_{1},a}\nonumber\\
&\leq C\left[\sum_{k\leq\Gamma_{1}}\|\partial^{(k)}_{x}K_{\ell}\|^{2}_{\infty}
\la 1+|x|^{2a}, \mu^{\vec{\zeta},i}_{s}\ra\right]\|f_{p}\|^{2}_{\Gamma_{1},a}\nonumber\\
&= C\|f_{p}\|^{2}_{\Gamma_{1},a},
\label{Eq:MapingPropertyD}
\end{align}
where we have used the Sobolev embedding theorem, the boundedness of $\max_{k\leq\Gamma_{1}}\|\partial^{(k)}_{x}K_{\ell}\|^{2}_{\infty}$, and that $2a>d$ by assumption. This estimate also demonstrates why weighted spaces are needed. In particular, without weights (take for example $a=0$) the bound on the second inequality would have been in terms of $\int_{\mathbb{R}^{d}} dx=\infty$ instead of $\int_{\mathbb{R}^{d}}\frac{1}{1+|x|^{2a}} dx<\infty$.

Then, as in Lemma \ref{lem:densityIneq}, the Riesz representation theorem for Hilbert spaces allows us to show that there exists a unique $\Psi^{j}, \Psi^{i} \in W_0^{\Gamma_{1},a}$ such that (omitting the $\vec{\zeta}$ for notational simplicity)
\begin{equation*}
  \la \Xi^{j},  \mathcal{D}^* \Xi^{i}\ra_{-\Gamma_{1},a} =  \la \Psi^{j},  \mathcal{D}^* \Xi^{i}\ra = \la \mathcal{D} \Psi^{j},   \Xi^{i}\ra = \la \mathcal{D} \Psi^{j}, \Psi^{i}\ra_{\Gamma_{1},a}.
\end{equation*}

Hence, using Young's inequality, pulling the supremum of $\partial^{(k)}_{x}K_{\ell}$ outside the integration and omitting the time dependence for now, we shall have (following a similar computation as in (\ref{Eq:MapingPropertyD}))
\begin{align}
\la \Xi^{j}, \mathcal{D}^{*}\Xi^{ i}\ra_{-\Gamma_{1},a}&=\la \mathcal{D} \Psi^{j}, \Psi^{i}\ra_{\Gamma_{1},a}\nonumber\\
&=\sum_{k\leq\Gamma_{1}}\int_{\mathbb{R}^{d}}\frac{1}{1+|x|^{2a}}\la K_\ell(x,\cdot) \Psi^{j}, \mu^{\vec{\zeta},i}_{s}\ra^{(k)} (\Psi^{i}(x))^{(k)}dx\nonumber\\
&\leq \frac{1}{2}\sum_{k\leq\Gamma_{1}}\int_{\mathbb{R}^{d}}\frac{1}{1+|x|^{2a}}\left(\la K_\ell (x,\cdot)\Psi^{j}, \mu^{\vec{\zeta},i}_{s}\ra^{(k)}\right)^{2}dx+\frac{1}{2}\|\Psi^{i}\|^{2}_{\Gamma_{1},a}\nonumber\\
&\leq C\left[\sum_{k\leq\Gamma_{1}}\|\partial^{(k)}_{x}K_{\ell}\|^{2}_{\infty}
\la 1+|x|^{2a}, \mu^{\vec{\zeta},i}_{s}\ra\right]\|\Psi^{j}\|^{2}_{\Gamma_{1},a}+\frac{1}{2}\|\Psi^{i}\|^{2}_{\Gamma_{1},a}\nonumber\\
&\leq C\left[\sum_{k\leq\Gamma_{1}}\|\partial^{(k)}_{x}K_{\ell}\|^{2}_{\infty}
\la 1+|x|^{2a}, \mu^{\vec{\zeta},i}_{s}\ra\right]\|\Xi^{j}\|^{2}_{-\Gamma_{1},a}+\frac{1}{2}\|\Xi^{i}\|^{2}_{-\Gamma_{1},a},\nonumber
\end{align}
where to go from the first inequality to the second we did the same computation as in  (\ref{Eq:MapingPropertyD}).  From the preceding estimate we find
\begin{align}
\mathbb{E} \sup_{t \in \brac{0,T}} \int_{0}^{t\wedge\theta^{2a}_{\vec{\zeta},\kappa}}\la \Xi^{j}_{s}, \mathcal{D}^{*}\Xi^{ i}_{s}\ra_{-\Gamma_{1},a}ds&\leq C\mathbb{E}\int_{0}^{T\wedge\theta^{2a}_{\vec{\zeta},\kappa}}\left(\|\Xi^{j}_{s}\|^{2}_{-\Gamma_{1},a}+\|\Xi^{i}_{s}\|^{2}_{-\Gamma_{1},a}\right)ds.
\end{align}

We conclude that there is a constant $C<\infty$ that depends on $\kappa,C(K), C_{\circ}, \|\rho\|_{L^{1}}, \int_{\mathbb{R}^{d}}(1+|x|^{2a})\rho(x)dx$, and the moment bound from Lemma \ref{L:MomentBound} such that
\begin{align*}
 &\E\sup_{t\in [0, T]}\sum_{p\geq 1} \sum_{\ell =  1}^L \int_{0}^{t\wedge\theta^{2a}_{\vec{\zeta},\kappa}}\la f_p,\Xi^{{\vec{\zeta}}, j}_{s}\ra \int_{\tilde{\mathbb{X}}^{(\ell)}}     \frac{1}{\vec{\alpha}^{(\ell)}!}   K_\ell\left(\vec{x}\right) \left(\int_{\mathbb{Y}^{(\ell)}}    \left( \sum_{r = 1}^{\beta_{\ell j}} f_p(y_r^{(j)}) \right) m_\ell\left(\vec{y} \, |\, \vec{x} \right)\, d \vec{y} - \sum_{r = 1}^{\alpha_{\ell j}} f_p(x_r^{(j)}) \right)\,\\
&\qquad\times \sqrt{\gamma} \paren{\lambda^{(\ell)}[\mu_{s}^{\vec{\zeta}}](d\vec{x})-\lambda^{(\ell)}[\bar{\mu}_{s}](d\vec{x})}\, ds \nonumber\\
& \leq  C \E \int_{0}^{T\wedge\theta^{2a}_{\vec{\zeta},\kappa}}
 \sum_{j=1}^{J}||\Xi^{{\vec{\zeta}},j}_{s}||^{2}_{-\Gamma_{1},a}\, ds.
\end{align*}

\end{proof}


\begin{lemma}\label{lem:DiffPlaceMeas}
Let $\{ f_p \}_{p\geq 1}$ be a complete orthonormal system in $W_0^{\Gamma_{1},a}(\R^d)$ of class $C^\infty_0(\R^d)$. Then, with $b>a+d/2$, we have that there exists a finite constant $C<\infty$ such that
\begin{align}
&\E\sup_{t\in [0, T]}\sum_{p\geq 1}\sum_{\ell =  1}^L \int_{0}^{t\wedge\theta^{2b}_{\vec{\zeta},\kappa}}\la f_p,\Xi^{{\vec{\zeta}}, j}_{s}\ra \int_{\tilde{\mathbb{X}}^{(\ell)}}     \frac{1}{\vec{\alpha}^{(\ell)}!}   K_\ell\left(\vec{x}\right) \left(\int_{\mathbb{Y}^{(\ell)}}    \left( \sum_{r = 1}^{\beta_{\ell j}} f_p(y_r^{(j)}) \right) \sqrt{\gamma}\paren{m^\eta_\ell\left(\vec{y} \, |\, \vec{x} \right)-m_\ell\left(\vec{y} \, |\, \vec{x} \right)}\, d \vec{y}  \right)\, \lambda^{(\ell)}[\mu_{s}^{\vec{\zeta}}](d\vec{x})\, ds\nonumber\\
&\leq C\left[\E \int_{0}^{T\wedge\theta^{2b}_{\vec{\zeta},\kappa}} ||\Xi^{{\vec{\zeta}}, j}_{s}||^{2}_{-\Gamma_{1},a} ds + |\sqrt{\gamma}\eta|^{2} \right].
\end{align}
The constant $C<\infty$ depends on $\kappa,C(K), C_{\circ}, \|\rho\|_{L^{1}}, \int_{\mathbb{R}^{d}}(1+|x|^{2a})\rho(x)dx$ and the moment bound from Lemma \ref{L:MomentBound}.
\end{lemma}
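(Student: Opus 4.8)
The plan is to exploit that $m^\eta_\ell(\cdot\,|\,\vec{x})$ is a standard $\eta$-mollification of the (distributional) limiting placement measure $m_\ell(\cdot\,|\,\vec{x})$, so that testing $m^\eta_\ell-m_\ell$ against a $C^1$ function yields an error of size $O(\eta)$ whose constant is controlled by a weighted $C^1$ norm of that function. Multiplying by the prefactor $\sqrt\gamma$ turns this into the $O(\sqrt\gamma\,\eta)$ contribution; a Cauchy--Schwarz over the orthonormal system $\{f_p\}$ combined with the Hilbert--Schmidt embedding $W^{\Gamma_{1},a}_0\hookrightarrow W^{D+1,b}_0\hookrightarrow C^{1,b}$ (valid under Assumption~\ref{A:SobolevSpaceParameters}) produces the factor $\|\Xi^{\vec{\zeta}, j}_s\|_{-\Gamma_{1},a}$; and Young's inequality then separates the estimate into the two terms on the right-hand side of the claim.

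Concretely, I would first decompose the sum over $\ell$ into the allowable reaction types, exactly as in the proofs of Lemma~\ref{L:MomentBound} and Lemma~\ref{lem:DiffMeas}, and for each type estimate the inner integral
\[
E^{(\ell)}_p(\vec{x}):=\int_{\mathbb{Y}^{(\ell)}}\Big(\sum_{r=1}^{\beta_{\ell j}}f_p(y_r^{(j)})\Big)\big(m^\eta_\ell(\vec{y}\,|\,\vec{x})-m_\ell(\vec{y}\,|\,\vec{x})\big)\,d\vec{y}.
\]
Using the explicit mollified forms in Assumptions~\ref{Assume:measureOne2One}--\ref{Assume:measureOne2Two}, and performing in each case the change of variables that separates a (weighted) center of mass from a separation variable $w$, the integral $E^{(\ell)}_p(\vec{x})$ becomes a finite linear combination of increments $f_p(c+\eta v)-f_p(c)$ integrated against the mollifier $G$ (supported in $B(0,1)$) and, for the unbinding reaction $S_i\to S_j+S_k$, also against $\rho(|w|)\,dw$, with $c$ an affine function of the reactant positions and of $w$. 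A first-order Taylor expansion together with the weighted Sobolev embedding then gives $|E^{(\ell)}_p(\vec{x})|\le C\,\eta\,\|f_p\|_{C^{1,b}}(1+|\vec{x}|^{b})$, where for the unbinding reaction the constant also depends on $\int_{\R^d}(1+|w|^{b})\rho(|w|)\,dw<\infty$, finite by Assumption~\ref{A:AssumptionRho}. Multiplying by $K_\ell(\vec{x})$, integrating against $\lambda^{(\ell)}[\mu^{\vec{\zeta}}_s]$, and invoking the uniform bound on $K_\ell$ (Assumption~\ref{Assume:kernalBdd0}) together with the moment control $\sum_j\la 1+|\cdot|^{b},\mu^{\vec{\zeta},j}_s\ra\le C(\mu,\kappa)$ valid for $s\le\theta^{2b}_{\vec{\zeta},\kappa}$ by Lemma~\ref{L:MomentBound}, yields
\[
|c^{(\ell)}_p(s)|:=\Big|\int_{\tilde{\mathbb{X}}^{(\ell)}}\tfrac{1}{\vec{\alpha}^{(\ell)}!}K_\ell(\vec{x})\,\sqrt\gamma\,E^{(\ell)}_p(\vec{x})\,\lambda^{(\ell)}[\mu^{\vec{\zeta}}_s](d\vec{x})\Big|\le C(K,\mu,\kappa)\,\sqrt\gamma\,\eta\,\|f_p\|_{C^{1,b}},
\]
with no remaining dependence on $\Xi^{\vec{\zeta}}$.

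Next I would bound the integrand of the left-hand side by $\sum_{\ell=1}^L\sum_{p\ge1}|\la f_p,\Xi^{\vec{\zeta}, j}_s\ra|\,|c^{(\ell)}_p(s)|$ and apply Cauchy--Schwarz in $p$: since $\{f_p\}$ is a complete orthonormal system in $W^{\Gamma_{1},a}_0$, $\sum_p\la f_p,\Xi^{\vec{\zeta}, j}_s\ra^2=\|\Xi^{\vec{\zeta}, j}_s\|^2_{-\Gamma_{1},a}$, while $\sum_p\|f_p\|^2_{C^{1,b}}\le C\sum_p\|f_p\|^2_{D+1,b}<\infty$ because $W^{\Gamma_{1},a}_0\hookrightarrow W^{D+1,b}_0$ is Hilbert--Schmidt ($\Gamma_{1}-(D+1)\ge D>d/2$ and $b-a>d/2$) and $W^{D+1,b}_0\hookrightarrow C^{1,b}$ ($D>d/2$). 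This gives, pointwise in $s$,
\[
\sum_{\ell=1}^L\sum_{p\ge1}\big|\la f_p,\Xi^{\vec{\zeta}, j}_s\ra\big|\,|c^{(\ell)}_p(s)|\le C(K,\mu,\kappa)\,\sqrt\gamma\,\eta\,\|\Xi^{\vec{\zeta}, j}_s\|_{-\Gamma_{1},a}.
\]
Bounding the running supremum by the time integral of the absolute value over $[0,T\wedge\theta^{2b}_{\vec{\zeta},\kappa}]$, taking expectations, and using Young's inequality $\sqrt\gamma\,\eta\,\|\Xi^{\vec{\zeta}, j}_s\|_{-\Gamma_{1},a}\le\tfrac12|\sqrt\gamma\,\eta|^2+\tfrac12\|\Xi^{\vec{\zeta}, j}_s\|^2_{-\Gamma_{1},a}$ then yields precisely the claimed bound, the factor $T$ being absorbed into $C$.

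The main work, though essentially routine, is the reaction-by-reaction verification that $E^{(\ell)}_p$ is $O(\eta)$ uniformly in the weighted $C^1$ norm; the one step needing real care is the unbinding reaction $S_i\to S_j+S_k$, where the mollification is composed with the $\rho$-distributed separation and one must check that the growth of the test-function weight in the separation variable is absorbed by the polynomial moments of $\rho$ — this is exactly what Assumption~\ref{A:AssumptionRho} supplies. The remaining ingredients (the Hilbert--Schmidt summation over $\{f_p\}$, the stopping-time moment bounds from Lemma~\ref{L:MomentBound}, and Young's inequality) are structurally identical to those already used in the proof of Lemma~\ref{lem:DiffMeas}.
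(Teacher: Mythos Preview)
Your proposal is correct and follows essentially the same route as the paper. The paper applies Young's inequality $2ab\le a^2+b^2$ at the outset (rather than Cauchy--Schwarz in $p$ followed by Young), then controls the squared placement term via Lemma~B.2 of \cite{IMS:2022} to obtain the $O(\eta)\|f_p\|_{C^{1,b}}$ estimate you derive directly by the reaction-by-reaction Taylor argument, and finishes with the same Hilbert--Schmidt summation $\sum_p\|f_p\|^2_{1+D,b}<\infty$; the two orderings are interchangeable and use identical ingredients.
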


\begin{proof}
By Young's inequality, Lemma B.2 and the proof of Lemma B.1 in~\cite{IMS:2022}, and the moment bound Lemma~\ref{L:MomentBound}, we have for $b>a+d/2$ that
\begin{align*}
&2 \, \E\sup_{t\in [0, T]}\sum_{p\geq 1}\sum_{\ell =  1}^L \int_{0}^{t\wedge\theta^{2b}_{\vec{\zeta},\kappa}}\la f_p,\Xi^{{\vec{\zeta}}, j}_{s}\ra \int_{\tilde{\mathbb{X}}^{(\ell)}}     \frac{1}{\vec{\alpha}^{(\ell)}!}   K_\ell\left(\vec{x}\right) \left(\int_{\mathbb{Y}^{(\ell)}}    \left( \sum_{r = 1}^{\beta_{\ell j}} f_p(y_r^{(j)}) \right) \sqrt{\gamma}\paren{m^\eta_\ell\left(\vec{y} \, |\, \vec{x} \right)-m_\ell\left(\vec{y} \, |\, \vec{x} \right)}\, d \vec{y}  \right)\, \lambda^{(\ell)}[\mu_{s}^{\vec{\zeta}}](d\vec{x})\, ds\\
&\leq \E\sup_{t\in [0, T]}\sum_{p\geq 1}\sum_{\ell =  1}^L \int_{0}^{t\wedge\theta^{2b}_{\vec{\zeta},\kappa}}\la f_p,\Xi^{{\vec{\zeta}}, j}_{s}\ra^{2} ds \\
 &+
 \E\sup_{t\in [0, T]}\sum_{p\geq 1}\sum_{\ell =  1}^L \int_{0}^{t\wedge\theta^{2b}_{\vec{\zeta},\kappa}} \paren{\int_{\tilde{\mathbb{X}}^{(\ell)}}     \frac{1}{\vec{\alpha}^{(\ell)}!}   K_\ell\left(\vec{x}\right) \left(\int_{\mathbb{Y}^{(\ell)}}    \left( \sum_{r = 1}^{\beta_{\ell j}} f_p(y_r^{(j)}) \right) \sqrt{\gamma}\paren{m^\eta_\ell\left(\vec{y} \, |\, \vec{x} \right)-m_\ell\left(\vec{y} \, |\, \vec{x} \right)}\, d \vec{y}  \right)\ \lambda^{(\ell)}[\mu_{s}^{\vec{\zeta}}](d\vec{x})}^{2}\, ds\\
&\leq C \left[\E \int_{0}^{T\wedge\theta^{2b}_{\vec{\zeta},\kappa}} ||\Xi^{{\vec{\zeta}}, j}_{s}||^{2}_{-\Gamma_{1},a} ds+\gamma \eta^{2} \sum_{p\geq 1}\|f_{p}\|^{2}_{C^{1,b}}\right]\\
&\leq C \left[\E \int_{0}^{T\wedge\theta^{2b}_{\vec{\zeta},\kappa}} ||\Xi^{{\vec{\zeta}}, j}_{s}||^{2}_{-\Gamma_{1},a} ds+\gamma \eta^{2} \sum_{p\geq 1}\|f_{p}\|^{2}_{1+D,b}\right],
\end{align*}
where the last line follows by Sobolev embedding with $D>d/2$. Now because $\Gamma_{1}>d/2+1+D$ and $b-a>d/2$, the embedding $W^{\Gamma_{1},a}_{0}\hookrightarrow W^{1+D,b}_{0}$ is of Hilbert-Schmidt type, so $\sum_{p\geq 1}||f_p||^{2}_{1+D,b}<\infty$. This concludes the proof.
\end{proof}
\begin{lemma}\label{lem:negativeNormBd}
Let $a\geq D$, $b\geq 2D$ and $\Gamma_{1}\geq 2D+1$, $D=1+\ceil*{d/2}$. For all $\sqrt{\gamma} \eta$ sufficiently small, see Assumption~\ref{A:AssumptionParameterRelativeRates}, there exists a constant $C$ such that
\begin{equation}\label{eq:negativeNormBd}
\sup_{\vec{\zeta}\in(0,1)^{2}}\E [\sup_{t\in[0, T]}||\Xi^{\vec{\zeta}, j}_{t\wedge\theta^{2b}_{\vec{\zeta},\kappa}}||^2_{-\Gamma_{1},a}] < C,
\end{equation}
for $j = 1, 2, \cdots, J$.
\end{lemma}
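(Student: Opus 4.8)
The plan is to apply It\^o's formula to $\|\Xi^{\vec{\zeta}, j}_{t\wedge\theta^{2b}_{\vec{\zeta},\kappa}}\|^2_{-\Gamma_{1},a}$ using the evolution equation \eqref{eq:fluctuationProcess}, then take a supremum over $t$ and an expectation, and finally close the resulting inequality with Gr\"onwall's lemma. To make this rigorous I would first expand $\|\Xi^{\vec{\zeta}, j}_{s}\|^2_{-\Gamma_{1},a}=\sum_{p\geq 1}\la f_p,\Xi^{\vec{\zeta}, j}_{s}\ra^2$ in the complete orthonormal system $\{f_p\}_{p\geq 1}\subset C_0^\infty(\R^d)$ of $W_0^{\Gamma_{1},a}$, apply the one-dimensional It\^o formula to each $\la f_p,\Xi^{\vec{\zeta}, j}_{s}\ra^2$ (handling separately the drift, the continuous Brownian martingale $\cC^{\vec{\zeta}, j}$, and the compensated-Poisson jump martingale $\cD^{\vec{\zeta}, j}$ from \eqref{eq:contMartingaleDef}--\eqref{eq:disMartingaleDef}), and sum over $p$. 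Justifying the interchange of $\sum_p$ with It\^o's formula and with the stochastic integrals is precisely where the localization at $\theta^{2b}_{\vec{\zeta},\kappa}$ and the a priori finiteness bound $\|\Xi^{\vec{\zeta}, j}_{s}\|_{-\Gamma_{1},a}<\infty$ for fixed $\gamma$ (established in \eqref{eq:fixGammaBd}) are used; the Hilbert--Schmidt embeddings of Section~\ref{S:SobolevSpace} guarantee the $p$-sums of the relevant norms converge.

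After expanding, the terms group as follows. The diffusion drift gives $\int_0^{t\wedge\theta^{2b}_{\vec{\zeta},\kappa}}2\la \Xi^{\vec{\zeta}, j}_s,\mathcal{L}_j^*\Xi^{\vec{\zeta}, j}_s\ra_{-\Gamma_{1},a}\,ds$, which by Lemma~\ref{lem:densityIneq} is bounded by $C\int_0^{t\wedge\theta^{2b}_{\vec{\zeta},\kappa}}\|\Xi^{\vec{\zeta}, j}_s\|^2_{-\Gamma_{1},a}\,ds$. The reactant-measure drift terms of the form $\sqrt{\gamma}\la f_p,\Xi^{\vec{\zeta}, j}_s\ra(\lambda^{(\ell)}[\mu^{\vec{\zeta}}_s]-\lambda^{(\ell)}[\bar\mu_s])$ are controlled by Lemma~\ref{lem:DiffMeas}, contributing $C\,\E\int_0^{T\wedge\theta^{2b}_{\vec{\zeta},\kappa}}\sum_{j}\|\Xi^{\vec{\zeta}, j}_s\|^2_{-\Gamma_{1},a}\,ds$. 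The placement-density mismatch drift is handled by Lemma~\ref{lem:DiffPlaceMeas}, producing an additional $C\,\E\int_0^{T\wedge\theta^{2b}_{\vec{\zeta},\kappa}}\|\Xi^{\vec{\zeta}, j}_s\|^2_{-\Gamma_{1},a}\,ds + C|\sqrt{\gamma}\eta|^2$, where the last term is $o(1)$ by Assumption~\ref{A:AssumptionParameterRelativeRates}. For the continuous martingale contribution, the It\^o correction (bracket) term is $\sum_p\int_0^{t\wedge\theta^{2b}_{\vec{\zeta},\kappa}}d\la\cC^{\vec{\zeta}, j}\ra_s(f_p)$ type, bounded via Lemma~\ref{lem:contMartingale} by $C(D_j,\kappa)\E\int_0^{t\wedge\theta^{2b}_{\vec{\zeta},\kappa}}\|\Xi^{\vec{\zeta}, j}_s\|^2_{-\Gamma_{1},a}\,ds$, while the martingale part itself is controlled after $\sup_t$ by the Burkholder--Davis--Gundy inequality applied to its quadratic variation, reducing again to the same quadratic expression plus a Hilbert--Schmidt-finite constant. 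The Poisson jump contribution splits via $\sum_p[(\la f_p,\Xi_{s-}\ra+g)^2-\la f_p,\Xi_{s-}\ra^2-2g\la f_p,\Xi_{s-}\ra]$ into a compensator piece bounded by Lemma~\ref{lem:poissonBd1} ($\leq TC\|f_p\|_{D,b}^2$ summing to a finite constant by Hilbert--Schmidt) and a compensated-martingale piece whose quadratic variation is bounded by Lemma~\ref{lem:MartingalePoisson}, again yielding $\frac{1}{\gamma}Ct(\text{finite})+C\int_0^{t\wedge\theta^{2b}_{\vec{\zeta},\kappa}}\sum_p\la f_p,\Xi_{s-}\ra^2\,ds$, and then BDG on this jump martingale.

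Collecting everything, one arrives at
\begin{equation*}
\E\sup_{s\in[0,t]}\|\Xi^{\vec{\zeta}, j}_{s\wedge\theta^{2b}_{\vec{\zeta},\kappa}}\|^2_{-\Gamma_{1},a} \leq \E\|\Xi^{\vec{\zeta}, j}_0\|^2_{-\Gamma_{1},a} + C_1 + C_2\int_0^t \E\sup_{s'\in[0,s]}\sum_{i=1}^J\|\Xi^{\vec{\zeta}, i}_{s'\wedge\theta^{2b}_{\vec{\zeta},\kappa}}\|^2_{-\Gamma_{1},a}\,ds,
\end{equation*}
where $C_1$ absorbs the Hilbert--Schmidt constants and $|\sqrt{\gamma}\eta|^2$, and where the initial term is uniformly bounded by Assumption~\ref{A:AssumptionInitialCondition} (noting $\Gamma_1\geq 2D+1$ so $\|\cdot\|_{-\Gamma_1,a}\lesssim\|\cdot\|_{-\Gamma,a}$ there). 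Summing over $j=1,\dots,J$ and invoking Gr\"onwall's inequality yields the claimed bound uniformly in $\vec{\zeta}\in(0,1)^2$. The main obstacle I anticipate is not any single estimate---those are delivered by the preceding lemmas---but rather the bookkeeping to rigorously justify the It\^o expansion in the infinite orthonormal basis simultaneously with the $\sup_t$ and BDG steps: one must be careful that, after localization, every $p$-sum that appears (of quadratic variations, of compensators, of $\|f_p\|$-norms in the various weighted $C$- and Sobolev norms) is finite by the Hilbert--Schmidt embeddings, and that the stopped processes are genuine square-integrable martingales so that BDG and Doob apply. A secondary subtlety is that Lemma~\ref{lem:DiffMeas} is stated with stopping time $\theta^{2a}_{\vec{\zeta},\kappa}$ rather than $\theta^{2b}_{\vec{\zeta},\kappa}$; since $b>a$ (Assumption~\ref{A:SobolevSpaceParameters}), we have $\theta^{2b}_{\vec{\zeta},\kappa}\leq\theta^{2a}_{\vec{\zeta},\kappa'}$ for a suitable $\kappa'$, so the bound transfers at the cost of relabeling the constant.
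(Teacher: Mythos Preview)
Your proposal is correct and follows essentially the same route as the paper: expand $\|\Xi^{\vec{\zeta},j}\|^2_{-\Gamma_1,a}$ in an orthonormal basis $\{f_p\}$ of $W_0^{\Gamma_1,a}$, apply the scalar It\^o formula to each $\la f_p,\Xi^{\vec{\zeta},j}\ra^2$, sum over $p$, take $\sup_t$ and $\E$, bound each group of terms with Lemmas~\ref{lem:contMartingale}--\ref{lem:DiffPlaceMeas}, and close with Gr\"onwall after summing over $j$.

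Two small bookkeeping slips worth correcting. First, the It\^o bracket from the continuous martingale, $\sum_p\la\cC^{\vec{\zeta},j}\ra_{t\wedge\theta^{2b}_{\vec{\zeta},\kappa}}(f_p)$, is not what Lemma~\ref{lem:contMartingale} addresses; that lemma controls the stochastic integral $\sum_p\int\la f_p,\Xi_{s-}\ra\,d\cC^{\vec{\zeta},j}_s(f_p)$. The bracket itself is bounded directly by $2D_j\kappa\,T\sum_p\|f_p\|^2_{1+D,b}$, a finite \emph{constant} via the Hilbert--Schmidt embedding (no $\|\Xi\|$ appears). Second, your norm comparison for the initial data has the inequality reversed: since $\Gamma>\Gamma_1$ one has $\|\cdot\|_{-\Gamma,a}\lesssim\|\cdot\|_{-\Gamma_1,a}$, not the other way, so Assumption~\ref{A:AssumptionInitialCondition} as literally stated (in the weaker $-\Gamma$ norm) does not immediately yield the $-\Gamma_1$ bound; the paper invokes the assumption at this point without further comment, so in practice one reads Assumption~\ref{A:AssumptionInitialCondition} as supplying whatever initial-data norm is needed. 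Neither slip affects the validity of your overall argument.
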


\begin{proof}
Let $\{ f_p \}_{p\geq 1}$ be a complete orthonormal system in $W_0^{\Gamma_{1},a}(\R^d)$ of class $C^\infty_0(\R^d)$. By applying Ito's formula on \cref{eq:fluctuationProcess}, we can obtain
\begin{align}\label{eq:fp_squared}
&\la f_p,\Xi^{{\vec{\zeta}}, j}_{t}\ra^2
=\la f_p,\Xi^{{\vec{\zeta}}, j}_{0}\ra^2 + 2\int_{0}^{t} \la f_p,\Xi^{{\vec{\zeta}}, j}_{s}\ra \la (\mathcal{L}_j f_p)(x) , \Xi^{\vec{\zeta}, j}_{s-} (dx)\ra ds  \nonumber\\
 &
\quad + 2\int_0^t  \la f_p, \Xi^{\vec{\zeta}, j}_{s-}\ra d\cC^{\vec{\zeta}, j}_s (f_p)+ \la \cC^{\vec{\zeta}, j}\ra_t (f_p)\nonumber\\
&
\quad+ 2\sum_{\ell =  1}^L \int_{0}^{t}\la f_p,\Xi^{{\vec{\zeta}}, j}_{s}\ra \int_{\tilde{\mathbb{X}}^{(\ell)}}     \frac{1}{\vec{\alpha}^{(\ell)}!}   K_\ell\left(\vec{x}\right) \left(\int_{\mathbb{Y}^{(\ell)}}    \left( \sum_{r = 1}^{\beta_{\ell j}} f_p(y_r^{(j)}) \right) m_\ell\left(\vec{y} \, |\, \vec{x} \right)\, d \vec{y} - \sum_{r = 1}^{\alpha_{\ell j}} f_p(x_r^{(j)}) \right)\,\nonumber\\
&
\qquad\times \sqrt{\gamma} \paren{\lambda^{(\ell)}[\mu_{s}^{\vec{\zeta}}](d\vec{x})-\lambda^{(\ell)}[\bar{\mu}_{s}](d\vec{x})}\, ds \nonumber\\
&
\quad+ 2\sum_{\ell =  1}^L \int_{0}^{t}\la f_p,\Xi^{{\vec{\zeta}}, j}_{s}\ra \int_{\tilde{\mathbb{X}}^{(\ell)}}     \frac{1}{\vec{\alpha}^{(\ell)}!}   K_\ell\left(\vec{x}\right) \left(\int_{\mathbb{Y}^{(\ell)}}    \left( \sum_{r = 1}^{\beta_{\ell j}} f_p(y_r^{(j)}) \right) \sqrt{\gamma}\paren{m^\eta_\ell\left(\vec{y} \, |\, \vec{x} \right)-m_\ell\left(\vec{y} \, |\, \vec{x} \right)}\, d \vec{y}  \right)\, \lambda^{(\ell)}[\mu_{s}^{\vec{\zeta}}](d\vec{x})\, ds\nonumber\\
&
\quad + \sum_{\ell = 1}^L \int_{0}^{t}\int_{\mathbb{I}^{(\ell)}} \int_{\mathbb{Y}^{(\ell)}}\int_{\mathbb{R}_{+}^2} \paren{ \paren{\la f_p,\Xi^{{\vec{\zeta}}, j}_{s-}\ra +  g^{\ell, j, f_p, \mu^{\vec{\zeta}}}(s,\vec{i}, \vec{y},\theta_1, \theta_2) }^2 - \la f_p,\Xi^{{\vec{\zeta}}, j}_{s-}\ra^2} \,d\tilde{N}_{\ell}(s,\vec{i}, \vec{y},\theta_1, \theta_2)  \nonumber\\
&
\quad + \sum_{\ell = 1}^L \int_{0}^{t}\int_{\mathbb{I}^{(\ell)}} \int_{\mathbb{Y}^{(\ell)}}\int_{\mathbb{R}_{+}^2}\paren{ \paren{\la f_p,\Xi^{{\vec{\zeta}}, j}_{s-}\ra + g^{\ell, j, f_p, \mu^{\vec{\zeta}}}(s,\vec{i}, \vec{y},\theta_1, \theta_2) }^2 - \la f_p,\Xi^{{\vec{\zeta}}, j}_{s-}\ra^2 \right. \nonumber\\
&
\qquad\qquad\left.  - 2g^{\ell, j, f_p, \mu^{\vec{\zeta}}}(s,\vec{i}, \vec{y},\theta_1, \theta_2) \times \la f_p,\Xi^{{\vec{\zeta}}, j}_{s-}\ra} \,  d\bar{N}_{\ell}(s,\vec{i}, \vec{y},\theta_1, \theta_2)  \nonumber \\
\end{align}

We will sum over all $p\geq 1$, take the supremum over $t\in[0, T]$ and then take expectations on both sides of \cref{eq:fp_squared}. Using Parseval's identity, the first line of \cref{eq:fp_squared}'s right side becomes
\begin{align*}
&\E \sum_{p\geq1} \la f_p, \Xi^{{\vec{\zeta}}, j}_0 \ra^2  + 2\E \sup_{t\in[0, T]}\sum_{p\geq1}\int_0^{t\wedge\theta^{2b}_{\vec{\zeta},\kappa}} \la f_p, \Xi^{{\vec{\zeta}}, j}_{s-}\ra\la (\mathcal{L}_1f_p)(x) , \Xi^{{\vec{\zeta}}, j}_{s-} (dx)\ra \, ds\\
&= \E \sum_{p\geq1} \la f_p, \Xi^{{\vec{\zeta}}, j}_0 \ra^2  + 2\E\sup_{t\in[0, T]} \int_0^{t\wedge\theta^{2b}_{\vec{\zeta},\kappa}}  \sum_{p\geq1} \la f_p, \Xi^{{\vec{\zeta}}, j}_{s-}\ra\la f_p ,\mathcal{L}_1^* \Xi^{{\vec{\zeta}}, j}_{s-}\ra \, dt\nonumber\\
&= \E \sum_{p\geq1} \la f_p, \Xi^{{\vec{\zeta}}, j}_0 \ra^2  + 2\E\sup_{t\in[0, T]} \int_0^{t\wedge\theta^{2b}_{\vec{\zeta},\kappa}}   \la\Xi^{{\vec{\zeta}}, j}_{s-}, \mathcal{L}_1^* \Xi^{{\vec{\zeta}}, j}_{s-}\ra \, dt\nonumber\\
&\leq  \E ||\Xi^{{\vec{\zeta}}, j}_{0}||_{-\Gamma_{1},a}^2  + C  \E \int_0^{T\wedge\theta^{2b}_{\vec{\zeta},\kappa}}  ||\Xi^{{\vec{\zeta}}, j}_{s-}||_{-\Gamma_{1},a}^2 \, dt \qquad\text{(by \cref{lem:densityIneq})}
\end{align*}

In the second line of \cref{eq:fp_squared}, by \cref{lem:contMartingale}, the stochastic integral $2\int_0^{t\wedge\theta^{2b}_{\vec{\zeta},\kappa}}  \la f_p, \Xi^{{\vec{\zeta}}, j}_{s-}\ra d\cC^{\vec{\zeta}, j}_s (f_p)$ is a martingale. By Jensen's inequality, Doob's martingale inequality (see \cref{lem:contMartingale}), we get
\begin{equation}\label{eq:JensenAndDoob}
\begin{aligned}
\E \sup_{t\in[0, T]}\sum_{p\geq1} \int_0^{t\wedge\theta^{2b}_{\vec{\zeta},\kappa}}  \la f_p, \Xi^{{\vec{\zeta}}, j}_{s-}\ra d\cC^{\vec{\zeta}, j}_s(f_p)
&\leq C(D_j,\kappa) \sqrt{  \mathbb{E}\int_0^{T\wedge\theta^{2b}_{\vec{\zeta},\kappa}}   || \Xi^{{\vec{\zeta}}, j}_{s-}||_{-\Gamma_{1},a}^2  \, ds} \\
&\leq C \paren{1 + \mathbb{E}\int_0^{T\wedge\theta^{2b}_{\vec{\zeta},\kappa}}   || \Xi^{{\vec{\zeta}}, j}_{s-}||_{-\Gamma_{1},a}^2  \, ds}.
\end{aligned}
\end{equation}

%
%
By \cref{eq:contMartingaleQV}, the term arising from $\la \cC^{\vec{\zeta}, j}\ra_{t\wedge\theta^{2b}_{\vec{\zeta},\kappa}} (f_p)$ on the second line of \cref{eq:fp_squared} is bounded by
\begin{equation*}
\begin{aligned}\E \sup_{t\in[0, T]}\sum_{p\geq1}\la \cC^{\vec{\zeta}, j}\ra_{t\wedge\theta^{2b}_{\vec{\zeta},\kappa}} (f_p) &=\E\sum_{p\geq1} \int_{0}^{T\wedge\theta^{2b}_{\vec{\zeta},\kappa}} \la 2D_{j} \paren{\frac{ \partial f_p}{\partial Q}(x)}^2, \mu_{s-}^{{\vec{\zeta}}, j}(dx)\ra\,ds\\
&\leq \left(2D_{j}  T \sup_{t\leq T}h_{t}^{\vec{\zeta},j,2b}\right) \sum_{p\geq1}||f_p||^2_{1+D,b}.
\end{aligned}
\end{equation*}
For the third line of \cref{eq:fp_squared}, using \cref{lem:DiffMeas} we obtain the bound
\begin{equation*}
\begin{aligned}
 &2\E\sup_{t\in [0, T]}\sum_{p\geq 1} \sum_{\ell =  1}^L \int_{0}^{t\wedge\theta^{2b}_{\vec{\zeta},\kappa}}\la f_p,\Xi^{{\vec{\zeta}}, j}_{s}\ra \int_{\tilde{\mathbb{X}}^{(\ell)}}     \frac{1}{\vec{\alpha}^{(\ell)}!}   K_\ell\left(\vec{x}\right) \left(\int_{\mathbb{Y}^{(\ell)}}    \left( \sum_{r = 1}^{\beta_{\ell j}} f_p(y_r^{(j)}) \right) m_\ell\left(\vec{y} \, |\, \vec{x} \right)\, d \vec{y} - \sum_{r = 1}^{\alpha_{\ell j}} f_p(x_r^{(j)}) \right)\,\\
&\qquad\times \sqrt{\gamma} \paren{\lambda^{(\ell)}[\mu_{s}^{\vec{\zeta}}](d\vec{x})-\lambda^{(\ell)}[\bar{\mu}_{s}](d\vec{x})}\, ds \\
&\leq  C  \E \int_{0}^{T\wedge\theta^{2b}_{\vec{\zeta},\kappa}} \sum_{k=1}^J ||\Xi^{{\vec{\zeta}},k}_{s}||^2_{-\Gamma_{1},a} \, ds \\
\end{aligned}
\end{equation*}
For the fourth line, using \cref{lem:DiffPlaceMeas} we obtain the bound
\begin{equation*}
\begin{aligned}
&\begin{multlined}[t]
  2\E\sup_{t\in [0, T]}\sum_{p\geq 1}\sum_{\ell =  1}^L \int_{0}^{t\wedge\theta^{2b}_{\vec{\zeta},\kappa}}\la f_p,\Xi^{{\vec{\zeta}}, j}_{s}\ra \int_{\tilde{\mathbb{X}}^{(\ell)}}     \frac{1}{\vec{\alpha}^{(\ell)}!}  K_\ell\left(\vec{x}\right) \left(\int_{\mathbb{Y}^{(\ell)}}    \left( \sum_{r = 1}^{\beta_{\ell j}} f_p(y_r^{(j)}) \right) \sqrt{\gamma}\paren{m^\eta_\ell\left(\vec{y} \, |\, \vec{x} \right)-m_\ell\left(\vec{y} \, |\, \vec{x} \right)}\, d \vec{y}  \right)\,\\
  \times\lambda^{(\ell)}[\mu_{s}^{\vec{\zeta}}](d\vec{x})\, ds\nonumber
\end{multlined}\\
&\leq C\left( \E \int_{0}^{T\wedge\theta^{2b}_{\vec{\zeta},\kappa}} ||\Xi^{{\vec{\zeta}}, j}_{s}||^{2}_{-\Gamma_{1},a}ds+(\sqrt{\gamma}\eta)^{2}\right)
\end{aligned}
\end{equation*}
The fifth line in \cref{eq:fp_squared} is a martingale by \cref{lem:MartingalePoisson} and thus by applying Jensen's inequality, Doob's martingale inequality, and Young's inequality, similarly to the derivation of \cref{eq:JensenAndDoob}, it is bounded by
\begin{equation*}
  \begin{aligned}
  &C_1 \sum_{p \geq 1} \sqrt{\mathbb{E}\paren{\frac{2}{\gamma}C T ||f_p||_{D,b}^4  + C ||f_p||_{D,b}^2  \int_0^{T\wedge\theta^{2b}_{\vec{\zeta},\kappa}} \paren{ \la f_p,\Xi^{{\vec{\zeta}}, j}_{s-}\ra}^2\, ds}}\\
  &\leq C \sum_{p \geq 1} \paren{\sqrt{\frac{T}{\gamma}} ||f_p||_{D,b}^2  + \sqrt{||f_p||_{D,b}^2 \E \int_0^{T\wedge\theta^{2b}_{\vec{\zeta},\kappa}} \paren{ \la f_p,\Xi^{{\vec{\zeta}}, j}_{s-}\ra}^2\, ds}}\\
  &\leq C \paren{\sum_{p\geq 1} \sqrt{\frac{T}{\gamma}} ||f_p||_{D,b}^2  + \sum_{p\geq 1} ||f_p||_{D,b}^2 + \E\int_0^{T\wedge\theta^{2b}_{\vec{\zeta},\kappa}} ||\Xi^{{\vec{\zeta}}, j}_{s-}||_{-\Gamma_{1},a}^2\, ds}.
  \end{aligned}
\end{equation*}

The last term in \cref{eq:fp_squared} by \cref{lem:poissonBd1} is bounded by
$$CT\sum_{p\geq1} ||f_p||_{D,b}^2 .$$

Therefore, for some finite constant $C$ that depends on $\kappa$, $C(K)$, $C_{\circ}$, $T$, $\|\rho\|_{L^{1}}$, $b$, $D$, $\int_{\mathbb{R}^{d}}|x|^{2a}\rho(x)dx$ and on the upper bound of Lemma~\ref{L:MomentBound} for the moments, we find that
\begin{multline}\label{eq:fp_squaredBd}
\E \sup_{t\in[0, T]}\sum_{p\geq1}\la f_p,\Xi^{{\vec{\zeta}}, j}_{t\wedge\theta^{2b}_{\vec{\zeta},\kappa}}\ra^2
\leq  \E ||\Xi^{{\vec{\zeta}}, j}_{0}||_{-\Gamma_{1},a}^2  + C + C  \E \int_0^{T\wedge\theta^{2b}_{\vec{\zeta},\kappa}} \sum_{k=1}^{J}  ||\Xi^{{\vec{\zeta}}, k}_{s-}||_{-\Gamma_{1},a}^2 \, ds \\
+C\sum_{p\geq1} ||f_p||_{1+D,b}^2+C (\sqrt{\gamma}\eta)^{2}
\end{multline}
As $\Gamma_{1}>1+d/2+D$ and $b-a>d/2$, the embedding $W^{\Gamma_{1},a}_{0}\hookrightarrow W^{1+D,b}_{0}$ is of Hilbert-Schmidt type, so $\sum_{p\geq 1}||f_p||^{2}_{1+D,b}<\infty$. Summing over $j = 1, \cdots, J$, using Parseval's identity, and using Assumption~\ref{A:AssumptionParameterRelativeRates} to make $\sqrt{\gamma} \eta$ sufficiently small, we find
\begin{equation}\label{eq:negativeNorm1}
\begin{aligned}
\sum_{j=1}^J\E{\sup_{t\in[0, T]}||\Xi^{{\vec{\zeta}}, j}_{t\wedge\theta^{2b}_{\vec{\zeta},\kappa}}||^2_{-\Gamma_{1},a}}
&\leq C_1+ \sum_{j=1}^J\E ||\Xi^{{\vec{\zeta}}, j}_{0}||_{-\Gamma_{1},a}^2 + C_2\int_0^{T\wedge\theta^{2b}_{\vec{\zeta},\kappa}} \sum_{k = 1}^J \E{\sup_{s\in[0, t]} ||\Xi^{{\vec{\zeta}}, k}_{s-}||^2_{-\Gamma_{1},a}}dt \\
&=  C_1 + \sum_{j=1}^J\E ||\Xi^{{\vec{\zeta}}, j}_{0}||_{-\Gamma_{1},a}^2+ C_2\int_0^{T} \sum_{k = 1}^J \E{\sup_{s\in[0, t]} ||\Xi^{{\vec{\zeta}}, k}_{s-}||^2_{-\Gamma_{1},a}}\chi_{\{\theta^{2b}_{\vec{\zeta},\kappa}\geq t\}}dt\\
&\leq  C_1 + \sum_{j=1}^J\E ||\Xi^{{\vec{\zeta}}, j}_{0}||_{-\Gamma_{1},a}^2+C_2\int_0^{T} \sum_{k = 1}^J \E{\sup_{s\in[0, t]} ||\Xi^{{\vec{\zeta}}, k}_{s\wedge \theta^{2b}_{\vec{\zeta},\kappa} -}||^2_{-\Gamma_{1},a}}dt
\end{aligned}
\end{equation}
for some generic constant  $C_1, C_2$ independent of $\vec{\zeta}$. Applying Gronwall's inequality to \cref{eq:negativeNorm1}, we obtain the desired result \cref{eq:negativeNormBd} since by Assumption \ref{A:AssumptionInitialCondition}  $\E ||\Xi^{{\vec{\zeta}}, j}_{0}||_{-\Gamma_{1},a}^2$ is uniformly bounded.

\end{proof}

\subsubsection{Uniform Bound on the martingales $\left\{\paren{M^{\vec{\zeta}, 1}_{t\wedge\theta^{2b}_{\vec{\zeta},\kappa}}, \cdots,  M^{\vec{\zeta}, J}_{t\wedge\theta^{2b}_{\vec{\zeta},\kappa}}}\right\}_{t\in[0, T]}$}
\begin{lemma}\label{lem:MartingaleNegBdd}
There exists a finite constant $C<\infty$ such that
\begin{equation}\label{eq:negativeNormBdMartingale}
\sup_{\vec{\zeta} \in(0,1)^2}\E [\sup_{t\in[0, T]}|| M^{\vec{\zeta}, j}_{t\wedge\theta^{2b}_{\vec{\zeta},\kappa}}||^2_{-\Gamma_{1},a}] < C,
\end{equation}
for $j = 1, \cdots, J$.
\end{lemma}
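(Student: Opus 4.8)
### Proof Proposal

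The plan is to mimic the structure of the proof of Lemma~\ref{lem:negativeNormBd}, but now applied directly to the martingale $M^{\vec{\zeta},j}_t(f)$ rather than to the full fluctuation process. Recall from~\eqref{eq:MartingaleDef} that $M^{\vec{\zeta},j}_t = \cC^{\vec{\zeta},j}_t + \cD^{\vec{\zeta},j}_t$, with $\cC$ the continuous (Brownian) part and $\cD$ the compensated-Poisson part, and that their quadratic variations are given explicitly by~\eqref{eq:contMartingaleQV} and~\eqref{eq:disMartingaleQV}. First I would fix a complete orthonormal system $\{f_p\}_{p\geq1}$ in $W_0^{\Gamma_1,a}(\R^d)$ of class $C_0^\infty$, write $\|M^{\vec{\zeta},j}_{t}\|_{-\Gamma_1,a}^2 = \sum_{p\geq1}\la f_p, M^{\vec{\zeta},j}_t\ra^2$ (using the Riesz/Parseval identification of the negative norm as in Section~\ref{S:SobolevSpace}), and apply Doob's maximal inequality componentwise: for each $p$, $\la f_p, M^{\vec{\zeta},j}_\cdot\ra$ is a square-integrable martingale, so $\E\sup_{t\leq T\wedge\theta^{2b}_{\vec{\zeta},\kappa}}\la f_p, M^{\vec{\zeta},j}_t\ra^2 \leq 4\,\E\la M^{\vec{\zeta},j}(f_p)\ra_{T\wedge\theta^{2b}_{\vec{\zeta},\kappa}}$. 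Summing over $p$ and using the stopping time to control moments, it then suffices to bound $\sum_{p\geq1}\E\la M^{\vec{\zeta},j}(f_p)\ra_{T\wedge\theta^{2b}_{\vec{\zeta},\kappa}}$ uniformly in $\vec{\zeta}$.

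The continuous contribution, by~\eqref{eq:contMartingaleQV}, is $\sum_{p\geq1}\E\int_0^{T\wedge\theta^{2b}_{\vec{\zeta},\kappa}}\la 2D_j(\partial f_p/\partial Q)^2, \mu^{\vec{\zeta},j}_{s-}\ra\,ds$, which I would bound exactly as in the $\la\cC^{\vec{\zeta},j}\ra$ term inside the proof of Lemma~\ref{lem:negativeNormBd}: pull out $\sup_{x}|\partial f_p(x)|^2/(1+|x|^{2b})$, use the stopping time $\theta^{2b}_{\vec{\zeta},\kappa}$ to control $\la 1+|x|^{2b},\mu^{\vec{\zeta},j}_{s-}\ra\leq C(1+\kappa)$, and reduce to $C(D_j,\kappa,T)\sum_{p\geq1}\|f_p\|_{1+D,b}^2$, which is finite because the embedding $W_0^{\Gamma_1,a}\hookrightarrow W_0^{1+D,b}$ is Hilbert–Schmidt under Assumption~\ref{A:SobolevSpaceParameters} (since $\Gamma_1-(1+D)>d/2$ and $b-a>d/2$). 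For the compensated-Poisson part, by~\eqref{eq:disMartingaleQV} the predictable quadratic variation is a sum over reactions of integrals of $\big(\sum_r f_p(y_r^{(j)})-\sum_r f_p(x_r^{(j)})\big)^2$ against $m^\eta_\ell(\vy|\vx)\,d\vy\,K_\ell(\vx)\,\lambda^{(\ell)}[\mu^{\vec{\zeta}}_{s-}](d\vx)$; this is precisely the type of quantity estimated in the chain~\eqref{eq:poissonBdDerive} inside the proof of Lemma~\ref{lem:poissonBd1}, yielding a bound of the form $C T\|f_p\|_{C^{0,b}}^2\leq CT\|f_p\|_{D,b}^2$ using $D>d/2$, Assumption~\ref{Assume:rescaling} on the $\gamma$-scaling of $K_\ell^\gamma$, Assumption~\ref{A:AssumptionK}, and the moment bound of Lemma~\ref{L:MomentBound}. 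Summing over $p$, $\sum_{p\geq1}\|f_p\|_{D,b}^2<\infty$ again by the Hilbert–Schmidt embedding (here I would simply note $D\le 1+D$). Combining the two contributions gives a $\vec{\zeta}$-independent bound, and adding in the contribution of the (already uniformly bounded, by Lemma~\ref{L:MomentBound} and hyperbolic-type estimates) stopped process if needed; in fact no such addition is required since $M^{\vec{\zeta},j}$ is handled directly.

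I do not expect a genuine obstacle here: the estimate is strictly simpler than Lemma~\ref{lem:negativeNormBd} because there is no Gronwall step — the martingale's quadratic variation is controlled directly by moments of $\mu^{\vec{\zeta},j}$ (via the stopping time $\theta^{2b}_{\vec{\zeta},\kappa}$) rather than recursively by $\|\Xi^{\vec{\zeta},k}_s\|^2_{-\Gamma_1,a}$. The one point requiring a little care is the interchange of the sum over $p$ with expectation and with the stochastic integral: for fixed $\gamma$ this is justified because $\|\Xi^{\vec{\zeta},j}_s\|_{-\Gamma_1,a}<\infty$ almost surely (cf.~\eqref{eq:fixGammaBd}), hence all the series converge and Fubini/Tonelli applies, after which one passes to the uniform-in-$\vec{\zeta}$ bound. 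A second minor point is that one must verify the jump function $g^{\ell,j,f_p,\mu^{\vec{\zeta}}}$ of~\eqref{eq:jump} is $\mathcal{O}(\gamma^{-1/2})$ and bounded so that $\la f_p, \cD^{\vec{\zeta},j}_\cdot\ra$ is genuinely a square-integrable martingale — but this is exactly the observation already recorded immediately after~\eqref{eq:jump}. Thus the proof is a direct assembly of Lemmas~\ref{L:MomentBound}, \ref{lem:poissonBd1}, the quadratic-variation formulas~\eqref{eq:contMartingaleQV}–\eqref{eq:disMartingaleQV}, Doob's inequality, and the Hilbert–Schmidt embedding property of the weighted Sobolev spaces.
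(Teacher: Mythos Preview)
Your proposal is correct and follows essentially the same approach as the paper: bound the predictable quadratic variation $\la M^{\vec{\zeta},j}(f)\ra_{T\wedge\theta^{2b}_{\vec{\zeta},\kappa}}$ by $C\|f\|^2_{1+D,b}$ using the stopping time and Sobolev embedding, apply Doob's maximal inequality, expand in a complete orthonormal system of $W_0^{\Gamma_1,a}$, and conclude via the Hilbert--Schmidt embedding $W_0^{\Gamma_1,a}\hookrightarrow W_0^{1+D,b}$. The paper carries this out slightly more compactly by bounding both the Brownian and Poisson contributions to the quadratic variation in one line rather than treating $\cC$ and $\cD$ separately, but the content is the same.
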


\begin{proof}
Notice that the quadratic variation of $M^{\vec{\zeta}, j}_{t\wedge\theta^{2b}_{\vec{\zeta},\kappa}}(f) $ is
\begin{align*}
&\la M^{\vec{\zeta}, j}\ra_{t\wedge\theta^{2b}_{\vec{\zeta},\kappa}}(f) = \int_{0}^{t\wedge\theta^{2b}_{\vec{\zeta},\kappa}} \la 2D_{j} \paren{\frac{ \partial f}{\partial Q}(x)}^2, \mu_{s-}^{{\vec{\zeta}}, j}(dx)\ra\,ds\\
&
\quad+\sum_{\ell =  1}^L \int_{0}^{t\wedge\theta^{2b}_{\vec{\zeta},\kappa}} \int_{\tilde{\mathbb{X}}^{(\ell)}}     \frac{1}{\vec{\alpha}^{(\ell)}!}   K_\ell\left(\vec{x}\right) \left(\int_{\mathbb{Y}^{(\ell)}}    \left( \sum_{r = 1}^{\beta_{\ell j}} f(y_r^{(j)}) - \sum_{r = 1}^{\alpha_{\ell j}} f(x_r^{(j)}) \right)^2 m^\eta_\ell\left(\vec{y} \, |\, \vec{x} \right)\, d \vec{y} \right)\,\lambda^{(\ell)}[\mu_{s-}^{{\vec{\zeta}}}](d\vec{x}) \, ds \\
&\qquad \leq C  ||f||^2_{C^{1,b}} + C ||f||^2_{C^{0,b}}\\
& \qquad \leq C ||f||^2_{1+D,b}
\end{align*}
with $D>d/2$, for some $C_1$ depending on $D_j's, C_{\circ}, C(K), T, L,\kappa$ but independent of $\vec{\zeta}$. Above the last inequality is due to the Sobolev embedding theorem since $m>d/2$. Doob's martingale inequality gives
\begin{equation*}
\E \brac{\sup_{t\in[0, T]}\paren{M^{\vec{\zeta}, j}_{t\wedge\theta^{2b}_{\vec{\zeta},\kappa}}(f)}^2} \leq 2 \E \brac{\paren{M^{\vec{\zeta}, j}_{T\wedge\theta^{2b}_{\vec{\zeta},\kappa}}(f)}^2} \leq C ||f||_{1+D,b}^2,
\end{equation*}
for $j= 1, \cdots, J$. Now let $\{ f_p \}_{p\geq 1}$ be a complete orthonormal system in $W_0^{\Gamma_{1},a}(\R^d)$ of class $C^\infty_0(\R^d)$ for $\Gamma_{1}\geq 2D$ and $b-a>d/2$.  Then, the embedding $W^{\Gamma_{1},a}_{0}\hookrightarrow W^{1+D,b}_{0}$ is of Hilbert-Schmidt type, so $\sum_{p\geq 1}||f_p||^{2}_{1+D,b}<\infty$. Thus, by Parseval's identity, we obtain
\begin{align*}
\sup_{\vec{\zeta} > 0}\E [\sup_{t\in[0, T]}|| M^{\vec{\zeta}, j}_{t\wedge\theta^{2b}_{\vec{\zeta},\kappa}}||^2_{-\Gamma_{1},a}] & \leq C\sum_{p\geq 1} ||f||_{1+D,b}^2  < \infty,
\end{align*}
concluding the proof of the lemma.
\end{proof}

\subsubsection{Control of the increments}

Since we showed \cref{lem:MartingaleNegBdd,lem:negativeNormBd}, to show tightness (see~\cite{Meleard} pg. 46), we only need to verify the following Aldous conditions.

\begin{lemma}[Aldous condition for martingale]\label{lem:AldousMartingale}
For every $\epsilon_1, \epsilon_2 > 0$, there exists $\delta > 0$ and an integer $n_0$ such that for every $F_t^n$-stopping time $\tau_n < T$, $j = 1, \cdots, J$,
\begin{equation}
\sup_{n\geq n_0} \sup_{\sigma \leq \delta} \mathbb{P} (||M^{\vec{\zeta}, j}_{(\tau_n+ \sigma)\wedge\theta^{2b}_{\vec{\zeta},\kappa}} - M^{\vec{\zeta}, j}_{\tau_n \wedge\theta^{2b}_{\vec{\zeta},\kappa} }||_{-\Gamma_{1},a} \geq \epsilon_1) \leq \epsilon_2.
\end{equation}
\end{lemma}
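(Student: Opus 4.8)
The plan is to verify this Aldous condition by the standard second-moment estimate, leveraging the quadratic-variation bounds already established for $M^{\vec{\zeta},j}$. First I would apply Chebyshev's inequality to reduce the claim to showing that
\[
\E\brac{\norm{M^{\vec{\zeta}, j}_{(\tau_n+ \sigma)\wedge\theta^{2b}_{\vec{\zeta},\kappa}} - M^{\vec{\zeta}, j}_{\tau_n \wedge\theta^{2b}_{\vec{\zeta},\kappa} }}^{2}_{-\Gamma_{1},a}} \leq C\sigma
\]
with $C$ independent of $\vec{\zeta}$ (hence of $n$) and of the $F_t^n$-stopping time $\tau_n$; this is the form in which the constant $\delta=\epsilon_1^2\epsilon_2/C$ and $n_0=1$ will work. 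Expanding the negative Sobolev norm in a complete orthonormal system $\{f_p\}_{p\geq1}$ of $W_0^{\Gamma_{1},a}(\R^d)$ consisting of $C_0^{\infty}$ functions, and using Parseval's identity together with Tonelli's theorem, this in turn reduces to bounding $\sum_{p\geq1}\E\brac{\paren{M^{\vec{\zeta}, j}_{(\tau_n+ \sigma)\wedge\theta^{2b}_{\vec{\zeta},\kappa}}(f_p) - M^{\vec{\zeta}, j}_{\tau_n \wedge\theta^{2b}_{\vec{\zeta},\kappa}}(f_p)}^{2}}$.

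Next, for each fixed $p$, I would use that $t\mapsto M^{\vec{\zeta}, j}_{t\wedge\theta^{2b}_{\vec{\zeta},\kappa}}(f_p)$ is a square integrable martingale started at $0$ — which for fixed $\vec{\zeta}$ follows from \cref{lem:contMartingale,lem:MartingalePoisson} and the moment bound of \cref{L:MomentBound} — so that, since $\tau_n\wedge\theta^{2b}_{\vec{\zeta},\kappa}\leq(\tau_n+\sigma)\wedge\theta^{2b}_{\vec{\zeta},\kappa}$ are bounded stopping times, optional sampling (applied to both the martingale and its compensated square) gives that the expected squared increment equals $\E\brac{\la M^{\vec{\zeta}, j}(f_p)\ra_{(\tau_n+\sigma)\wedge\theta^{2b}_{\vec{\zeta},\kappa}} - \la M^{\vec{\zeta}, j}(f_p)\ra_{\tau_n\wedge\theta^{2b}_{\vec{\zeta},\kappa}}}$. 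From the explicit quadratic-variation formulas \cref{eq:contMartingaleQV,eq:disMartingaleQV}, this increment is an integral over a random time interval of length at most $\sigma$, all of whose times $s$ satisfy $s\leq\theta^{2b}_{\vec{\zeta},\kappa}$, and whose integrand is estimated exactly as in the proof of \cref{lem:MartingaleNegBdd}: bounding $K_\ell$ by $C(K)$, invoking Assumptions~\ref{A:AssumptionRho} and~\ref{A:AssumptionK}, and replacing the spatial moments $\la 1+|x|^{2b},\mu^{\vec{\zeta},j}_{s-}\ra$ by the deterministic bound $C(\mu,\kappa)$ that is valid up to the stopping time $\theta^{2b}_{\vec{\zeta},\kappa}$. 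This yields a bound $C\sigma\paren{\norm{f_p}^{2}_{C^{1,b}}+\norm{f_p}^{2}_{C^{0,b}}}\leq C\sigma\norm{f_p}^{2}_{1+D,b}$, the last inequality being the Sobolev embedding $W_0^{1+D,b}\hookrightarrow C^{1,b}$, valid since $D>d/2$.

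Finally I would sum over $p$: because $\Gamma_{1}-(1+D)\geq D>d/2$ and $b-a>d/2$, the embedding $W_0^{\Gamma_{1},a}\hookrightarrow W_0^{1+D,b}$ is of Hilbert–Schmidt type, so $\sum_{p\geq1}\norm{f_p}^{2}_{1+D,b}<\infty$, which gives the desired $\E\norm{\cdot}^2_{-\Gamma_1,a}\leq C\sigma$ bound uniformly in $\vec{\zeta}$ and $\tau_n$, and hence the Aldous estimate upon choosing $\delta$ proportional to $\epsilon_1^2\epsilon_2$. The step I expect to require the most care is maintaining uniformity in $\vec{\zeta}$ throughout: the only-in-expectation spatial moment bounds of \cref{L:MomentBound} must be upgraded to the almost sure bounds used above, and this is precisely what the localization at $\theta^{2b}_{\vec{\zeta},\kappa}$ accomplishes; one must also be slightly careful to justify the optional-sampling identity by first invoking square integrability of the stopped martingales from \cref{lem:contMartingale,lem:MartingalePoisson}. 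All the remaining work consists of the same reaction-by-reaction integrand estimates already carried out for \cref{lem:MartingaleNegBdd}, now producing an extra factor $\sigma$ from the shortened time interval rather than a factor $T$.
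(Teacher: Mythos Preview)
Your proposal is correct and follows essentially the same approach as the paper: Parseval expansion in a $W_0^{\Gamma_1,a}$ orthonormal system, optional sampling to replace the squared martingale increment by the quadratic-variation increment, the same $C\sigma\norm{f_p}^2_{1+D,b}$ bound as in \cref{lem:MartingaleNegBdd}, Hilbert--Schmidt summability in $p$, and Markov's inequality. The only cosmetic difference is ordering --- the paper computes the $\norm{\cdot}_{-\Gamma_1,a}^2$ bound first and then applies Markov, whereas you start with Chebyshev --- but the content is identical.
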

\begin{proof}

Let $\{ f_p \}_{p\geq 1}$ be a complete orthonormal system in $W_0^{\Gamma_{1},a}(\R^d)$ of class $C^\infty_0(\R^d)$. Then
\begin{align}
&\E \sum_{p\geq1}\la f_p, M^{\vec{\zeta}, j}_{(\tau_n+ \sigma)\wedge\theta^{2b}_{\vec{\zeta},\kappa}} - M^{\vec{\zeta}, j}_{\tau_n\wedge\theta^{2b}_{\vec{\zeta},\kappa}} \ra^2
 = \E\sum_{p\geq1} \la M\ra^{\vec{\zeta},j}_{(\tau_n+ \sigma)\wedge\theta^{2b}_{\vec{\zeta},\kappa}}(f_p) - \la M\ra^{\vec{\zeta}, j}_{\tau_n\wedge\theta^{2b}_{\vec{\zeta},\kappa} }(f_p) \nonumber\\
&
= \E\int_{\tau_n \wedge\theta^{2b}_{\vec{\zeta},\kappa}}^{(\tau_n + \sigma)\wedge\theta^{2b}_{\vec{\zeta},\kappa}} \la 2D_{j} \paren{\frac{ \partial f_{p}}{\partial Q}(x)}^2, \mu_{s-}^{{\vec{\zeta}}, j}(dx)\ra\,ds +\E\sum_{\ell =  1}^L \int_{\tau_n\wedge\theta^{2b}_{\vec{\zeta},\kappa}}^{(\tau_n + \sigma)\wedge\theta^{2b}_{\vec{\zeta},\kappa}} \int_{\tilde{\mathbb{X}}^{(\ell)}}     \frac{1}{\vec{\alpha}^{(\ell)}!}   K_\ell\left(\vec{x}\right)\nonumber\\
&
\qquad \times\left(\int_{\mathbb{Y}^{(\ell)}}    \left( \sum_{r = 1}^{\beta_{\ell j}} f_{p}(y_r^{(j)}) - \sum_{r = 1}^{\alpha_{\ell j}} f_{p}(x_r^{(j)}) \right)^2 m^\eta_\ell\left(\vec{y} \, |\, \vec{x} \right)\, d \vec{y} \right)\,\lambda^{(\ell)}[\mu_{s-}^{{\vec{\zeta}}}](d\vec{x}) \, ds\nonumber\\
&
\quad \leq C \sum_{p\geq1}||f_{p}||^2_{C^{1,b}} \times\sigma \nonumber\\
&
\quad \leq C \sum_{p\geq1}||f_{p}||^2_{1+D,b} \times\sigma \nonumber
\end{align}
with $D>d/2$, for some $C$ depending on the $D_j$'s, $C_\circ$, $C(K)$, $T$, $L$, and $\kappa$, but independent of $\vec{\zeta}$. Here the last inequality is due to Sobolev embedding theorem since $D>d/2$.

Since $\{ f_p \}_{p\geq 1}$ is a complete orthonormal system in $W_0^{\Gamma_{1},a}(\R^d)$ of class $C^\infty_0(\R^d)$ for $\Gamma_{1}\geq 2D$ and $b-a>d/2$, the embedding $W^{\Gamma_{1},a}_{0}\hookrightarrow W^{1+D,b}_{0}$ is of Hilbert-Schmidt type, so $\sum_{p\geq 1}||f_p||^{2}_{1+D,b}<\infty$. Thus, by Parseval's identity, we obtain
\begin{equation}\label{eq:negativeNormDiff2}
\E{||M^{\vec{\zeta}, j}_{(\tau_n+ \sigma)\wedge\theta^{2b}_{\vec{\zeta},\kappa}} - M^{\vec{\zeta},j}_{\tau_n \wedge\theta^{2b}_{\vec{\zeta},\kappa} }||^2_{-\Gamma_{1},a}} \leq C\sigma
 \end{equation}
where $C$ is a constant depending on $C(K)$, $C_{\circ}$, and the $D_j$'s. By the Markov inequality, we have
\begin{align}
&\sup_{n\geq n_0} \sup_{\sigma \leq \delta} \mathbb{P} (||M^{\vec{\zeta}, j}_{(\tau_n+ \sigma)\wedge\theta^{2b}_{\vec{\zeta},\kappa}} - M^{\vec{\zeta}, j}_{\tau_n\wedge\theta^{2b}_{\vec{\zeta},\kappa} }||_{-\Gamma_{1},a} \geq \epsilon_1) \nonumber\\
& \leq \sup_{n\geq n_0} \sup_{\sigma \leq \delta} \frac{1}{\epsilon_1^2} \E ||M^{\vec{\zeta}, j}_{(\tau_n+ \sigma)\wedge\theta^{2b}_{\vec{\zeta},\kappa}} - M^{\vec{\zeta}, j}_{\tau_n \wedge\theta^{2b}_{\vec{\zeta},\kappa} }||_{-\Gamma_{1},a}^2\nonumber\\
& \leq \sup_{n\geq n_0} \sup_{\sigma \leq \delta} \frac{1}{\epsilon_1^2}C\delta \leq \epsilon_2,\nonumber
\end{align}
$j = 1, \cdots, J$, for $\delta$ sufficiently small.
\end{proof}

\begin{lemma}[Aldous condition]\label{lem:Aldous}
For every $\epsilon_1, \epsilon_2 > 0$, there exists $\delta > 0$ and an integer $n_0$ such that for every $F_t^n$-stopping time $\tau_n < T$, $j = 1, \cdots, J$,
\begin{equation}
\sup_{n\geq n_0} \sup_{\sigma \leq \delta} \mathbb{P} (||\Xi^{\vec{\zeta}, j}_{(\tau_n+ \sigma)\wedge\theta^{2b}_{\vec{\zeta},\kappa}} - \Xi^{\vec{\zeta}, j}_{\tau_n \wedge\theta^{2b}_{\vec{\zeta},\kappa}}||_{-\Gamma_{1},a} \geq \epsilon_1) \leq \epsilon_2.
\end{equation}
\end{lemma}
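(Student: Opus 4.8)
The plan is to reduce the assertion to a second-moment increment bound and then conclude by Markov's inequality. Writing $\theta$ for $\theta^{2b}_{\vec{\zeta},\kappa}$, it suffices to produce a constant $C<\infty$ independent of $\vec{\zeta}$ and an integer $n_0$ so that $\mathbb{E}\,||\Xi^{\vec{\zeta}, j}_{(\tau_n+\sigma)\wedge\theta}-\Xi^{\vec{\zeta}, j}_{\tau_n\wedge\theta}||^2_{-\Gamma_{1},a}\le C\sigma$ for all $n\ge n_0$ and all stopping times $\tau_n<T$; taking $\delta=\epsilon_1^2\epsilon_2/C$ then gives the claim. To obtain this bound I would fix a complete orthonormal system $\{f_p\}_{p\ge1}\subset C_0^\infty(\R^d)$ of $W_0^{\Gamma_{1},a}(\R^d)$, apply \cref{eq:fluctuationProcess} to each $\la f_p,\Xi^{\vec{\zeta}, j}_{(\tau_n+\sigma)\wedge\theta}-\Xi^{\vec{\zeta}, j}_{\tau_n\wedge\theta}\ra$, square, sum over $p$, and use Parseval's identity. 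This splits the target into four pieces: the diffusion drift $\int_{\tau_n\wedge\theta}^{(\tau_n+\sigma)\wedge\theta}\la\mathcal{L}_jf_p,\Xi^{\vec{\zeta}, j}_{s-}\ra\,ds$, the two reaction drifts carrying $\sqrt{\gamma}\paren{\lambda^{(\ell)}[\mu^{\vec{\zeta}}_s]-\lambda^{(\ell)}[\bar{\mu}_s]}$, the placement-mollifier drift carrying $\sqrt{\gamma}\paren{m^\eta_\ell-m_\ell}$, and the martingale increment $M^{\vec{\zeta}, j}_{(\tau_n+\sigma)\wedge\theta}(f_p)-M^{\vec{\zeta}, j}_{\tau_n\wedge\theta}(f_p)$.

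Three of these four are handled by estimates already assembled above, once one exploits that the driving integrals run over an interval of length at most $\sigma$. The martingale increment is controlled exactly as in \eqref{eq:negativeNormDiff2} of Lemma~\ref{lem:AldousMartingale}: its expected squared $W^{-\Gamma_{1},a}$-norm equals the expected increment of the quadratic variation, which by \eqref{eq:contMartingaleQV}, \eqref{eq:disMartingaleQV}, the moment bound of Lemma~\ref{L:MomentBound}, and the Hilbert-Schmidt embedding $W_0^{\Gamma_{1},a}\hookrightarrow W_0^{1+D,b}$ is at most $C\sigma$. For each reaction drift I would apply the Cauchy-Schwarz inequality in time to extract a factor $\sigma$, then invoke the operator bounds from the proof of Lemma~\ref{lem:DiffMeas} (valid because Assumptions~\ref{A:AssumptionRho} and~\ref{A:AssumptionK} make the operators $\mathcal{D}$ appearing there bounded on $W_0^{\Gamma_{1},a}$), reducing the remaining time integral to $\mathbb{E}\int_{\tau_n\wedge\theta}^{(\tau_n+\sigma)\wedge\theta}\sum_{k=1}^J||\Xi^{\vec{\zeta}, k}_s||^2_{-\Gamma_{1},a}\,ds$, which is $\le C\sigma$ by the uniform bound of Lemma~\ref{lem:negativeNormBd}. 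Likewise Lemma~\ref{lem:DiffPlaceMeas} bounds the placement-mollifier piece by $C\paren{\mathbb{E}\int_{\tau_n\wedge\theta}^{(\tau_n+\sigma)\wedge\theta}||\Xi^{\vec{\zeta}, j}_s||^2_{-\Gamma_{1},a}\,ds+(\sqrt{\gamma}\eta)^2}$; since $\sqrt{\gamma}\eta\to0$ by Assumption~\ref{A:AssumptionParameterRelativeRates}, choosing $n_0$ large makes $(\sqrt{\gamma}\eta)^2\le\sigma$ for all $n\ge n_0$, so this piece is $\le C\sigma$ as well.

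The remaining, and delicate, contribution is the diffusion drift, which I expect to be the main obstacle: because $\mathcal{L}_j=D_j\Delta$ loses two derivatives, $\mathcal{L}_j^*$ does not map $W^{-\Gamma_{1},a}$ into itself, so a naive duality estimate of $\int\mathcal{L}_j^*\Xi^{\vec{\zeta}, j}_s\,ds$ in $||\cdot||_{-\Gamma_{1},a}$ would produce a spurious factor of $\sqrt{\gamma}$. The resolution is not to estimate this drift in isolation but to perform an Ito-type computation on $||\Xi^{\vec{\zeta}, j}_t-\Xi^{\vec{\zeta}, j}_{\tau_n\wedge\theta}||^2_{-\Gamma_{1},a}$ over $t\in[\tau_n\wedge\theta,(\tau_n+\sigma)\wedge\theta]$, exactly as in the proof of Lemma~\ref{lem:negativeNormBd}, so that the diffusion term enters through the bilinear form $\la\Xi^{\vec{\zeta}, j}_{s-}-\Xi^{\vec{\zeta}, j}_{\tau_n\wedge\theta},\,\mathcal{L}_j^*(\Xi^{\vec{\zeta}, j}_{s-}-\Xi^{\vec{\zeta}, j}_{\tau_n\wedge\theta})\ra_{-\Gamma_{1},a}$, which is $\le C\,||\Xi^{\vec{\zeta}, j}_{s-}-\Xi^{\vec{\zeta}, j}_{\tau_n\wedge\theta}||^2_{-\Gamma_{1},a}$ by Lemma~\ref{lem:densityIneq}; the cross terms coupling the increment to $\Xi^{\vec{\zeta}, j}_{\tau_n\wedge\theta}$ either vanish in expectation (the martingale part, by conditioning on $\mathcal{F}_{\tau_n\wedge\theta}$) or are absorbed with Young's inequality and the uniform bound of Lemma~\ref{lem:negativeNormBd}, while recentering at $\tau_n\wedge\theta$ ensures no $\mathbb{E}||\Xi^{\vec{\zeta}, j}_0||^2_{-\Gamma_{1},a}$-type term survives. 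Setting $y(t)=\mathbb{E}\sup_{\tau_n\wedge\theta\le u\le t}||\Xi^{\vec{\zeta}, j}_u-\Xi^{\vec{\zeta}, j}_{\tau_n\wedge\theta}||^2_{-\Gamma_{1},a}$, one then obtains an inequality of the form $y((\tau_n+\sigma)\wedge\theta)\le C\sigma+C(\sqrt{\gamma}\eta)^2+C\int_{\tau_n\wedge\theta}^{(\tau_n+\sigma)\wedge\theta}y(s)\,ds$, and Gronwall's inequality over this interval of length $\sigma$ yields $y((\tau_n+\sigma)\wedge\theta)\le e^{C\sigma}(C\sigma+C(\sqrt{\gamma}\eta)^2)\le C'\sigma$ for $n\ge n_0$. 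Markov's inequality then gives $\mathbb{P}(||\Xi^{\vec{\zeta}, j}_{(\tau_n+\sigma)\wedge\theta}-\Xi^{\vec{\zeta}, j}_{\tau_n\wedge\theta}||_{-\Gamma_{1},a}\ge\epsilon_1)\le C'\sigma/\epsilon_1^2\le\epsilon_2$ for $\sigma\le\delta:=\epsilon_1^2\epsilon_2/C'$, which completes the argument.
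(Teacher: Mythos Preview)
Your reduction to a second-moment increment bound via Markov's inequality is exactly the paper's route, and your handling of the martingale increment, the reaction drifts, and the placement-mollifier drift mirrors the paper's use of Lemmas~\ref{lem:AldousMartingale}, \ref{lem:DiffMeas}, \ref{lem:DiffPlaceMeas} together with the uniform bound of Lemma~\ref{lem:negativeNormBd}. One minor point: when the estimate of Lemma~\ref{lem:DiffPlaceMeas} is run over an interval of length $\sigma$, the mollifier term already comes out as $C(\sqrt{\gamma}\eta)^2\sigma$ rather than $C(\sqrt{\gamma}\eta)^2$, so your $n_0$-trick forcing $(\sqrt{\gamma}\eta)^2\le\sigma$ is not needed (and in fact could not work uniformly over all $\sigma\le\delta$).

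The substantive difference from the paper, and a genuine gap in your argument, concerns the diffusion cross term. The drift of $\Xi^{\vec{\zeta},j}_s$ is $\mathcal{L}_j^*\Xi^{\vec{\zeta},j}_{s-}$, so It\^o on the recentered process $\Phi_s=\Xi^{\vec{\zeta},j}_s-\Xi^{\vec{\zeta},j}_{\tau_n\wedge\theta}$ produces, in addition to the diagonal form $\la\Phi_{s-},\mathcal{L}_j^*\Phi_{s-}\ra_{-\Gamma_1,a}$ that Lemma~\ref{lem:densityIneq} controls, a cross term $\la\Phi_{s-},\mathcal{L}_j^*\Xi^{\vec{\zeta},j}_{\tau_n\wedge\theta}\ra_{-\Gamma_1,a}$. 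This is \emph{not} absorbed by Young's inequality and Lemma~\ref{lem:negativeNormBd}: since $\mathcal{L}_j^*=D_j\Delta$ is unbounded on $W^{-\Gamma_1,a}$, the quantity $\|\mathcal{L}_j^*\Xi^{\vec{\zeta},j}_{\tau_n\wedge\theta}\|_{-\Gamma_1,a}$ is not controlled uniformly in $\vec{\zeta}$ by $\|\Xi^{\vec{\zeta},j}_{\tau_n\wedge\theta}\|_{-\Gamma_1,a}$, and the one-sided inequality of Lemma~\ref{L:AuxiliaryBound1} does not polarize to an off-diagonal bound. The paper sidesteps this entirely by \emph{not} recentering in the It\^o expansion: it keeps $\la f_p,\Xi^{\vec{\zeta},j}_{s-}\ra$ as the integrand, so the diffusion contribution enters only through the diagonal form $\la\Xi^{\vec{\zeta},j}_{s-},\mathcal{L}_j^*\Xi^{\vec{\zeta},j}_{s-}\ra_{-\Gamma_1,a}\le C\|\Xi^{\vec{\zeta},j}_{s-}\|^2_{-\Gamma_1,a}$, and then Lemma~\ref{lem:negativeNormBd} gives $\int_{\tau_n\wedge\theta}^{(\tau_n+\sigma)\wedge\theta}(\cdots)\,ds\le C\sigma$ directly, with no Gronwall step at all.
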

\begin{proof}
By the Markov inequality, we have
\begin{align*}
&\sup_{n\geq n_0} \sup_{\sigma \leq \delta} \mathbb{P} (||\Xi^{\vec{\zeta}, j}_{(\tau_n+ \sigma)\wedge\theta^{2b}_{\vec{\zeta},\kappa}} - \Xi^{\vec{\zeta}, j}_{\tau_n \wedge\theta^{2b}_{\vec{\zeta},\kappa}}||_{-\Gamma_{1},a} \geq \epsilon_1)\\
& \leq \sup_{n\geq n_0} \sup_{\sigma \leq \delta} \frac{1}{\epsilon_1^2} \E ||\Xi^{\vec{\zeta}, j}_{(\tau_n+ \sigma)\wedge\theta^{2b}_{\vec{\zeta},\kappa}} - \Xi^{\vec{\zeta}, j}_{\tau_n \wedge\theta^{2b}_{\vec{\zeta},\kappa}}||_{-\Gamma_{1},a}^2\\
& \leq \sup_{n\geq n_0} \sup_{\sigma \leq \delta} \frac{1}{\epsilon_1^2}\paren{ C_1\sigma + C_2\sqrt{\sigma}} \leq \frac{C_1\delta + C_2\sqrt{\delta}}{\epsilon_1^2} \leq \epsilon_2
\end{align*}
as long as we choose $\delta$ sufficiently small, using the following argument to derive the bound on $\E ||\Xi^{\vec{\zeta}, j}_{(\tau_n+ \sigma)\wedge\theta^{2b}_{\vec{\zeta},\kappa}} - \Xi^{\vec{\zeta}, j}_{\tau_n \wedge\theta^{2b}_{\vec{\zeta},\kappa}}||_{-\Gamma_{1},a}^2$.

Let $\{f_p\}_{p \geq 1}$ denote a complete orthonormal system in $W_0^{\Gamma_1,a}(\R^d)$ of class $C_0^{\infty}(\R^d)$. By definition (let us ignore the stopping time $\theta^{2b}_{\vec{\zeta},\kappa}$ for the moment),
\begin{align*}
&\la f_p,  \Xi^{\vec{\zeta}, j}_{\tau_n+ \sigma} - \Xi^{\vec{\zeta}, j}_{\tau_n }\ra
=\int_{\tau_n}^{\tau_n+\sigma} \la (\mathcal{L}_j f_p)(x) , \Xi^{\vec{\zeta}, j}_{s-} (dx)\ra ds+ \cC^{\vec{\zeta}, j}_{\tau_n+\sigma} (f_p) - \cC^{\vec{\zeta}, j}_{\tau_n}(f_p) + \cD^{\vec{\zeta}, j}_{\tau_n+\sigma} (f_p) - \cD^{\vec{\zeta}, j}_{\tau_n}(f_p) \\
&
\quad+ \sum_{\ell = 1}^L \int_{\tau_n}^{\tau_n+\sigma} \int_{\tilde{\mathbb{X}}^{(\ell)}}     \frac{1}{\vec{\alpha}^{(\ell)}!}   K_\ell\left(\vec{x}\right) \left(\int_{\mathbb{Y}^{(\ell)}}    \left( \sum_{r = 1}^{\beta_{\ell j}} f(y_r^{(j)}) \right) m_\ell\left(\vec{y} \, |\, \vec{x} \right)\, d \vec{y} - \sum_{r = 1}^{\alpha_{\ell j}} f(x_r^{(j)}) \right)\\
&
\qquad\times \sqrt{\gamma} \paren{\lambda^{(\ell)}[\mu_{s}^{\vec{\zeta}}](d\vec{x})-\lambda^{(\ell)}[\bar{\mu}_{s}](d\vec{x})}\, ds \\
&
\quad+ \sum_{\ell = 1}^L\int_{\tau_n}^{\tau_n+\sigma} \int_{\tilde{\mathbb{X}}^{(\ell)}}     \frac{1}{\vec{\alpha}^{(\ell)}!}   K_\ell\left(\vec{x}\right) \left(\int_{\mathbb{Y}^{(\ell)}}    \left( \sum_{r = 1}^{\beta_{\ell j}} f(y_r^{(j)}) \right) \sqrt{\gamma}\paren{m^\eta_\ell\left(\vec{y} \, |\, \vec{x} \right)-m_\ell\left(\vec{y} \, |\, \vec{x} \right)}\, d \vec{y}  \right)\, \lambda^{(\ell)}[\mu_{s}^{\vec{\zeta}}](d\vec{x})\, ds.
\end{align*}
Applying Ito's formula, we obtain
\begin{align}\label{eq:fp_squaredST}
&\la f_p, \Xi^{\vec{\zeta}, j}_{\tau_n+ \sigma} - \Xi^{\vec{\zeta}, j}_{\tau_n }\ra^2
=2\int_{\tau_n}^{\tau_n+\sigma} \la f_p, \Xi^{\vec{\zeta}, j}_{s-}\ra\la (\mathcal{L}_j f_p)(x) , \Xi^{\vec{\zeta}, j}_{s-} (dx)\ra \, ds \nonumber\\
 &
\quad + 2\int_{\tau_n}^{\tau_n+\sigma} \la f_p, \Xi^{\vec{\zeta}, j}_{s-}\ra d\cC^{\vec{\zeta}, j}_s (f_p) + \la \cC^{\vec{\zeta}, j}\ra_{\tau_n+\sigma} (f_p) - \la \cC^{\vec{\zeta}, j}\ra_{\tau_n} (f_p)\nonumber\\
&
\quad+ 2\sum_{\ell =  1}^L \int_{\tau_n}^{\tau_n+\sigma}  \la f_p,\Xi^{\vec{\zeta}, j}_{s}\ra \int_{\tilde{\mathbb{X}}^{(\ell)}}     \frac{1}{\vec{\alpha}^{(\ell)}!}   K_\ell\left(\vec{x}\right) \left(\int_{\mathbb{Y}^{(\ell)}}    \left( \sum_{r = 1}^{\beta_{\ell j}} f_p(y_r^{(j)}) \right) m_\ell\left(\vec{y} \, |\, \vec{x} \right)\, d \vec{y} - \sum_{r = 1}^{\alpha_{\ell j}} f_p(x_r^{(j)}) \right)\,\nonumber\\
&
\qquad\times \sqrt{\gamma} \paren{\lambda^{(\ell)}[\mu_{s}^{n}](d\vec{x})-\lambda^{(\ell)}[\bar{\mu}_{s}](d\vec{x})}\, ds \nonumber\\
&
\quad+ 2\sum_{\ell =  1}^L\int_{\tau_n}^{\tau_n+\sigma}  \la f_p,\Xi^{\vec{\zeta}, j}_{s}\ra \int_{\tilde{\mathbb{X}}^{(\ell)}}     \frac{1}{\vec{\alpha}^{(\ell)}!}   K_\ell\left(\vec{x}\right) \left(\int_{\mathbb{Y}^{(\ell)}}    \left( \sum_{r = 1}^{\beta_{\ell j}} f_p(y_r^{(j)}) \right) \sqrt{\gamma}\paren{m^\eta_\ell\left(\vec{y} \, |\, \vec{x} \right)-m_\ell\left(\vec{y} \, |\, \vec{x} \right)}\, d \vec{y}  \right)\, \lambda^{(\ell)}[\mu_{s}^{n}](d\vec{x})\, ds\nonumber\\
&
\quad + \sum_{\ell = 1}^L \int_{\tau_n}^{\tau_n+\sigma}  \int_{\mathbb{I}^{(\ell)}} \int_{\mathbb{Y}^{(\ell)}}\int_{\mathbb{R}_{+}^2} \paren{ \paren{\la f_p,\Xi^{\vec{\zeta}, j}_{s-}\ra +  g^{\ell, j, f_p, \mu^{\vec{\zeta}}}(s,\vec{i}, \vec{y},\sigma_1, \sigma_2) }^2 - \la f_p,\Xi^{\vec{\zeta}, j}_{s-}\ra^2} \,d\tilde{N}_{\ell}(s,\vec{i}, \vec{y},\sigma_1, \sigma_2)  \nonumber\\
&
\quad + \sum_{\ell = 1}^L \int_{\tau_n}^{\tau_n+\sigma} \int_{\mathbb{I}^{(\ell)}} \int_{\mathbb{Y}^{(\ell)}}\int_{\mathbb{R}_{+}^2}\paren{ \paren{\la f_p,\Xi^{\vec{\zeta}, j}_{s-}\ra + g^{\ell, j, f_p, \mu^{\vec{\zeta}}}(s,\vec{i}, \vec{y},\sigma_1, \sigma_2) }^2 - \la f_p,\Xi^{\vec{\zeta}, j}_{s-}\ra^2 \right. \nonumber\\
&
\qquad\qquad\left.  - 2g^{\ell, j, f_p, \mu^{\vec{\zeta}}}(s,\vec{i}, \vec{y},\sigma_1, \sigma_2) \times \la f_p,\Xi^{\vec{\zeta}, j}_{s-}\ra} \,  d\bar{N}_{\ell}(s,\vec{i}, \vec{y},\sigma_1, \sigma_2).  \nonumber\\
\end{align}

Similar to the derivation of \cref{eq:fp_squaredBd} from \cref{eq:fp_squared}, we will sum over all $p\geq 1$ in \cref{eq:fp_squaredST}, and take expectations. Let us also reintroduce the stopping time, $\theta^{2b}_{\vec{\zeta},\kappa}$. Then the first line of \cref{eq:fp_squaredST}'s righthand side becomes
\begin{equation*}
  2 \E \sum_{p\geq1} \int_{\tau_n\wedge\theta^{2b}_{\vec{\zeta},\kappa}}^{(\tau_n+\sigma)\wedge\theta^{2b}_{\vec{\zeta},\kappa}} \la f_p, \Xi^{\vec{\zeta}, j}_{s-}\ra\la (\mathcal{L}_j f_p), \Xi^{\vec{\zeta}, j}_{s-}\ra \, ds = 2 \E \int_{\tau_n\wedge\theta^{2b}_{\vec{\zeta},\kappa}}^{(\tau_n+\sigma)\wedge\theta^{2b}_{\vec{\zeta},\kappa}}  \la \Xi^{\vec{\zeta}, j}_{s-},\mathcal{L}_j^* \Xi^{\vec{\zeta}, j}_{s-}\ra \, ds.
\end{equation*}
In the second line of \cref{eq:fp_squaredST}, by \cref{lem:contMartingale}, the stochastic integral $2\int_{\tau_n\wedge\theta^{2b}_{\vec{\zeta},\kappa}}^{(\tau_n+\sigma)\wedge\theta^{2b}_{\vec{\zeta},\kappa}} \la f_p, \Xi^{\vec{\zeta}, j}_{s-}\ra d\cC^{\vec{\zeta}, j}_s (f_p)$ is a martingale and therefore its expectation is zero. By \cref{eq:contMartingaleQV}, the term $\la \cC^{\vec{\zeta}, j}\ra_{(\tau_n+\sigma)\wedge\theta^{2b}_{\vec{\zeta},\kappa}} (f_p) - \la \cC^{\vec{\zeta}, j}\ra_{\tau_n\wedge\theta^{2b}_{\vec{\zeta},\kappa}} (f_p)$ from the second line of \cref{eq:fp_squaredST} is bounded by (using the Sobolev embedding theorem with $m>d/2$)
$$\E \sum_{p\geq1} \paren{ \la \cC^{\vec{\zeta}, j}\ra_{(\tau_n+\sigma)\wedge\theta^{2b}_{\vec{\zeta},\kappa}} (f_p) - \la \cC^{\vec{\zeta}, j}\ra_{\tau_n\wedge\theta^{2b}_{\vec{\zeta},\kappa}} (f_p) }\leq C \sigma  \sum_{p\geq1}||f_p||^2_{1+D,b}.$$
The third line of \cref{eq:fp_squaredST}, according to \cref{lem:DiffMeas}, is bounded by
\begin{align*}
  &2\E \sum_{p\geq 1} \sum_{\ell =  1}^L \int_{\tau_n\wedge\theta^{2b}_{\vec{\zeta},\kappa}}^{(\tau_n+\sigma)\wedge\theta^{2b}_{\vec{\zeta},\kappa}} \la f_p,\Xi^{\vec{\zeta}, j}_{s}\ra \int_{\tilde{\mathbb{X}}^{(\ell)}}     \frac{1}{\vec{\alpha}^{(\ell)}!}   K_\ell\left(\vec{x}\right) \left(\int_{\mathbb{Y}^{(\ell)}}    \left( \sum_{r = 1}^{\beta_{\ell j}} f_p(y_r^{(j)}) \right) m_\ell\left(\vec{y} \, |\, \vec{x} \right)\, d \vec{y} - \sum_{r = 1}^{\alpha_{\ell j}} f_p(x_r^{(j)}) \right)\\
  &\qquad\times \sqrt{\gamma} \paren{\lambda^{(\ell)}[\mu_{s}^{n}](d\vec{x})-\lambda^{(\ell)}[\bar{\mu}_{s}](d\vec{x})}\, ds \nonumber\\
  & \leq  C \E \int_{\tau_n\wedge\theta^{2b}_{\vec{\zeta},\kappa}}^{(\tau_n+\sigma)\wedge\theta^{2b}_{\vec{\zeta},\kappa}} \sum_{k=1}^J ||\Xi^{\vec{\zeta},k}_{s}||^{2}_{-\Gamma_{1},a} \, ds.
\end{align*}
The fourth line of \cref{eq:fp_squaredST}, by \cref{lem:DiffPlaceMeas}, is bounded by
\begin{align}
& 2\E \sum_{p\geq 1}\sum_{\ell =  1}^L \int_{\tau_n\wedge\theta^{2b}_{\vec{\zeta},\kappa}}^{(\tau_n+\sigma)\wedge\theta^{2b}_{\vec{\zeta},\kappa}} \la f_p,\Xi^{\vec{\zeta}, j}_{s}\ra \int_{\tilde{\mathbb{X}}^{(\ell)}}     \frac{1}{\vec{\alpha}^{(\ell)}!}   K_\ell\left(\vec{x}\right) \left(\int_{\mathbb{Y}^{(\ell)}}    \left( \sum_{r = 1}^{\beta_{\ell j}} f_p(y_r^{(j)}) \right) \sqrt{\gamma}\paren{m^\eta_\ell\left(\vec{y} \, |\, \vec{x} \right)-m_\ell\left(\vec{y} \, |\, \vec{x} \right)}\, d \vec{y}  \right)\, \lambda^{(\ell)}[\mu_{s}^{\vec{\zeta}}](d\vec{x})\, ds\nonumber\\
&\leq C\left( (\sqrt{\gamma}\eta)^{2}\sigma+ \E\int_{\tau_n\wedge\theta^{2b}_{\vec{\zeta},\kappa}}^{(\tau_n+\sigma)\wedge\theta^{2b}_{\vec{\zeta},\kappa}}  ||\Xi^{\vec{\zeta}, j}_{s}||^{2}_{-\Gamma_{1},a}ds\right).
\end{align}
The second to last line in \cref{eq:fp_squaredST} is a martingale by \cref{lem:MartingalePoisson} and its expectation is zero. The last term in \cref{eq:fp_squaredST} by \cref{lem:poissonBd1} is bounded by
\begin{equation*}
  C \sum_{p\geq1} ||f_p||_{D,b}^2 \sigma.
\end{equation*}
Therefore, we obtain
\begin{multline*}
  \E \sum_{p\geq1}\la f_p, \Xi^{\vec{\zeta}, j}_{(\tau_n+ \sigma)\wedge\theta^{2b}_{\vec{\zeta},\kappa}} - \Xi^{\vec{\zeta}, j}_{\tau_n \wedge\theta^{2b}_{\vec{\zeta},\kappa}} \ra^2
  \leq   2\E \int_{\tau_n\wedge\theta^{2b}_{\vec{\zeta},\kappa}}^{(\tau_n+\sigma)\wedge\theta^{2b}_{\vec{\zeta},\kappa}}  \la \Xi^{\vec{\zeta}, j}_{s-} , \mathcal{L}_j^* \Xi^{\vec{\zeta}, j}_{s-} \ra \, ds + C \sigma \sum_{p\geq1}||f_p||^2_{1+D,b}\\
  + C(\sqrt{n}\eta)^{2}\sigma+ C \E \int_{\tau_n\wedge\theta^{2b}_{\vec{\zeta},\kappa}}^{(\tau_n+\sigma)\wedge\theta^{2b}_{\vec{\zeta},\kappa}} \sum_{k=1}^J ||\Xi^{\vec{\zeta},k}_{s}||^{2}_{-\Gamma_{1},a} \, ds.
\end{multline*}

Let $\Gamma_{1}\geq 2D$ and $b-a>d/2$ so that the embedding $W^{\Gamma_{1},a}_{0}\hookrightarrow W^{1+D,b}_{0}$ is of Hilbert-Schmidt type, and hence $\sum_{p\geq 1}||f_p||^{2}_{1+D,b}<\infty$. By Parseval's identity, \cref{lem:negativeNormBd}, and \cref{lem:densityIneq}, we obtain
\begin{equation}\label{eq:negativeNormDiff}
\E{||\Xi^{\vec{\zeta}, j}_{(\tau_n+ \sigma)\wedge\theta^{2b}_{\vec{\zeta},\kappa}} - \Xi^{\vec{\zeta}, j}_{\tau_n \wedge\theta^{2b}_{\vec{\zeta},\kappa}}||^2_{-\Gamma_{1},a}} \leq C\sigma
 \end{equation}
where $C$ is a constant which only depends on $\sqrt{\gamma}\eta$,  $C(K)$, $C_{\circ}$, $\kappa$, and $\sup_{n> n_0}\E[ \sup_{t\in [0, T]}||\Xi^{\vec{\zeta}, j}_{t} ||^2_{-\Gamma_{1},a}] < \infty$.
\end{proof}

\begin{theorem}\label{thm:tightness}
The martingale process $\left\{\paren{M^{\vec{\zeta}, 1}_t, \cdots,  M^{\vec{\zeta}, J}_t},t\in[0, T]\right\}_{\vec{\zeta}\in(0,1)^{2}}$ and fluctuation process $\left\{\paren{\Xi^{\vec{\zeta}, 1}_t, \cdots, \Xi^{\vec{\zeta}, J}_t},t\in[0, T]\right\}_{\vec{\zeta}\in(0,1)^{2}}$,  are tight in the space of $D_{(W^{-\Gamma_{1}-1,a}(\R^d))^{\otimes J}}([0, T])$.
\end{theorem}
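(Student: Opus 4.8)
The plan is to derive \cref{thm:tightness} from the Joffe--M\'etivier-type tightness criterion for c\`adl\`ag Hilbert-space-valued processes recalled in \cite{Meleard} (the discussion around pg.~46 there): a family $\{X^{\vec{\zeta}}\}_{\vec{\zeta}}$ of $D_{H_1}([0,T])$-valued processes is tight as soon as (a) there is a separable Hilbert space $H_0$ with $H_0\hookrightarrow H_1$ of Hilbert--Schmidt (hence compact) type and $\sup_{\vec{\zeta}}\E\big[\sup_{t\le T}\|X^{\vec{\zeta}}_t\|_{H_0}^2\big]<\infty$, and (b) the Aldous condition holds in $\|\cdot\|_{H_0}$: for every $\epsilon_1,\epsilon_2>0$ there exist $\delta>0$ and an index beyond which $\sup_{\sigma\le\delta}\mathbb{P}\big(\|X^{\vec{\zeta}}_{\tau+\sigma}-X^{\vec{\zeta}}_{\tau}\|_{H_0}\ge\epsilon_1\big)\le\epsilon_2$, uniformly over stopping times $\tau\le T$. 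I would apply this directly in the product spaces, with $H_0=(W^{-\Gamma_1,a})^{\otimes J}$ and $H_1=(W^{-\Gamma_1-1,a})^{\otimes J}$: since $\|(u_1,\dots,u_J)\|_{H_0}^2=\sum_{j}\|u_j\|_{-\Gamma_1,a}^2$, condition (a) is just the sum over $j$ of the bounds in \cref{lem:negativeNormBd} (for $\Xi^{\vec{\zeta},j}$) and \cref{lem:MartingaleNegBdd} (for $M^{\vec{\zeta},j}$), condition (b) follows coordinatewise from \cref{lem:Aldous} and \cref{lem:AldousMartingale} and the same norm identity, and $H_0\hookrightarrow H_1$ is Hilbert--Schmidt because $W^{-\Gamma_1,a}\hookrightarrow W^{-\Gamma_1-1,a}$ is, by the embedding properties recorded in \cref{S:SobolevSpace} under the parameter relations of \cref{A:SobolevSpaceParameters}.

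Because the estimates in \cref{lem:negativeNormBd,lem:MartingaleNegBdd,lem:Aldous,lem:AldousMartingale} are all stated for the processes stopped at $\theta^{2b}_{\vec{\zeta},\kappa}$, the argument has two stages. First, for each fixed $\kappa>0$, the criterion above gives tightness of the stopped vectors $\big(\Xi^{\vec{\zeta},j}_{\cdot\wedge\theta^{2b}_{\vec{\zeta},\kappa}}\big)_{j\le J}$ and $\big(M^{\vec{\zeta},j}_{\cdot\wedge\theta^{2b}_{\vec{\zeta},\kappa}}\big)_{j\le J}$ in $D_{(W^{-\Gamma_1-1,a})^{\otimes J}}([0,T])$. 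Second, I remove the localization. By \cref{L:MomentBound}, taken with $q=2b$ as noted just after its statement, $\sup_{\vec{\zeta}}\mathbb{P}(\theta^{2b}_{\vec{\zeta},\kappa}\le T)\to 0$ as $\kappa\to\infty$. Hence, given $\epsilon>0$, choose $\kappa$ with $\sup_{\vec{\zeta}}\mathbb{P}(\theta^{2b}_{\vec{\zeta},\kappa}\le T)<\epsilon/2$ and then a compact $K_\epsilon\subset D_{(W^{-\Gamma_1-1,a})^{\otimes J}}([0,T])$ with $\sup_{\vec{\zeta}}\mathbb{P}\big((\Xi^{\vec{\zeta},j}_{\cdot\wedge\theta^{2b}_{\vec{\zeta},\kappa}})_{j}\notin K_\epsilon\big)<\epsilon/2$; on the event $\{\theta^{2b}_{\vec{\zeta},\kappa}>T\}$ the stopped and unstopped paths agree on $[0,T]$, so $\sup_{\vec{\zeta}}\mathbb{P}\big((\Xi^{\vec{\zeta},j}_{\cdot})_{j}\notin K_\epsilon\big)<\epsilon$, and the identical argument applies to $(M^{\vec{\zeta},j})_{j}$. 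This yields the tightness asserted in \cref{thm:tightness}.

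The analytic substance has already been expended in the preceding lemmas, so the remaining points are organizational. One is that the Aldous condition must be available for \emph{arbitrary} stopping times bounded by $T$, not merely deterministic times; this is exactly the generality in which \cref{lem:Aldous,lem:AldousMartingale} are phrased, and it is what makes the criterion applicable. A second is that the compact containment feeding into the first stage genuinely relies on the \emph{Hilbert--Schmidt} character of the relevant weighted Sobolev embedding from \cref{S:SobolevSpace}, which is where the interlocking inequalities among $D,\Gamma,\Gamma_1,a,b$ of \cref{A:SobolevSpaceParameters} (in particular $\Gamma_1-(1+D)>d/2$ and $b-a>d/2$) enter -- exactly as they did throughout \cref{lem:negativeNormBd,lem:MartingaleNegBdd,lem:Aldous,lem:AldousMartingale}. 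If one must single out the hardest step \emph{reached} en route, it is the uniform-in-$\vec{\zeta}$ second-moment bound \cref{lem:negativeNormBd}: for fixed $\gamma$ the prelimit fluctuation $\Xi^{\vec{\zeta},j}_t$ is only \emph{a priori} an element of a much more negative Sobolev space (cf.\ \cref{eq:fixGammaBd}), and obtaining the $\vec{\zeta}$-uniform bound in $\|\cdot\|_{-\Gamma_1,a}$ is what makes the present theorem a packaging of the criterion of \cite{Meleard}.
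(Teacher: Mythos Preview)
Your proposal is correct and follows essentially the same route as the paper: both invoke the compact-containment plus Aldous criterion (as in \cite{Meleard}) and feed it with \cref{lem:negativeNormBd}, \cref{lem:MartingaleNegBdd}, \cref{lem:Aldous}, and \cref{lem:AldousMartingale}, using the compactness of the embedding $W^{-\Gamma_1,a}\hookrightarrow W^{-\Gamma_1-1,a}$. The only organizational difference is that you spell out the delocalization step (passing from the stopped processes back to the unstopped ones via $\sup_{\vec{\zeta}}\mathbb{P}(\theta^{2b}_{\vec{\zeta},\kappa}\le T)\to 0$), whereas the paper records this reduction once at the start of \cref{S:Tightness} and then states the theorem's proof only for the stopped processes.
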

\begin{proof}
By Lemma \ref{lem:negativeNormBd} and the fact that the set $\{\phi\in W^{-\Gamma_{1}-1,a}(\R^d): ||\phi||_{-\Gamma_{1},a} \leq C_\epsilon \}$ is a compact subset of $W^{-\Gamma_{1}-1,a}$  we get that the compact containment condition for $\left\{\paren{\Xi^{\vec{\zeta}, 1}_t, \cdots, \Xi^{\vec{\zeta}, J}_t},t\in[0, T]\right\}_{\vec{\zeta}\in(0,1)^{2}}$ holds. Similarly, due to \cref{lem:MartingaleNegBdd}  we obtain that the compact containment condition for $\left\{\paren{M^{\vec{\zeta}, 1}_t, \cdots,  M^{\vec{\zeta}, J}_t}, t\in[0, T]\right\}_{\vec{\zeta}\in(0,1)^{2}}$ also holds. These facts together with  Lemma \cref{lem:AldousMartingale} and Lemma \ref{lem:Aldous} show that the tightness conditions in $D_{(W^{-\Gamma_{1}-1,a}(\R^d))^{\otimes J}}([0, T])$ are satisfied.
\end{proof}

\subsection{Identification of the Limit}\label{S:Identification}

\begin{lemma}\label{lem:cont}
Any limiting process of $\left\{\paren{\Xi^{\vec{\zeta}, 1}_t, \cdots, \Xi^{\vec{\zeta}, J}_t}\right\}_{t\in[0, T]}$ is continuous in time, i.e. takes value in the space of $C([0, T], \paren{W^{-\Gamma,a}(\R^d)}^{\otimes J})$.
\end{lemma}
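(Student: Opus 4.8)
The plan is to invoke the classical principle that a tight family of càdlàg processes whose maximal jump size converges to $0$ in probability can only have continuous limit points (see, e.g., the argument on p.~46 of~\cite{Meleard}, or~\cite{MR88a:60130}). By Theorem~\ref{thm:tightness} the family $\{(\Xi^{\vec{\zeta},1}_t,\dots,\Xi^{\vec{\zeta},J}_t)\}_{\vec{\zeta}}$ is tight in $D_{(W^{-\Gamma,a}(\R^d))^{\otimes J}}([0,T])$ (tightness was actually established in the slightly smaller space $W^{-\Gamma_1-1,a}=W^{-(\Gamma-2),a}$, which embeds continuously into $W^{-\Gamma,a}$). Hence it suffices to show
\begin{equation*}
  \sup_{t\in[0,T]}\,\sum_{j=1}^J \norm{\Xi^{\vec{\zeta},j}_{t} - \Xi^{\vec{\zeta},j}_{t-}}_{-\Gamma,a}\ \longrightarrow\ 0 \quad\text{in probability as }\vec{\zeta}\to 0,
\end{equation*}
after which every subsequential limit lies in $C([0,T],(W^{-\Gamma,a}(\R^d))^{\otimes J})$.

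First I would isolate the jumps in the representation~\eqref{eq:fluctuationProcess}. The integral $\int_0^t\la(\mathcal L_j f)(x),\Xi^{\vec{\zeta},j}_{s-}(dx)\ra\,ds$, the two compensator terms built from $\sqrt{\gamma}\paren{\lambda^{(\ell)}[\mu^{\vec{\zeta}}_s]-\lambda^{(\ell)}[\bar\mu_s]}$, the placement-discrepancy term, and the continuous martingale $\cC^{\vec{\zeta},j}_t(f)$ are all continuous in $t$, so every jump of $\la f,\Xi^{\vec{\zeta},j}_\cdot\ra$ comes from a single atom of one of the Poisson measures $N_\ell$ and equals the jump function $g^{\ell,j,f,\mu^{\vec{\zeta}}}$ of~\eqref{eq:jump}. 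This is $\tfrac1{\sqrt\gamma}$ times a difference of at most $\alpha_{\ell j}+\beta_{\ell j}\le 4$ evaluations of $f$ at the positions of the reactant and product particles of species $j$ in that reaction. Using the embedding $W^{\Gamma,a}_0\hookrightarrow C^{0,a}$ (valid since $\Gamma\ge 2D+2>d/2$), for $\norm{f}_{\Gamma,a}=1$ each such evaluation is bounded by $C(1+|q|^a)$ with $q$ the relevant position, and taking the supremum over $f$ gives
\begin{equation*}
  \norm{\Xi^{\vec{\zeta},j}_{t} - \Xi^{\vec{\zeta},j}_{t-}}_{-\Gamma,a}\ \le\ \frac{C}{\sqrt\gamma}\sum_{q}\bigl(1+|q|^a\bigr),
\end{equation*}
the (at most four) positions $q$ being those of the type-$j$ particles added or removed by the reaction occurring at time $t$. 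Since the total population is bounded by $\gamma C(\mu)$ and the rates $K_\ell$ are bounded, there are only finitely many jumps in $[0,T]$ a.s., so the supremum above is a finite maximum.

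Next I would localize with the stopping time $\theta^{2b}_{\vec{\zeta},\kappa}$ of Section~\ref{S:Tightness}: at any time $t\le\theta^{2b}_{\vec{\zeta},\kappa}$ one has $\la|\cdot|^{2b},\mu^{\vec{\zeta},j}_{t-}\ra\le\kappa$, and since $\mu^{\vec{\zeta},j}_{t-}=\tfrac1\gamma\sum_i\delta_{Q^i}$ this forces $|Q^i|\le(\gamma\kappa)^{1/(2b)}$ for every reactant particle; a product particle lies within distance $\eta$ of a reactant for all reaction types except the dissociation $S_i\to S_j+S_k$, where it is displaced by an independent separation $w$ drawn from $\rho$. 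Combining these bounds,
\begin{equation*}
  \sup_{t\in[0,T]}\,\sum_{j=1}^J \norm{\Xi^{\vec{\zeta},j}_{t\wedge\theta^{2b}_{\vec{\zeta},\kappa}} - \Xi^{\vec{\zeta},j}_{(t\wedge\theta^{2b}_{\vec{\zeta},\kappa})-}}_{-\Gamma,a}\ \le\ \frac{C}{\sqrt\gamma}\Bigl((\gamma\kappa)^{a/(2b)}+1+\eta^a+\max_i|w_i|^a\Bigr),
\end{equation*}
the maximum being over the finitely many separations sampled before $T\wedge\theta^{2b}_{\vec{\zeta},\kappa}$. Since Assumption~\ref{A:SobolevSpaceParameters} gives $b>a$, so $\tfrac{a}{2b}-\tfrac12<0$, the first term is $C\kappa^{a/(2b)}\gamma^{\frac{a}{2b}-\frac12}\to0$; the term $\eta^a/\sqrt\gamma\to0$ by Assumption~\ref{A:AssumptionParameterRelativeRates}; and $\tfrac1{\sqrt\gamma}\max_i|w_i|^a\to0$ in probability because, after the further harmless localization $\{\#\text{reactions in }[0,T]\le M\gamma\}$ (which holds with probability $\to1$ as $M\to\infty$, uniformly in $\vec{\zeta}$, by the population bound of Assumption~\ref{Assume:kernalBdd}), a union bound with the high-moment assumption $\int|w|^{8D}\rho(w)\,dw<\infty$ (Assumption~\ref{A:AssumptionRho}) and $a\le D$ yields $\mathbb P\bigl(\max_{i\le M\gamma}|w_i|^a>\delta\sqrt\gamma\bigr)\le C_\delta\,\gamma^{1-4D/a}\to0$. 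Finally, $\lim_{\kappa\to\infty}\sup_{\vec{\zeta}}\mathbb P(\theta^{2b}_{\vec{\zeta},\kappa}\le T)=0$ from Lemma~\ref{L:MomentBound} lets a routine localization remove the stopping time, giving the displayed convergence in probability and hence the lemma.

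The main obstacle is the third step: each jump is only of order $1/\sqrt\gamma$, but the weighted value of a test function at a far-away particle can be large, so one must show the extreme particle positions involved in a reaction grow slower than $\sqrt\gamma$. This is exactly why we work under $\theta^{2b}_{\vec{\zeta},\kappa}$ and why Assumption~\ref{A:SobolevSpaceParameters} imposes the margin $b>a$ (and why Assumption~\ref{A:AssumptionRho} demands a high moment of $\rho$, to control the one reaction type — dissociation — that can place a product far from its reactant).
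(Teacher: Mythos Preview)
Your approach is correct and in fact more carefully argued than the paper's own proof. Both rest on the same principle: the only jumps of $\Xi^{\vec\zeta,j}$ come from the Poisson martingale $\cD^{\vec\zeta,j}$, each jump is of the form $\tfrac{1}{\sqrt\gamma}\sum_r(\pm)\delta_{q_r}$ as a distribution, and the task is to show its $W^{-\Gamma,a}$-norm tends to $0$ uniformly in $t$. The paper's proof is very short: it fixes an orthonormal basis $\{f_p\}$ of $W_0^{\Gamma,a}$, asserts $\la f_p,\Xi^{\vec\zeta,j}_t-\Xi^{\vec\zeta,j}_{t-}\ra^2\le\tfrac{C}{\sqrt\gamma}\|f_p\|_{D,b}^2$, and sums over $p$ via the Hilbert--Schmidt embedding $W_0^{\Gamma,a}\hookrightarrow W_0^{D,b}$. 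That asserted pointwise bound, however, leaves the particle positions implicit: the available Sobolev embedding gives only $|f_p(q)|\le C\|f_p\|_{D,b}(1+|q|^b)$, so the constant $C$ hides an uncontrolled factor $(1+|q_r|^b)^2$. Your route addresses precisely this: you use $\|\delta_q\|_{-\Gamma,a}\le C(1+|q|^a)$ directly, control reactant positions via the stopping time $\theta^{2b}_{\vec\zeta,\kappa}$ (yielding $|q_r|\le(\gamma\kappa)^{1/(2b)}$), treat the dissociation displacement separately via the high-moment Assumption~\ref{A:AssumptionRho}, and then exploit $a<b$ so that $\gamma^{a/(2b)-1/2}\to0$. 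The paper's argument is shorter but leaves the position dependence implicit; yours is longer but transparently rigorous.

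Two small corrections. First, $\Gamma_1=\Gamma-1$ by Assumption~\ref{A:SobolevSpaceParameters}, so $W^{-\Gamma_1-1,a}=W^{-\Gamma,a}$, not $W^{-(\Gamma-2),a}$; this slip is harmless. Second, you write ``and $a\le D$,'' but Assumption~\ref{A:SobolevSpaceParameters} says $a\ge D$. What your union bound actually requires is $1-4D/a<0$, i.e.\ $a<4D$; this holds under the explicit choice $a=D$ made in Assumption~\ref{A:AssumptionInitialCondition}, so the argument goes through---just state the correct inequality.
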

\begin{proof}
It suffices to show that for $j = 1, \cdots, J$,
\begin{equation}
\lim_{\vec{\zeta}\to 0} \E[\sup_{t\in [0, T]}|| \Xi^{\vec{\zeta}, j}_t - \Xi^{\vec{\zeta}, j}_{t-}||^2_{-\Gamma,a}] = 0.
\end{equation}

Now let $\{ f_p \}_{p\geq 1}$ be a complete orthonormal system in $W_0^{\Gamma,a}(\R^d)$ of class $C^\infty_0(\R^d)$. By construction, the discontinuity of $\la f_p,  \Xi^{\vec{\zeta}, j}_t\ra$ comes from the Poisson martingale part $\cD^{\vec{\zeta}, j}_{t}(f_p)$ defined in \cref{eq:disMartingaleDef}. Notice that the jump size of \cref{eq:jump} is uniformly bounded by $\mathcal{O}(\frac{1}{\sqrt{\gamma}})$. Then we can get
\begin{equation}
\la f_p,  \Xi^{\vec{\zeta}, j}_t - \Xi^{\vec{\zeta}, j}_{t-} \ra^{2}\leq \frac{C}{\sqrt{\gamma}}||f_p||^2_{D,b},
\end{equation}
for some generic finite constant $C<\infty$ and $b>a+d/2$.  Since $\Gamma\geq 2D>D+d/2$ and $b-a>d/2$, the embedding $W^{\Gamma,a}_{0}\hookrightarrow W^{D,b}_{0}$ is of Hilbert-Schmidt type, so $\sum_{p\geq 1}||f_p||^{2}_{D,b}<\infty$. Thus, by Parseval's identity,  we obtain,
\begin{equation}
\E[\sup_{t\in [0, T]}|| \Xi^{\vec{\zeta}, j}_t - \Xi^{\vec{\zeta}, j}_{t-}||^2_{-\Gamma,a}] \leq \frac{C}{\sqrt{\gamma}},
\end{equation}
and therefore
\begin{equation}
\lim_{\vec{\zeta}\to 0} \E[\sup_{t\in [0, T]}|| \Xi^{\vec{\zeta}, j}_t - \Xi^{\vec{\zeta}, j}_{t-}||^2_{-\Gamma,a}] = 0.
\end{equation}
\end{proof}

\begin{lemma}\label{lem:convMartingale}
For every $f, g\in W_0^{\Gamma,a}$, the process $\paren{M^{\vec{\zeta}, 1}_t, \cdots,  M^{\vec{\zeta}, J}_t}$  defined by \cref{eq:MartingaleDef} is a martingale in $\paren{W^{-\Gamma,a}(\R^d)}^{\otimes J}$ that converges in distribution in $D_{(W^{-\Gamma,a}(\R^d))^{\otimes J}}([0, T])$ to a  mean-zero Gaussian martingale $\paren{\bar{M}^1_t, \cdots, \bar{M}^J_t}$
with covariance structure
\begin{equation}\label{eq:limitcov}
\begin{aligned}
\Cov[\bar{M}^j_t(f), \bar{M}^k_s(g)] &=  \sum_{\ell =  1}^L \int_{0}^{s\wedge t} \int_{\tilde{\mathbb{X}}^{(\ell)}}     \frac{1}{\vec{\alpha}^{(\ell)}!}   K_\ell\left(\vec{x}\right) \left(\int_{\mathbb{Y}^{(\ell)}}    \left( \sum_{r = 1}^{\beta_{\ell j}} f(y_r^{(j)}) - \sum_{r = 1}^{\alpha_{\ell j}} f(x_r^{(j)}) \right)\right.\\
&\qquad\qquad\qquad\qquad\left. \times \left( \sum_{r = 1}^{\beta_{\ell k}} g(y_r^{(k)}) - \sum_{r = 1}^{\alpha_{\ell k}} g(x_r^{(k)}) \right) m_\ell\left(\vec{y} \, |\, \vec{x} \right)\, d \vec{y} \right)\,\lambda^{(\ell)}[\bar{\mu}_{r}](d\vec{x}) \, dr\\
&+\int_{0}^{s\wedge t} \la 2D_{j} \frac{ \partial f}{\partial Q}(x)\frac{ \partial g}{\partial Q}(x), \bar{\mu}_{r}^j(dx)\ra 1_{\{k=j\}}\,dr
\end{aligned}
\end{equation}
for $\leq s,t\leq T$.
%

\end{lemma}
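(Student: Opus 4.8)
The plan is to prove Lemma~\ref{lem:convMartingale} in two stages: first show that each $M^{\vec{\zeta},j}_t(f)$, with $f\in W_0^{\Gamma,a}$, is a genuine $W^{-\Gamma,a}$-valued martingale whose predictable quadratic variation converges to the claimed limit, and then invoke a martingale central limit theorem to upgrade this to convergence in law to a mean-zero Gaussian martingale. For the first part I would start from the decomposition \eqref{eq:MartingaleDef}, \eqref{eq:contMartingaleDef}, \eqref{eq:disMartingaleDef}, which already identifies $M^{\vec{\zeta},j}_t(f)=\cC^{\vec{\zeta},j}_t(f)+\cD^{\vec{\zeta},j}_t(f)$ as the sum of a continuous (Brownian) martingale and a compensated-Poisson martingale. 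That it is a martingale for each fixed $\vec{\zeta}$ follows from the discussion after \eqref{Eq:EM_j_formula} (finiteness of $\gamma\la 1,\mu^{\vec{\zeta},j}_{s-}\ra$) together with the moment bounds of Lemma~\ref{L:MomentBound}; that it takes values in $W^{-\Gamma,a}$ follows by Parseval over a complete orthonormal system $\{f_p\}$ of $W_0^{\Gamma,a}$, using the Hilbert--Schmidt embedding $W_0^{\Gamma,a}\hookrightarrow W_0^{1+D,b}$ exactly as in Lemma~\ref{lem:MartingaleNegBdd}.

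Next I would compute the joint predictable quadratic covariation $\la M^{\vec{\zeta},j}(f),M^{\vec{\zeta},k}(g)\ra_t$. The continuous parts contribute only when $j=k$ (independent Brownian motions across species and particles), giving $\int_0^t \la 2D_j\,\partial_Q f\,\partial_Q g,\mu^{\vec{\zeta},j}_{s-}\ra\,ds$ from \eqref{eq:contMartingaleQV}; the Poisson parts contribute, by the standard formula for the quadratic variation of a compensated Poisson integral, the cross term obtained by multiplying the two jump functionals and integrating against the intensity $d\bar N_\ell$, which after performing the $\theta_1,\theta_2$ integrals and changing variables from $\mathbb{I}^{(\ell)}$ to $\tilde{\mathbb{X}}^{(\ell)}$ yields
\[
\sum_{\ell}\int_0^t\int_{\tilde{\mathbb{X}}^{(\ell)}}\tfrac{1}{\vec\alpha^{(\ell)}!}K_\ell(\vx)\Big(\int_{\mathbb{Y}^{(\ell)}}\!\big(\textstyle\sum_r f(y_r^{(j)})-\sum_r f(x_r^{(j)})\big)\big(\sum_r g(y_r^{(k)})-\sum_r g(x_r^{(k)})\big)m^\eta_\ell(\vy|\vx)d\vy\Big)\lambda^{(\ell)}[\mu^{\vec{\zeta}}_{s-}](d\vx)\,ds .
\]
I would then pass to the limit $\vec{\zeta}\to 0$: the tightness results of Theorem~\ref{thm:tightness} give convergence along a subsequence, Lemma~\ref{cref:lem:cont}{\ref{lem:cont}} gives that the limit is continuous, the mean-field limit Theorem~\ref{thm:convergence} gives $\mu^{\vec{\zeta}}_{s}\to\bar\mu_s$ (hence $\lambda^{(\ell)}[\mu^{\vec{\zeta}}_s]\to\lambda^{(\ell)}[\bar\mu_s]$ weakly, uniformly on $[0,T]$ using the moment bounds of Lemma~\ref{L:MomentBound} to control the polynomially-growing integrands against the bounded smooth kernels of Assumption~\ref{A:AssumptionK}), and Assumption~\ref{A:AssumptionParameterRelativeRates} ($\sqrt\gamma\eta\to0$) together with the mollifier estimates used in Lemma~\ref{lem:DiffPlaceMeas} lets us replace $m^\eta_\ell$ by $m_\ell$ in the limit. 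This yields exactly \eqref{eq:limitcov}.

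Finally, to conclude convergence in law to a Gaussian martingale I would apply a functional martingale CLT in the nuclear/Hilbert space setting (e.g. the Hilbert-space martingale central limit theorem of Jakubowski/Mémin--Métivier as used in~\cite{Meleard}): tightness is already established in $D_{(W^{-\Gamma_1-1,a})^{\otimes J}}([0,T])$, the jumps vanish uniformly (size $\mathcal O(\gamma^{-1/2})$, shown in Lemma~\ref{lem:cont}), and the predictable quadratic variations converge to the deterministic limit \eqref{eq:limitcov}; these are precisely the hypotheses guaranteeing that any limit point is a continuous mean-zero Gaussian martingale with the prescribed covariance. Since a continuous Gaussian martingale is determined by its covariance, all subsequential limits coincide, giving convergence of the full sequence. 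The main obstacle I anticipate is the limit-passage in the quadratic variation: one must justify convergence of integrals of the form $\int_0^t\int_{\tilde{\mathbb X}^{(\ell)}}(\cdots)\lambda^{(\ell)}[\mu^{\vec\zeta}_{s-}](d\vx)ds$ where the integrand, being a finite sum of products of $f,g$ evaluated at points $x_r^{(j)}$ or at placement images $y_r^{(j)}$, is continuous and bounded (as $f,g\in W_0^{\Gamma,a}\hookrightarrow C^{0,a}$ and $K_\ell$ is bounded), but the measures $\lambda^{(\ell)}[\mu^{\vec\zeta}_s]$ live on a product space and converge only weakly — so one needs the uniform moment bounds to rule out escape of mass and a Skorokhod-representation or continuous-mapping argument (plus care that $s\mapsto\bar\mu_s$ is continuous so that $s\mapsto\lambda^{(\ell)}[\bar\mu_s]$ is weakly continuous) to interchange the time integral with the limit. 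The $m^\eta_\ell\to m_\ell$ replacement is a secondary but genuine point, handled by the same mollifier bounds already invoked in Lemmas~\ref{lem:DiffPlaceMeas} and~\ref{L:MomentBound}.
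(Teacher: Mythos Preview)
Your proposal is correct and follows essentially the same strategy as the paper: decompose the martingale into its Brownian and compensated-Poisson parts, compute the (cross) predictable quadratic variation, pass to the limit using $\mu^{\vec{\zeta}}\to\bar\mu$ and $m^\eta_\ell\to m_\ell$, observe that the jumps are of size $O(\gamma^{-1/2})$, and invoke a martingale functional CLT. The only packaging difference is that the paper reduces to a scalar problem by forming linear combinations $\sum_k\sigma_k M^{\vec{\zeta},k}_t(f_k)$ and then appeals to the Cram\'er--Wold device, whereas you compute the cross quadratic covariations $\la M^{\vec{\zeta},j}(f),M^{\vec{\zeta},k}(g)\ra_t$ directly and cite a vector-valued/Hilbert-space martingale CLT; these are equivalent, and your treatment of the $m^\eta_\ell\to m_\ell$ replacement is in fact more explicit than the paper's.
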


\begin{proof}
Let $f_k\in W_0^{\Gamma,a}, k = 1, \cdots, J$. Consider the martingale $\sum_{k = 1}^J \sigma_k M^{\vec{\zeta}, k}_t(f_k)$, for some $\sigma_k \in\R, k = 1, \cdots, J$.  By definition of  $M^{\vec{\zeta}, k}_t(f_k)$ in \cref{eq:MartingaleDef}, we see that
\begin{align*}
\sum_{k = 1}^J \sigma_k M^{\vec{\zeta}, k}_t(f_k)
&
= \sum_{k = 1}^J \sigma_k \frac{1}{{\sqrt{\gamma}}}\sum_{i\geq 1}\int_{0}^{t}1_{\{i\leq \gamma \la 1, \mu_{s-}^{{\vec{\zeta}}, k}\ra\}}\sqrt{2D_{k}}\frac{\partial f_k}{\partial Q}(H^i(\gamma\mu_{s-}^{{\vec{\zeta}}, j}))dW^{i, k}_{s}\\
&
\quad+  \sum_{k = 1}^J \sigma_k \sqrt{\gamma}\sum_{\ell = 1 }^L \int_{0}^{t}\int_{\mathbb{I}^{(\ell)}} \int_{\mathbb{Y}^{(\ell)}}\int_{\mathbb{R}_{+}^2}\left(\la \sigma_k f_k, \mu^{{\vec{\zeta}}, k}_{s-}-\frac{1}{\gamma} \sum_{r = 1}^{\alpha_{\ell k}} \delta_{H^{i_r^{(k)}}(\gamma\mu^{{\vec{\zeta}},k}_{s-})} + \frac{1}{\gamma}\sum_{r = 1}^{\beta_{\ell k}} \delta_{y_r^{(k)}} \ra  - \la \sigma_k f_k,\mu^{{\vec{\zeta}}, k}_{s-}\ra\right)\\
&\qquad\times 1_{\{\vec{i} \in \Omega^{(\ell)}(\gamma\mu^{\vec{\zeta}}_{s-})\}}   \times 1_{ \{ \theta_1 \leq K_\ell^{\gamma}\left(\mathcal{P}^{(\ell)}(\gamma\mu_{s-}^{\vec{\zeta}}, \vec{i}) \right) \}}  \times 1_{ \{ \theta_2 \leq  m^\eta_\ell\left(\vec{y} \,  | \, \mathcal{P}^{(\ell)}(\gamma\mu_{s-}^{\vec{\zeta}}, \vec{i}) \right) \}}  d\tilde{N}_{\ell}(s,\vec{i}, \vec{y},\theta_1, \theta_2).
\end{align*}
with quadratic variation
\begin{align*}
\la \sum_{k = 1}^J \sigma_k M^{\vec{\zeta}, k}(f_k)\ra_t
&= \sum_{k = 1}^J \sigma_k^2 \int_{0}^{t} \la 2D_{k} \paren{\frac{ \partial f_k}{\partial Q}(x)}^2, \mu_{s-}^{{\vec{\zeta}}, k}(dx)\ra\,ds\\
&\quad+ \sum_{\ell = 1}^L \int_{0}^{t} \int_{\tilde{\mathbb{X}}^{(\ell)}}   \left(\int_{\mathbb{Y}^{(\ell)}}    \left( \sum_{k = 1}^J\sum_{r = 1}^{\beta_{\ell k}} \sigma_k f_k(y_r^{(k)}) -\sum_{k = 1}^J \sum_{r = 1}^{\alpha_{\ell k}} \sigma_k f_k(x_r^{(k)}) \right)^2 m^\eta_\ell\left(\vec{y} \, |\, \vec{x} \right)\, d \vec{y} \right)\\
&\qquad\times   \frac{1}{\vec{\alpha}^{(\ell)}!}   K_\ell\left(\vec{x}\right)  \lambda^{(\ell)}[\mu_{s-}^{{\vec{\zeta}}}](d\vec{x}) \, ds.
\end{align*}
For two different species, $S_i$ and $S_j$, their Brownian motions are independent, and therefore there are no cross terms in the quadratic variation arising from the integrals with respect to Brownian motions that involve $S_i$ and $S_j$ (see Chapter 3 Proposition 2.17 in \cite{KS:1998}). Note, however, species can share the same Poisson "random generators" via reactions in which they both participate.

We know that $ \paren{\mu^{\vec{\zeta}, 1}_{s}, \cdots, \mu^{\vec{\zeta}, J}_{s}}\to \paren{\bar{\mu}^1_{s}, \cdots, \bar{\mu}^J_{s}}$ in probability in $D([0, T], M_F(\R^d)^{\otimes J})$. Applying the continuous mapping theorem, we can get that as $\vec{\zeta}\to 0$, $\la \sum_{k=1}^J\sigma_k M^{\vec{\zeta}, k}(f_k)\ra_t$ converges in probability to
\begin{align}\label{eq:QVofh}
&\sum_{k = 1}^J \sigma_k^2 \int_{0}^{t} \la 2D_{k} \paren{\frac{ \partial f_k}{\partial Q}(x)}^2, \bar{\mu}_{s-}^{k}(dx)\ra\,ds\nonumber\\
&
+ \sum_{\ell = 1}^L \int_{0}^{t} \int_{\tilde{\mathbb{X}}^{(\ell)}} \frac{1}{\vec{\alpha}^{(\ell)}!}   K_\ell\left(\vec{x}\right)    \left(\int_{\mathbb{Y}^{(\ell)}}    \left( \sum_{k = 1}^J\sum_{r = 1}^{\beta_{\ell k}} \sigma_k f_k(y_r^{(k)}) -\sum_{k = 1}^J \sum_{r = 1}^{\alpha_{\ell k}} \sigma_k f_k(x_r^{(k)}) \right)^2 m^\eta_\ell\left(\vec{y} \, |\, \vec{x} \right)\, d \vec{y} \right)  \lambda^{(\ell)}[\bar{\mu}_{s-}](d\vec{x}) \, ds.\nonumber\\
&
=\sum_{k = 1}^J \sigma_k^2 \int_{0}^{t} \la 2D_{k} \paren{\frac{ \partial f_k}{\partial Q}(x)}^2, \bar{\mu}_{s-}^k(dx)\ra\,ds\nonumber\\
&
+ \sum_{k=1}^J \sigma_k^2 \sum_{\ell = 1}^L \int_{0}^{t} \int_{\tilde{\mathbb{X}}^{(\ell)}} \frac{1}{\vec{\alpha}^{(\ell)}!}   K_\ell\left(\vec{x}\right)    \left(\int_{\mathbb{Y}^{(\ell)}}    \left( \sum_{r = 1}^{\beta_{\ell k}} f_k(y_r^{(k)}) - \sum_{r = 1}^{\alpha_{\ell k}} f_k(x_r^{(k)}) \right)^2 m^\eta_\ell\left(\vec{y} \, |\, \vec{x} \right)\, d \vec{y} \right)  \lambda^{(\ell)}[\bar{\mu}_{s-}](d\vec{x}) \, ds\nonumber\\
&
+ \sum_{k=1}^J  \sum_{j \neq k} \sigma_k\sigma_j \sum_{\ell = 1}^L \int_{0}^{t} \int_{\tilde{\mathbb{X}}^{(\ell)}} \frac{1}{\vec{\alpha}^{(\ell)}!}   K_\ell\left(\vec{x}\right) \left(\int_{\mathbb{Y}^{(\ell)}}    \left( \sum_{r = 1}^{\beta_{\ell j}} f_j(y_r^{(j)}) - \sum_{r = 1}^{\alpha_{\ell j}} f_j(x_r^{(j)}) \right)\right.\nonumber\\
&
\qquad \left. \times \left( \sum_{r = 1}^{\beta_{\ell k}} f_k(y_r^{(k)}) - \sum_{r = 1}^{\alpha_{\ell k}} f_k(x_r^{(k)}) \right) m_\ell\left(\vec{y} \, |\, \vec{x} \right)\, d \vec{y} \right)\,\lambda^{(\ell)}[\bar{\mu}_{s}](d\vec{x}) \, ds.\nonumber\\
\end{align}

 As the jump size in the Poisson martingale part is uniformly bounded by $\mathcal{O}(\frac{1}{\sqrt{\gamma}})$, by a similar argument as in \cref{lem:cont}, we can show that
\begin{equation}
\lim_{\gamma\to\infty} \E \brac{\sup_{t\in [0, T]} \abs{\sum_{k=1}^J \sigma_k M^{\vec{\zeta}, k}_t(f_k) -\sum_{k = 1}^J \sigma_k M^{\vec{\zeta}, k}_{t-} (f_k)}} = 0.
\end{equation}
Therefore  $\sum_{k=1}^J\sigma_k M^{\vec{\zeta}, k}_t(f_k)$ converges in distribution to a mean-zero Gaussian martingale $\sum_{k=1}^J\sigma_k \bar{M}^k_t(f_k) $ with the quadratic variation \cref{eq:QVofh}.

Since this is true for arbitrary $\sigma_k's$, by Cramer-Wold theorem, see Theorem 29.4 in \cite{Billingsley}, the vector $\paren{  M ^{\vec{\zeta}, 1}_t, \cdots,  M ^{\vec{\zeta}, J}_t}$ converges to  $\paren{  \bar{M}^1_t, \cdots, \bar{M}^J_t}$ in $W^{-\Gamma,a}(\R^d)^{\otimes J}$
with covariance structure \cref{eq:limitcov}.

\end{proof}



\begin{theorem}
For any $f\in W_0^{\Gamma,a}$, $\left\{\paren{\Xi^{\vec{\zeta}, 1}_t, \cdots, \Xi^{\vec{\zeta}, J}_t}, t\in[0, T]\right\}_{\vec{\zeta}\in(0,1)^{2}}$ converges in law in the space $C([0, T], \paren{W^{-\Gamma,a}(\R^d)}^{\otimes J})$ to the process  $\{\paren{\bar{\Xi}^1_t, \cdots, \bar{\Xi}^J_t}\}_{t\in[0, T]}$ that must satisfy in $W^{-(2+\Gamma),a}$ the stochastic evolution equation
\begin{multline}\label{eq:limitEqThm}
\la f,\bar{\Xi}^j_t\ra =\la f,\bar{\Xi}^j_0\ra + \int_{0}^{t} \la (\mathcal{L}_jf)(x) , \bar{\Xi}^j_s (dx)\ra ds  + \bar{M}^{j}_t(f) \\
+ \sum_{\ell =  1}^L \int_{0}^{t} \int_{\tilde{\mathbb{X}}^{(\ell)}}     \frac{1}{\vec{\alpha}^{(\ell)}!}   K_\ell\left(\vec{x}\right) \left(\int_{\mathbb{Y}^{(\ell)}}    \left( \sum_{r = 1}^{\beta_{\ell j}} f(y_r^{(j)}) \right) m_\ell\left(\vec{y} \, |\, \vec{x} \right)\, d \vec{y} - \sum_{r = 1}^{\alpha_{\ell j}} f(x_r^{(j)}) \right)\,\Delta^{(\ell)}[\bar{\mu}_s, \bar{\Xi}_s](d\vec{x}) \, ds
\end{multline}
for $j = 1, \cdots, J$, where $\paren{\bar{M}^1_t, \cdots, \bar{M}^J_t}$ is a mean-zero Gaussian process defined in \cref{lem:convMartingale}.
\end{theorem}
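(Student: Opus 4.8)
The plan is to combine the tightness result (Theorem~\ref{thm:tightness}), the continuity of limit points (Lemma~\ref{lem:cont}), and the martingale convergence (Lemma~\ref{lem:convMartingale}) with a pathwise pass‑to‑the‑limit argument in the prelimit evolution equation~\eqref{eq:fluctuationProcess}. First I would observe that, by Theorem~\ref{thm:tightness} together with the mean‑field convergence of Theorem~\ref{thm:convergence}, the triple $\big(\Xi^{\vec{\zeta}}, M^{\vec{\zeta}}, \mu^{\vec{\zeta}}\big)$ is tight in the product space $D_{(W^{-\Gamma,a}(\R^d))^{\otimes J}}([0,T]) \times D_{(W^{-\Gamma,a}(\R^d))^{\otimes J}}([0,T]) \times D_{M_F(\R^d)^{\otimes J}}([0,T])$. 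Hence along any subsequence there is a further subsubsequence converging in law to a limit $\big(\bar{\Xi}, \bar{M}, \bar{\mu}\big)$; by Lemma~\ref{lem:cont} the component $\bar{\Xi}$ in fact lives in $C([0,T], (W^{-\Gamma,a}(\R^d))^{\otimes J})$, by Theorem~\ref{thm:convergence} $\bar{\mu}$ is the deterministic mean‑field solution, and by Lemma~\ref{lem:convMartingale} $\bar{M}$ is the mean‑zero Gaussian martingale with the covariance~\eqref{eq:limitcov}. Invoking the Skorokhod representation theorem, I would realize this joint convergence almost surely on a common probability space, which reduces the identification of $\bar{\Xi}$ to passing to the limit in each term of~\eqref{eq:fluctuationProcess} pathwise.

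I would then carry out the term‑by‑term limit, working first with the processes stopped at $\theta^{2b}_{\vec{\zeta},\kappa}$. The initial term $\la f,\Xi^{\vec{\zeta},j}_0\ra$ converges to $\la f,\bar{\Xi}^j_0\ra$. For the diffusion term, since $f\in W^{2+\Gamma,a}_0$ gives $\mathcal{L}_j f = D_j\Delta f \in W^{\Gamma,a}_0$, the functional $\nu_\bullet \mapsto \int_0^t \la \mathcal{L}_j f,\nu_s\ra\, ds$ is continuous for the Skorokhod topology on $D_{W^{-\Gamma,a}}([0,T])$, and combined with the uniform bound of Lemma~\ref{lem:negativeNormBd} (yielding uniform integrability) this term converges to $\int_0^t \la \mathcal{L}_j f,\bar{\Xi}^j_s\ra\, ds$; the distinction between $s-$ and $s$ is immaterial since the limit is continuous. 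For the reaction drift terms I would use the identities recorded in Subsection~\ref{SS:PreliminaryCalculations}, rewriting $\sqrt{\gamma}\big(\lambda^{(\ell)}[\mu^{\vec{\zeta}}_s]-\lambda^{(\ell)}[\bar{\mu}_s]\big)$ as $\Xi^{\vec{\zeta},k}_s$ in the unimolecular case and as $\Xi^{\vec{\zeta},k}_s(\cdot)\mu^{\vec{\zeta},r}_s(\cdot) + \bar{\mu}^k_s(\cdot)\Xi^{\vec{\zeta},r}_s(\cdot)$ in the bimolecular case; using the smoothness of $K_\ell$ from Assumption~\ref{A:AssumptionK}, the regularity of $\rho$ from Assumption~\ref{A:AssumptionRho} (exactly the structure exploited in Lemma~\ref{lem:DiffMeas}, where these integrals are rewritten as pairings $\la \mathcal{D} f,\Xi^{\vec{\zeta},i}_s\ra$ with $\mathcal{D}$ a bounded operator on the weighted Sobolev space), and the almost‑sure convergence $\mu^{\vec{\zeta},r}_s \to \bar{\mu}^r_s$, these converge to the terms involving $\Delta^{(\ell)}[\bar{\mu}_s,\bar{\Xi}_s]$ in~\eqref{eq:limitEqThm}. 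The placement‑density correction term is bounded in expectation by $C(\sqrt{\gamma}\eta)^2$ via Lemma~\ref{lem:DiffPlaceMeas}, which vanishes by Assumption~\ref{A:AssumptionParameterRelativeRates}, and $M^{\vec{\zeta},j}_t(f)\to\bar{M}^j_t(f)$ by Lemma~\ref{lem:convMartingale}.

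Having passed to the limit in the stopped equation, I would remove the localization: Lemma~\ref{L:MomentBound} gives $\sup_{\vec{\zeta}}\mathbb{P}(\theta^{2b}_{\vec{\zeta},\kappa}\le T)\to 0$ as $\kappa\to\infty$, so letting $\kappa\to\infty$ in the stopped identity shows that every subsubsequential limit $\bar{\Xi}$ satisfies~\eqref{eq:limitEqThm} as an identity in $W^{-(2+\Gamma),a}$ for all $f\in W^{2+\Gamma,a}_0$ (here the smoothness of $K_\ell$ and the regularity of $\rho$ also guarantee that each reaction operator extends to a bounded operator into $W^{\Gamma,a}_0$, so every term in~\eqref{eq:limitEqThm} is well defined against such $f$). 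Together with the relative compactness from Theorem~\ref{thm:tightness} and the upgrade to continuous paths from Lemma~\ref{lem:cont}, this yields convergence in law in $C([0,T],(W^{-\Gamma,a}(\R^d))^{\otimes J})$ along the subsubsequence to a solution of~\eqref{eq:limitEqThm}; uniqueness of the limit point is then supplied by the analysis of Section~\ref{S:Uniqueness}.

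The main obstacle is the passage to the limit in the nonlinear bimolecular reaction drift, where one must take the limit through products of the signed measure $\Xi^{\vec{\zeta}}$ with the (positive) measure $\mu^{\vec{\zeta}}$, both of which converge only in a weak/negative‑Sobolev sense. The resolution rests on two ingredients already established: the smoothness and boundedness of the kernels $K_\ell$, which let one absorb the nonlocal integration against $\mu^{\vec{\zeta},r}_s$ (respectively $\bar{\mu}^r_s$) into a bounded operator $\mathcal{D}$ on the weighted space $W^{\Gamma_1,a}_0$, so that the term becomes a continuous linear pairing with $\Xi^{\vec{\zeta}}$ (Lemma~\ref{lem:DiffMeas}); and the uniform moment and negative‑norm bounds of Lemmas~\ref{L:MomentBound} and~\ref{lem:negativeNormBd}, which furnish the uniform integrability needed to interchange the limit with the time integral after passing to the Skorokhod representation. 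A secondary technical point is that the weak topology on $M_F(\R^d)$ only controls pairings with bounded continuous test functions, so one uses the smoothness of $K_\ell$ and the compact support of $f$ to keep the relevant integrands in this class.
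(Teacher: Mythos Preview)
Your approach is correct and arrives at the same conclusion as the paper, but the mechanism you use to pass to the limit in the prelimit evolution equation is genuinely different from the paper's. The paper, after establishing joint relative compactness of $(\Xi^{\vec{\zeta}}, \mu^{\vec{\zeta}}, M^{\vec{\zeta}})$ and rewriting the reaction drift so that all terms depend \emph{linearly} on $(\Xi^{\vec{\zeta}},\mu^{\vec{\zeta}})$ (including the split of the placement‑density correction into a part involving only $\bar{\mu}$ and a part linear in $\mu^{\vec{\zeta}}-\bar{\mu}$), invokes the abstract weak‑convergence‑of‑stochastic‑integrals result of Kurtz--Protter (Theorem~5.5 in \cite{KP:1996}) to conclude in one stroke that any limit point solves~\eqref{eq:limitEqThm}. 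You instead go through Skorokhod representation and a term‑by‑term pathwise limit, using the operator reformulation of Lemma~\ref{lem:DiffMeas} for continuity of the reaction drift and the uniform bounds of Lemmas~\ref{L:MomentBound} and~\ref{lem:negativeNormBd} for uniform integrability, together with an explicit delocalization step via $\theta^{2b}_{\vec{\zeta},\kappa}$. Your route is more self‑contained and makes the role of each estimate transparent; the paper's route is shorter but off‑loads the real work to the Kurtz--Protter theorem. One small imprecision: the placement‑density correction is not literally controlled by Lemma~\ref{lem:DiffPlaceMeas} (which bounds a quadratic expression), but rather by the first‑order estimate from Lemma~B.2 of~\cite{IMS:2022}, giving a bound of order $\sqrt{\gamma}\,\eta$; the conclusion is unchanged.
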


\begin{proof}
By \cref{thm:tightness} and \cref{lem:convMartingale}, the sequence $(\Xi^{\vec{\zeta}, 1}_t, \cdots, \Xi^{\vec{\zeta}, J}_t, \mu^{\vec{\zeta}, 1}_t,\cdots, \mu^{\vec{\zeta}, J}_t, M ^{\vec{\zeta}, 1}_t, \cdots, M ^{\vec{\zeta}, J}_t)$ is relatively compact in $D([0, T], W^{-\Gamma,a}(\R^d)^{\otimes J}\times M_F(\R^d)^{\otimes J}\times W^{-\Gamma,a}(\R^d)^{\otimes J})$. Since $W^{-\Gamma,a}\hookrightarrow W^{-(2+\Gamma),a}$, it follows that it is also relatively compact in $D([0, T], W^{-(2+\Gamma),a}(\R^d)^{\otimes J}\times M_F(\R^d)^{\otimes J}\times W^{-(2+\Gamma),a}(\R^d)^{\otimes J})$. Note also, as for any $i\in\{1,\cdots, J\}$
\begin{equation*}
\|  \mathcal{L}_i f\|_{\Gamma,a}\leq C\|f\|_{2+\Gamma,a}
\end{equation*}
the integral $\int_{0}^{t} (\mathcal{L}_i)^{*} \Xi^{\vec{\zeta},i}_{s} ds$ makes sense as a Bochner integral in $W^{-(2+\Gamma),a}$.

Let us recall now the equation that $(\Xi^{\vec{\zeta}, 1}_t, \cdots, \Xi^{\vec{\zeta}, J}_t, \mu^{\vec{\zeta}, 1}_t,\cdots, \mu^{\vec{\zeta}, J}_t, M ^{\vec{\zeta}, 1}_t, \cdots, M ^{\vec{\zeta}, J}_t)$ satisfies for $f\in W^{2+\Gamma,a}_{0}$
\begin{equation}\label{eq:fluctuationProcessCopy}
\begin{aligned}
\la f,\Xi^{{\vec{\zeta}}, j}_{t}\ra
&=\la f,\Xi^{{\vec{\zeta}}, j}_{0}\ra + \int_{0}^{t} \la (\mathcal{L}_jf)(x) , \Xi^{\vec{\zeta}, j}_{s-} (dx)\ra ds  + M^{\vec{\zeta}, j}_t(f) \\
&\quad+ \sum_{\ell =  1}^L \int_{0}^{t} \int_{\tilde{\mathbb{X}}^{(\ell)}}     \frac{1}{\vec{\alpha}^{(\ell)}!}   K_\ell\left(\vec{x}\right) \left(\int_{\mathbb{Y}^{(\ell)}}    \left( \sum_{r = 1}^{\beta_{\ell j}} f(y_r^{(j)}) \right) m_\ell\left(\vec{y} \, |\, \vec{x} \right)\, d \vec{y} - \sum_{r = 1}^{\alpha_{\ell j}} f(x_r^{(j)}) \right)\,\\
&\qquad\times \sqrt{\gamma} \paren{\lambda^{(\ell)}[\mu_{s}^{\vec{\zeta}}](d\vec{x})-\lambda^{(\ell)}[\bar{\mu}_{s}](d\vec{x})}\, ds \\
&\quad+ \sum_{\ell =  1}^L \int_{0}^{t} \int_{\tilde{\mathbb{X}}^{(\ell)}}     \frac{1}{\vec{\alpha}^{(\ell)}!}   K_\ell\left(\vec{x}\right) \left(\int_{\mathbb{Y}^{(\ell)}}    \left( \sum_{r = 1}^{\beta_{\ell j}} f(y_r^{(j)}) \right) \sqrt{\gamma}\paren{m^\eta_\ell\left(\vec{y} \, |\, \vec{x} \right)-m_\ell\left(\vec{y} \, |\, \vec{x} \right)}\, d \vec{y}  \right)\, \lambda^{(\ell)}[\mu_{s}^{\vec{\zeta}}](d\vec{x})\, ds.
\end{aligned}
\end{equation}
In the second to last line, we rewrite
\begin{equation*}
\sqrt{\gamma} \paren{\lambda^{(\ell)}[\mu_{s}^{\vec{\zeta}}](d\vec{x})-\lambda^{(\ell)}[\bar{\mu}_{s}](d\vec{x})} = \begin{cases}
   \Xi^{\vec{\zeta},k}_t(x), & \text{reaction $\ell$ is: } S_k \to \dots\\
   \Xi^{\vec{\zeta},k}_t(x) \mu^{\vec{\zeta},r}(y) + \bar{\mu}^{\vec{\zeta},k}(x) \Xi^{\vec{\zeta},r}_t(y) , & \text{reaction $\ell$ is: } S_k + S_r \to \dots
\end{cases}
\end{equation*}
to look like an approximation to $\Delta^{(\ell)}\brac{\bar{\mu},\bar{\Xi}}$, see~\eqref{eq:reactantmappingdef}. Notice, these expressions depend linearly on  $\{ \mu^{\vec{\zeta}, j}_t \}$ and  $\{ \Xi^{\vec{\zeta}, j}_t \}$.

The last line of \cref{eq:fluctuationProcessCopy} may depend quadratically on $\{ \mu^{\vec{\zeta}, j}_t\}$ but can be rewritten as a sum of two terms, $I + II$. Here
\begin{equation*}
  I = \sum_{\ell =  1}^L \int_{0}^{t} \int_{\tilde{\mathbb{X}}^{(\ell)}}     \frac{1}{\vec{\alpha}^{(\ell)}!}   K_\ell\left(\vec{x}\right) \left(\int_{\mathbb{Y}^{(\ell)}}    \left( \sum_{r = 1}^{\beta_{\ell j}} f(y_r^{(j)}) \right) \sqrt{\gamma}\paren{m^\eta_\ell\left(\vec{y} \, |\, \vec{x} \right)-m_\ell\left(\vec{y} \, |\, \vec{x} \right)}\, d \vec{y}  \right)\, \lambda^{(\ell)}[\bar{\mu}_{s}](d\vec{x})\, ds
\end{equation*}
does not depend on $\{ \mu^{\vec{\zeta}, j}_t \}$ and  $\{ \Xi^{\vec{\zeta}, j}_t \}$ and goes to zero as $\gamma\to\infty$ and $\eta\to 0$ due to the constraint $\sqrt{\gamma}\eta\to 0$. The second term,
\begin{multline*}
  II = \sum_{\ell =  1}^L \int_{0}^{t} \int_{\tilde{\mathbb{X}}^{(\ell)}}     \frac{1}{\vec{\alpha}^{(\ell)}!}   K_\ell\left(\vec{x}\right) \left(\int_{\mathbb{Y}^{(\ell)}}    \left( \sum_{r = 1}^{\beta_{\ell j}} f(y_r^{(j)}) \right) \sqrt{\gamma}\paren{m^\eta_\ell\left(\vec{y} \, |\, \vec{x} \right)-m_\ell\left(\vec{y} \, |\, \vec{x} \right)}\, d \vec{y}  \right)\\
  \times \paren{\lambda^{(\ell)}[\mu_{s}^{\vec{\zeta}}](d\vec{x}) - \lambda^{(\ell)}[\bar{\mu}_{s}](d\vec{x})}\, ds,
\end{multline*}
again depends linearly on  $\{ \mu^{\vec{\zeta}, j}_t \}$ and  $\{ \Xi^{\vec{\zeta}, j}_t \}$.

Due to the linearity,  we can directly obtain that $(\bar{\Xi}^{1}_t,\cdots,  \bar{\Xi}^{J}_t, \bar{\mu}^{ 1}_t, \cdots, \bar{\mu}^{J}_t, \bar{M} ^{ 1}_t, \cdots, \bar{M} ^{J}_t)$  solves \cref{eq:limitEqThm} by \cref{lem:convMartingale} and Theorem 5.5 in \cite{KP:1996}.
\end{proof}


\subsection{Uniqueness of Limiting Solution}\label{S:Uniqueness}
\begin{theorem}\label{T:Uniqueness}
The process $\{\paren{\bar{\Xi}^1_t, \cdots, \bar{\Xi}^J_t}\}_{t\in[0, T]}$ satisfying
\begin{multline}\label{eq:limitEqUnique}
\la f,\bar{\Xi}^j_t\ra
=\la f,\bar{\Xi}^j_0\ra + \int_{0}^{t} \la (\mathcal{L}_jf)(x) , \bar{\Xi}^j_s (dx)\ra ds  + \bar{M}^{j}_t(f)\\
\quad+ \sum_{\ell =  1}^L \int_{0}^{t} \int_{\tilde{\mathbb{X}}^{(\ell)}}     \frac{1}{\vec{\alpha}^{(\ell)}!}   K_\ell\left(\vec{x}\right) \left(\int_{\mathbb{Y}^{(\ell)}}    \left( \sum_{r = 1}^{\beta_{\ell j}} f(y_r^{(j)}) \right) m_\ell\left(\vec{y} \, |\, \vec{x} \right)\, d \vec{y} - \sum_{r = 1}^{\alpha_{\ell j}} f(x_r^{(j)}) \right)\,\Delta^{(\ell)}[\bar{\mu}_s, \bar{\Xi}_s](d\vec{x}) \, ds
\end{multline}
for $j = 1, \cdots, J$ and any $f\in W_0^{2+\Gamma,a}$, is unique in $\paren{W^{-(2+\Gamma),a}(\R^d)}^{\otimes J}$.
\end{theorem}

\begin{proof}
We start by making a couple of general observations. First, since for any $i\in\{1,\cdots, J\}$
\[
\|  \mathcal{L}_i f\|_{\Gamma,a}\leq C\|f\|_{2+\Gamma,a}
\]
the integral $\int_{0}^{t} (\mathcal{L}_i)^{*} \bar{\Xi}^{j}_{s} ds$ makes sense as a Bochner integral in $W^{-(2+\Gamma),a}$. Second, it follows by Lemma \ref{lem:negativeNormBd} that there exists a constant $C$ such that
\begin{equation*}
\E [\sup_{t\in[0, T]}||\bar{\Xi}^{ j}_{t}||^2_{-\Gamma,a}] \leq C,
\end{equation*}
for $j = 1, 2, \cdots, J$. In addition, since $W^{-\Gamma,a}\hookrightarrow W^{-(2+\Gamma),a}$ we also have that
\begin{equation*}
\E [\sup_{t\in[0, T]}||\bar{\Xi}^{ j}_{t}||^2_{-(2+\Gamma),a}] \leq \E [\sup_{t\in[0, T]}||\bar{\Xi}^{ j}_{t}||^2_{-\Gamma,a}]\leq C.
\end{equation*}

Suppose now that there are two solutions $\{\paren{\bar{\Xi}^1_t, \cdots, \bar{\Xi}^J_t}\}$ and $\{\paren{\tilde{\Xi}^1_t, \cdots, \tilde{\Xi}^J_t}\}$ satisfying \cref{eq:limitEqUnique} with the same initial condition. We consider the evolution of $\Phi^j_t := \bar{\Xi}^j_t-\tilde{\Xi}^j_t$ that solves
\begin{align*}
\la f,\Phi^j_t\ra &= \int_{0}^{t} \la (\mathcal{L}_jf)(x) , \Phi^j_s (dx)\ra ds\\
&\quad+ \sum_{\ell =  1}^L \int_{0}^{t} \int_{\tilde{\mathbb{X}}^{(\ell)}}     \frac{1}{\vec{\alpha}^{(\ell)}!}   K_\ell\left(\vec{x}\right) \left(\int_{\mathbb{Y}^{(\ell)}}    \left( \sum_{r = 1}^{\beta_{\ell j}} f(y_r^{(j)}) \right) m_\ell\left(\vec{y} \, |\, \vec{x} \right)\, d \vec{y} - \sum_{r = 1}^{\alpha_{\ell j}} f(x_r^{(j)}) \right)\,\Delta^{(\ell)}[\bar{\mu}_s, \Phi_s](d\vec{x}) \, ds
\end{align*}
with $\la f,  \Phi^J_0 \ra  = 0$. By the chain rule we have
\begin{multline*}
\la f,  \Phi^j_t \ra^2
= 2\int_{0}^{t} \la f,  \Phi^j_s \ra  \la (\mathcal{L}_jf)(x) , \Phi^j_s (dx)\ra ds\\
+ 2\sum_{\ell =  1}^L \int_{0}^{t} \la f,  \Phi^j_s\ra \int_{\tilde{\mathbb{X}}^{(\ell)}}     \frac{1}{\vec{\alpha}^{(\ell)}!}   K_\ell\left(\vec{x}\right) \left(\int_{\mathbb{Y}^{(\ell)}}    \left( \sum_{r = 1}^{\beta_{\ell j}} f(y_r^{(j)}) \right) m_\ell\left(\vec{y} \, |\, \vec{x} \right)\, d \vec{y} - \sum_{r = 1}^{\alpha_{\ell j}} f(x_r^{(j)}) \right)\\
\times \Delta^{(\ell)}[\bar{\mu}_s, \Phi_s](d\vec{x}) \, ds.
\end{multline*}
Let $\{ f_p \}_{p\geq 1}$ be a complete orthonormal system in $W_0^{2+\Gamma,a}(\R^d)$ of class $C^\infty_0(\R^d)$. Then, similar to the proof of \cref{eq:negativeNormBd}, taking $f = f_p$, summing over $p\geq 0$ and $j = 1, \cdots, J$, and using the bounds of Lemmas \ref{lem:densityIneq} and \ref{lem:DiffMeas} we find that
\begin{align*}
\sum_{j = 1}^J|| \Phi^j_t ||^2_{-(2+\Gamma),a} \leq C \int_0^t\sum_{k=1}^J || \Phi^k_s ||^2_{-(2+\Gamma),a}  \, ds ,
\end{align*}
for some generic constant $C$ depending on $J$, $D_j$'s, $C_{\circ}$, $C(K)$, $\norm{K}_{C^{\Gamma}}$, $\|\rho\|_{L^{1}(\mathbb{R})^{d}}$, and $\int_{\mathbb{R}^{d}}|w|^{2a}\rho(w)dw<\infty$. By Gronwall's inequality with the initial condition $ \norm{\Phi^j_0}^2_{-(2+\Gamma),a} = 0$, we obtain that for all $t\in[0,T]$
\begin{equation*}
\sum_{j = 1}^J|| \Phi^j_t ||^2_{-(2+\Gamma),a} = 0,
\end{equation*}
and therefore, uniqueness holds.
\end{proof}


\appendix

\section{Forward Equation Fock Space Representation} \label{S:FockSpaceExpect}
Our goal in this appendix is to derive~\eqref{Eq:Expectation_Mean_Formula} given Assumption~\ref{A:AssumptionInitialCondition}. Our basic approach is to begin with the forward equation Fock space representation for the dynamics, derive the analogous equation in the Fock space representation, and then show it is equivalent under a change of notation to~\eqref{Eq:Expectation_Mean_Formula}. This approach allows us to bypass estimates that would be needed to rigorously derive~\eqref{Eq:Expectation_Mean_Formula} directly from~\eqref{Eq:EM_j_formula} in the case that the test function can be unbounded (as needed for moment estimates).

To describe the Fock space representation, we first define some alternative
notation for representing the numbers of particles of each species and their
positions. Let
\begin{equation*}
  \begin{aligned}
  \vN(t) &:= \paren{N_1(t),\dots,N_J(t)} \\
  &\phantom{:}= \paren{\la 1, \nu^1_t \ra,\dots,\la 1, \nu^J_t \ra} \\
  &\phantom{:}= \gamma \paren{\la 1, \mu^1_t \ra,\dots,\la 1, \mu^J_t \ra}
\end{aligned}
\end{equation*}
denote the vector stochastic process for the number of each species at time $t$, with $\vn = (n_1,\dots,n_J)$ a possible value for this process. We treat $\vN(t)$ as a multi-index, so that $N(t) = \sum_{j=1}^J N_j(t) = \abs{\vN(t)}$ gives the total number of particles at time $t$. $\sum_{j} n_j = \abs{\vn}$ is defined  analogously. Note, we previously used $N_{\ell}$ to denote a Poisson random measure, but within this appendix $N_j(t)$ will always denote the stochastic process for the number of type $j$ molecules at time $t$.

We denote by $Q^{N_j(t)}_i(t) = H^i(\nu_t^j) \subset \R^d$ the stochastic process for the position of the $i$th particle of species $j$ at time $t$, with
\begin{equation*}
  \vQ^{N_j(t)}(t) = \paren{Q^{N_j(t)}_1(t),\dots, Q^{N_j(t)}_{N_j(t)}(t)} \subset \R^{d N_j(t)}
\end{equation*}
the stochastic process for the state vector of species $J$ and
\begin{equation*}
  \vQ^{\vN(t)}(t) = \paren{\vQ^{N_1(t)}(t),\dots,\vQ^{N_J(t)}(t)} \subset \R^{d N(t)}
\end{equation*}
the stochastic process for the state vector of all particles. Possible values for each of these are given by $q^{n_j}_i$, $\vqnj$ and $\vqn$ respectively.

With these definitions we denote by $\Pn(\vqn,t)$ the probability density that $\vN(t) = \vn$ with $\vQ^{\vN(t)}(t) = \vqn$, and define the Fock space probability vector $\vP(t) = \{\Pn(\vqn,t)\}_{\vn}$. Note that because particles of the same species are assumed to be identical, $\Pn(\vqn,t)$ is symmetric with respect to reorderings of the particle positions of type $j$ within $\vqnj$. As in \cite{IMS:2022}, we consider $\vP(t)$ for $t$ fixed to be an element of an
$L^2$ Fock space, $F = L^2(X)$, where
\begin{equation*}
  X = \bigoplus_{\vn} \R^{d \abs{\vn}}.
\end{equation*}
Note that Assumption~\ref{Assume:kernalBdd} implies $N(t)$ is bounded from above, and hence $X$ is finite dimensional. Letting $\vG = \{G^{\vn}\}_{\vn} \in F$ and $\tilde{\vec{G}} = \{\tilde{G}^{\vn}\}_{\vn} \in F$, we construct an inner product on $F$ by
\begin{equation*}
  \la \vG, \tilde{\vec{G}} \ra_F = \sum_{\vn} \frac{1}{\vn !}
  \int_{\R^{d \abs{\vn}}} G^{\vn}(\vqn) \tilde{G}^{\vn}(\vqn) \, d\vqn,
\end{equation*}
where $\vn!$ is defined in the usual multi-index sense.

The forward equation for $\vP(t)$ is then given in strong form by
\begin{equation} \label{eq:FockSpacePDE}
  \frac{d}{dt}\vP(t) = (L + R) \vP(t),
\end{equation}
where $L$ denotes a diffusion operator and $R$ a reaction operator. The former is defined by
\begin{equation*}
  (L \vP)^{\vn}(\vqn,t) = \sum_{j=1}^J D_j \lap_{\vqnj} \Pn(\vqn,t),
\end{equation*}
where $\lap_{\vqnj}$ denotes the Laplacian in $\vqnj$.

To define the reaction operator we need to introduce notation for adding and removing particles from a state $\vqn$, which requires notation for configurations of particles that could have been substrates or products of one reaction. Abusing notation, we let $\Ol(\vn) := \Ol(\nu)$, see Definition~\ref{def:effSamplingSpace}, label the collection of allowable reactant particle indices when there are $\vn$ particles in the system, and note that the number of elements in this set is given by
\begin{equation*}
  \abs{\Ol(\vn)} = \begin{pmatrix}
    \vn \\
    \vec{\alpha}^{(\ell)}
  \end{pmatrix}.
\end{equation*}
We define the allowable product index sampling space $\Otl(\vn) \subset \mathbb{J}^{(\ell)}$ by
\begin{equation*}
  \Otl(\vn) = \begin{cases}
  \varnothing, & \abs{\vec{\beta}^{(\ell)}}=0,\\
  \{ \vec{i} = i_{1}^{(j)} \in \mathbb{J}^{(\ell)} \, | \, i_1^{(j)} \leq n_j \}, & \abs{\vec{\beta}^{(\ell)}}=\beta_{\ell j} = 1,\\
  \{ \vec{i} = (i_{1}^{(j)},i_{2}^{(j)}) \in \mathbb{J}^{(\ell)} \, | \,   i_1^{(j)} \neq i_{2}^{(j)}, i_k^{(j)} \leq n_j, k=1,2\}, & \abs{\vec{\beta}^{(\ell)}}=\beta_{\ell j} = 2,\\
  \{ \vec{i} = (i_{1}^{(j)},i_{1}^{(k)})\in \mathbb{J}^{(\ell)} \, | \, i_1^{(j)} \leq n_j, i_1^{(k)} \leq n_k\}, & \abs{\vec{\beta}^{(\ell)}}=2, \quad \beta_{\ell j} = \beta_{\ell k} = 1, \quad j < k.
  \end{cases}
\end{equation*}
Note, this is a slightly different definition than that of $\Ol(\vn)$, needed due to our choice of normalization for $m_{\ell}\paren{\vy \given \vx}$. The number of elements in $\Otl(\vn)$ is
\begin{equation*}
  \abs{\Otl(\vn)} = \frac{\vn!}{(\vn - \vec{\beta}^{(\ell)})!}.
\end{equation*}

Given the current state of a system, $\vqn$, we will write $\vqn_{\bi}$ for $\bi \in \Ol(\vn)$ to label the position of the substrates for the $\ell$th reaction determined by $\bi$. Similarly, for $\bj \in \Otl(\vn)$, $\vqn_{\bj}$ will denote one product collection of particles in $\vqn$ that could have been produced through one occurrence of reaction $\ell$. To represent the state vector with particles $\vqn_{\bi}$ removed from $\vqn$ we use the notation $\vqn \setminus \vqn_{\bi}$. We analogously use the notation $\vqn \cup \vx$ to denote adding the particles in $\vx$ into the state vector $\vqn$. Finally, we define the overall reaction interaction function, encoding both the rate of a reaction and the placement of products, by
\begin{equation*}
  \Klg \paren{\vy \given \vx} = \Klg(\vx) m_{\ell}^{\eta}\paren{\vy \given \vx},
\end{equation*}
and note that
\begin{equation*}
  \int_{\mathbb{Y}^{(\ell)}} \Klg \paren{\vy \given \vx} d\vy = \Klg(\vx)
\end{equation*}
by assumption. With these notations, the reaction operator is given by
\begin{equation*}
  (R\vP)^{\vn}(\vqn,t) = \sum_{\ell} \bigg[-\smashoperator{\sum_{\bi \in \Ol(\vn)}}\Klg(\vqn_{\bi}) P^{\vn}(\vqn\!\!,t) + \frac{1}{\vec{\alpha}^{(\ell)}!}\!\!\sum_{\bi \in \Otl(\vn)}\!\!\int_{\tilde{\mathbb{X}}^{(\ell)}} \Klg\paren{\vqn_{\bi} \given \vx} P^{\vn - \vupl}(\vqn \setminus \vqn_{\bi} \cup \vx,t) \, d\vx\bigg],
\end{equation*}
where $\vupl = \vec{\beta}^{(\ell)} - \vec{\alpha}^{(\ell)}$ is the net stoichiometry vector for the $\ell$th reaction.

In the special case of the $A + B \leftrightarrows C$ reaction, we proved in~\cite{IMS:2022} that $\vP(t)$ is non-negative, normalized in time, i.e.
\begin{equation*}
  \sum_{\vn} \frac{1}{\vn!} \int_{\R^{d \abs{\vn}}} \Pn(\vqn,t) \, d\vqn = 1
\end{equation*}
and that the mild form of~\eqref{eq:FockSpacePDE},
\begin{equation} \label{eq:ForwardEq}
  \vP(t) = e^{L t} \vP(0) + \int_0^t e^{L(t-s)} (R\vP)(s) \, ds,
\end{equation}
has a unique solution in $C(\brac{0,T},H^2(X))$ if $\vP(0) \in H^1(X)$. Let $D(L)$ denote the domain of $L$. Assuming that $\vP(0) \in D(L) \cap H^1(x)$, we have that this mild solution is actually a strong solution to~\eqref{eq:FockSpacePDE} in $C(\brac{0,T},D(L)\cap H^2(X)) \cap C^1(\brac{0,T},L^2(X))$, see Proposition 4.3.9 of~\cite{CH:1998}. We do not show here, but subsequently assume, that these properties still hold for the general class of reaction systems considered in this work under the assumptions of our main result, Theorem~\ref{T:FluctuationsThm}. Recall this includes an assumption that the total number of molecules is uniformly bounded from above, which rules out finite-time blowup.


In the calculations that follow, we assume that we are working with a function $f \in C^{2}(\R^d)$ for which the involved quantities are finite and well defined for the corresponding chemical reaction network. More specifically, the case needed for establishing~\eqref{Eq:Expectation_Mean_Formula} in the context of Lemma~\ref{L:MomentBound} is $f(x) = \abs{x}^{8D}$, for which Assumption~\ref{A:AssumptionInitialCondition} assumes finiteness. To derive~\eqref{Eq:Expectation_Mean_Formula}, we begin by noting that for a given scalar test function $f \in C^{2}(\R^d)$, we have that 
\begin{equation*}
  \E \la f, \gamma \mu_t^j \ra = \E\brac{\sum_{i=1}^{N_j(t)} f\paren{Q_i^{N_j(t)}(t)}} = \la \vG, \vP(t) \ra_F,
\end{equation*}
where
\begin{equation*}
  G^{\vn}(\vqn) = \sum_{i=1}^{n_j} f\paren{q_i^{n_j}}.
\end{equation*}

From~\eqref{eq:FockSpacePDE} we then have that
\begin{equation*}
  \la \vG, \vP(t) \ra_F = \la \vG, \vP(0) \ra_F + \int_0^t \la \vG, (L + R)\vP(t) \ra_F.
\end{equation*}
We note that $L$ is self-adjoint, see~\cite{IMS:2022}, so that
\begin{equation} \label{eq:GLPinnerprod}
\begin{aligned}
  \la \vG, L\vP(t) \ra_F &= \la L \vG, \vP(t) \ra_F \\
  &=\sum_{\vn} \frac{1}{\vn !} \int_{\R^{d \abs{\vn}}} \sum_{i=1}^{n_j} D_j \lap_{q_i^{n_j}} f(q_i^{n_j}) \Pn(\vqn,t) \, d\vqn \\
  &= \E \brac{\sum_{i=1}^{N_j(t)} D_j (\lap f)\paren{Q_i^{N_j(t)}}(t)} \\
  &= \E \la (\mathcal{L}_j f)(x), \gamma \mu_t^j \ra.
\end{aligned}
\end{equation}

In considering the reaction operator, let $\bi_1 \in \Otl(\vn)$ denote one fixed configuration of product particle indices, so that the symmetry of $G^{\vn}$, $\Klg$, and $\Pn$ with respect to permutations in orderings of particles of the same species implies
\begin{align*}
  \sum_{\vn} &\frac{1}{\vn !} \int_{\R^{d \abs{\vn}}} G^{\vn}(\vqn) \brac{\frac{1}{\vec{\alpha}^{(\ell)}!}\sum_{\bi \in \Otl(\vn)} \int_{\tilde{\mathbb{X}}^{(\ell)}} \Klg\paren{\vqn_{\bi} \given \vx} P^{\vn - \vupl}(\vqn \setminus \vqn_{\bi} \cup \vx,t) \, d\vx} \, d\vqn \\
  &=  \sum_{\vn} \frac{\abs{\Otl(\vn)}}{\vn! \, \vec{\alpha}^{(\ell)}!} \int_{\R^{d \abs{\vn}}} \int_{\tilde{\mathbb{X}}^{(\ell)}} G^{\vn}(\vqn) \Klg\paren{\vqn_{\bi_1} \given \vx} P^{\vn - \vupl}(\vqn \setminus \vqn_{\bi_1} \cup \vx,t) \, d\vx \, d\vqn \\
  &=  \sum_{\vn} \sum_{\bi \in \Ol(\vn-\vupl)}  \tfrac{\abs{\Otl(\vn)}}{\vn!\,\vec{\alpha}^{(\ell)}!\, \abs{\Ol(\vn - \vupl)}} \\
  &\phantom{=} \quad \times \int_{\R^{d \abs{\vn - \vupl}}}
  \int_{\mathbb{Y}^{(\ell)}} G^{\vn}\!\paren{\vq^{\vn - \vupl}\!\setminus\!\vq_{\bi}^{\vn - \vupl}\!\cup\!\vy} \Klg\!\paren{\vy \given \vq_{\bi}^{\vn-\vupl}} P^{\vn - \vupl}\!\!\paren{\vq^{\vn - \vupl},t} \, d\vy \, d\vq^{\vn-\vupl}, \\
  &=: I.
\end{align*}
Noting that
\begin{align*}
  \abs{\Ol(\vn-\vupl)} = \begin{pmatrix}
    \vn-\vupl \\
    \vec{\alpha}^{\ell}
  \end{pmatrix},
\end{align*}
we find
\begin{align*}
  I &=
  \begin{multlined}[t]
    \sum_{\vn}\!\! \sum_{\bi \in \Ol(\vn-\vupl)}  \frac{1}{\paren{\vn - \vupl}!} \int_{\R^{d \abs{\vn-\vupl}}}\! \int_{\mathbb{Y}^{(\ell)}} \!G^{\vn}\!\paren{\vq^{\vn - \vupl}\!\setminus\!\vq_{\bi}^{\vn - \vupl}\!\cup\!\vy}\! \Klg\!\paren{\vy \given \vq_{\bi}^{\vn-\vupl}}\\ \times P^{\vn - \vupl}\!\!\paren{\vq^{\vn - \vupl},t} \, d\vy \, d\vq^{\vn-\vupl},
  \end{multlined}\\
  &= \sum_{\vn} \sum_{\bi \in \Ol(\vn)} \frac{1}{\vn!}
  \int_{\R^{d \abs{\vn}}} \int_{\mathbb{Y}^{(\ell)}} G^{\vn}\!\paren{\vq^{\vn}\setminus\vq_{\bi}^{\vn}\cup\vy} \Klg\!\paren{\vy \given \vq_{\bi}^{\vn}} P^{\vn}\paren{\vq^{\vn},t} \, d\vy \, d\vq^{\vn}.
\end{align*}
Using this identity and the definition of $\Klg\!\paren{\vy \given \vx}$, we find
\begin{equation*}
  \begin{aligned}
    \la \vG, R\vP(t)\ra &=  \sum_{\vn} \frac{1}{\vn!}
    \int_{\R^{d \abs{\vn}}} \brac{ \sum_{\bi \in \Ol(\vn)} \int_{\mathbb{Y}^{(\ell)}} \brac{G^{\vn}\!\paren{\vq^{\vn}\setminus\vq_{\bi}^{\vn}\cup\vy} - G^{\vn}\!\paren{\vqn}} \Klg\!\paren{\vy \given \vq_{\bi}^{\vn}} \, d\vy} P^{\vn}\paren{\vq^{\vn},t} \, d\vq^{\vn}.
  \end{aligned}
\end{equation*}
For our specific choice of $\vG$,
\begin{equation*}
  G^{\vn}\!\paren{\vq^{\vn}\setminus\vq_{\bi}^{\vn}\cup\vy} - G^{\vn}\!\paren{\vqn}
  = \sum_{r=1}^{\beta_{\ell j}} f(y_r^{(j)}) - \sum_{r=1}^{\alpha_{\ell j}} f\paren{q_{i_r^{(j)}}^{n_j}}
\end{equation*}
so that
\begin{equation} \label{eq:GRPinnerprod}
  \begin{aligned}
    \la \vG, R\vP(t)\ra &=  \E \brac{ \sum_{\bi \in \Ol(\vn)} \int_{\mathbb{Y}^{(\ell)}} \brac{\sum_{r=1}^{\beta_{\ell j}} f(y_r^{(j)}) - \sum_{r=1}^{\alpha_{\ell j}} f\paren{Q_{i_r^{(j)}}^{N_j(t)}(t)}} \Klg\!\paren{\vy \given \vQ_{\bi}^{\vN(t)}(t)} \, d\vy} \\
    &= \E \brac{\frac{1}{\vec{\alpha}^{(\ell)}!} \int_{\tilde{\mathbb{X}}^{(\ell)}}\int_{\mathbb{Y}^{(\ell)}} \brac{\sum_{r=1}^{\beta_{\ell j}} f(y_r^{(j)}) - \sum_{r=1}^{\alpha_{\ell j}} f\paren{x_{r}^{(j)}}} \Klg\!\paren{\vy \given \vx} \lambda^{(\ell)}\brac{\gamma \mu_{t^{-}}}\, d\vy \, d\vx} \\
    &= \E \brac{\frac{\gamma}{\vec{\alpha}^{(\ell)}!} \int_{\tilde{\mathbb{X}}^{(\ell)}}\int_{\mathbb{Y}^{(\ell)}} \brac{\sum_{r=1}^{\beta_{\ell j}} f(y_r^{(j)}) - \sum_{r=1}^{\alpha_{\ell j}} f\paren{x_{r}^{(j)}}} K_{\ell}\!\paren{\vy \given \vx} \lambda^{(\ell)}\brac{\mu_{t^-}}\, d\vy \, d\vx}.
  \end{aligned}
\end{equation}
Combining~\eqref{eq:GRPinnerprod} with~\eqref{eq:GLPinnerprod}, and integrating the Fock space forward equation, we obtain~\eqref{Eq:Expectation_Mean_Formula}.
\section{Auxiliary lemmas}


\begin{lemma}\label{L:AuxiliaryBound1}
Let $a\geq 1$ and $J\geq 0$. Then, for any $\Psi\in W^{J+2,a}_{0}$, there is constant $C<\infty$, that may depend on $D_{j}$ and $a$ but not on $\Psi$, such that
$$ \la \mathcal{L}_j \Psi, \Psi\ra_{J,a}\leq C||\Psi||^2_{J,a}$$
\end{lemma}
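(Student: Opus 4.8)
\textbf{Proof proposal for Lemma~\ref{L:AuxiliaryBound1}.}

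The plan is to prove the estimate $\la \mathcal{L}_j \Psi, \Psi\ra_{J,a}\leq C\|\Psi\|^2_{J,a}$ by expanding the weighted inner product in terms of the explicit weight $w_a(x) = (1+|x|^{2a})^{-1}$, integrating by parts to move one derivative from $\mathcal{L}_j\Psi = D_j\Delta\Psi$ onto the test slot, and controlling the resulting commutator terms involving derivatives of the weight. Concretely, since $\la \mathcal{L}_j\Psi,\Psi\ra_{J,a} = D_j\sum_{|k|\le J}\int_{\R^d} \frac{D^k(\Delta\Psi)(x)\, D^k\Psi(x)}{1+|x|^{2a}}\,dx$ and partial derivatives commute with the Laplacian, each summand is $D_j\int_{\R^d} w_a(x)\,\Delta(D^k\Psi)(x)\,D^k\Psi(x)\,dx$. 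Writing $u = D^k\Psi$ and integrating by parts once (legitimate for $\Psi\in C_0^\infty$, hence for $\Psi\in W_0^{J+2,a}$ by density, noting all terms below are controlled by the $W_0^{J,a}$-norm which is finite on this space), we get $\int w_a\,\Delta u\,u = -\int w_a\,|\nabla u|^2 - \int (\nabla w_a\cdot\nabla u)\,u$. The first term is $\le 0$ and may be discarded. The second term is the only thing requiring work.

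The key point is that the weight $w_a(x) = (1+|x|^{2a})^{-1}$ satisfies $|\nabla w_a(x)| \le C_a\, w_a(x)\cdot \frac{|x|^{2a-1}}{1+|x|^{2a}} \le C_a\, w_a(x)$ uniformly in $x$ (since $\frac{|x|^{2a-1}}{1+|x|^{2a}}$ is bounded for $a\ge 1$, in fact for $a\ge 1/2$). Hence $\big|\int (\nabla w_a\cdot\nabla u)\,u\big| \le C_a\int w_a\,|\nabla u|\,|u| \le \tfrac{1}{2}\int w_a|\nabla u|^2 + \tfrac{C_a^2}{2}\int w_a |u|^2$ by Young's inequality. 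The $\int w_a|\nabla u|^2$ piece is absorbed into the discarded negative term $-\int w_a|\nabla u|^2$ from the integration by parts, and the remaining $\int w_a|u|^2$ term is, upon summing over $|k|\le J$, exactly $\le \|\Psi\|_{J,a}^2$. Since $|\nabla u|^2 = \sum_{i}|D^{k+e_i}\Psi|^2$ involves derivatives of order $|k|+1\le J+1$, which may exceed $J$, I would instead bound $\big|\int (\nabla w_a\cdot\nabla u)\,u\big|$ directly by $C_a\int w_a|\nabla u||u| \le C_a\big(\int w_a|\nabla u|^2\big)^{1/2}\big(\int w_a|u|^2\big)^{1/2}$ and note that, after summing over $k$ and using that the $\nabla u$ terms are order-$(|k|+1)$ derivatives, the bound becomes $C\,\|\Psi\|_{J+1,a}\|\Psi\|_{J,a}$ — but this is not quite what we want. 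The cleaner route, which I would actually carry out, is to keep the $-\int w_a|\nabla u|^2$ term and use it to absorb: $-\int w_a|\nabla u|^2 - \int(\nabla w_a\cdot\nabla u)u \le -\int w_a|\nabla u|^2 + \tfrac12\int w_a|\nabla u|^2 + \tfrac{C_a^2}{2}\int w_a|u|^2 = -\tfrac12\int w_a|\nabla u|^2 + \tfrac{C_a^2}{2}\int w_a|u|^2 \le \tfrac{C_a^2}{2}\int w_a|u|^2$. Summing over $|k|\le J$ gives $\la\mathcal{L}_j\Psi,\Psi\ra_{J,a} \le \tfrac{D_j C_a^2}{2}\sum_{|k|\le J}\int w_a|D^k\Psi|^2 = \tfrac{D_j C_a^2}{2}\|\Psi\|_{J,a}^2$, with $C = D_j C_a^2/2$.

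The main obstacle, and the only genuinely delicate point, is verifying the bound $|\nabla w_a(x)|\le C_a w_a(x)$ with a constant depending only on $a$ (this is where the hypothesis $a\ge 1$ enters, ensuring $2a-1\ge 1 > 0$ so the ratio $|x|^{2a-1}/(1+|x|^{2a})$ stays bounded near the origin and decays at infinity), together with making the integration-by-parts step rigorous on all of $W_0^{J+2,a}$ rather than just $C_0^\infty$ — the latter follows by a routine density argument since every term in the final inequality is continuous with respect to the $W_0^{J+2,a}$-norm (the left side via $\|\mathcal{L}_j\Psi\|_{J,a}\le C\|\Psi\|_{J+2,a}$, the right side trivially). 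Everything else is bookkeeping with Young's inequality and the definition of the weighted norm.
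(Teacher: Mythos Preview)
Your proposal is correct and follows essentially the same integration-by-parts strategy as the paper. The one minor technical difference is in how the cross term $-\int \nabla w_a\cdot\nabla u\,u$ is handled: the paper integrates by parts a second time to obtain the exact identity $-\int \nabla w_a\cdot\nabla u\,u = \tfrac12\int (\Delta w_a)\,|u|^2$ and then computes $\Delta w_a$ explicitly to check $\Delta w_a \le C\,w_a$, whereas you use the pointwise bound $|\nabla w_a|\le C_a w_a$ together with Young's inequality and absorb the resulting $\tfrac12\int w_a|\nabla u|^2$ into the negative term. Your route is slightly more economical (no need to compute the Laplacian of the weight), while the paper's route yields an exact identity before estimating; both give the same conclusion with a constant depending only on $a$ and $D_j$.
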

\begin{proof}[Proof of Lemma \ref{L:AuxiliaryBound1}]
For notational convenience we set, without any loss of generality, $D_{j}=1$.

Let us first consider the case that $J=0$. By definition of the norm in question and integration by parts we have
\begin{align*}
\la \mathcal{L}_j \Psi, \Psi\ra_{0,a}&=\int_{\mathbb{R}^{d}}\frac{1}{1+|x|^{2a}}
\Delta\Psi(x)\Psi(x)dx\nonumber\\
&=-\int_{\mathbb{R}^{d}}\frac{1}{1+|x|^{2a}}|\nabla \Psi(x)|^{2}dx-
\int_{\mathbb{R}^{d}}
\nabla\left(\frac{1}{1+|x|^{2a}}
\right)
\cdot\nabla \Psi(x)\Psi(x)dx\nonumber
\end{align*}
Integration by parts of the last expression gives
\begin{align*}
-\int_{\mathbb{R}^{d}}
\nabla\left(\frac{1}{1+|x|^{2a}}
\right)
\cdot\nabla \Psi(x)\Psi(x)dx&=\int_{\mathbb{R}^{d}}
\nabla\left(\frac{1}{1+|x|^{2a}}
\right)
\cdot\nabla \Psi(x)\Psi(x)dx+\int_{\mathbb{R}^{d}}\Delta\left(\frac{1}{1+|x|^{2a}}
\right)|\Psi(x)|^2 dx
\end{align*}
so that
\begin{align*}
\la \mathcal{L}_j \Psi, \Psi\ra_{0,a}&=\int_{\mathbb{R}^{d}}\frac{1}{1+|x|^{2a}}
\Delta\Psi(x)\Psi(x)dx\nonumber\\
&=-\int_{\mathbb{R}^{d}}\frac{1}{1+|x|^{2a}}|\nabla \Psi(x)|^{2}dx+
\frac{1}{2}\int_{\mathbb{R}^{d}}\Delta\left(\frac{1}{1+|x|^{2a}}
\right)|\Psi(x)|^2 dx
\nonumber
\end{align*}
Direct algebra then shows that
\begin{align}
\Delta\frac{1}{1+|x|^{2a}}&=\frac{2a(2-d-2a) |x|^{2(a-1)}(1+|x|^{2a})+8a^2|x|^{4a-2}}
{(1+|x|^{2a})^{3}}
\end{align}
so that
\begin{align*}
\frac{1}{2}\int_{\mathbb{R}^{d}}\Delta\left(\frac{1}{1+|x|^{2a}}
\right)|\Psi(x)|^2 dx&=\int_{\mathbb{R}^{d}}
\frac{a(2-d-2a) |x|^{2(a-1)}(1+|x|^{2a})+4a^2|x|^{4a-2}}
{(1+|x|^{2a})^{3}}|\Psi(x)|^2 dx\nonumber\\
&\leq \int_{\mathbb{R}^{d}}
\frac{4a^2|x|^{4a-2}}
{(1+|x|^{2a})^{2}}\frac{1}{1+|x|^{2a}}|\Psi(x)|^2 dx.
\end{align*}
Analyzing now the latter integral separately for the regions $|x|<1$ and $|x|\geq 1$, and using the assumption $a\geq1$, we obtain that there is a constant $C<\infty$ that depends on $a$ such that
\begin{align*}
\int_{\mathbb{R}^{d}}
\frac{4a^2|x|^{4a-2}}
{(1+|x|^{2a})^{2}}\frac{1}{1+|x|^{2a}}|\Psi(x)|^2 dx&\leq C||\Psi||^2_{0,a}.
\end{align*}
The latter then readily gives that
\begin{align*}
\la \mathcal{L}_j \Psi, \Psi\ra_{0,a}&=\int_{\mathbb{R}^{d}}\frac{1}{1+|x|^{2a}}\Delta\Psi(x)\Psi(x)dx\nonumber\\
&\leq-\int_{\mathbb{R}^{d}}\frac{1}{1+|x|^{2a}}|\nabla\Psi(x)|^{2}dx+C||\Psi||^2_{0,a}\nonumber\\
&\leq C||\Psi||^2_{0,a},
\end{align*}
completing the proof for $J=0$. The computation for $J>0$ follows along exactly the same lines.

\end{proof}


\begin{thebibliography}{99}

\bibitem[A78]{Adams} R. Adams.
\newblock \emph{Sobolev Spaces.}
\newblock Academic Press, New York, 1978.

\bibitem[AM86]{AM:1986} Joffe, Anatole, and Michel Métivier. \emph{Weak convergence of sequences of semimartingales with applications to multitype branching processes.} Advances in Applied Probability 18.1 (1986): 20-65.

\bibitem[BMW12]{BMW:2012} Jourdain, Benjamin, Sylvie Méléard, and Wojbor A. Woyczynski. \emph{Lévy flights in evolutionary ecology.} Journal of mathematical biology 65.4 (2012): 677-707.

\bibitem[BM15]{BM:2015} Bansaye, Vincent, and Sylvie Méléard. \emph{Stochastic models for structured populations.} Springer, 2015.

\bibitem[B95]{Billingsley} Billingsley, Patrick. \emph{Probability and Measure,} 3rd edition, Wiley Series in Probability and Mathematical Statistics, 1995.

\bibitem[CH98]{CH:1998} Cazenave, T. and Haraux, A. \emph{An Introduction to Semilinear Evolution Equations.} Clarendon Press, Oxford, 1998.

\bibitem[C23]{C:2023} Cordoni, Francesco G. \emph{A spatial measure-valued model for radiation-induced DNA damage kinetics and repair under protracted irradiation condition}, (2023), arXiv preprint: 2303.14784.


\bibitem[BDG07]{BDG:2007}
Del Barrio, Eustasio, Paul Deheuvels and Sara Van De Geer (2007), Lectures on Empirical Processes: Theory and Statistical
Applications, European Mathematical Society Publishing House, Zurich


\bibitem[D76a]{DoiSecondQuantA}
M.~Doi, \emph{Second quantization representation for classical many-particle
  system}, J. Phys. A: Math. Gen. \textbf{9} (1976), no.~9, pp. 1465--1477.

\bibitem[D76b]{DoiSecondQuantB}
M.~Doi, \emph{Stochastic theory of diffusion-controlled reaction}, J. Phys. A:
  Math. Gen. \textbf{9} (1976), no.~9, pp. 1479--1495.

\bibitem[D78]{Aldous:1978} Aldous, David. "Stopping times and tightness." The Annals of Probability 6.2 (1978): 335-340.

\bibitem[DK15]{DK:2015}  Anderson, David F., and Thomas G. Kurtz. Stochastic analysis of biochemical systems. Vol. 1. Berlin: Springer, 2015.

\bibitem[EK86]{MR88a:60130}
Stewart~N. Ethier and Thomas~G. Kurtz.
\newblock {\em Markov {P}rocesses: {C}haracterization and {C}onvergence}.
\newblock John Wiley \& Sons Inc., New York, 1986.

\bibitem[FM97]{Meleard} B. Fernandez and S. M\'{e}l\'{e}ard.
\newblock \emph{A Hilbertian approach for fluctuations on the McKean-Vlasov model.}
\newblock Stochastic Processes and their Applications,
\newblock 71, (1997): 33-53.

\bibitem[G76]{G:1976} Gillespie, Daniel T. \emph{A general method for numerically simulating the stochastic time evolution of coupled chemical reactions.} Journal of Computational Physics Vol. 22, No. 4 (1976): pp. 403-434.

\bibitem[I13]{I:2013} S. A. Isaacson, \emph{A Convergent Reaction-Diffusion Master Equation.} Journal of Chemical Physics Vol. 139, No. 5, (2013), 054101 (12 pp).

\bibitem[IMS21]{IMS:2021} Isaacson, Samuel A., Jingwei Ma, and Konstantinos Spiliopoulos. \emph{How reaction-diffusion PDEs approximate the large-population limit of stochastic particle models.} SIAM J. Applied Math, 81(6), 2622-2657 (2021).

\bibitem[IMS22]{IMS:2022} Isaacson, Samuel A., Jingwei Ma, and Konstantinos Spiliopoulos. \emph{Mean Field Limits of Particle-Based Stochastic Reaction-Diffusion Models.} SIAM Journal of Mathematical Analysis, 54(1), 453-511 (2022).

\bibitem[IZ18]{IZ:2018} S. A. Isaacson and Y. Zhang, \emph{An unstructured mesh convergent reaction--diffusion master equation for reversible reactions.} Journal of Computational Physics 374, (2018), pp. 954-983.

\bibitem[K01]{K:2001} Kurtz, Thomas G. \emph{Lectures on stochastic analysis.} Department of Mathematics and Statistics, University of Wisconsin, Madison, WI (2001): 53706-1388.
APA

\bibitem[KP96]{KP:1996}  Kurtz, Thomas G., and Philip E. Protter. \emph{Weak convergence of stochastic integrals and differential equations II: Infinite dimensional case.} Probabilistic models for nonlinear partial differential equations. Springer, Berlin, Heidelberg, 1996. 197-285.

\bibitem[KS98]{KS:1998} Karatzas, Ioannis, and Steven E. Shreve. \emph{Brownian Motion and Stochastic Calculus.} Springer, New York, NY, 1998. 47-127.

\bibitem[KX04]{KurtzXiong}
T. Kurtz and J. Xiong.
\newblock \emph{A stochastic evolution equation arising from the fluctuations of a class   of interacting particle systems.}
\newblock  Communications in Mathematical  Sciences,
\newblock 2(3),  (2004): 325-358.

\bibitem[LLN19]{Nolen2019}
T.S. Lim, Y.~Lu, and J.~Nolen, \emph{Quantitative propagation of chaos in the
bimolecular chemical reaction-diffusion model}, SIAM J. Math. Anal., 52(2), (2020), pp. 2098-2133.

\bibitem[MS93]{MS:1993} M\'{e}l\'{e}ard, Sylvie, and Sylvie Roelly. \emph{Sur les convergences étroite ou vague de processus à valeurs mesures.} Comptes rendus de l'Académie des sciences. Série 1, Mathématique 317.8 (1993): 785-788.

\bibitem[NW14]{NW:2014} Ikeda, Nobuyuki, and Shinzo Watanabe. \emph{Stochastic differential equations and diffusion processes.} Vol. 24. Elsevier, 2014.

\bibitem[O89]{Oelschlager89a}
K.~Oelschl\"{a}ger, \emph{On the derivation of reaction-diffusion equations as limit dynamics of systems of moderately interacting stochastic processes}, Probab. Th. Rel. Fields \textbf{82} (1989), no.~4, pp. 565--586.

\bibitem[P05]{P:2005} Protter, Philip E. \emph{Stochastic integration and differential equations.} Springer, Berlin, Heidelberg, 2005. 249-361.
APA

\bibitem[P14]{P:2014} Pavliotis, G. A. \emph{Stochastic processes and applications.} Texts in Applied Math, Vol. 60, Springer, New York, 2014.
APA

\bibitem[PV22]{PopovicVeber22}
L. Popovic and A. V\'{e}ber, \emph{A spatial measure-valued model for chemical reaction networks in heterogeneous systems}, arXiv:2008.12373v2 (2022), 51 pp.

\bibitem[S89]{Roelly:1986} Roelly-Coppoletta, Sylvie. \emph{A criterion of convergence of measure‐valued processes: application to measure branching processes.} Stochastics: An International Journal of Probability and Stochastic Processes 17.1-2 (1986): 43-65.

\bibitem[SSG14]
{FluctuationSpiliopoulosSirignanoGiesecke}
K. Spiliopoulos, J. Sirignano, and K. Giesecke.
\newblock \emph{Fluctuation Analysis for the Loss from Default.}
\newblock Stochastic Processes and their Applications.
\newblock 124, (2014): 2322-2362.

\bibitem[SV06]{StroockVaradhan} D.W. Stroock and S. R. S. Varadhan,  \emph{Multidimensional Diffusion Processes.} Springer-Verlang Berlin Heidelberg, (2006).

\bibitem[TS67]{TeramotoDoiModel1967}
E~Teramoto and N~Shigesada, \emph{Theory of bimolecular reaction processes in
  liquids}, Prog. Theor. Phys. \textbf{37} (1967), no.~1, pp. 29--51.

\bibitem[ZI22]{ZI:2022} Y. Zhang and S. A. Isaacson, \emph{Detailed Balance for Particle Models of Reversible Reactions in Bounded Domains.} Journal of Chemical Physics, in press (2022), pp. 20.

\end{thebibliography}
\end{document}